
\documentclass[a4paper,11pt]{article}

\usepackage[ngerman, american]{babel} 
\frenchspacing
\usepackage[utf8]{inputenc}
\usepackage[a4paper, left=3cm, right=3cm, top=2.5cm, bottom=2.5cm]{geometry}
\usepackage{graphicx}
\usepackage{float}
\usepackage{color,psfrag}
\usepackage{amssymb}
\usepackage{amsmath} 
\usepackage{amsthm}
\usepackage{dsfont}
\usepackage{mathrsfs}
\usepackage{color}
\usepackage{setspace}
\usepackage{comment}
\usepackage{tikz}
\usetikzlibrary{decorations.pathreplacing,decorations.markings}
\tikzset{
  on each segment/.style={
    decorate,
    decoration={
      show path construction,
      moveto code={},
      lineto code={
        \path [#1]
        (\tikzinputsegmentfirst) -- (\tikzinputsegmentlast);
      },
      curveto code={
        \path [#1] (\tikzinputsegmentfirst)
        .. controls
        (\tikzinputsegmentsupporta) and (\tikzinputsegmentsupportb)
        ..
        (\tikzinputsegmentlast);
      },
      closepath code={
        \path [#1]
        (\tikzinputsegmentfirst) -- (\tikzinputsegmentlast);
      },
    },
  },
  mid arrow/.style={postaction={decorate,decoration={
        markings,
        mark=at position .5 with {\arrow[#1]{stealth}}
      }}},
}
\usepackage{caption}
\captionsetup[figure]{labelfont={bf},name={Fig.},labelsep=period}
\usepackage{calligra}
\DeclareMathAlphabet{\mathcalligra}{T1}{calligra}{m}{n}
\usepackage{hyperref}
\usepackage[nottoc]{tocbibind}


\newcommand{\NN}{\nonumber}
\newcommand{\eps}{{\varepsilon}}        
          
\renewcommand{\phi}{{\varphi}}          
\newcommand{\om}{\omega}

\newcommand{\cA}{\mathcal{A}}

\newcommand{\cE}{\mathcal{E}}
\newcommand{\cF}{\mathcal{F}}
\newcommand{\cG}{\mathcal{G}}

\newcommand{\cM}{\mathcal{M}}        
         
\newcommand{\cO}{\mathcal{O}}        
\newcommand{\cP}{\mathcal{P}}

\newcommand{\cS}{\mathcal{S}}


\newcommand{\C}{\mathds{C}}
\newcommand{\R}{\mathds{R}}
\newcommand{\N}{\mathds{N}}
\newcommand{\Z}{\mathds{Z}}

\newcommand{\m}{\mathfrak{m}}


\newcommand{\ri}{{\mathrm i}}
\newcommand{\dx}{{\mathrm d}}
\newcommand{\I}{\mathds{1}}
\newcommand{\1}{\mathbf{1}}

\newcommand{\E}{\mathds{E}}
\renewcommand{\P}{\mathds{P}}
\newcommand{\Cov}{\mathrm{Cov}}

\newcommand{\diag}[1]{\mathrm{diag}(#1)}

\newcommand{\Tr}{\mathop{\mathrm{Tr}}}

\newcommand{\Id}{\mathrm{Id}}

\newcommand{\niton}{\not\owns}

\newcommand{\NCA}{\smash{\overrightarrow{NCP}}}

\setlength{\parindent}{0pt}
\newcommand{\nsp}{\negmedspace}



\newtheorem{theorem}{Theorem}[section]         
\newtheorem{lemma}[theorem]{Lemma}             
\newtheorem{corollary}[theorem]{Corollary}
\newtheorem{definition}[theorem]{Definition}

\newtheorem{assumption}[theorem]{Assumption}

\newtheoremstyle{myrem}
  {}
  {}
  {}
  {}
  {\bfseries}
  {.}
  { }
  {\thmname{#1}\thmnumber{ #2}\thmnote{\normalfont{ (#3)}}}
\theoremstyle{myrem}
\newtheorem*{remark}{Remark}
\newtheorem{example}[theorem]{Example}

\numberwithin{equation}{section} 

\begin{document}
\title{\vspace{-2cm} Fluctuation Moments for Regular Functions of Wigner Matrices}
\author{Jana Reker\thanks{IST Austria, Am Campus 1, 3400 Klosterneuburg, Austria. E-Mail: jana.reker$@$ist.ac.at.}}
\maketitle

\begin{abstract}
We compute the deterministic approximation for mixed fluctuation moments of products of deterministic matrices and general Sobolev functions of Wigner matrices. Restricting to polynomials, our formulas reproduce recent results of Male, Mingo, Peché, and Speicher~\cite{MaleMingoPecheSpeicher2020}, showing that the underlying combinatorics of non-crossing partitions and annular non-crossing permutations continue to stay valid beyond the setting of second-order free probability theory. The formulas obtained further characterize the variance in the functional central limit theorem obtained recently in the companion paper~\cite{JRmain}.
\end{abstract}

\textbf{AMS Subject Classification (2020):} 60B20, 15B52, 46L54.\\
\textbf{Keywords:} Wigner Matrix, Global Fluctuations, Fluctuation Moments, Annular Non-crossing Permutations, Free Probability.

\section{Introduction}\label{sect-intro}		

In his seminal work~\cite{Wigner1955}, Wigner established that the empirical spectral measure of certain random matrix ensembles converges, as the dimension goes to infinity, to the semicircle distribution. Since then, many variations and extensions of this result have been considered, yielding a variety of asymptotic phenomena for a wide range of random matrix models. One particular example is the fact that the resolvent $G(z)=(W-z)^{-1}$ of a large Hermitian random matrix $W$ tends to concentrate around a deterministic matrix $M = M (z)$ for spectral parameters $z\in\C$ even just slightly away from the real axis (see, e.g.,~\cite{CES-optimalLL} and references therein for a collection of recent results). It was recently shown (see~\cite{CES-thermalization, CES-optimalLL}) that a similar concentration holds for alternating products of the form
\begin{equation}\label{eq-altproduct}
F_{[1,k]}:=f_1(W)A_1\dots f_k(W)A_k.
\end{equation}
Here, $A_1,\dots,A_k$ are bounded deterministic matrices and $f_1,\dots,f_k$ are regular test functions, allowing in particular for $f_j(W)=G(z_j)$. Products of the form~\eqref{eq-altproduct} with $f_j(W)$ replaced by (polynomials of) the random matrix itself play a key role in free probability theory, as they characterize the joint non-commutative probability distribution of Wigner and deterministic matrices.

\medskip
We remind the reader that a (tracial \emph{first-order}) non-commutative probability space is a pair $(\cA,\phi_1)$ consisting of a complex unital algebra $\cA$ and a tracial linear functional $\phi_1:\cA\rightarrow\C$ with $\phi_1(1_{\cA})=1$, where $1_{\cA}$ is the unit element of the algebra. One particular example is the space $(\cA,\phi_1)=(\cM_{N\times N}(L^{\infty-}(\Omega,\P)),\E\langle\cdot\rangle)$ of $N\times N$ random matrices, where $(\Omega,\P)$ is a classical probability space, $\cM_{N\times N}(S)$ denotes the $N\times N$-matrices with entries in $S$, the space
\begin{displaymath}
L^{\infty-}(\Omega,\P):=\bigcap_{1\leq p<\infty}L^p(\Omega,\P)
\end{displaymath}
contains all random variables with all finite moments, and $\langle\cdot\rangle$ denotes the normalized trace. Note that this definition includes deterministic and Wigner matrices. In this context, the non-commutative probability distribution of $a\in\cA$ is characterized in terms of its moments $(\phi_1(a^k))_k$ with the joint distribution of multiple elements of $\cA$ being defined analogously. Recent work by Cipolloni, Erd\H{o}s and Schröder~\cite{CES-thermalization} established that the structure of the limit of $\E\langle F_{[1,k]}\rangle$ as in~\eqref{eq-altproduct} matches the formulas obtained in free probability, and reproduces known results for the alternating moments $\E\langle W_1D_1\dots W_kD_k\rangle$ of a finite family of independent Wigner matrices $(W_j)_j$ and a finite family of deterministic matrices $(D_j)_j$ (see, e.g.,~\cite[Sect.~4.4]{MSBook}) in the case $f_j(x)=x$. More precisely, in the large $N$ limit, the leading-order term $\m[F_{[1,k]}]$ of $ \E\langle F_{[1,k]}\rangle$ is of the form
\begin{equation}\label{eq-1storderlimit}
\m[F_{[1,k]}]:=\sum_{\pi\in NCP([k])}\Big(\prod_{B\in\pi}\Big\langle \prod_{j\in B}A_j\Big\rangle\Big)\Phi^{(1)}_\pi(f_1,\dots,f_k),
\end{equation}
where $NCP([k])$ denotes the non-crossing partitions of the cyclically ordered set $\{1,\dots,k\}$ and the functions $\smash{\Phi^{(1)}_\pi}$ only depend on $f_1,\dots,f_k$ and $\pi\in NCP([k])$. Hence, the right-hand side of~\eqref{eq-1storderlimit} is a sum of terms that factorize into a contribution of the deterministic matrices resp. the test functions appearing in the product~\eqref{eq-altproduct} with the underlying combinatorics matching the results obtained for the case $f_j(x)=x$ in free probability theory. Note, however, that resolvents and functions with an $N$-dependent mesoscopic scaling are typically not accessible in free probability as many of the standard techniques rely on explicit moment computations for polynomials. The results in~\cite{CES-thermalization} thus show that the underlying combinatorics continue to apply in a more general context.

\medskip
After considering the concentration of~\eqref{eq-altproduct}, the next natural step is to study the fluctuations around the deterministic value. It is well-known that the linear statistics $\Tr f(W)=\sum_{j=1}^Nf(\lambda_j)$ with a regular test function $f:\R\rightarrow\R$ have a variance of order one (first observed in~\cite{KhorunzhyKhoruzhenkoPastur1995}) and, in fact, satisfy a central limit theorem (CLT) with a Gaussian limit, as shown, e.g., in~\cite{KhorunzhyKhoruzhenkoPastur1996} for the Wigner case and in~\cite{Johansson1998} for invariant ensembles. By now, the statistics $\Tr f(W)$ are well-studied on both macroscopic and mesoscopic scales (see, e.g.,~\cite{Guionnet2002, BaiYao2005, LytovaPastur2009,LytovaPasturCLT, Shcherbina2011, SosoeWong2013, BorotGuionnet2013, Shcherbina2013, HeKnowles2020,BaoHe2021, LandonSosoe2022} for the Wigner case and~\cite{CES-functCLT, JRmain} for further references on previous results for Wigner matrices and other models). However, while the fluctuations of $\Tr [f(W)A]$ are known for general regular functions $f$ (see~\cite{Lytova2013} and~\cite{CES-functCLT}), traces of products of the form~\eqref{eq-altproduct} for $k\geq2$ have so far only been studied for $f_j$ being polynomials in the context of second-order freeness (see, e.g., \cite[Ch.~5]{MSBook} or~\cite{CollinsMingoSniadySpeicher2007, Male2021, MaleMingoPecheSpeicher2020}). 

\medskip
We remind the reader that a \emph{second-order} non-commutative probability space is a triplet $(\cA,\phi_1,\phi_2)$, where the functional $\phi_2:\cA\times\cA\rightarrow\C$ is bilinear, tracial in both arguments, symmetric under the interchanging of its arguments, and satisfies ${\phi_2(a,1_{\cA})=\phi_2(1_{\cA},a)=0}$ for all $a\in\cA$. The second-order probability distribution of $a\in\cA$ is characterized in terms of $(\phi_2(a^k,a^\ell))_{k,\ell}$, called the \emph{fluctuation moments}, with the joint moments of multiple elements again being defined analogously. As a canonical example, we remark that $\cM_{N\times N}(L^{\infty-}(\Omega,\P))$ may be endowed with the functional $\phi_2(\cdot,\cdot)=\Cov(\Tr(\cdot),\Tr(\cdot))$, to make it a second-order probability space. In contrast to the first-order structure, the fluctuation moments are sensitive to the symmetry class of the underlying Wigner matrix and explicitly involve the fourth cumulant of the entry distribution (see~\cite[Thm.~6]{MaleMingoPecheSpeicher2020}, as well as~\cite{Redelmeier2012, Redelmeier2018, CES-functCLT}). In particular, we observe a breaking of universality compared to the first-order problem of computing $\E\langle \cdot\rangle$. The joint fluctuation moments of Wigner and deterministic matrices are explicitly known (cf.~\cite[Thm.~13 of Ch.~5]{MSBook} for the GUE case and~\cite[Thm.~6]{MaleMingoPecheSpeicher2020} for general Wigner matrices). 

\medskip
A functional CLT for traces of products of the form~\eqref{eq-altproduct} has recently been established in the companion paper~\cite{JRmain} and the limiting covariance is derived using a recursion. In the present paper, we supply the combinatorial argument necessary to obtain the solution to the recursion and compute the limiting covariance explicitly. More precisely, we show that if $W$ is a GUE matrix, the leading order term $\m[F_{[1,k]}|F_{[k+1,k+\ell]}]$ of the covariance of $\Tr( F_{[1,k]})$ and $\Tr(F_{[k+1,k+\ell]})$ (with $F_{[k+1,k+\ell]}=f_{k+1}A_{k+1}\dots f_{k+\ell}A_{k+\ell}$ of the same build as~\eqref{eq-altproduct}) is given by
\begin{align}
&\m[F_{[1,k]}|F_{[k+1,k+\ell]}]=\sum_{\pi\in \NCA(k,\ell)}\Big(\prod_{B\in\pi}\Big\langle \prod_{j\in B}A_j\Big\rangle\Big)\Phi^{(2)}_\pi(f_1,\dots,f_{k+\ell})\label{eq-2ndorderlimit}\\
&\quad+\sum_{\pi_1\times\pi_2\in NCP(k)\times NCP(\ell)}\Big(\prod_{B_1\in\pi_1,B_2\in\pi_2}\Big\langle \prod_{j\in B_1} A_j\Big\rangle\Big\langle \prod_{j\in B_2} A_j\Big\rangle\Big)\Phi^{(2)}_{\pi_1\times\pi_2}(f_1,\dots,f_{k+\ell}).\NN
\end{align}
Here, $\NCA(k,\ell)$ denotes the non-crossing permutations of the $(k,\ell)$-annulus and the functions $\smash{\Phi^{(2)}_\pi}$ resp. $\smash{\Phi^{(2)}_{\pi_1\times\pi_2}}$ only depend on $f_1,\dots,f_{k+\ell}$ and the underlying permutation resp. partition. Similar to~\eqref{eq-1storderlimit}, we thus obtain a sum of terms that factorize into a contribution of the deterministic matrices resp. the test functions appearing in the product~\eqref{eq-altproduct} with the underlying combinatorics again matching the results obtained for the case $f_j(x)=x$ in free probability theory (see~\cite{MingoSpeicher2006}). Moreover, we show that the overall structure of~\eqref{eq-2ndorderlimit} continues to hold if $W$ is chosen to be a Wigner matrix with $\smash{W_{ij}\overset{d}{=}N^{-1/2}\chi_{od}}$ for~${i<j}$ and $\smash{W_{jj}\overset{d}{=}N^{-1/2}\chi_{d}}$ for general entry distributions $\chi_{od}$ and $\chi_d$. In the general case, however, the sum in the first line of the right-hand side of~\eqref{eq-2ndorderlimit} splits into four summands $\smash{\Phi^{(GUE)}_{\pi}}$, $\smash{\kappa_4\Phi^{(\kappa)}_{\pi}}$, $\smash{\sigma\Phi^{(\sigma)}_{\pi}}$, and $\smash{\widetilde{\omega}_2\Phi^{(\omega)}_{\pi}}$ which have different prefactors in terms of the deterministic matrices $A_1,\dots,A_{k+\ell}$. Here, $\smash{\Phi^{(GUE)}_{\pi}}$ corresponds to the GUE case in~\eqref{eq-2ndorderlimit} and the remaining contributions are associated with the parameters
\begin{equation}\label{eq-parameters}
\kappa_4=\E|\chi_{od}|^4-2,\quad \sigma=\E\chi_{od}^2,\quad \widetilde{\omega}_2=\E\chi_d^2-1-\sigma
\end{equation}
of the Wigner matrix $W$. A similar decomposition is also observed for $\smash{\Phi^{(2)}_{\pi_1\times\pi_2}}$ in~\eqref{eq-2ndorderlimit}. In particular, we find that the closed expression obtained from solving the recursion in~\cite{JRmain} has the same overall structure as the formulas in~\cite[Thm.~6]{MaleMingoPecheSpeicher2020}. This shows that the analogies~\cite{CES-thermalization} established in the first-order setting have a counterpart for the second-order structures. Our combinatorial approach further allows us to give the functions in~\eqref{eq-2ndorderlimit} in a closed form, thus yielding a fully explicit formula for the limiting covariance in the GUE case.

\medskip
We remark that the main results of the present paper, i.e., combinatorial formulas for $\m[F_{[1,k]}|F_{[k+1,k+\ell]}]$ such as~\eqref{eq-2ndorderlimit}, are applied to obtain an explicit limiting covariance structure for the multi-point functional CLT~\cite[Thm.~2.7]{JRmain}. Here, replacing the recursive definition of the limiting variance by a closed formula allows for an easier application of the theorem, e.g., to thermalization problems in physics (cf.~\cite[Cor.~2.12]{JRmain}). We emphasize that the main results in the companion paper~\cite{JRmain} are of analytic nature and that their main technical difficulty lies in including functions with a mesoscopic scaling of the form
\begin{equation}\label{eq-mesofunct}
f_j(x)=g_j(N^\gamma(x-E))
\end{equation}
where $g_j$ is a regular $N$-independent function, $E\in\R$ lies in the bulk of the limiting spectrum of $W$, and $N^{-\gamma}$ is larger than the typical eigenvalue spacing around $E$. In contrast, we assume all test functions to be $N$-independent in the present paper and focus on the combinatorial structures arising in the multi-point functional CLT. While an extension to the functions in~\eqref{eq-mesofunct} is possible using the techniques from~\cite{JRmain}, restricting to the macroscopic regime allows for a cleaner presentation of the results. It further facilitates working with more general assumptions on the Wigner matrix $W$. Note that~\cite[As.~1.1]{JRmain} corresponds to setting $\sigma=\widetilde{\omega}_2=0$ in~\eqref{eq-parameters}, while Assumption~\ref{as-Wigner2} below matches the setting of~\cite{MaleMingoPecheSpeicher2020, CES-functCLT} with general $\sigma\in[-1,1]$ and $\widetilde{\omega}_2\geq-2$, thus generalizing the formulas from~\cite{JRmain}.

\medskip
We conclude the section with a brief overview of the paper. After introducing some commonly used notations, the assumptions on the Wigner matrix $W$ are given in Assumption~\ref{as-Wigner2}. We then give a brief overview of the combinatorics needed to identify the deterministic approximation of $\langle T_1\dots T_k\rangle$ where $T_j:=G(z_j)A_j$, and the multi-resolvent local laws needed for the analysis of the fluctuations (Section~\ref{sect-prelim1}) as well as the definitions from free probability that are used to characterize the limiting covariance of $\langle T_1\dots T_k\rangle-\E\langle T_1\dots T_k\rangle$ and $\langle T_{k+1}\dots T_{k+\ell}\rangle-\E\langle T_{k+1}\dots T_{k+\ell}\rangle$ (Section~\ref{sect-prelim2}). To prepare for the statements of our main results, we give a CLT for the case that all functions $f_j$ are resolvents (Theorem~\ref{thm-resolventCLT2}). The role of the limiting covariance in the theorem is played by a recursively defined set function $\m[\cdot|\cdot]$ (Definition~\ref{def-M}), which is the main object of interest in the present paper. We study the recursion in detail in Section~\ref{sect-formulas} and obtain explicit combinatorial formulas for its solution (Theorems~\ref{thm-main},~\ref{thm-structurekap} - \ref{thm-structureom}). In Section~\ref{sect-CLT2}, we extend the CLT to more general test functions (Theorem~\ref{thm-functCLT} and Corollary~\ref{cor-covarianceLL}) to discuss the connection to free probability theory in detail. In particular, we apply the results to the case $f_j(x)=x$ and show that the limiting covariance in the functional CLT reduces to the formula for the joint fluctuation moments of GUE and deterministic matrices (Corollary~\ref{cor-fpapplication}) as given in~\cite{MaleMingoPecheSpeicher2020}. Lastly, the proofs are given in Sections~\ref{sect-mainproof} and~\ref{sect-restproof}. To keep the presentation concise, some routine calculations are deferred to the appendix.

\medskip
\textbf{Acknowledgements:} I am very grateful to László Erd\H{o}s for suggesting the topic and many valuable discussions during my work on the project. Partially supported by ERC Advanced Grant "RMTBeyond" No.~101020331.

\subsection{General Notation}\label{sect-prelim}
We start by introducing some notation used throughout the paper. For two positive quantities $f,g$, we write $f\lesssim g$ and $f\sim g$ whenever there exist (deterministic, $N$-independent) constants $c,C>0$ such that $f\leq Cg$ and $cg\leq f\leq Cg$, respectively. We denote the Hermitian conjugate of a matrix $A$ by $A^*$ and the complex conjugate of a scalar~$z\in\C$ by $\overline{z}$. Moreover, $\|\cdot\|$ denotes the operator norm, $\mathrm{Tr}(\cdot)$ is the usual trace and $\langle\cdot\rangle=N^{-1}\Tr(\cdot)$. We further denote the covariance of two complex random variables $X_1,X_2$ by $\Cov(X_1,X_2)$ and follow the convention
\begin{displaymath}
\Cov(Y_1,Y_2)=\E(Y_1-\E Y_1)\overline{(Y_2-\E Y_2)},
\end{displaymath}
i.e., the covariance is linear in the first and anti-linear in the second entry. For $k,a,b\in \N$ with $a\leq b$, we set $[k]=\{1,\dots,k\}$ and adopt the interval notation $[a,b]=\{a,a+1,\dots,b\}$. We further write $\langle a,b]$ or $[a,b\rangle$ to indicate that $a$ or $b$ are excluded from the interval, respectively. Ordered sets are denoted by $(\dots)$ instead of $\{\dots\}$.

\medskip
Given a matrix $A\in\C^{N\times N}$, the traceless part of $A$ is denoted by $\mathring{A}:=A-\langle A\rangle \Id$ where $\Id$ denotes the identity matrix. Further, $\mathbf{a}:=\diag{A}$ denotes the diagonal matrix obtained from extracting only the diagonal entries of $A$ and $A_1\odot A_2$ denotes the entry-wise (or Hadamard) product of two matrices $A_1$ and $A_2$. For a Hermitian matrix $W$ and $z_1,\dots,z_k\in\C\setminus\R$, we write the corresponding resolvents as $G_j=G(z_j):=(W-z_j)^{-1}$ and index products of resolvents using the interval notation
\begin{displaymath}
G_{[a,b]}:=G_aG_{a+1}\dots G_b
\end{displaymath}
for $a,b\in\N$ with $a\leq b$. Recalling that angled brackets indicate that an edge point of the interval is excluded, we write $G_{\langle a,b]}$ and $G_{[a,b\rangle}$ to exclude $G_a$ or $G_b$ from the product, respectively. Moreover, $G_{\emptyset}$ is interpreted as zero. Note that this notation differs slightly from~\cite{CES-thermalization,CES-optimalLL}. As we often consider alternating products of resolvents with deterministic matrices $A_1,\dots,A_k$, define $T_j:=G_jA_j$ and apply the same interval notation as above to write
\begin{equation}\label{eq-reschain}
T_{[k]}:=T_1\dots T_k=G_1A_1\dots G_kA_k,\quad T_{[a,b]}:=T_aT_{a+1}\dots T_b.
\end{equation}
Again, angled brackets are used to exclude $T_a$ or $T_b$ from the product, respectively, and $T_{\emptyset}$ is interpreted as zero. We call a product of the type~\eqref{eq-reschain} \emph{resolvent chain} of length $k$.

\medskip
Throughout the paper, we assume $W$ to be an $N\times N$ real or complex Wigner matrix satisfying the following assumptions.
\begin{assumption}\label{as-Wigner2}
The matrix elements of $W$ are independent up to Hermitian symmetry $\smash{W_{ij}=\overline{W_{ji}}}$ and we assume identical distribution in the sense that there is a centered real random variable $\chi_d$ and a centered real or complex random variable $\chi_{od}$ such that $\smash{W_{ij}\overset{d}{=}N^{-1/2}\chi_{od}}$ for~${i<j}$ and $\smash{W_{jj}\overset{d}{=}N^{-1/2}\chi_{d}}$, respectively. We further assume that $\E|\chi_{od}|^2=1$ as well as the existence of all moments of $\chi_d$ and $\chi_{od}$, i.e., there exist constants $C_p>0$ for any $p\in\N$ such that
\begin{displaymath}
\E|\chi_d|^p+\E|\chi_{od}|^p\leq C_p.
\end{displaymath}
\end{assumption}

We remark that Assumption~\ref{as-Wigner2} matches the model considered in~\cite{CES-functCLT} and~\cite{MaleMingoPecheSpeicher2020}. Compared to the conditions $\E\chi_{od}^2=0$ and $\E\chi_d^2=1$ in~\cite{JRmain}, we allow for arbitrary values of the parameters $\sigma=\E\chi_{od}^2\in[-1,1]$ and $\omega_2=\E\chi_d^2\geq0$. This description includes real symmetric Wigner ensembles such as GOE ($\sigma=1$) as well as matrices of the form $W=D+\ri S$ where $D$ is a diagonal matrix and $S$ is skew-symmetric ($\sigma=-1)$. We further introduce the notation
\begin{equation}\label{eq-defkappa4}
\kappa_4:=\E|\chi_{od}|^4-2
\end{equation}
for the normalized fourth cumulant of the off-diagonal entries as well as
\begin{equation}\label{eq-defomega2t}
\widetilde{\om_2}:=\om_2-1-\sigma.
\end{equation}
The eigenvalue density profile of $W$ is described by the semicircle law
\begin{equation}\label{eq-scdensity}
\rho_{sc}(x):=\frac{\sqrt{x^2-4}}{2\pi}\I_{[-2,2]}(x)
\end{equation}
which mainly enters our analysis in the form of its Stieltjes transform
\begin{equation}\label{eq-defm}
m(z):=\int\frac{\rho_{sc}(x)}{z-x}\dx x,\quad z\in\C\setminus\R.
\end{equation}
We remind the reader that $m(z)$ is the unique solution of the Dyson equation
\begin{equation}\label{eq-mselfcon}
-\frac{1}{m(z)}=m(z)+z,\quad \Im z\Im m(z)>0
\end{equation}
and that its derivative satisfies
\begin{equation}\label{eq-mselfconderived}
m'(z)=\frac{m(z)^2}{1-m(z)^2}.
\end{equation}
Given fixed $z_1,\dots,z_k\in\C\setminus\R$, set $m_j=m(z_j)$ and $m_j'=m'(z_j)$, respectively. We further introduce
\begin{equation}\label{eq-defq}
q_{i,j}=\frac{m_im_j}{1-m_im_j},
\end{equation}
possibly setting $q_{j,j}=m_j'$ whenever $i=j$.

\subsection{Preliminaries Part 1: First-Order Quantities}\label{sect-prelim1}
In this section, we briefly summarize the definitions and results from~\cite{CES-optimalLL,CES-thermalization} which are needed to characterize the deterministic approximation of $\langle T_{[1,k]}\rangle$.

\begin{definition}[Non-crossing partitions]\label{def-NCdisk}
Let $S$ be a finite (cyclically) ordered set of integers. We call a partition $\pi$ of the set $S$ \emph{crossing} if there exist blocks $B\neq B'$ in $\pi$ with $a,b\in B$, $c,d\in B'$, and $a<c<b<d$, otherwise we call it \emph{non-crossing}. The set of non-crossing partitions is denoted by $NCP(S)$ and we abbreviate $NCP(k):=NCP([k])$. For each non-crossing partition $\pi=\{B_1,\dots,B_n\}$, set $|\pi|:=n$ for the number of blocks in the partition.
\end{definition}

Recall that non-crossing partitions have an alternative geometrical definition: Arrange the elements of $S$ equidistantly in clockwise order on the circle and for each $\pi\in B$ consider the convex hull $P_B$ of the points $s\in B$. Then $\pi$ is non-crossing if and only if the polygons $\{P_B|B\in\pi\}$ are pair-wise disjoint. Because of this, we also call the elements of $NCP(k)$ \emph{disk non-crossing} to distinguish them from their annulus analog defined below. We further recall the definition of the Kreweras complement (see Fig.~\ref{fig-disknc} for an example).

\begin{definition}[Kreweras complement, disk case]\label{def-Kdisk}
Let $S\subset \N$ be a finite set of integers equidistantly arranged in clockwise order on the circle and label the midpoints of the arcs between the points $s\in S$ also by the elements of $S$. We arrange the new labels such that the arc $s$ follows the point $s$ in clockwise order. Let $\pi\in NCP(S)$. Then the \emph{(disk) Kreweras complement} of $\pi$, denoted by $K(\pi)$, is the element of $NCP(S)$ such that $r,s$ belong to the same block of $K(\pi)$ if and only if the arcs labeled $r,s$ are in the same connected component in the complement $D\setminus \cup_{B\in\pi}P_B$ of the polygons $\{P_B|B\in \pi\}$ in the labeled disk $D$.
\end{definition}

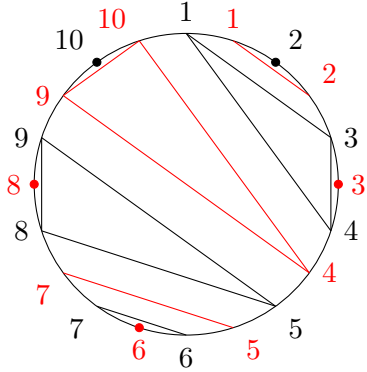
\begin{figure}[H]
\begin{center}
\begin{tikzpicture}[scale=2] 
\draw (0,1) node[above=1pt] {1};
\draw (0.309,0.951) node[above=1pt] {\color{red}1\color{black}};
\draw (0.5878,0.809) node[above right=1pt] {2};
\draw (0.809,0.5878) node[above right=1pt] {\color{red}2\color{black}};
\draw (0.951,0.309) node[right=1pt] {3};
\draw (1,0) node[right=1pt] {\color{red}3\color{black}};
\draw (0.951,-0.309) node[right=1pt] {4};
\draw (0.809,-0.5878) node[right=1pt] {\color{red}4\color{black}};
\draw (0.5878,-0.809) node[below right=1pt] {5};
\draw (0.309,-0.951) node[below right=1pt] {\color{red}5\color{black}};
\draw (0,-1) node[below=1pt] {6};
\draw (-0.309,-0.951) node[below=1pt] {\color{red}6\color{black}};
\draw (-0.5878,-0.809) node[below left=1pt] {7};
\draw (-0.809,-0.5878) node[below left=1pt] {\color{red}7\color{black}};
\draw (-0.951,-0.309) node[left=1pt] {8};
\draw (-1,0) node[left=1pt] {\color{red}8\color{black}};
\draw (-0.951,0.309) node[left=1pt] {9};
\draw (-0.809,0.5878) node[left=1pt] {\color{red}9\color{black}};
\draw (-0.5878,0.809) node[above left=1pt] {10};
\draw (-0.309,0.951) node[above left=1pt] {\color{red}10\color{black}};

\draw (0,0) circle (1cm);

\filldraw [black] (-0.5878,0.809) circle (0.75pt);
\filldraw [black] (0.5878,0.809) circle (0.75pt);

\draw[black] (0,1) -- (0.951,0.309);
\draw[black] (0.951,0.309) -- (0.951,-0.309);
\draw[black] (0.951,-0.309) -- (0,1);

\draw[black] (-0.951,0.309) -- (-0.951,-0.309);
\draw[black] (-0.951,-0.309) -- (0.5878,-0.809);
\draw[black] (-0.951,0.309) -- (0.5878,-0.809);

\draw[black] (-0.5878,-0.809) -- (0,-1);

\filldraw [red] (1,0) circle (0.75pt);
\filldraw [red] (-0.309,-0.951) circle (0.75pt);
\filldraw [red] (-1,0) circle (0.75pt);

\draw[red] (0.309,0.951) -- (0.809,0.5878);

\draw[red] (0.309,-0.951) -- (-0.809,-0.5878);

\draw[red] (0.809,-0.5878) -- (-0.809,0.5878);
\draw[red] (0.809,-0.5878) -- (-0.309,0.951);
\draw[red] (-0.809,0.5878) -- (-0.309,0.951);
\end{tikzpicture}
\end{center}
\captionof{figure}{The non-crossing partition $\pi=\{\{1,3,4\},\{2\},\{5,8,9\},\{6,7\},\{10\}\}$ (black) and its Kreweras complement $K(\pi)=\{\{1,2\},\{3\},\{4,9,10\},\{5,7\},\{6\},\{8\}\}$ (red).}\label{fig-disknc}
\end{figure}

Observe that $D\setminus \cup_{B\in\pi}P_B$ has $|S|-|\pi|+1$ connected components, hence $|\pi|+|K(\pi)|=|S|+1$. Further, $K^2=K\circ K$ recovers $\pi$ up to a rotation of $D$, i.e., $K^2(\pi)$ is the partition where for $S=\{s_1,\dots,s_k\}$ the elements in each block of $\pi$ are shifted by $s_1\mapsto s_2\mapsto\dots\mapsto s_k\mapsto s_1$. In particular, taking the Kreweras complement is invertible as a map on $NCP(S)$.

\begin{definition}[Free cumulant function]\label{def-circ}
Fix $k\in\N$, denote the power set of $[k]$ by $\cP([k])$ and let $f:\cP([k])\rightarrow\C$ be a function mapping subsets of $[k]$ to scalars. The \emph{(first-order) free cumulant function} of $f$ is the unique map $f_\circ: \cP([k])\rightarrow\C$ satisfying
\begin{equation}\label{eq-mcrelation1}
f[S]=\sum_{\pi\in NCP(S)}\prod_{B\in\pi}f_{\circ}[B]
\end{equation}
for any $S\subseteq[k]$.
\end{definition}
We emphasize that Definition~\ref{def-circ} does not require $f$ to have any particular symmetries. However, in the free probability literature, $f$ usually arises from tracial functional and is hence symmetric under the cyclic permutation of its entries (cf., e.g.,~\cite[Ch.~2]{MSBook}). The implicit relation in~\eqref{eq-mcrelation1} can be recursively turned into an explicit definition of $f_\circ$. Alternatively, we may also invert~\eqref{eq-mcrelation1} explicitly using the Möbius function associated with the lattice of non-crossing partitions. Recall that $NCP(S)$ is a lattice with respect to the refinement order, i.e., the partial order in which $\pi\leq\nu$ if and only if for each $B\in\pi$ there exists $B'\in\nu$ with $B\subset B'$. Moreover, there are unique maximal and minimal elements given by $0_S:=\{\{s\}|s\in S\}$ and $1_S:=\{S\}$, respectively. The free cumulant function can then be written as
\begin{align}
f_\circ[S]=\sum_{\pi\in NC[S]}\mu(\pi,\1_S)\prod_{B\in\pi}f[B],\quad \mu(\pi,\nu):=\begin{cases}1,\ &\pi=\nu,\\ -\sum_{\pi<\tau\leq\nu}\mu(\tau,\nu),\ &\pi<\nu,\end{cases}\label{eq-defMoebius}
\end{align}
using the Möbius function $\mu:\{(\pi,\nu)|\pi\leq\nu\in NCP(S)\}\rightarrow\Z$ that is recursively defined by~\eqref{eq-defMoebius}. We remark that $\mu(\pi,1_S)$ can be given in a closed form using the Catalan numbers~(see, e.g.,~\cite[Lem.~2.16]{CES-thermalization}).

\medskip
The following choice for the function $f$ is of particular interest.

\begin{definition}[Divided differences]\label{def-divdif}
For finite multi-sets $\{z_1,\dots, z_k\}\subset\C\setminus\R$ we recursively define
\begin{displaymath}
m[z_1,\dots,z_k]:=\frac{m[z_2,\dots,z_k]-m[z_1,\dots,z_{k-1}]}{z_k-z_1}
\end{displaymath}
whenever there are two distinct $z_1\neq z_k$ among $z_1,\dots,z_k$ and otherwise
\begin{displaymath}
m[\underbrace{z,\dots,z}_{k\text{ times}}]:=\frac{m^{(k-1)}(z)}{(k-1)!}
\end{displaymath}
where $m^{(k-1)}$ is the $(k-1)$-th derivative of the function $m$ in~\eqref{eq-defm}. Note that this is well-defined in the sense that $m[z_1,\dots,z_k]$ is independent of the ordering of the multi-set $\{z_1,\dots,z_k\}$. We abbreviate $m[1,\dots,k]:=m[z_1,\dots,z_k]$.
\end{definition}

We emphasize that $m[\cdot]$, and hence $m_{\circ}[\cdot]$, have full permutation symmetry, which is much more than what was assumed for $f$ in Definition~\ref{def-circ}. The following example illustrates the combinatorial formulas~\eqref{eq-mcrelation1} and~\eqref{eq-defMoebius} for $f=m[\cdot]$.

\begin{example}[First-order free cumulants]\label{ex-circ}
In the case $k=1$ we simply have $m[1]=m(z_1)$. For $k=2$, the only non-crossing partitions are $(12)$ and $(1)(2)$ such that
\begin{displaymath}
m_{\circ}[1,2]=m[1,2]-m_1m_2,\quad m_j:=m[j]=m[z_j]
\end{displaymath}
while for $k=3$ we have
\begin{displaymath}
m_{\circ}[1,2,3]=m[1,2,3]-m_1m[2,3]-m_2m[1,3]-m_3m[1,2]+2m_1m_2m_3.
\end{displaymath}
\end{example}

The quantities $m$ and $m_\circ$ were studied in detail in~\cite{CES-thermalization}, yielding a close connection to \emph{non-crossing graphs}. We recall the definition and give an example in Fig.~\ref{fig-NCG} below. These graphs are planar. For later convenience, we use a slightly more general notion of planar graphs throughout the paper than the standard literature by allowing for self-connections (loops) and multi-edges.

\begin{definition}[Disk non-crossing graphs]\label{def-NCG}
Let $S\subset\N$ be a finite (cyclically) ordered set of integers equidistantly arranged in clockwise order on the circle. We call an undirected planar graph $(S,E)$ on the vertex set $S$ without loops or multi-edges \emph{(disk) crossing} if there exist two edges $(a,b),(c,d)\in E$ with $a<c<b<d$, otherwise we call it \emph{(disk) non-crossing}\footnote{The edges of a disk non-crossing graph $\Gamma$ can be drawn in the interior of the disk without intersecting, i.e., $\Gamma$ is a planar graph drawn inside a labeled disk.}. The set of all \emph{(disk) non-crossing graphs} with vertex set $S$ is denoted by $NCG(S)$ and we denote the subset of connected graphs as $NCG_c(S)$. Whenever $S=[k]$, abbreviate $NCG(k):=NCG([k])$.
\end{definition}

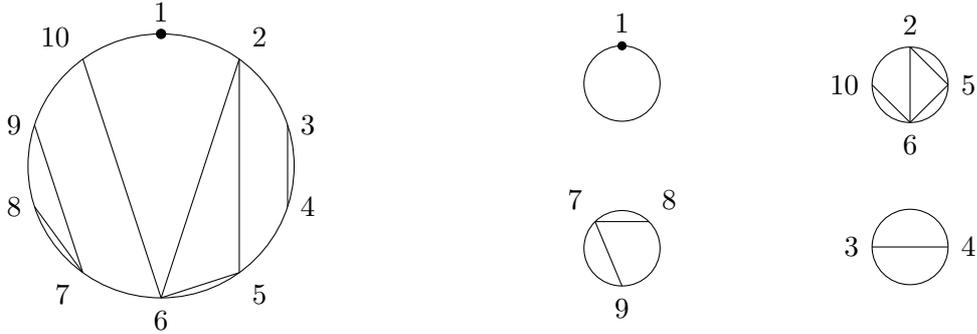
\begin{figure}[H]
\parbox{0.5\textwidth}{
\begin{center}
\begin{tikzpicture}[scale=1.75]
\draw (0,1) node[above=1pt] {1};
\draw (0.5878,0.809) node[above right=1pt] {2};
\draw (0.951,0.309) node[right=1pt] {3};
\draw (0.951,-0.309) node[right=1pt] {4};
\draw (0.5878,-0.809) node[below right=1pt] {5};
\draw (0,-1) node[below=1pt] {6};
\draw (-0.5878,-0.809) node[below left=1pt] {7};
\draw (-0.951,-0.309) node[left=1pt] {8};
\draw (-0.951,0.309) node[left=1pt] {9};
\draw (-0.5878,0.809) node[above left=1pt] {10};
\draw (0,0) circle (1cm);

\filldraw [black] (0,1) circle (1pt);

\draw[black] (0.951,-0.309) -- (0.951,0.309);

\draw[black] (-0.5878,-0.809) -- (-0.951,-0.309);
\draw[black] (-0.951,0.309) -- (-0.5878,-0.809);

\draw[black] (-0.5878,0.809) -- (0,-1);
\draw[black] (0,-1) -- (0.5878,-0.809);
\draw[black] (0,-1) -- (0.5878,0.809);
\draw[black] (0.5878,-0.809) -- (0.5878,0.809);
\end{tikzpicture}
\end{center}
}
\parbox{0.5\textwidth}{
\begin{center}
\begin{tikzpicture}[scale=0.5]
\draw (0,1) node[above=1pt] {1};
\draw (0,-1) node[below=1pt] {\color{white}blank\color{black}};
\draw (1,0) node[right=1pt] {\color{white}blank\color{black}};
\draw (-1,0) node[left=1pt] {\color{white}blank\color{black}};
\draw (0,0) circle (1cm);

\filldraw [black] (0,1) circle (3pt);
\end{tikzpicture}\hspace{0.75cm}
\begin{tikzpicture}[scale=0.5]
\draw (0,1) node[above=1pt] {2};
\draw (1,0) node[right=1pt] {5};
\draw (0,-1) node[below=1pt] {6};
\draw (-1,0) node[left=1pt] {10};
\draw (0,0) circle (1cm);

\draw[black] (-1,0) -- (0,-1);
\draw[black] (0,1) -- (0,-1);
\draw[black] (1,0) -- (0,-1);
\draw[black] (0,1) -- (1,0);
\end{tikzpicture}\\
\begin{tikzpicture}[scale=0.5]
\draw (-0.707,0.707) node[above left=1pt] {7};
\draw (0,-1) node[below=1pt] {9};
\draw (0.707,0.707) node[above right=1pt] {8};
\draw (0,1) node[above=1pt] {\color{white}blank\color{black}};
\draw (1,0) node[right=1pt] {\color{white}blank\color{black}};
\draw (-1,0) node[left=1pt] {\color{white}blank\color{black}};
\draw (0,0) circle (1cm);

\draw[black] (-0.707,0.707) -- (0,-1);
\draw[black] (-0.707,0.707) -- (0.707,0.707);
\end{tikzpicture}\hspace{0.75cm}
\begin{tikzpicture}[scale=0.5]
\draw (0,-1) node[below=1pt] {\color{white}blank\color{black}};
\draw (0,1) node[above=1pt] {\color{white}blank\color{black}};
\draw (1,0) node[right=1pt] {4};
\draw (-1,0) node[left=1pt] {\color{white}0\color{black}3};
\draw (0,0) circle (1cm);

\draw[black] (-1,0) -- (1,0);
\end{tikzpicture}
\end{center}
}
\captionof{figure}{An element of $NCG(10)$ and its connected subgraphs.}\label{fig-NCG}
\end{figure}

Emphasizing that Definition~\ref{def-NCG} lives on a disk is important, as we later introduce a non-crossing property on the annulus. Whenever both definitions are used together, we use the specifications \emph{disk} non-crossing and \emph{annular} non-crossing to distinguish the underlying geometry. By construction, every $\Gamma\in NCG(S)$ induces a non-crossing partition with blocks representing the vertices in the connected components of $\Gamma$. Further, any connected component of $\Gamma$ is itself a (disk) non-crossing graph.

\medskip
Lemma~5.2 of~\cite{CES-thermalization} proves the representations
\begin{align}
m[S]&=\Big(\prod_{s\in S}m_s\Big)\sum_{\Gamma\in NCG(S)}\prod_{(i,j)\in E(\Gamma)}q_{i,j},\label{eq-mgraphs}\\
m_{\circ}[S]&=\Big(\prod_{s\in S}m_s\Big)\sum_{\Gamma\in NCG_c(S)}\prod_{(i,j)\in E(\Gamma)}q_{i,j}\label{eq-mcircgraphs}
\end{align}
in terms of the \emph{weights} $q_{i,j}$ in~\eqref{eq-defq}. Here, $E(\Gamma)$ is the edge set of the graph $\Gamma$. Note that~\eqref{eq-mgraphs} and~\eqref{eq-mcircgraphs} are still well-defined if $S$ is an ordered multi-set, i.e., if some elements are repeated. In this case, we consider $NCG(|S|)$ instead of $NCG(S)$ and use the one-to-one correspondence between the (possibly repeated) labels $\{s|s\in S\}$ and $\{1,\dots,|S|\}$ to obtain a uniquely defined right-hand side.

\medskip
A key technical tool in the proof of our main results is the optimal multi-resolvent local law~\cite[Thm.~2.5]{CES-optimalLL}. As we only work on macroscopic scales, i.e., with $N$-independent spectral parameters, in the present paper, we state the result in the form of a global law and omit the dependence on $\eta_*=\min |\Im z_j|$. Recall the commonly used definition of stochastic domination.

\begin{definition}[Stochastic domination]
Let
\begin{displaymath}
X=\Big\{X^{(N)}(u)\Big|N\in\N,u\in U^{(N)}\Big\} \ \text{and}\ Y=\Big\{Y^{(N)}(u)\Big|N\in\N,u\in U^{(N)}\Big\}
\end{displaymath}
be two families of non-negative random variables that are indexed by $N$ and possibly some other parameter $u$. We say that $X$ is \emph{stochastically dominated} by $Y$, denoted by $X\prec Y$ or $X=\cO_{\prec}(Y)$, if, for all $\eps,C>0$ we have 
\begin{displaymath}
\sup_{u\in U^{(N)}}\P\Big(X^{(N)}(u)>N^\eps Y^{(N)}(u)\Big)\leq N^{-C}
\end{displaymath}
for large enough $N\geq N_0(\eps,C)$.
\end{definition}

\begin{theorem}[Macroscopic version of {\cite[Thm.~2.5]{CES-optimalLL}}]\label{thm-multiG-LL}
Fix $k\in \N$ and pick spectral parameters $z_1,\dots,z_k\in\C\setminus\R$ with $|\Im z_j|\gtrsim1$ and $\max_j|z_j|\leq N^{100}$ as well as deterministic matrices $A_1,\dots,A_k\in\C^{N\times N}$ with $\|A_i\|\lesssim 1$. Define\footnote{The noncommutative product $\prod_{j\in B}A_j$ for $B=(j_1,\dots,j_r)$ is defined as $A_{j_1}\dots A_{j_r}$ in the order inherited from $B\subseteq S$.}
\begin{equation}\label{eq-LLformula}
M_{[k]}:=\sum_{\pi\in NCP(k)}\Big(\prod_{\substack{B\in K(\pi),\\k\notin B}}\Big\langle \prod_{j\in B}A_j\Big\rangle\prod_{i\in B(k)\setminus\{k\}}A_i\Big)\Big(\prod_{B\in\pi}m_{\circ}[B]\Big),
\end{equation}
where $B(k)$ is the block in $K(\pi)$ that contains $k$. Recalling that $T_j=G_jA_j$, we have the averaged local law
\begin{equation}\label{eq-multiGaveraged}
\langle T_{[1,k]}\rangle=\langle M_{[k]}A_k\rangle+\cO_{\prec}\Big(\frac{1}{N}\Big),
\end{equation}
and for $\mathbf{x},\mathbf{y}\in \C^N$ with $\|\mathbf{x}\|,\|\mathbf{y}\|\lesssim 1$ we have the isotropic local law
\begin{equation}\label{eq-multiGisotropic}
\langle \mathbf{x},T_{[1,k\rangle}G_k\mathbf{y}\rangle=\langle\mathbf{x},M_{[k]}\mathbf{y}\rangle+\cO_{\prec}\Big(\frac{1}{\sqrt{N}}\Big).
\end{equation}
\end{theorem}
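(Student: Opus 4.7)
The plan is to deduce the statement as a direct corollary of the optimal multi-resolvent local law \cite[Thm.~2.5]{CES-optimalLL}, combined with the explicit combinatorial characterization of the deterministic approximation $M_{[k]}$ developed in~\cite{CES-thermalization}. First I would recall that \cite[Thm.~2.5]{CES-optimalLL} establishes averaged and isotropic local laws for resolvent chains $T_{[1,k]}$ with error bounds that depend polynomially on $\eta_*^{-1} = (\min_j |\Im z_j|)^{-1}$. Under the macroscopic assumption $|\Im z_j| \gtrsim 1$, these factors are $\cO(1)$, so the quoted bounds collapse to the claimed $\cO_\prec(N^{-1})$ rate for the averaged law and $\cO_\prec(N^{-1/2})$ rate for the isotropic law.

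Next I would verify that the deterministic approximation produced in~\cite[Thm.~2.5]{CES-optimalLL} coincides with the explicit expression~\eqref{eq-LLformula}. The matrix $M_{[k]}$ is uniquely characterized by a Dyson-type recursion whose combinatorial solution, derived in~\cite{CES-thermalization}, is organized along non-crossing partitions $\pi \in NCP(k)$: each block $B \in \pi$ contributes a free cumulant factor $m_\circ[B]$ in the sense of Definition~\ref{def-circ}, while the Kreweras complement $K(\pi)$ governs the distribution of the deterministic matrices. Concretely, every block of $K(\pi)$ not containing the distinguished index $k$ collapses into a normalized trace $\langle \prod_{j\in B} A_j \rangle$, whereas the unique block $B(k) \ni k$ retains its $A_i$'s as an ordered matrix product. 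I would verify this by induction on $k$: peeling off the last factor $G_k A_k$ yields a recursion for $M_{[1,k]}$ in terms of shorter chains, which matches the defining recursion for the right-hand side of~\eqref{eq-LLformula} after using the recursive identity~\eqref{eq-mcrelation1} for $m_\circ$. Alternatively, one may substitute the graph representation~\eqref{eq-mcircgraphs} for each $m_\circ[B]$ and reorganize to match the non-crossing-graph formulation of~\cite{CES-thermalization} directly.

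Finally, I would cover the very large spectral parameters regime, where $|z_j|$ is only assumed to be at most $N^{100}$. This is handled by the standard Neumann expansion $G_j = -z_j^{-1}\sum_{\ell \geq 0}(W/z_j)^\ell$ combined with the a priori bound $\|W\| \lesssim 1$ with very high probability, which holds under Assumption~\ref{as-Wigner2}. Truncating the series at a sufficiently large but $N$-independent order yields error estimates that are much stronger than the claimed $\cO_\prec(N^{-1})$ and $\cO_\prec(N^{-1/2})$ bounds, so this regime does not affect the result.

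The main obstacle I anticipate is not analytic but combinatorial: matching the implicit Dyson-type representation of $M_{[k]}$ that emerges from the stability analysis in~\cite{CES-optimalLL} with the explicit Kreweras-complement formula~\eqref{eq-LLformula}. Since this identification is essentially carried out in~\cite{CES-thermalization}, the task reduces to reconciling notational conventions and invoking the existing formula; the analytic backbone (cumulant expansion, stability of the matrix Dyson equation, and the local-law bootstrap) is imported from those works and need not be reproduced here.
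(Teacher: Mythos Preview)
Your proposal is correct and aligns with the paper's treatment: the paper does not give its own proof of this theorem but imports it as a preliminary result, citing it explicitly as the macroscopic version of \cite[Thm.~2.5]{CES-optimalLL} with the combinatorial formula for $M_{[k]}$ taken from~\cite{CES-thermalization}. Your sketch of specializing the $\eta_*$-dependent bounds to the macroscopic regime and identifying the deterministic approximation via the Kreweras-complement formula is exactly the intended reading.
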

 
As we frequently encounter $\langle M_{[k]}A_k\rangle$ in the following sections, we introduce the notation
\begin{equation}\label{eq-defM}
\m[T_1,\dots,T_k]=\m[z_1,A_1,\dots,z_k,A_k]:=\langle M_{[k]}A_k\rangle.
\end{equation}
In particular,
\begin{equation}\label{eq-formulaM}
\m[T_1,\dots,T_k]=\sum_{\pi\in NCP(k)}\Big(\prod_{B\in K(\pi)}\Big\langle \prod_{j\in B}A_j\Big\rangle\Big)\Big(\prod_{B\in\pi}m_{\circ}[B]\Big)
\end{equation}
and we have $\m[G_1,\dots,G_k]=m[1,\dots,k]$ as a consequence of~\eqref{eq-mcrelation1}. We further apply~\eqref{eq-multiGaveraged} or~\eqref{eq-multiGisotropic} for a product $T_{s_1}\dots T_{s_{k-1}}G_{s_k}$ that is indexed by a (cyclically) ordered set $S=(s_1,\dots,s_k)$ instead of an interval. In this case, the deterministic approximation is denoted~as
\begin{displaymath}
M_S=M_{(s_1,\dots,s_k)}
\end{displaymath}
with the same definition as in~\eqref{eq-LLformula}.

\begin{remark}
The quantities $m[1,\dots,k]$, $m_{\circ}[1,\dots,k]$, $\m[T_1,\dots,T_k]$, $(M_{[k]})_{ij}$, and $\|M_{[k]}\|$ are of order one for any $k\in\N$ and $i,j\in[N]$ in the macroscopic regime (cf.~Lemma~2.4 and Appendix~A of~\cite{CES-optimalLL}). Theorem~\ref{thm-multiG-LL} asserts that the deterministic $M_{[k]}$ is the leading order approximation of $T_{[1,k\rangle}G_k$. In particular, the error terms in~\eqref{eq-multiGaveraged} and~\eqref{eq-multiGisotropic} are smaller than the natural upper bound on their leading term by a factor of $1/N$ and $1/\sqrt{N}$, respectively.
\end{remark}

We further need a generalization of the averaged local law~\eqref{eq-multiGaveraged} that includes transposes.

\begin{theorem}[Global law for resolvent chains with transposes]\label{thm-skewedLL}
Let $k\in\N$ and pick spectral parameters $z_1,\dots,z_k$ with $|\Im(z_j)|\gtrsim 1$ and $\max_j|z_j|\leq N^{100}$ as well as deterministic matrices $A_1,\dots,A_k$ with $\|A_j\|\lesssim1$. Moreover, let $G_j^\sharp$ denote either the resolvent $G_j=G(z_j)$ or its transpose $G_j^t$ and denote by $\#$ the binary vector that has a one in $j$-th position if $\smash{G_j^\sharp=G_j^t}$ and a zero otherwise. Then,
\begin{equation}\label{eq-defMsharp}
\langle G_1^\sharp A_1\dots G_k^\sharp A_k\rangle=\sum_{\pi\in NCP(k)}\Big(\prod_{B\in K(\pi)}\Big\langle \prod_{j\in B}A_j\Big\rangle\Big)\Big(\prod_{B\in\pi}m_{\circ}^{\#,\sigma}[B]\Big)+\cO_\prec\Big(\frac{1}{N}\Big)
\end{equation}
where $m_\circ^{\#,\sigma}[\cdot]$ denotes the free cumulants associated with the set function $m^{\#,\sigma}[\cdot]$ by Definition~\ref{def-circ}. Here, $m^{\#,\sigma}[\cdot]$ is defined to satisfy $m^{\#,\sigma}[\emptyset]=0$ as well as the recursion
\begin{align}
m^{\#,\sigma}[1,\dots,k]=m_1(1+q_{1,k}^\sharp)\Big(m^{\#,\sigma}[2,\dots,k]+\sum_{j=2}^kc_{1,j}m^{\#,\sigma}[1,\dots,j]m^{\#,\sigma}[j,\dots,k]\Big)\label{eq-mhashrecursion}
\end{align}
with $q_{1,k}^\sharp=q_{1,k}=\frac{m_1m_k}{1-m_1m_k}$ whenever $\#_1=\#_k$, i.e., either both $G_1$ and $G_k$ occur as transposes in the product $G_1^\sharp\dots G_k^\sharp$ or neither of them, and $q_{1,k}^\sharp=\frac{\sigma m_1m_k}{1-\sigma m_1m_k}$ otherwise. Similarly, $c_{1,j}=1$ whenever $\#_1=\#_j$ and $c_{1,j}=\sigma$ otherwise. Recall that $\sigma=\E\chi_{od}^2$ where $\chi_{od}$ is the real or complex random variable that specifies the distribution of the off-diagonal entries of the Wigner matrix $W$.
\end{theorem}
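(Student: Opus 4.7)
The plan is to adapt the cumulant-expansion derivation underlying Theorem~\ref{thm-multiG-LL} from~\cite{CES-optimalLL}, tracking how transposes alter the second-moment contractions. The decisive new input is that the Wigner covariance produces two kinds of pairings: a \emph{Hermitian} pairing weighted by $\E W_{ij}\overline{W_{ij}}=N^{-1}$ and a \emph{twisted} pairing weighted by $\E W_{ij}^2=N^{-1}\sigma$ on the off-diagonal. Whether either of these closes into a pair of traces depends on the relative transpose pattern $\#$ of the two resolvents being contracted; this is exactly what the coefficients $c_{1,j}$ and $q_{1,k}^\sharp$ in~\eqref{eq-mhashrecursion} are designed to encode.

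Concretely, I would insert $WG_1^\sharp=z_1G_1^\sharp+\Id$ (or its transpose when $\sharp_1=1$) at the leading position of $\langle G_1^\sharp A_1\cdots G_k^\sharp A_k\rangle$ and expand the resulting $W$ factor using the cumulant expansion for Wigner entries. Assumption~\ref{as-Wigner2} ensures that cumulants of order $\geq3$ contribute only to the $\cO_\prec(1/N)$ error. Using the resolvent identities $\partial_{W_{kl}}G_{ab}=-G_{ak}G_{lb}$ and $\partial_{W_{kl}}(G^t)_{ab}=-G_{bk}G_{la}$, the second-order term produces, for each $j\in[k]$, a pairing that splits the chain at position $j$. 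Only those pairings whose index structure closes into a pair of traces survive at leading order: when $\sharp_1=\sharp_j$ the Hermitian pairing closes up and yields $c_{1,j}=1$, whereas when $\sharp_1\neq\sharp_j$ only the twisted pairing closes up and yields $c_{1,j}=\sigma$. The same parity distinction applied to the self-pairing of positions $1$ and $k$, followed by the geometric resummation $(1-c\,m_1m_k)^{-1}=1+q_{1,k}^\sharp$ with $c\in\{1,\sigma\}$, produces the prefactor $m_1(1+q_{1,k}^\sharp)$ exactly as in the standard master-equation argument, and establishes the recursion~\eqref{eq-mhashrecursion} for $m^{\#,\sigma}$.

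The passage from the recursion to the explicit non-crossing partition formula~\eqref{eq-defMsharp}, including the deterministic matrix factors $A_j$, is then identical to the corresponding step for Theorem~\ref{thm-multiG-LL}: define $m_\circ^{\#,\sigma}$ from $m^{\#,\sigma}$ via Definition~\ref{def-circ}, and use the Kreweras complement bijection of Section~5 of~\cite{CES-thermalization} to factor the products of block traces of the $A_j$'s. The error term $\cO_\prec(1/N)$ is controlled by the standard multi-resolvent a priori bounds, which remain valid for $G_j^t$ since $G_j$ and $G_j^t$ share the same spectrum and hence satisfy the same single-resolvent local law.

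The main technical obstacle is the combinatorial book-keeping of the mixed transpose pattern. The roles of the Hermitian and twisted parts of the covariance swap whenever $\sharp_1\neq\sharp_j$, and one must verify that across all $k$ positions and all possible splits the surviving contractions reassemble precisely into the coefficients $c_{1,j}$ and $q_{1,k}^\sharp$ of the stated recursion, rather than generating additional subleading trace structures. A useful consistency check is that in the purely untransposed case $\#=(0,\dots,0)$ all $c_{1,j}=1$ and $q_{1,k}^\sharp=q_{1,k}$, and the recursion collapses to the one derived in~\cite{CES-thermalization} for $m[1,\dots,k]$, so~\eqref{eq-defMsharp} reproduces~\eqref{eq-LLformula} as required.
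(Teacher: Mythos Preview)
Your approach is essentially the same as the paper's: both derive the recursion~\eqref{eq-mhashrecursion} by expanding $W$ at the leading resolvent (the paper packages this via the self-energy renormalization $\underline{WG_1^\sharp\cdots G_k^\sharp}$ rather than a raw cumulant expansion, but the content is identical), and both pass to the non-crossing partition formula exactly as in~\cite{CES-thermalization}.

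One point you do not address but the paper does: the stability factor $1-\sigma m_1m_k$ is controlled via $|1-\sigma m_1m_k|>1-|\sigma|$, which is only useful for $|\sigma|<1$. The paper treats $\sigma=1$ as trivial (then $G^t=G$) and handles $\sigma=-1$ by a separate reduction: for $W=\ri S$ skew-symmetric one has $R(z)^t=-R(-z)$, which eliminates the transposes entirely, and the general diagonal is then removed by a perturbative estimate $\langle G_1^\sharp\cdots G_k^\sharp\rangle=\langle R(z_1)^\sharp\cdots R(z_k)^\sharp\rangle+\cO_\prec(N^{-1})$. Your appeal to ``standard multi-resolvent a priori bounds'' for the error term implicitly relies on input (e.g.\ bounds on $\langle\underline{WG_1^\sharp\cdots G_k^\sharp}\rangle$ from~\cite{CES-ETH}) whose proofs in the literature may not cover $\sigma=-1$ directly, so this edge case deserves a sentence.
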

The proof of Theorem~\ref{thm-skewedLL} is, modulo careful bookkeeping of the transposes, similar to the proof of the averaged local law in~\cite[Thm.~3.4]{CES-thermalization}. For the convenience of the reader, a brief sketch of the argument is included in Appendix~\ref{app-skewedLLproof}. We remark that the same result may be obtained on mesoscopic scales with optimal error bounds following the strategy of~\cite{CES-optimalLL} (cf.~\cite[Rem.~2.2]{CES-optimalLL}) and that several examples in the cases $k\in\{2,3\}$ are considered in Proposition~3.4 and Remark 3.5 of~\cite{CES-ETH} as well as in Propositions~3.3 and~3.4 of~\cite{CES-functCLT}.

\medskip
Note that $\sigma=1$ implies that the matrix $W$ is real and its resolvent satisfies $G_j^t=G_j$. Hence, the statement of Theorem~\ref{thm-skewedLL} reduces to that of an averaged global law for real symmetric Wigner matrices in this case. Due to the structural similarity between~\eqref{eq-formulaM} and~\eqref{eq-defMsharp}, we will slightly abuse notation and write the right-hand side of~\eqref{eq-defMsharp} as
\begin{equation}\label{eq-mtransposes}
\m[G_1^\sharp A_1,\dots, G_k^\sharp A_k]:=\sum_{\pi\in NCP(k)}\Big(\prod_{B\in K(\pi)}\Big\langle \prod_{j\in B}A_j\Big\rangle\Big)\Big(\prod_{B\in\pi}m_{\circ}^{\#,\sigma}[B]\Big).
\end{equation}
Moreover, by Definition~\ref{def-circ}, we have $\m[G_1^\sharp,\dots,G_k^\sharp]=m^{\#,\sigma}[1,\dots,k]$ and~\eqref{eq-mhashrecursion} reduces to the divided differences in Definition~\ref{def-divdif} whenever $\#$ is the zero vector.

\subsection{Preliminaries Part 2: Second-Order Quantities}\label{sect-prelim2}
In this section, we give an overview of the definitions from free probability that are used in later sections as well as some related quantities appearing in the CLTs.

\medskip
Recall that the key picture for describing the expectation of $\langle T_{[1,k]}\rangle$ is a disk with the labels $1,\dots,k$ organized in clockwise order along its boundary. In a very similar spirit, the key picture for describing the corresponding second-order object, i.e., the covariance of $\langle T_{[1,k]}\rangle$ and $\langle T_{[k+1,k+\ell]}\rangle$, consists of two concentric labeled circles. Let $k,\ell\in\N$ and arrange the numbers $1,\dots,k$ equidistantly in clockwise order on the outer circle and the numbers ${k+1,\dots,k+\ell}$ equidistantly in counter-clockwise order on the inner circle. We refer to the planar domain between these two circles together with the labeled points on its boundary as \emph{the $(k,\ell)$-annulus} (see Fig.~\ref{fig-klannulus} below). The labeled points will often serve as vertices of a graph. In this case, any edges connecting two points are drawn inside the annulus.

\begin{figure}[H]
\begin{center}
\begin{tikzpicture}[scale=1.25]
\draw (0,1) node[above=1pt] {1};
\filldraw [black] (0,1) circle (1pt);
\draw (1,0) node[right=1pt] {2};
\filldraw [black] (1,0) circle (1pt);
\draw (0,-1) node[below=1pt] {3};
\filldraw [black] (0,-1) circle (1pt);
\draw (-1,0) node[left=1pt] {4};
\filldraw [black] (-1,0) circle (1pt);

\draw (0.3536,0.3536) node[below left=0.3536pt] {5};
\filldraw [black] (0.3536,0.3536) circle (1pt);
\draw (-0.3536,0.3536) node[below right=0.3536pt] {6};
\filldraw [black] (-0.3536,0.3536) circle (1pt);
\draw (0,-0.5) node[above=0.75pt] {7};
\filldraw [black] (0,-0.5) circle (1pt);

\draw (0,0) circle (1cm);
\draw (0,0) circle (0.5cm);
\end{tikzpicture}
\end{center}
\captionof{figure}{The labels of the $(4,3)$-annulus.}\label{fig-klannulus}
\end{figure}
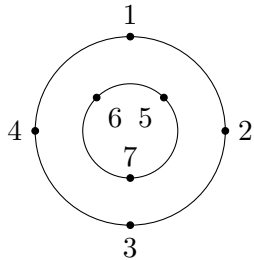

\begin{definition}[Annular non-crossing permutations]\label{def-NCA}
Let $k,\ell\in\N$. We call a permutation of $[k+\ell]$ an \emph{annular non-crossing permutation} if we can draw its cycles\footnote{Recall that every permutation has a unique cycle decomposition. We represent a cycle $(abc\dots x)$ by an oriented graph with edge set $\{(a,b),(b,c),\dots,(x,a)\}$.} on the $(k,\ell)$-annulus such that the following conditions (see~\cite[Def.~5 in Ch.~5]{MSBook}) are satisfied:
\begin{itemize}
\item[(i)] Non-crossing property: The cycles do not cross.
\item[(ii)] Standardness: Each cycle encloses a region in the annulus that is homeomorphic to the disk with boundary oriented clockwise (in particular, the cycles follow the orientation of the numbering of the circles).
\item[(iii)] Connectedness: At least one cycle connects both circles.
\end{itemize}
The set of annular non-crossing permutations is denoted by $\NCA(k,\ell)$. Any cycle that connects both circles is referred to as \emph{connecting cycle}.
\end{definition}
We remark that $\NCA(k,\ell)$ can be fully characterized by the avoidance of certain crossing patterns (cf. analogous geometric characterization of $NCP(k)$ below Definition~\ref{def-NCdisk}) and an algebraic analog of the standardness condition. This equivalent definition is discussed, e.g., in~\cite[Sect.~3]{MingoNica2004}, but we will not use it here.

\begin{definition}[Annular non-crossing partitions]
Let $k,\ell\in\N$. We call the partitions induced by the cycles of $\NCA(k,\ell)$ \emph{annular non-crossing partitions}. The set of annular non-crossing partitions is denoted by $NCP(k,\ell)$. A block that arises from a connecting cycle is referred to as \emph{connecting block}.
\end{definition}

While there is a one-to-one correspondence between the non-crossing partitions of the disk in Definition~\ref{def-NCdisk} and the permutations of $[k]$ avoiding the same crossing pattern, there is a crucial difference between non-crossing partitions and permutations on the $(k,\ell)$-annulus. In particular, there is no bijective mapping between a permutation in $\NCA(k,\ell)$ and the partition of $[k+\ell]$ induced by its cycles, as, e.g., both permutations $(123)$ and $(132)$ correspond to the partition $\{\{1,2,3\}\}$, but give rise to different pictures due to the orientation induced by Definition~\ref{def-NCA}(ii) (see Fig.~\ref{fig-NCAvsNC} below). In general, a permutation always uniquely determines the underlying partition, but a partition can be obtained from more than one permutation. This happens if and only if there is exactly one connecting block (cf.~\cite[Sect.~4]{MingoNica2004}).

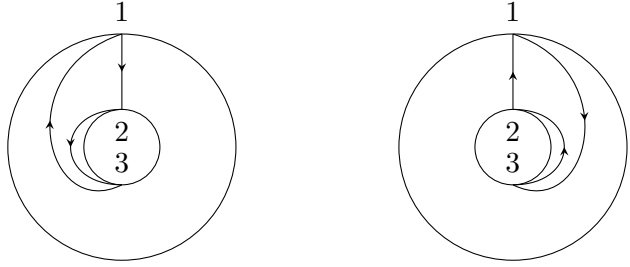
\begin{figure}[H]
\begin{center}
\begin{tikzpicture}[scale=1] 
\draw (0,1.5) node[above=1pt] {1};

\draw (0,0.5) node[below=1pt] {2};
\draw (0,-0.5) node[above=1pt] {3};

\draw (0,0) circle (1.5cm);
\draw (0,0) circle (0.5cm);

\draw[black,postaction={on each segment={mid arrow=black}}] (0,1.5) -- (0,0.5);
\draw[black,postaction={on each segment={mid arrow=black}}] (0,0.5) .. controls (-0.9,0.5) and (-0.9,-0.5) .. (0,-0.5);
\draw[black,postaction={on each segment={mid arrow=black}}] (0,-0.5) .. controls (-1,-1) and (-1.5,1) .. (0,1.5);
\end{tikzpicture}\hspace{2cm}
\begin{tikzpicture}[scale=1] 
\draw (0,1.5) node[above=1pt] {1};

\draw (0,0.5) node[below=1pt] {2};
\draw (0,-0.5) node[above=1pt] {3};

\draw (0,0) circle (1.5cm);
\draw (0,0) circle (0.5cm);

\draw[black,postaction={on each segment={mid arrow=black}}] (0,1.5) .. controls (1.5,1) and (1,-1) .. (0,-0.5);
\draw[black,postaction={on each segment={mid arrow=black}}] (0,-0.5) .. controls (0.9,-0.5) and (0.9,0.5) .. (0,0.5);
\draw[black,postaction={on each segment={mid arrow=black}}] (0,0.5) -- (0,1.5);
\end{tikzpicture}
\end{center}
\captionof{figure}{The non-crossing permutations $(123)$ and $(132)$ on the $(1,2)$-annulus. They are different as permutations, but their cycles induce the same non-crossing partition.}\label{fig-NCAvsNC}
\end{figure}

Lastly, we consider partitions arising from permutations that respect the non-crossing property and standardness condition but do not have a connecting cycle. In this case, we may consider the permutation restricted to each circle separately, i.e., as an element of $NCP(k)\times NCP(\ell)$, and introduce an artificial connection by \emph{marking} one block on each circle.

\begin{definition}[Marked non-crossing partition]
Consider $\pi\in NCP(k)\times NCP(\ell)$ that naturally splits into $\pi=\pi_1\times \pi_2$ with $\pi_1\in NCP(k)$, $\pi_2\in NCP(\ell)$. We pick one block of $\pi_1$ and one of $\pi_2$, respectively, and mark them by underlining. The resulting object is referred to as a \emph{marked non-crossing partition}.
\end{definition}

Marking a block on each circle allows us to artificially introduce a connecting block by considering the union of the two marked blocks. As a consequence, any marked non-crossing partition can be associated with a unique element of $NCP(k,\ell)$. We further note that there are $|\pi_1|\cdot|\pi_2|$ possibilities to mark the blocks of $\pi=\pi_1\times\pi_2\in NCP(k)\times NCP(\ell)$. For example, $\{\{\underline{1}\},\{2\}\}\times\{\{\underline{3}\}\}$ and $\{\{1\},\{\underline{2}\}\}\times\{\{\underline{3}\}\}$ are considered different marked partitions although both arise from $\{\{1\},\{2\}\}\times\{\{3\}\}\in NCP(2)\times NCP(1)$ (see Fig.~\ref{fig-marked} below).

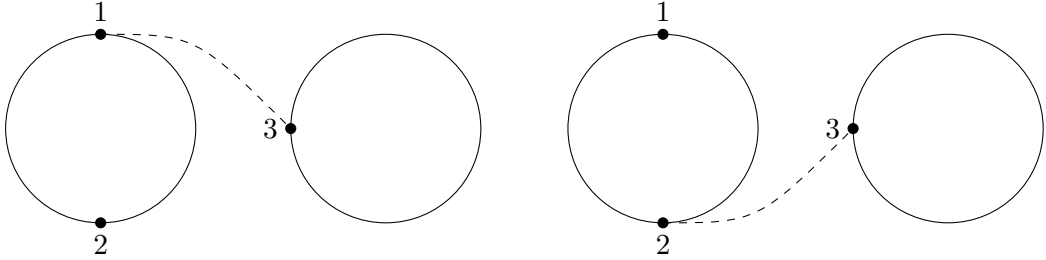
\begin{figure}[H]
\begin{center}
\begin{tikzpicture}[scale=1.25]
\draw (0,1) node[above=1pt] {1};
\filldraw [black] (0,1) circle (1.5pt);
\draw (0,-1) node[below=1pt] {2};
\filldraw [black] (0,-1) circle (1.5pt);
\draw (0,0) circle (1.0);

\draw (2,0) node[left=1pt] {3};
\filldraw [black] (2,0) circle (1.5pt);
\draw (3,0) circle (1.0);

\draw[dashed] (0,1) .. controls (1,1) .. (2,0);
\end{tikzpicture}\hspace{1cm}
\begin{tikzpicture}[scale=1.25]
\draw (0,1) node[above=1pt] {1};
\filldraw [black] (0,1) circle (1.5pt);
\draw (0,-1) node[below=1pt] {2};
\filldraw [black] (0,-1) circle (1.5pt);
\draw (0,0) circle (1.0);

\draw (2,0) node[left=1pt] {3};
\filldraw [black] (2,0) circle (1.5pt);
\draw (3,0) circle (1.0);

\draw[dashed] (0,-1) .. controls (1,-1) .. (2,0);
\end{tikzpicture}
\end{center}
\captionof{figure}{A visualization of $\{\{\underline{1}\},\{2\}\}\times\{\{\underline{3}\}\}$ and $\{\{1\},\{\underline{2}\}\}\times\{\{\underline{3}\}\}$. Marking is indicated by a dashed line.}\label{fig-marked}
\end{figure}

We further recall from~\cite[Ch.~5]{MSBook} that there is a second-order analog of Definition~\ref{def-circ}.

\begin{definition}[Second-order free cumulant function]\label{def-circcirc}
Let $f$ be a function mapping tuples $(S_1,S_2)$ of two finite (cyclically) ordered sets of integers to scalars. Assume further that $f$ is symmetric under the interchanging of its two arguments, i.e., that $f[S_1|S_2]=f[S_2|S_1]$, and cyclic in the sense that $f[S_1|\{s_1,\dots,s_j\}]=f[S_1|\{s_2,\dots,s_j,s_1\}$. Moreover, we assume that $f[S_1|\emptyset]=f[\emptyset|S_2]=0$, i.e., $f$ vanishes if either argument is the empty set. We implicitly define the \emph{second-order free cumulant function} of $f$ as the unique map $f_{\circ\circ}$ defined on pairs of finite (cyclically) ordered sets $(U_1,U_2)$ that satisfies
\begin{align}
f[S_1|S_2]&=\sum_{\pi\in \NCA(|S_1|,|S_2|)}\prod_{B\in\pi}f_{\circ}[B]+\sum_{\substack{\pi_1\times\pi_2\in NCP(|S_1|)\times NCP(|S_2|),\\ U_1\in\pi_1,U_2\in\pi_2\text{ marked}}} f_{\circ\circ}[U_1|U_2]\prod_{\substack{B\in \pi_1\setminus U_1\\ \cup\pi_2\setminus U_2}}f_{\circ}[B]\label{eq-mcrelation2}
\end{align}
for any finite $S_1,S_2$. Here, $f_\circ$ is the first-order free cumulant function introduced in Definition~\ref{def-circ}.
\end{definition}
Note that we use a set function $f$ that is symmetric under the interchanging of its arguments instead of its skew-symmetric version $f[S_1|S_2]=\overline{f[S_2|S_1]}$ typically used in the free probability literature to mimic the covariance functional (cf.~\cite[Ch.~5]{MSBook}). This choice will simplify the computations by reducing the number of complex conjugates arising in the intermediate steps.

\medskip
Similar to~\eqref{eq-mcrelation1}, the implicit relation~\eqref{eq-mcrelation2} may be turned into an explicit definition of $f_{\circ\circ}$ by recursion. Note that the term $f_{\circ\circ}[[k]\,|\, [k+1,k+\ell]]$ in formula~\eqref{eq-mcrelation2} with $f[[k]\,|\,[k+1,k+\ell]]$ on the left-hand side only occurs for the marked partition $\{\{\underline{1,\dots,k}\}\}\times\{\{\underline{k+1,\dots,k+\ell}\}\}$ and hence always has coefficient one, so we can express it in terms of $f$, $f_\circ$, and the previously identified values of $f_{\circ\circ}$. This shows that $f_{\circ\circ}$ is well-defined. Although we will not rely on Möbius inversion to express $f_{\circ\circ}$, we remark that it is possible to include both $\NCA(k,\ell)$ and the marked elements of $NCP(k)\times NCP(\ell)$ into one common definition, the \emph{non-crossing partitioned permutations}, which can be endowed with a partial ordering and hence render it suitable for Möbius inversion. This would allow to rewrite the right-hand side of~\eqref{eq-mcrelation2} to a structure similar to~\eqref{eq-mcrelation1} and obtain a closed formula similar to~\eqref{eq-defMoebius}. We refer to Sections~4 and~5 of~\cite{CollinsMingoSniadySpeicher2007} for the full construction.

\medskip
Similar to~\eqref{eq-mcrelation1} above, the relation~\eqref{eq-mcrelation2} is applied for one particular choice of $f$ built up from the Stieltjes transform $m(z)$. We will later see that the set function $m[\cdot|\cdot]$ defined below arises as the deterministic approximation of the (appropriately scaled) covariance of $\langle G_{[1,k]}\rangle$ and $\langle G_{[k+1,k+\ell]}\rangle$ in a similar way that the divided differences $m[\cdot]$ arise for the expectation of $\langle G_{[1,k]}\rangle$. In particular, $m[\cdot|\cdot]$ satisfies the symmetry and cyclicity assumption in Definition~\ref{def-circcirc}. We give a recursive definition of $m[\cdot|\cdot]$ for now, however, closed formulas are later obtained in Section~\ref{sect-formulas}.

\begin{definition}\label{def-m}
Let $S_1=(z_1,\dots,z_{k'})\subset\C\setminus\R$ and $S_2=(z_{k'+1},\dots,z_{k'+\ell'})\subset\C\setminus\R$ be two finite ordered multi-sets. We define $m[\cdot|\cdot]$ to be the set function taking values in~$\C$ with the properties (i)-(iii) listed below. Similar to $m[\cdot]$ in Definition~\ref{def-divdif}, we interpret $m[\cdot|\cdot]$ as a function of the indices of the spectral parameters.
\begin{itemize}
\item[(i)] Symmetry: $m[\cdot|\cdot]$ is symmetric under the interchanging of its arguments, i.e., for any sets $B_1\subseteq S_1,B_2\subseteq S_2$ we have
\begin{displaymath}
m[(i,z_i\in B_1)|(j,z_j\in B_2)]=m[(j,z_j\in B_2)|(i,z_i\in B_1)].
\end{displaymath}
\item[(ii)] Initial condition: For any sets $B_1\subseteq S_1,B_2\subseteq S_2$ we have
\begin{equation}\label{eq-minitial}
m[(i,z_i\in B_1)|\emptyset]=m[\emptyset|(j,j\in B_2)]=0.
\end{equation}
\item[(iii)] Recursion: Let $B_1\subseteq S_1$ and $B_2\subseteq S_2$ be ordered subsets with $|B_1|=k\leq k'$ and $|B_2|=\ell\leq\ell'$ elements, respectively. For simplicity, we index them by $[k]$ and $[k+1,k+\ell]$. The function $m[\cdot|\cdot]$ satisfies the following linear recursion
\begin{align}
&m[1,\dots,k|k+1,\dots,k+\ell]\NN\\
&=\frac{m_1}{1-m_1m_k}\Bigg(m[2,\dots,k|k+1,\dots,k+\ell]\NN\\
&\quad+\sum_{j=1}^{k-1}m[1,\dots,j|k+1,\dots,k+\ell]m[j,\dots,k]\NN\\
&\quad+\sum_{j=2}^km[1,\dots,j]m[j,\dots,k|k+1,\dots,k+\ell]+s_{GUE}+s_\kappa+s_\sigma+s_\omega\Bigg)\label{eq-mrecursion}
\end{align}
where the source terms in the last line are given by
\begin{align*}
s_{GUE}&:=\sum_{j=1}^\ell m[1,\dots,k,k+j,\dots,k+\ell,k+1,\dots,k+j]\NN\\
s_{\kappa}&:=\kappa_4\sum_{r=1}^k\sum_{k+1\leq s\leq t\leq k+\ell}m[1,\dots,r]m[r,\dots,k]m[s,\dots,t]m[t,\dots,k+\ell,k+1,\dots, s]\NN\\
s_{\sigma}&:=\sigma\sum_{j=1}^\ell m^{\#,\sigma}[1,\dots,k,k+j,\dots,k+\ell,k+1,\dots,k+j]\NN\\
s_\omega&:=\widetilde{\omega_2}\sum_{j=1}^\ell m[1,\dots,k]m[k+j,\dots,k+\ell,k+1,\dots k+j].\NN
\end{align*}
Here, we wrote out the underlying multi-set in the definition of $s_\kappa$ to indicate that it evaluates to $m[s,s]$ instead of $m_s$ if $t=s$ and the vector $\#\in\{0,1\}^{k+\ell+1}$ is given by $\#_1=\dots=\#_k=0$ and $\#_{k+1}=\dots=\#_{k+\ell+1}=1$. Recall that $m[\cdot]$ denotes the divided differences as introduced in Definition~\ref{def-divdif} and that $m^{\#,\sigma}[\cdot]$ was introduced in Theorem~\ref{thm-skewedLL}.
\end{itemize}
\end{definition}

Note that the recursion for $m[\cdot|\cdot]$ is linear with different types of source terms in the last line of~\eqref{eq-mrecursion}. Therefore, we may introduce the decomposition
\begin{equation}\label{eq-mdecomp}
m[\cdot|\cdot]=m_{GUE}[\cdot|\cdot]+\kappa_4m_{\kappa}[\cdot|\cdot]+\sigma m_{\sigma}[\cdot|\cdot]+\widetilde{\omega}_2m_{\omega}[\cdot|\cdot]
\end{equation}
where $m_{GUE}[\cdot|\cdot]$ satisifes~\eqref{eq-mrecursion} for $\kappa_4=\sigma=\widetilde{\omega}_2=0$ and $\kappa_4m_{\kappa}[\cdot|\cdot]$, $\sigma m_\sigma[\cdot|\cdot]$, and $\widetilde{\omega}_2m_\omega[\cdot|\cdot]$ satisfy~\eqref{eq-mrecursion} with $s_{\kappa}$, $s_{\sigma}$, and $s_{\omega}$ as the only source term, respectively.

\medskip
Note that the right-hand side of \eqref{eq-mrecursion} only contains divided differences and $m[B_1|B_2]$ for $|B_1|+|B_2|<k+\ell$, so~\eqref{eq-mrecursion} indeed defines $m[\cdot|\cdot]$ recursively. The symmetry assumption in~(i) then extends~\eqref{eq-mrecursion} to the second entry of $m[\cdot|\cdot]$. Moreover, all source terms in the last line of~\eqref{eq-mrecursion} are fully expressable as a function of $m_1,\dots,m_{k+\ell}$ by~\eqref{eq-mgraphs}, making $m[\cdot|\cdot]$ eventually a function of $m_1,\dots,m_{k+\ell}$ as well.

\medskip
As an example, setting $\sigma=\widetilde{\omega}_2=0$ and applying the recursion once gives
\begin{align}
m[1|2]&=\frac{m_1^2m_2^2}{(1-m_1^2)(1-m_2^2)(1-m_1m_2)^2}+\kappa_4\frac{m_1^3m_2^3}{(1-m_1^2)(1-m_2^2)}\NN\\
&=\frac{m_1'm_2'}{(1-m_1m_2)^2}+\kappa_4m_1m_1'm_2m_2'\label{eq-msmallest}
\end{align}
with $m_i'=m'(z_i)$. We remark that $m_{GUE}[1|2]$, seen as a function of $(z_1,z_2)$, is sometimes referred to as the second-order Cauchy transform of the GUE ensemble in the free probability literature (cf.~\cite{DiazMingo2022}). The corresponding first-order object is $-m(z)$, which is obtained by applying the usual Cauchy transform to the semicircle law.\footnote{The Cauchy transform of a probability measure $\mu$ is given by $c(z)=\int_{\R}\frac{\mu(\dx x)}{z-x}$ and hence only differs from the Stieltjes transform by a sign.}

\medskip
We consider another special case in the following example.
\begin{example}
Whenever $\kappa_4=\sigma=\widetilde{\omega}_2=0$ and one argument of $m_{GUE}[\cdot|\cdot]$ is a singleton set, only the fourth line of~\eqref{eq-mrecursion} gives a non-zero contribution. Rewriting this term using~\eqref{eq-mgraphs} (cf. Lemma~\ref{lem-doubledindex} below) yields the closed formula
\begin{align}
m_{GUE}[1|2,\dots,\ell+1]&=\frac{m_1}{1-m_1^2}\sum_{j=2}^{\ell+1} m[1,\dots,j,\dots,\ell+1,j]\label{eq-msingleton}\\
&=\Big(\prod_{s=1}^{\ell+1} m_s\Big)\Big(\sum_{\Gamma\in NCG([\ell+1])}\prod_{(a,b)\in E(\Gamma)}q_{a,b}\Big)\sum_{j=2}^{\ell+1} \frac{m_1'}{m_1}\frac{m_j'}{m_j}\Big(1+\sum_{i\neq j}q_{i,j}\Big).\NN
\end{align}
For $\ell=2$, we hence obtain
\begin{displaymath}
m_{GUE}[1|2,3]=\frac{m_1'(m_2'm_3(1-m_1m_3)+m_2m_3'(1-m_1m_2))}{(1-m_1m_2)^2(1-m_1m_3)^2(1-m_2m_3)}.
\end{displaymath}
Note that the right-hand side of~\eqref{eq-msingleton} is fully expressed terms of non-crossing graphs on a labeled disk. This is because $\NCA(1,\ell)$ and marked elements of $NCP(1)\times NCP(\ell)$ can be reduced to disk non-crossing partitions in this special case. In particular, the orientation of the circles is not relevant for this example.
\end{example}

The proof of~\eqref{eq-msingleton} is immediate from the following combinatorial lemma which may be of independent interest. We give its proof in Appendix~\ref{app-doubledindex}.

\begin{lemma}\label{lem-doubledindex}
For $j\in\{1,\dots,k\}$, $k\geq1$, we have
\begin{equation}\label{eq-doubledindex}
m[1,\dots,j,\dots,k,j]=m[1,\dots,k]\Big(1+\sum_{l\in [k]\setminus \{j\}}q_{j,l}\Big)\frac{m_j'}{m_j}
\end{equation}
with $q_{i,j}$ as in~\eqref{eq-defq}.
\end{lemma}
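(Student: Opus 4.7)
The plan is to reduce the lemma to a derivative computation and then evaluate it using the graph representation~\eqref{eq-mgraphs}. My first step would be to invoke the classical identity that a divided difference with a repeated node equals a partial derivative: using the full permutation symmetry of $m[\cdot]$ (Definition~\ref{def-divdif}) together with the standard relation $\partial_{x_0} f[x_0,x_1,\dots,x_n]=f[x_0,x_0,x_1,\dots,x_n]$ (which extends by induction on $n$ from the easily-verified base case $n=1$), I would obtain
\begin{equation*}
m[1,\dots,j,\dots,k,j]=\partial_{z_j}m[1,\dots,k].
\end{equation*}

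\medskip
Next, I would substitute the graph representation $m[1,\dots,k]=\big(\prod_s m_s\big)A_k$ with $A_k:=\sum_{\Gamma\in NCG(k)}\prod_{(a,b)\in E(\Gamma)}q_{a,b}$ and take the logarithmic derivative with respect to $z_j$. The prefactor $\prod_s m_s$ contributes $m_j'/m_j$. For the graph sum, a short calculation using~\eqref{eq-mselfconderived} yields $\partial_{z_j}q_{j,l}=(m_j'/m_j)q_{j,l}(1+q_{j,l})$ for $l\neq j$, so each edge $(j,l)$ appearing in a graph $\Gamma$ picks up an extra factor $(1+q_{j,l})$ upon differentiation. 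Interchanging summations, the derivative of $A_k$ organizes into a sum over $l\in[k]\setminus\{j\}$ weighted by the sub-sum over non-crossing graphs containing the edge $(j,l)$.

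\medskip
The main obstacle is the combinatorial identity that collapses this marked-graph sum into the clean factor $\sum_{l\neq j}q_{j,l}\cdot A_k$. I expect this to follow from the standard factorization of non-crossing graphs containing a fixed edge $(j,l)$ into independent non-crossing subgraphs on the two arcs into which $(j,l)$ divides $[k]$; summing these contributions in a telescoping manner mirroring the recursion~\eqref{eq-mhashrecursion} with $\#$ the zero vector should reproduce the desired factor. As an alternative route, one can apply the graph formula~\eqref{eq-mgraphs} directly to $m[1,\dots,j,\dots,k,j]$ viewed as a multi-set divided difference on $k+1$ positions, placing the doubled $z_j$ adjacent to the original position $j$ on the cyclic arrangement and decomposing each $\Gamma\in NCG(k+1)$ according to the neighborhood of the additional vertex; matching the two resulting expressions then reduces to the same core combinatorial factorization. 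An inductive argument on $k$ based on the recursion~\eqref{eq-mhashrecursion}, with the easily-verified base case $m[1,1]=m_1'$, provides a third possible route.
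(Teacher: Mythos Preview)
Your reduction $m[1,\dots,k,j]=\partial_{z_j}m[1,\dots,k]$ is correct, and the logarithmic-derivative computation you outline is valid. The problem is the ``main obstacle'' you flag: the combinatorial identity
\[
\sum_{\Gamma\in NCG(k)}q_\Gamma\sum_{l:(j,l)\in E(\Gamma)}(1+q_{j,l})
=\Big(\sum_{\Gamma\in NCG(k)}q_\Gamma\Big)\sum_{l\neq j}q_{j,l}
\]
does \emph{not} hold in general. For $k=4$ and $j=1$, using the closed form
\[
m[1,2,3,4]=\frac{m_1m_2m_3m_4(1-m_1m_2m_3m_4)}{\prod_{1\le i<j\le4}(1-m_im_j)}
\]
(which follows from the divided-difference recursion together with $z_i-z_j=(m_i-m_j)(1-m_im_j)/(m_im_j)$), one finds directly
\[
\partial_{z_1}m[1,2,3,4]
=m[1,2,3,4]\,\frac{m_1'}{m_1}\Big[(1+q_{12}+q_{13}+q_{14})-\frac{m_1m_2m_3m_4}{1-m_1m_2m_3m_4}\Big].
\]
The derivative therefore carries the extra term $-m_1m_2m_3m_4/(1-m_1m_2m_3m_4)$, which~\eqref{eq-doubledindex} does not account for. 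Concretely, setting $m_1=\dots=m_4=\tfrac12$ (so every $q_{a,b}=\tfrac13$), the graph sum $\sum_{\Gamma\in NCG(5)}q_\Gamma$ for $m[1,2,3,4,1]$ evaluates to $29696/2187$, whereas the right-hand side of~\eqref{eq-doubledindex} predicts $30720/2187$. The identity is thus correct for $k\le 3$ but fails for $k\ge4$; none of your three proposed routes can close the argument because the target formula is not true.

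The paper's own proof (your second route) shares the same gap. After placing the two copies of $j$ adjacently and merging them, the map from $\cS_1$ to $NCG(k)$ is not a bijection: for a graph $\Gamma\in NCG(k)$ in which vertex $j$ has two or more neighbours, there are in general several non-crossing ways to distribute those neighbours between $j$ and $j'$ on the $(k+1)$-cycle. For example, on cyclic order $1,1',2,3,4$, the graph $\Gamma\in NCG(4)$ with edges $\{(1,2),(1,4),(2,4)\}$ has three valid preimages in $\cS_1$. The step ``reduce to summation over $NCG(k)$ without further restriction'' therefore overcounts, and this overcount matches exactly the discrepancy your derivative computation detects.
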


We also give some examples to illustrate the combinatorial formula~\eqref{eq-mcrelation2} for the choice $f[\cdot|\cdot]=m_{GUE}[\cdot|\cdot]$.

\begin{example}[Second-order free cumulants]\label{ex-circcirc}
Let $\kappa_4=\sigma=\widetilde{\omega}_2=0$. In the case $k=\ell=1$, the only non-crossing annular permutation is $(12)$ and there is also only one option for marking $\{\{1\}\}\times\{\{2\}\}$, namely $\smash{\{\{\underline{1}\}\}\times\{\{\underline{2}\}\}}$. Rearranging~\eqref{eq-mcrelation2}, we thus get
\begin{displaymath}
m_{\circ\circ}[1|2]=m[1|2]-m[1,2]+m_1m_2.
\end{displaymath}
Similarly, considering $k=1$ and $\ell=2$ yields
\begin{align*}
m_{\circ\circ}[1|2,3]&=m[1|2,3]-m[1|2]m_3-m[1|3]m_2-2m[1,2,3]\\
&\quad+2m_1m[2,3]+2m_2m[1,3]+2m_3m[1,2]-4m_1m_2m_3.
\end{align*}
In the case $k=\ell=2$, there are 18 distinct non-crossing annular permutations (see Fig.~\ref{fig-NCAcollection} below) and 4 elements in $NCP(2)\times NCP(2)$. However, the second sum in~\eqref{eq-mcrelation2} consists of 9 terms in total due to the marking of the blocks as, e.g., $\smash{\{\{\underline{1}\},\{2\}\}\times\{\{\underline{3,4}\}\}}$ and $\smash{\{\{1\},\{\underline{2}\}\}\times\{\{\underline{3,4}\}\}}$ correspond to $m_{\circ\circ}[1|3,4]m_2$ and $m_{\circ\circ}[2|3,4]m_1$, respectively, which do not need to coincide. In total, the formula defining $m_{\circ\circ}[1,2|3,4]$ has 27 terms on the right-hand side of~\eqref{eq-mcrelation2}.
\end{example}

\begin{figure}[H]
\begin{center}\footnotesize
\begin{tikzpicture}[scale=0.75] 
\draw (0,1.5) node[above=1pt] {1};
\draw (0,-1.5) node[below=1pt] {2};

\draw (0,0.5) node[below=1pt] {3};
\draw (0,-0.5) node[above=1pt] {4};

\draw (0,0) circle (1.5cm);
\draw (0,0) circle (0.5cm);

\draw[black,postaction={on each segment={mid arrow=black}}] (0,1.5) -- (0,0.5);
\draw[black,postaction={on each segment={mid arrow=black}}] (0,0.5) .. controls (-0.5,0.75) .. (0,1.5);
\draw[black,postaction={on each segment={mid arrow=black}}] (0,-0.5) .. controls (0.5,-1) and (-0.5,-1) .. (0,-0.5); 
\draw[black,postaction={on each segment={mid arrow=black}}] (0,-1.5) .. controls (-0.5,-1) and (0.5,-1) .. (0,-1.5); 
\end{tikzpicture}\hspace{0.5cm}
\begin{tikzpicture}[scale=0.75] 
\draw (0,1.5) node[above=1pt] {1};
\draw (0,-1.5) node[below=1pt] {2};

\draw (0,0.5) node[below=1pt] {3};
\draw (0,-0.5) node[above=1pt] {4};

\draw (0,0) circle (1.5cm);
\draw (0,0) circle (0.5cm);

\draw[black,postaction={on each segment={mid arrow=black}}] (0,1.5) .. controls (-1,0.5) and (-0.75,-0.5) .. (0,-0.5);
\draw[black,postaction={on each segment={mid arrow=black}}] (0,-0.5) .. controls (-1,-1) and (-1.5,1) .. (0,1.5);
\draw[black,postaction={on each segment={mid arrow=black}}] (0,-1.5) .. controls (-0.5,-1) and (0.5,-1) .. (0,-1.5); 
\draw[black,postaction={on each segment={mid arrow=black}}] (0,0.5) .. controls (-0.5,1) and (0.5,1) .. (0,0.5); 
\end{tikzpicture}\hspace{0.5cm}
\begin{tikzpicture}[scale=0.75] 
\draw (0,1.5) node[above=1pt] {1};
\draw (0,-1.5) node[below=1pt] {2};

\draw (0,0.5) node[below=1pt] {3};
\draw (0,-0.5) node[above=1pt] {4};

\draw (0,0) circle (1.5cm);
\draw (0,0) circle (0.5cm);

\draw[black,postaction={on each segment={mid arrow=black}}] (0,1.5) .. controls (0.5,1) and (-0.5,1) .. (0,1.5); 
\draw[black,postaction={on each segment={mid arrow=black}}] (0,-1.5) .. controls (-1.5,-1) and (-1,1) .. (0,0.5);
\draw[black,postaction={on each segment={mid arrow=black}}] (0,0.5) .. controls (-0.75,0.5) and (-1,-0.5) .. (0,-1.5);
\draw[black,postaction={on each segment={mid arrow=black}}] (0,-0.5) .. controls (0.5,-1) and (-0.5,-1) .. (0,-0.5); 
\end{tikzpicture}\hspace{0.5cm}
\begin{tikzpicture}[scale=0.75] 
\draw (0,1.5) node[above=1pt] {1};
\draw (0,-1.5) node[below=1pt] {2};

\draw (0,0.5) node[below=1pt] {3};
\draw (0,-0.5) node[above=1pt] {4};

\draw (0,0) circle (1.5cm);
\draw (0,0) circle (0.5cm);
\draw[black,postaction={on each segment={mid arrow=black}}] (0,1.5) .. controls (0.5,1) and (-0.5,1) .. (0,1.5); 
\draw[black,postaction={on each segment={mid arrow=black}}] (0,-1.5) -- (0,-0.5);
\draw[black,postaction={on each segment={mid arrow=black}}] (0,-0.5) .. controls (0.5,-0.75) .. (0,-1.5);
\draw[black,postaction={on each segment={mid arrow=black}}] (0,0.5) .. controls (-0.5,1) and (0.5,1) .. (0,0.5); 
\end{tikzpicture}\hspace{0.5cm}
\begin{tikzpicture}[scale=0.75] 
\draw (0,1.5) node[above=1pt] {1};
\draw (0,-1.5) node[below=1pt] {2};

\draw (0,0.5) node[below=1pt] {3};
\draw (0,-0.5) node[above=1pt] {4};

\draw (0,0) circle (1.5cm);
\draw (0,0) circle (0.5cm);

\draw[black,postaction={on each segment={mid arrow=black}}] (0,1.5) -- (0,0.5);
\draw[black,postaction={on each segment={mid arrow=black}}] (0,0.5) .. controls (-0.5,0.75) .. (0,1.5);
\draw[black,postaction={on each segment={mid arrow=black}}] (0,-1.5) -- (0,-0.5);
\draw[black,postaction={on each segment={mid arrow=black}}] (0,-0.5) .. controls (0.5,-0.75) .. (0,-1.5);
\end{tikzpicture}\hspace{0.5cm}
\begin{tikzpicture}[scale=0.75] 
\draw (0,1.5) node[above=1pt] {1};
\draw (0,-1.5) node[below=1pt] {2};

\draw (0,0.5) node[below=1pt] {3};
\draw (0,-0.5) node[above=1pt] {4};

\draw (0,0) circle (1.5cm);
\draw (0,0) circle (0.5cm);

\draw[black,postaction={on each segment={mid arrow=black}}] (0,-0.5) .. controls (0.75,-0.5) and (1,0.5) .. (0,1.5);
\draw[black,postaction={on each segment={mid arrow=black}}] (0,1.5) .. controls (1.5,1) and (1,-1) .. (0,-0.5);
\draw[black,postaction={on each segment={mid arrow=black}}] (0,-1.5) .. controls (-1.5,-1) and (-1,1) .. (0,0.5);
\draw[black,postaction={on each segment={mid arrow=black}}] (0,0.5) .. controls (-0.75,0.5) and (-1,-0.5) .. (0,-1.5);
\end{tikzpicture}\hspace{0.5cm}
\begin{tikzpicture}[scale=0.75] 
\draw (0,1.5) node[above=1pt] {1};
\draw (0,-1.5) node[below=1pt] {2};

\draw (0,0.5) node[below=1pt] {3};
\draw (0,-0.5) node[above=1pt] {4};

\draw (0,0) circle (1.5cm);
\draw (0,0) circle (0.5cm);

\draw[black,postaction={on each segment={mid arrow=black}}] (0,1.5) .. controls (1.5,0.5) and (1.5,-0.5) .. (0,-1.5);
\draw[black,postaction={on each segment={mid arrow=black}}] (0,-1.5) .. controls (1,-0.5) and (0.75,0.5) .. (0,0.5);
\draw[black,postaction={on each segment={mid arrow=black}}] (0,0.5) -- (0,1.5);
\draw[black,postaction={on each segment={mid arrow=black}}] (0,-0.5) .. controls (0.5,-1) and (-0.5,-1) .. (0,-0.5); 
\end{tikzpicture}
\hspace{0.5cm}
\begin{tikzpicture}[scale=0.75] 
\draw (0,1.5) node[above=1pt] {1};
\draw (0,-1.5) node[below=1pt] {2};

\draw (0,0.5) node[below=1pt] {3};
\draw (0,-0.5) node[above=1pt] {4};

\draw (0,0) circle (1.5cm);
\draw (0,0) circle (0.5cm);

\draw[black,postaction={on each segment={mid arrow=black}}] (0,1.5) -- (0,0.5);
\draw[black,postaction={on each segment={mid arrow=black}}] (0,0.5) .. controls (-0.75,0.5) and (-1,-0.5) .. (0,-1.5);
\draw[black,postaction={on each segment={mid arrow=black}}] (0,-1.5) .. controls (-1.5,-0.5) and (-1.5,0.5) .. (0,1.5);
\draw[black,postaction={on each segment={mid arrow=black}}] (0,-0.5) .. controls (0.5,-1) and (-0.5,-1) .. (0,-0.5); 
\end{tikzpicture}\hspace{0.5cm}
\begin{tikzpicture}[scale=0.75] 
\draw (0,1.5) node[above=1pt] {1};
\draw (0,-1.5) node[below=1pt] {2};

\draw (0,0.5) node[below=1pt] {3};
\draw (0,-0.5) node[above=1pt] {4};

\draw (0,0) circle (1.5cm);
\draw (0,0) circle (0.5cm);

\draw[black,postaction={on each segment={mid arrow=black}}] (0,1.5) .. controls (1.5,0.5) and (1.5,-0.5) .. (0,-1.5);
\draw[black,postaction={on each segment={mid arrow=black}}] (0,-1.5) -- (0,-0.5);
\draw[black,postaction={on each segment={mid arrow=black}}] (0,-0.5) .. controls (0.75,-0.5) and (1,0.5) .. (0,1.5);
\draw[black,postaction={on each segment={mid arrow=black}}] (0,0.5) .. controls (-0.5,1) and (0.5,1) .. (0,0.5); 
\end{tikzpicture}\hspace{0.5cm}
\begin{tikzpicture}[scale=0.75] 
\draw (0,1.5) node[above=1pt] {1};
\draw (0,-1.5) node[below=1pt] {2};

\draw (0,0.5) node[below=1pt] {3};
\draw (0,-0.5) node[above=1pt] {4};

\draw (0,0) circle (1.5cm);
\draw (0,0) circle (0.5cm);

\draw[black,postaction={on each segment={mid arrow=black}}] (0,1.5) .. controls (-1,0.5) and (-0.75,-0.5) .. (0,-0.5);
\draw[black,postaction={on each segment={mid arrow=black}}] (0,-0.5) -- (0,-1.5);
\draw[black,postaction={on each segment={mid arrow=black}}] (0,-1.5) .. controls (-1.5,-0.5) and (-1.5,0.5) .. (0,1.5);
\draw[black,postaction={on each segment={mid arrow=black}}] (0,0.5) .. controls (-0.5,1) and (0.5,1) .. (0,0.5); 
\end{tikzpicture}\hspace{0.5cm}
\begin{tikzpicture}[scale=0.75] 
\draw (0,1.5) node[above=1pt] {1};
\draw (0,-1.5) node[below=1pt] {2};

\draw (0,0.5) node[below=1pt] {3};
\draw (0,-0.5) node[above=1pt] {4};

\draw (0,0) circle (1.5cm);
\draw (0,0) circle (0.5cm);

\draw[black,postaction={on each segment={mid arrow=black}}] (0,1.5) .. controls (1.5,1) and (1,-1) .. (0,-0.5);
\draw[black,postaction={on each segment={mid arrow=black}}] (0,-0.5) .. controls (0.9,-0.5) and (0.9,0.5) .. (0,0.5);
\draw[black,postaction={on each segment={mid arrow=black}}] (0,0.5) -- (0,1.5);
\draw[black,postaction={on each segment={mid arrow=black}}] (0,-1.5) .. controls (-0.5,-1) and (0.5,-1) .. (0,-1.5); 
\end{tikzpicture}\hspace{0.5cm}
\begin{tikzpicture}[scale=0.75] 
\draw (0,1.5) node[above=1pt] {1};
\draw (0,-1.5) node[below=1pt] {2};

\draw (0,0.5) node[below=1pt] {3};
\draw (0,-0.5) node[above=1pt] {4};

\draw (0,0) circle (1.5cm);
\draw (0,0) circle (0.5cm);

\draw[black,postaction={on each segment={mid arrow=black}}] (0,1.5) -- (0,0.5);
\draw[black,postaction={on each segment={mid arrow=black}}] (0,0.5) .. controls (-0.9,0.5) and (-0.9,-0.5) .. (0,-0.5);
\draw[black,postaction={on each segment={mid arrow=black}}] (0,-0.5) .. controls (-1,-1) and (-1.5,1) .. (0,1.5);
\draw[black,postaction={on each segment={mid arrow=black}}] (0,-1.5) .. controls (-0.5,-1) and (0.5,-1) .. (0,-1.5); 
\end{tikzpicture}\hspace{0.5cm}
\begin{tikzpicture}[scale=0.75] 
\draw (0,1.5) node[above=1pt] {1};
\draw (0,-1.5) node[below=1pt] {2};

\draw (0,0.5) node[below=1pt] {3};
\draw (0,-0.5) node[above=1pt] {4};

\draw (0,0) circle (1.5cm);
\draw (0,0) circle (0.5cm);

\draw[black,postaction={on each segment={mid arrow=black}}] (0,-1.5) .. controls (-1.5,-1) and (-1,1) .. (0,0.5);
\draw[black,postaction={on each segment={mid arrow=black}}] (0,-0.5) -- (0,-1.5);
\draw[black,postaction={on each segment={mid arrow=black}}] (0,0.5) .. controls (-0.9,0.5) and (-0.9,-0.5) .. (0,-0.5);
\draw[black,postaction={on each segment={mid arrow=black}}] (0,1.5) .. controls (0.5,1) and (-0.5,1) .. (0,1.5); 
\end{tikzpicture}\hspace{0.5cm}
\begin{tikzpicture}[scale=0.75] 
\draw (0,1.5) node[above=1pt] {1};
\draw (0,-1.5) node[below=1pt] {2};

\draw (0,0.5) node[below=1pt] {3};
\draw (0,-0.5) node[above=1pt] {4};

\draw (0,0) circle (1.5cm);
\draw (0,0) circle (0.5cm);

\draw[black,postaction={on each segment={mid arrow=black}}] (0,-1.5) -- (0,-0.5);
\draw[black,postaction={on each segment={mid arrow=black}}] (0,-0.5) .. controls (0.9,-0.5) and (0.9,0.5) .. (0,0.5);
\draw[black,postaction={on each segment={mid arrow=black}}] (0,0.5) .. controls (1,1) and (1.5,-1) .. (0,-1.5);
\draw[black,postaction={on each segment={mid arrow=black}}] (0,1.5) .. controls (0.5,1) and (-0.5,1) .. (0,1.5); 
\end{tikzpicture}\hspace{0.5cm}
\begin{tikzpicture}[scale=0.75] 
\draw (0,1.5) node[above=1pt] {1};
\draw (0,-1.5) node[below=1pt] {2};

\draw (0,0.5) node[below=1pt] {3};
\draw (0,-0.5) node[above=1pt] {4};

\draw (0,0) circle (1.5cm);
\draw (0,0) circle (0.5cm);

\draw[black,postaction={on each segment={mid arrow=black}}] (0,1.5) .. controls (1.5,0.5) and (1.5,-0.5) .. (0,-1.5);
\draw[black,postaction={on each segment={mid arrow=black}}] (0,-1.5) .. controls (-1.5,-1) and (-1,1) .. (0,0.5);
\draw[black,postaction={on each segment={mid arrow=black}}] (0,0.5) .. controls (-0.9,0.5) and (-0.9,-0.5) .. (0,-0.5);
\draw[black,postaction={on each segment={mid arrow=black}}] (0,-0.5) .. controls (0.75,-0.5) and (1,0.5) .. (0,1.5);
\end{tikzpicture}\hspace{0.5cm}
\begin{tikzpicture}[scale=0.75] 
\draw (0,1.5) node[above=1pt] {1};
\draw (0,-1.5) node[below=1pt] {2};

\draw (0,0.5) node[below=1pt] {3};
\draw (0,-0.5) node[above=1pt] {4};

\draw (0,0) circle (1.5cm);
\draw (0,0) circle (0.5cm);

\draw[black,postaction={on each segment={mid arrow=black}}] (0,1.5) .. controls (1.5,0.5) and (1.5,-0.5) .. (0,-1.5);
\draw[black,postaction={on each segment={mid arrow=black}}] (0,0.5) -- (0,1.5);
\draw[black,postaction={on each segment={mid arrow=black}}] (0,-0.5) .. controls (0.9,-0.5) and (0.9,0.5) .. (0,0.5);
\draw[black,postaction={on each segment={mid arrow=black}}] (0,-1.5) -- (0,-0.5);
\end{tikzpicture}\hspace{0.5cm}
\begin{tikzpicture}[scale=0.75] 
\draw (0,1.5) node[above=1pt] {1};
\draw (0,-1.5) node[below=1pt] {2};

\draw (0,0.5) node[below=1pt] {3};
\draw (0,-0.5) node[above=1pt] {4};

\draw (0,0) circle (1.5cm);
\draw (0,0) circle (0.5cm);

\draw[black,postaction={on each segment={mid arrow=black}}] (0,1.5) -- (0,0.5);
\draw[black,postaction={on each segment={mid arrow=black}}] (0,0.5) .. controls (-0.9,0.5) and (-0.9,-0.5) .. (0,-0.5);
\draw[black,postaction={on each segment={mid arrow=black}}] (0,-0.5) -- (0,-1.5);
\draw[black,postaction={on each segment={mid arrow=black}}] (0,-1.5) .. controls (-1.5,-0.5) and (-1.5,0.5) .. (0,1.5);
\end{tikzpicture}\hspace{0.5cm}
\begin{tikzpicture}[scale=0.75] 
\draw (0,1.5) node[above=1pt] {1};
\draw (0,-1.5) node[below=1pt] {2};

\draw (0,0.5) node[below=1pt] {3};
\draw (0,-0.5) node[above=1pt] {4};

\draw (0,0) circle (1.5cm);
\draw (0,0) circle (0.5cm);

\draw[black,postaction={on each segment={mid arrow=black}}] (0,1.5) .. controls (1.5,1) and (1,-1) .. (0,-0.5);
\draw[black,postaction={on each segment={mid arrow=black}}] (0,-0.5) .. controls (0.9,-0.5) and (0.9,0.5) .. (0,0.5);
\draw[black,postaction={on each segment={mid arrow=black}}] (0,0.5) .. controls (-0.75,0.5) and (-1,-0.5) .. (0,-1.5);
\draw[black,postaction={on each segment={mid arrow=black}}] (0,-1.5) .. controls (-1.5,-0.5) and (-1.5,0.5) .. (0,1.5);
\end{tikzpicture}
\end{center}\normalsize
\captionof{figure}{The 18 elements of $\protect\NCA(2,2)$.}\label{fig-NCAcollection}
\end{figure}

We conclude this section by recalling the Kreweras complement for annular non-crossing permutations from~\cite{MingoNica2004} (see Fig.~\ref{fig-annulusK} below for an example). Similarly to the disk case, taking the Kreweras complement is an invertible map on $\NCA(k,\ell)$.

\begin{definition}[Kreweras complement, annulus case]\label{def-Kreweras2}
Consider the $(k,\ell)$-annulus and label the midpoints of the arcs between the points $1,\dots,k+\ell$ (black in Fig.~\ref{fig-annulusK}) also by $1,\dots,k+\ell$ (red in Fig.~\ref{fig-annulusK}). Respecting the orientation of the two circles, we arrange the new labels such that the arc $s$ follows the point $s$. Let $\pi\in \NCA(k,\ell)$ be visualized on the $(k,\ell)$-annulus as in Definition~\ref{def-NCA}. The \emph{(annular) Kreweras complement} $K(\pi)\in \NCA(k,\ell)$ is defined as the maximal annular non-crossing permutation on $[k+\ell]$ that can be drawn using only the labels at the midpoints of the arcs and without intersecting the cycles of $\pi$. In particular, each cycle of $K(\pi)$ again encloses a region in the annulus that is homeomorphic to the disk with boundary oriented clockwise. In this context, we consider an annular non-crossing permutation maximal if none of its cycles can be extended (by merging cycles) without inducing a crossing.
\end{definition}

\begin{figure}[H]
\begin{center}
\begin{tikzpicture}[scale=1.5]
\draw (0,2) node[above=1pt] {1};
\draw (1.414,1.414) node [above right=1pt] {\color{red}1\color{black}};
\draw (2,0) node[right=1pt] {2};
\draw (1.414,-1.414) node [below right=1pt] {\color{red}2\color{black}};
\draw (0,-2) node[below=1pt] {3};
\draw (-1.414,-1.414) node [below left=1pt] {\color{red}3\color{black}};
\draw (-2,0) node[left=1pt] {4};
\draw (-1.414,1.414) node [above left=1pt] {\color{red}4\color{black}};

\draw (0.3536,0.3536) node[below left=0.1pt] {5};
\draw (0,0.5) node [below=0.1pt] {\color{red}5\color{black}};
\draw (-0.3536,0.3536) node[below right=0.1pt] {6};
\draw (-0.3536,-0.3536) node [above right=0.1pt] {\color{red}6\color{black}};
\draw (0,-0.5) node[above=0.1pt] {7};
\draw (0.3536,-0.3536) node [above left=0.1pt] {\color{red}7\color{black}};

\draw (0,0) circle (2cm);
\draw (0,0) circle (0.5cm);

\draw[black,postaction={on each segment={mid arrow=black}}] (0,2) -- (2,0);
\draw[black,postaction={on each segment={mid arrow=black}}] (2,0) .. controls (1,-1) .. (0,-0.5);
\draw[black,postaction={on each segment={mid arrow=black}}] (0,-0.5) .. controls (1.5,-1.2) and (1.5,0.5) .. (0.3536,0.3536);
\draw[black,postaction={on each segment={mid arrow=black}}] (0.3536,0.3536) -- (0,2);

\draw[black,postaction={on each segment={mid arrow=black}}] (0,-2) -- (-2,0);
\draw[black,postaction={on each segment={mid arrow=black}}] (-2,0) .. controls (-1,-0.5) .. (0,-2);

\draw[black,postaction={on each segment={mid arrow=black}}] (-0.3536,0.3536) .. controls (-1.2,0.4) and (-0.2,1.1) .. (-0.3536,0.3536);

\draw[red,postaction={on each segment={mid arrow=red}}] (1.414,1.414) .. controls (1.6,0.8) and (0.7,1.2) .. (1.414,1.414);
\draw[red,postaction={on each segment={mid arrow=red}}] (-1.414,-1.414) .. controls (-1.5,-0.7) and (-0.6,-1.3) .. (-1.414,-1.414);
\draw[red,postaction={on each segment={mid arrow=red}}] (0.3536,-0.3536) .. controls (1,0) and (1.2,-0.9) .. (0.3536,-0.3536);

\draw[red,postaction={on each segment={mid arrow=red}}] (-1.414,1.414) .. controls (0,1) .. (0,0.5);
\draw[red,postaction={on each segment={mid arrow=red}}] (0,0.5) .. controls (-0.5,1.5) and (-1.5,0.5) .. (-0.3536,-0.3536);
\draw[red,postaction={on each segment={mid arrow=red}}] (-0.3536,-0.3536) .. controls (0,-1) .. (1.414,-1.414);
\draw[red,postaction={on each segment={mid arrow=red}}] (1.414,-1.414) .. controls (1,-2) and (-2,0) .. (-1.414,1.414);

\end{tikzpicture}
\end{center}
\captionof{figure}{The annular non-crossing permutation $\pi=(1275)(34)(6)$ in black and its Kreweras complement $K(\pi)=(1)(2456)(3)(7)$ in red.}\label{fig-annulusK}
\end{figure}
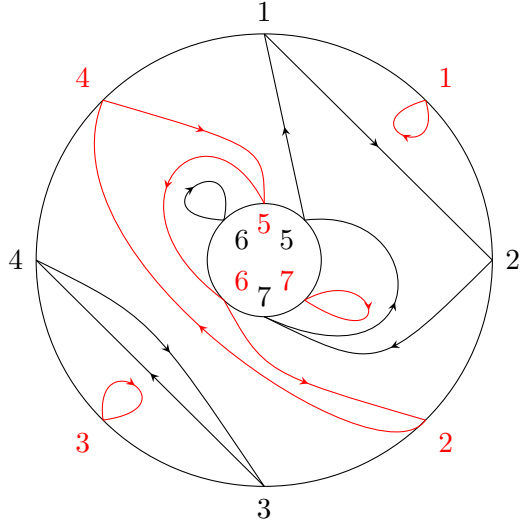

Note that $|\pi|+|K(\pi)|=k+\ell$ for any $\pi\in\NCA(k,\ell)$ (see, e.g.,~\cite[Sect.~6]{MingoNica2004}). We remark that while defining the annular Kreweras complement on the level of partitions would also be possible, the resulting map does not have the same properties as in the disk case (see, e.g.,~\cite[Sect.~1]{MingoNica2004} for a discussion). Therefore, we will only consider the annular Kreweras complement for permutations. Note that one can further assign a unique Kreweras complement to any marked non-crossing partition $\pi$ arising from some element $\pi_1\times\pi_2\in NCP(k)\times NCP(\ell)$ by applying Definition~\ref{def-Kdisk} circle-wise. In this case, we write $K(\pi)=K(\pi_1)\times K(\pi_2)$.

\section{Main Results}\label{sect-results}
The main focus of the present paper lies in determining the limiting covariance structure arising in the CLT for the centered statistics
\begin{align}
X_{\alpha}&:=\langle T_{[1,k]}\rangle-\E \langle T_{[1,k]}\rangle =\langle G_1A_1\dots G_kA_k\rangle-\E \langle G_1A_1\dots G_kA_k\rangle,\label{eq-defX}\\
Y_{\alpha}&:=\langle f_1(W)A_1\dots f_k(W)A_k\rangle-\E\langle f_1(W)A_1\dots f_k(W)A_k\rangle.\label{eq-defY}
\end{align}
Here, $\alpha=((z_1,A_1),\dots,(z_k,A_k))$ resp. $\alpha:=((f_1,A_1),\dots,(f_k,A_k))$ is a multi-index containing bounded deterministic matrices $A_1,\dots,A_k$ and either the spectral parameters $z_1,\dots,z_k\in\C\setminus\R$ with $|\Im z_j|\gtrsim1$ appearing in the resolvents or the test functions $f_1,\dots,f_k\in H^{k+1}(\R)$ with $\|f_j\|\lesssim1$. Whenever we need to refer to the number $k$ of resolvents (resp. test functions) in the product $X_\alpha$ (resp.~$Y_\alpha$), we carry the parameter $k$ as a superscript and write $\smash{X^{(k)}_{\alpha}}$ (resp.~$\smash{Y^{(k)}_{\alpha}}$). Recall that we set $T_j=G_jA_j=G(z_j)A_j$ as well as $T_{[i,j]}=T_iT_{i+1}\dots T_j$. Similarly, we introduce $F_j:=f_j(W)A_j$ and use the interval notation
\begin{displaymath}
F_{[i,j]}:=f_i(W)A_i\dots f_j(W)A_j
\end{displaymath}
for $i<j$ as well as $F_{\emptyset}=0$.

\subsection{Resolvent Central Limit Theorem and Recursion}\label{sect-CLT1}
We start by identifying the joint distribution of multiple $\smash{X^{(k_i)}_{\alpha_i}}$ with different $k_i$ and $\alpha_i$. To state the limiting covariance structure, we introduce a recursively defined set function $\m[\cdot|\cdot]$, which we later identify as the deterministic approximation of the (appropriately scaled) covariance of $\langle T_{[1,k]}\rangle$ and $\langle T_{[k+1,k+\ell]}\rangle$ similar to $M_{[k]}$ and $\m[\cdot]$ arising for the expectation of $T_{[1,k\rangle}G_k$ (see Theorem~\ref{thm-multiG-LL} as well as~\eqref{eq-defM} and~\eqref{eq-formulaM})\footnote{Note the similarity between the notations $\m[\cdot]$ and $\m[\cdot|\cdot]$, which take one and two resolvent chains as arguments, respectively.}. Note that $\alpha=((z_1,A_1),\dots,(z_k,A_k))$ contains the same information on the spectral parameters and deterministic matrices involved as the set of matrices $(T_j,j\in[k])$. We will, therefore, occasionally abuse notation and use $(z_j,A_j)$ and $T_j=G_jA_j$ interchangeably. In particular, we write
\begin{align*}
\m[\alpha|\beta]=\m[T_1,\dots,T_k|T_{k+1},\dots,T_{k+\ell}]
\end{align*}
where the two multi-indices $\alpha$ and $\beta$ index the spectral parameters and deterministic matrices in $T_1,\dots,T_k$ and $T_{k+1},\dots,T_{k+\ell}$, respectively. At this point, we only give a recursive definition for $\m[\cdot|\cdot]$, however, explicit formulas are later obtained in Section~\ref{sect-formulas}. Note that the case $\sigma=\widetilde{\omega}_2=0$ of Definition~\ref{def-M} was already given in~\cite{JRmain}.

\begin{definition}\label{def-M}
Let $S_1=(T_1,\dots,T_{k'})$ and $S_2=(T_{k'+1},\dots,T_{k'+\ell'})$ be two (ordered) finite sets of complex $N\times N$-matrices of the form $T_j=G_jA_j$. We define $\m[\cdot|\cdot]$ as the (deterministic) function of pairs of sets $S_1,S_2$ with values in $\C$ and the following properties:
\begin{itemize}
\item[(i)] Symmetry: $\m[\cdot|\cdot]$ is symmetric under the interchanging of its arguments, i.e., for any sets $B_1\subseteq S_1,B_2\subseteq S_2$ we have
\begin{displaymath}
\m[(T_i,i\in B_1)|(T_j, j\in B_2)]=\m[(T_j, j\in B_2)|(T_i,i\in B_1)].
\end{displaymath}
\item[(ii)] Initial condition: For any sets $B_1\subseteq S_1,B_2\subseteq S_2$ we have
\begin{equation}\label{eq-Minitial}
\m[(T_i,i\in B_1)|\emptyset]=\m[\emptyset|(T_j, j\in B_2)]=0.
\end{equation}
\item[(iii)] Recursion: Let $B_1\subseteq S_1$ and $B_2\subseteq S_2$ be ordered subsets with $|B_1|=k\leq k'$ and $|B_2|=\ell\leq\ell'$ elements, respectively. We index the matrices in $B_1$ by $[k]$ and the matrices in $B_2$ by $[k+1,k+\ell]$. The function $\m[\cdot|\cdot]$ satisfies the following linear recursion
\begin{align}
&\m[T_1,\dots,T_k|T_{k+1},\dots,T_{k+\ell}]\NN\\
&=m_1\Bigg(\m[T_2,\dots,T_{k-1},G_kA_kA_1|T_{k+1},\dots,T_{k+\ell}]\NN\\
&\quad+q_{1,k}\m[T_2,\dots,T_{k-1},G_kA_1|T_{k+1},\dots,T_{k+\ell}]\langle A_k\rangle\label{eq-Mrecursion}\\
&\quad+\sum_{j=1}^{k-1}\m[T_1,\dots,T_{j-1},G_j|T_{k+1},\dots,T_{k+\ell}]\big(\m[T_j,\dots,T_k]+q_{1,k}\m[T_j,\dots,T_{k-1},G_k]\langle A_k\rangle\big)\NN\\
&\quad+\sum_{j=2}^k\m[T_1,\dots,T_{j-1},G_j]\Big(\m[T_j,\dots,T_k|T_{k+1},\dots,T_{k+\ell}]\NN\\
&\quad\quad+q_{1,k}\m[T_j,\dots,T_{k-1},G_k|T_{k+1},\dots,T_{k+\ell}]\langle A_k\rangle\Big)+\mathfrak{s}_{GUE}+\mathfrak{s}_\kappa+\mathfrak{s}_\sigma+\mathfrak{s}_\omega\Bigg)\NN
\end{align}
where the source terms $\mathfrak{s}_{GUE}$, $\mathfrak{s}_\kappa$, $\mathfrak{s}_{\sigma}$, and $\mathfrak{s}_{\omega}$ are given by
\begin{align}
\mathfrak{s}_{GUE}&:=\sum_{j=1}^\ell \Big(\m[T_1,\dots,T_k,T_{k+j},\dots,T_{k+j-1},G_{k+j}]\NN\\
&\quad\quad +q_{1,k}\m[T_1,\dots,T_{k-1},G_k,T_{k+j},\dots,T_{k+j-1},G_{k+j}]\langle A_k\rangle\Big)\label{eq-sourceGUE}\\
\mathfrak{s}_{\kappa}&:=\kappa_4\sum_{r=1}^k\sum_{s=k+1}^{k+\ell}\Big(\sum_{t=k+1}^s\langle M_{[r]}\odot M_{(s,\dots,k+\ell,k+1,\dots,t)}\rangle\langle(M_{[r,k]}A_k)\odot M_{[t,s]}\rangle\NN\\
&\quad\quad+\sum_{t=s}^{k+\ell}\langle M_{[r]}\odot M_{[s,t]}\rangle\langle(M_{[r,k]}A_k)\odot M_{(t,\dots,k+\ell,k+1,\dots,s)}\rangle\Big)\NN\\
&\quad+\kappa_4q_{1,k}\sum_{r=1}^k\sum_{s=k+1}^{k+\ell}\Big(\sum_{t=k+1}^s\langle M_{[r]}\odot M_{(s,\dots,k+\ell,k+1,\dots,t)}\rangle\langle M_{[r,k]}\odot M_{[t,s]}\rangle\NN\\
&\quad\quad+\sum_{t=s}^{k+\ell}\langle M_{[r]}\odot M_{[s,t]}\rangle\langle M_{[r,k]}\odot M_{(t,\dots,k+\ell,k+1,\dots,s)}\rangle\Big)\langle A_k\rangle.\label{eq-sourcekappa}\\
\mathfrak{s}_\sigma&:=\sigma\sum_{j=1}^\ell \m[T_1\dots,T_k,G_{k+j}^tA_{k+j-1}^t,\dots,G_{k+\ell}^tA_{k+1}^t,\dots,G_{k+j-1}^tA_{k+j}^t,G_{k+j}]\label{eq-sourcesigma}\\
&\quad +q_{1,k}\sigma\sum_{j=1}^\ell \m[T_1\dots,T_{k-1},G_k,G_{k+j}^tA_{k+j-1}^t,\dots,G_{k+\ell}^tA_{k+1}^t,\dots,G_{k+j-1}^tA_{k+j}^t,G_{k+j}]\langle A_k\rangle\NN\\
\mathfrak{s}_\omega&:=\widetilde{\omega_2}\sum_{j=1}^\ell \langle (M_{[k]}A_k)\odot M_{(k+j,\dots,k+\ell,k+1,\dots k+j)}\rangle\label{eq-sourceomega}\\
&\quad+q_{1,k}\widetilde{\omega_2}\sum_{j=1}^\ell \langle M_{[k]}\odot M_{(k+j,\dots,k+\ell,k+1,\dots k+j)}\rangle\langle A_k\rangle\NN
\end{align}
Recall that $\odot$ denotes the Hadamard product, $q_{1,k}$ was defined in~\eqref{eq-defq}, and $M_{(\dots)}$ was defined in Theorem~\ref{thm-multiG-LL}. Moreover, recall that $\m[\cdot]$ was defined in~\eqref{eq-defM} and the notation with transposes was introduced in~\eqref{eq-mtransposes}.
\end{itemize}
\end{definition}
Note that setting $A_1=\dots=A_{k+\ell}=\Id$ reduces~\eqref{eq-Mrecursion} to~\eqref{eq-mrecursion}, showing that 
\begin{displaymath}
\m[G_1,\dots,G_k|G_{k+1},\dots,G_{k+\ell}]=m[1,\dots,k|k+1,\dots,k+\ell].
\end{displaymath}

\medskip
We use the linearity of the recursion and the different types of source terms to introduce the decomposition
\begin{equation}\label{eq-Mdecomp}
\m[\cdot|\cdot]=\m_{GUE}[\cdot|\cdot]+\kappa_4\m_\kappa[\cdot|\cdot]+\sigma\m_\sigma[\cdot|\cdot]+\widetilde{\omega}_2\m_\omega[\cdot|\cdot],
\end{equation}
where $\m_{GUE}[\cdot|\cdot]$ satisfies~\eqref{eq-Mrecursion} for $\kappa_4=\sigma=\widetilde{\omega}_2=0$, and $\kappa_4\m_\kappa[\cdot|\cdot]$, $\sigma\m_\sigma[\cdot|\cdot]$ resp. $\widetilde{\omega}_2\m_\omega[\cdot|\cdot]$ satisfy~\eqref{eq-Mrecursion} with $\mathfrak{s}_{\kappa}$, $\mathfrak{s}_{\sigma}$ resp. $\mathfrak{s}_{\omega}$ as only source term. Note that $\mathfrak{s}_{GUE}+\mathfrak{s}_\kappa+\mathfrak{s}_\sigma+\mathfrak{s}_\omega$ in~\eqref{eq-Mrecursion} is fully expressible as a function of $A_1,\dots,A_{k+\ell}$ and $m_1,\dots,m_{k+\ell}$ by~\eqref{eq-formulaM},~\eqref{eq-mcircgraphs} and~Lemma~\ref{lem-msharpgraphs}, eventually making $\m[\cdot|\cdot]$ a function of the same quantities.

\medskip
Recall that we set $X_{\alpha}=\langle T_1\dots T_k\rangle-\E\langle T_1\dots T_k\rangle$ with $\alpha=((z_1,A_1),\dots,(z_k,A_k))$. Before stating the CLT for $X_\alpha$, we note the following definition.
\begin{definition}
Consider two functions of the Wigner matrix $W$ in Assumption~\ref{as-Wigner2}, which we denote as $N$-dependent random variables $X^{(N)}$ and $Y^{(N)}$. We say that $X^{(N)}=Y^{(N)}+\cO(\varepsilon)$ \emph{in the sense of moments} if for any polynomial $\cP$ it holds that
\begin{displaymath}
\E \cP(X^{(N)})=\E \cP(Y^{(N)})+\cO(\eps),
\end{displaymath}
where the implicit constant in $\cO(\cdot)$ only depends on the polynomial $\cP$ and the constants in Assumption~\ref{as-Wigner2}.
\end{definition}

We now give a CLT for $X_\alpha$ in~\eqref{eq-defX}. As the main interest of the present paper is the deterministic approximation $\m[\cdot|\cdot]$, we restrict the discussion of the CLT to the macroscopic regime for technical simplicity. The proof of Theorem~\ref{thm-resolventCLT2} is analogous to that of~\cite[Thm.~3.6]{JRmain}. For the convenience of the reader, we include the necessary modifications for adapting the proof in~\cite{JRmain} to the generalized model in Assumption~\ref{as-Wigner2} in Appendix~\ref{app-resolventCLTproof}.

\begin{theorem}[Macroscopic CLT for resolvents]\label{thm-resolventCLT2}
Fix $p\in\N$, let $\alpha_1,\dots,\alpha_p$ be multi-indices, and let $W$ be a Wigner matrix satisfying Assumption~\ref{as-Wigner2}. For each $j=1,\dots,p$ pick a set of spectral parameters $\smash{z_1^{(j)},\dots,z_{k_j}^{(j)}}$ with $\smash{|\Im z_i^{(j)}|\gtrsim1}$  and $\max_j|z_j|\leq N^{100}$ as well as deterministic matrices $\smash{A_1^{(j)},\dots,A_{k_j}^{(j)}}$ with $\smash{\|A_i^{(j)}\|\lesssim1}$. Then,
\begin{align}
N^p\E\Big(\prod_{j=1}^p X_{\alpha_j}\Big)=\sum_{Q\in Pair([p])}\prod_{\{i,j\}\in Q}\m[\alpha_i|\alpha_j]+\cO\Big(\frac{N^\eps}{\sqrt{N}}\Big)\label{eq-resolventCLT2}
\end{align}
for any $\eps>0$. Here, $Pair(S)$ denotes the pairings of a set~$S$ and $\m[\cdot|\cdot]$ is a set function that satisfies Definition~\ref{def-M}. Equation~\eqref{eq-resolventCLT2} establishes an asymptotic version of Wick’s rule and hence identifies the joint limiting distribution of the random variables $(X_{\alpha_j})_j$ as asymptotically complex Gaussian in the sense of moments in the limit $N\rightarrow\infty$.
\end{theorem}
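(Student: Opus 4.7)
The plan is to proceed by induction on $p$, using a master equation built from the resolvent identity $G_j W = \Id + z_j G_j$ together with cumulant expansion to relate the $p$-th moment to the lower ones. For $p=1$ both sides vanish, since $X_\alpha$ is centered by definition. For $p=2$, the goal is to show $N^2\E[X_{\alpha_1}X_{\alpha_2}]=\m[\alpha_1|\alpha_2]+\cO(N^{\eps-1/2})$. I would unfold the first chain via $G_1=-z_1^{-1}\Id+z_1^{-1}G_1W$ until an explicit factor $\frac{1}{N}\sum_{a,b}W_{ab}(\cdots)_{ba}$ appears, and then apply the cumulant expansion formula for $\E[W_{ab}\,g(W)]$. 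Under Assumption~\ref{as-Wigner2}, the second-cumulant contributions split into three types: the standard symmetric pair $(W_{ab},W_{ba})$ (coefficient~$1/N$), the asymmetric pair $(W_{ab},W_{ab})$ (coefficient $\sigma/N$), and the diagonal $(W_{aa},W_{aa})$ (coefficient $(1+\sigma+\widetilde{\omega}_2)/N$). All three produce contractions between the first and second chain or within the first chain.

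Second, I would match each type of contraction term-by-term to the linear recursion in Definition~\ref{def-M}. Contractions within the first chain generate the four "self" lines of~\eqref{eq-Mrecursion} (carrying the divided-difference prefactors $\m[T_j,\dots,T_k]$ and the $q_{1,k}\langle A_k\rangle$ branch coming from the "trace" splitting). Contractions that merge the first chain with the second yield the source $\mathfrak{s}_{GUE}$ of~\eqref{eq-sourceGUE}, which is precisely the resplicing of $T_{k+j},\dots,T_{k+j-1},G_{k+j}$ into the first chain at the splice point. The asymmetric pair contracts to transposed resolvent blocks, whose deterministic approximation is supplied by Theorem~\ref{thm-skewedLL} and reproduces $\mathfrak{s}_\sigma$ as in~\eqref{eq-sourcesigma}. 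The diagonal excess contributes $\mathfrak{s}_\omega$ as in~\eqref{eq-sourceomega} via an entry-wise product. The third-order cumulants drop out because $\chi_d,\chi_{od}$ are centered, the fourth cumulant of $\chi_{od}$ produces $\mathfrak{s}_\kappa$ of~\eqref{eq-sourcekappa} by the standard "four-index" Hadamard-product pattern, and cumulants of order $\geq 5$ are bounded by $\cO(N^{\eps-1/2})$ using the averaged and isotropic multi-resolvent local laws of Theorem~\ref{thm-multiG-LL} applied to each derivative factor.

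For general $p$, I would apply the same master identity to $X_{\alpha_1}$, now letting the extracted $W_{ab}$ contract against the $W$'s occurring in any of $X_{\alpha_2},\dots,X_{\alpha_p}$ as well as within $X_{\alpha_1}$ itself. A contraction with $X_{\alpha_j}$ for some $j\geq 2$ produces, by the same bookkeeping as in the two-point case, a factor of $\m[\alpha_1|\alpha_j]$ together with the product $\prod_{i\notin\{1,j\}}X_{\alpha_i}$; applying the inductive hypothesis to the remaining $p-2$ factors gives a sum over pairings of $[p]\setminus\{1,j\}$, and summing over $j$ rebuilds the full sum over pairings of $[p]$. Self-contractions within $X_{\alpha_1}$ either reduce the length of the first chain (and feed back into the recursion defining $\m[\alpha_1|\cdot]$ one step deeper) or are cancelled by the subtraction $\E\langle T_{[1,k_1]}\rangle$ in $X_{\alpha_1}$; iterating the recursion to exhaustion and resumming reassembles exactly $\m[\alpha_1|\alpha_j]$ in each paired term. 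Odd $p$ produces no pairings, and the argument shows the moment is $\cO(N^{\eps-1/2})$.

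The main obstacle is the bookkeeping: every single term produced by the cumulant expansion must be traced to a specific piece of the long recursion in Definition~\ref{def-M}, including the two "$q_{1,k}\langle A_k\rangle$" branches that accompany each source. The second genuine difficulty is uniform error control: the resolvent chains appearing inside derivatives of arbitrary order are bounded in norm only via the isotropic law~\eqref{eq-multiGisotropic}, so one has to route every estimate through Theorem~\ref{thm-multiG-LL} (and its skewed variant Theorem~\ref{thm-skewedLL}) to certify that each contribution is $\cO(N^{\eps-1/2})$ after normalization by $N^p$. Beyond these two points the argument is structurally identical to that of~\cite[Thm.~3.6]{JRmain}; the only substantive additions are the new source terms $\mathfrak{s}_\sigma$ and $\mathfrak{s}_\omega$ and their error analysis, which is precisely what the appendix records.
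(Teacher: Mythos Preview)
Your approach is essentially the same as the paper's: induction on $p$, with the $p=2$ base case established by expanding the first chain via a resolvent-type identity, applying cumulant expansion, and matching the resulting contractions to the source terms $\mathfrak{s}_{GUE},\mathfrak{s}_\kappa,\mathfrak{s}_\sigma,\mathfrak{s}_\omega$ in Definition~\ref{def-M}. The paper organizes the expansion slightly differently by working with the renormalized product $\underline{WT_{[1,k]}}$ (see~\eqref{eq-underline2}) rather than the raw identity $G_1=-z_1^{-1}\Id+z_1^{-1}G_1W$; this absorbs the leading second-moment self-contractions into the definition and isolates the fluctuation part more cleanly, which in turn separates the ``$\cE$-correction'' of Lemma~\ref{lem-mEexchange2} from the covariance computation in Lemma~\ref{lem-covapprox2}. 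Your direct route reaches the same place but with more manual bookkeeping.

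One genuine slip: third-order cumulants do \emph{not} drop out ``because $\chi_d,\chi_{od}$ are centered''. Centeredness kills the first cumulant, not the third; the third cumulants of the entries are in general nonzero under Assumption~\ref{as-Wigner2}. They are instead subleading by power counting (each third-cumulant term carries an extra $N^{-1/2}$ relative to the second-cumulant terms after summing over indices and applying the isotropic local law~\eqref{eq-multiGisotropic}), and so land in the $\cO(N^{\eps-1/2})$ error. You should correct the mechanism here, since it is exactly the kind of estimate you flag as the ``second genuine difficulty''.
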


We remark that $\m[\cdot|\cdot]$ is cyclic in the sense that
\begin{displaymath}
\m[(T_j,j\in S_1)|T_1,\dots,T_k]=\m[(T_j,j\in S_1)|T_2,\dots,T_k,T_1]
\end{displaymath}
and that further
\begin{align*}
&\m[(T_j,j\in S_1)|T_1,\dots,T_{k-1},G_k]\NN\\
&=\frac{\m[(T_j,j\in S_1)|T_2,\dots,T_{k-1},G_kA_1]-\m[(T_j,j\in S_1)|T_1,\dots,T_{k-1}]}{z_k-z_1}.
\end{align*}
whenever  $z_1\neq z_k$, $A_k=\Id$, and $\sigma=0$. These identities can be obtained from the "meta argument" below~\cite[Lem.~4.1]{CES-optimalLL} (see also~\cite[Cor.~3.7]{JRmain}) using that the analogous formulas for the original resolvent chains are trivially true by resolvent identities. However, any additional information on $\m[\cdot|\cdot]$ has to be obtained from the recursion~\eqref{eq-Mrecursion} directly.

\subsection{Solution of the Recursion}\label{sect-formulas}
After identifying $\m[\alpha|\beta]$ as the deterministic approximation of $\E[X_\alpha X_\beta]$, we consider Definition~\ref{def-M} in detail. In this section, we derive a solution to the (deterministic) recursion~\eqref{eq-Mrecursion}. This characterizes the overall structure of the function $\m[\cdot|\cdot]$ and yields explicit combinatorial formulas to replace the recursive definition in applications. Making use of the linearity of the recursion and the decomposition~\eqref{eq-Mdecomp}, it is sufficient to consider the components $\m_{GUE}[\cdot|\cdot]$, $\m_\kappa[\cdot|\cdot]$,$\m_\sigma[\cdot|\cdot]$, and $\m_\omega[\cdot|\cdot]$ separately. We start by studying $\m_{GUE}[\cdot|\cdot]$. The proof consists of two steps that are carried out in Section~\ref{sect-mainproof}.

\begin{theorem}\label{thm-main}
Let $\alpha=((z_1,A_1),\dots,(z_k,A_k))$ and $\beta=((z_{k+1},A_{k+1}),\dots,(z_{k+\ell},A_{k+\ell}))$ for some $k,\ell\in\N$. Then,
\begin{align}
\m_{GUE}[\alpha|\beta]&=\sum_{\pi\in \NCA(k,\ell)}\Big(\prod_{B\in K(\pi)}\Big\langle \prod_{j\in B}A_j\Big\rangle\Big)\prod_{B\in\pi}m_{\circ}[B]\NN\\
&\quad+\sum_{\substack{\pi_1\times\pi_2\in NCP(k)\times NCP(\ell),\\ U_1\in\pi_1,U_2\in\pi_2\text{ marked}}}\Big(\prod_{\substack{B_1\in K(\pi_1),\\ B_2\in K(\pi_2)}}\Big\langle \prod_{j\in B_1} A_j\Big\rangle\Big\langle \prod_{j\in B_2} A_j\Big\rangle\Big)m_{\circ\circ}[U_1|U_2]\NN\\
&\quad\quad\times\prod_{\substack{B_1\in \pi_1\setminus U_1,\\ B_2\in\pi_2\setminus U_2}}m_{\circ}[B_1]m_{\circ}[B_2]\label{eq-fpformula}
\end{align}
with $m_{\circ}$ and $m_{\circ\circ}$ being the first and second-order free cumulant functions as defined in~\eqref{eq-mcrelation1} and~\eqref{eq-mcrelation2}, respectively.
\end{theorem}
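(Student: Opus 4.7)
The plan is to prove~\eqref{eq-fpformula} by induction on $k+\ell$, verifying that the right-hand side satisfies the linear recursion~\eqref{eq-Mrecursion} for $\m_{GUE}$ (i.e.\ with $\kappa_4=\sigma=\widetilde{\omega}_2=0$, so that only $\mathfrak{s}_{GUE}$ appears among the source terms). The initial conditions~\eqref{eq-Minitial} hold trivially because both sums in~\eqref{eq-fpformula} are empty whenever $B_1$ or $B_2$ is empty, and the uniqueness of the solution of~\eqref{eq-Mrecursion} then identifies the two sides. Following the two-step structure indicated before the statement, I would first prove the scalar version and then lift to arbitrary deterministic matrices.

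\textbf{Step 1: Scalar reduction.} Setting $A_1=\dots=A_{k+\ell}=\Id$ collapses every matrix average $\langle\prod_{j\in B}A_j\rangle$ to $1$, so that~\eqref{eq-fpformula} becomes precisely the defining relation~\eqref{eq-mcrelation2} for the second-order free cumulant $m_{\circ\circ}$ applied to $f=m_{GUE}[\cdot|\cdot]$, with first-order cumulants $m_\circ$ computed from the divided differences (Definition~\ref{def-divdif}). It thus suffices to show that $m_{GUE}[\cdot|\cdot]$ as defined by the scalar recursion~\eqref{eq-mrecursion} admits this annular expansion. I would carry this out by induction on $k+\ell$, classifying each $\pi\in\NCA(k,\ell)$ and each marked element of $NCP(k)\times NCP(\ell)$ by the cycle (respectively block) containing the label $1$. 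Three cases arise: a singleton, an outer-circle-only cycle of length $\geq 2$, or a connecting cycle that crosses to the inner circle. These match term-by-term the first line, the two split sums $\sum_{j=1}^{k-1}$ and $\sum_{j=2}^{k}$, and the source $s_{GUE}$ on the right-hand side of~\eqref{eq-mrecursion}. The graph representation~\eqref{eq-mgraphs} together with Lemma~\ref{lem-doubledindex} is the natural tool for matching the connecting-cycle contributions to $s_{GUE}$.

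\textbf{Step 2: Matrix extension.} Given the scalar identity, I would reapply the same inductive classification to the full formula~\eqref{eq-fpformula}, now additionally tracking how the Kreweras complement $K(\pi)$ behaves on the matrix side. When the cycle $C$ containing $1$ is split off from $\pi$, the arcs in $K(\pi)$ either detach an arc labeled $k$ as a singleton block (producing a $\langle A_k\rangle$ factor multiplied by $q_{1,k}$, matching the $q_{1,k}\langle A_k\rangle$-lines in~\eqref{eq-Mrecursion}), merge two consecutive arcs into one block (yielding the combined matrix $G_kA_kA_1$ in the first line of~\eqref{eq-Mrecursion}), or reroute across the annulus (producing the joined resolvent chain in $\mathfrak{s}_{GUE}$). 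The remaining scalar weights factor out via the induction hypothesis and the first-order formula~\eqref{eq-formulaM}, so once the Kreweras bookkeeping is in place the recursion closes.

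\textbf{Main obstacle.} The combinatorial heart of the argument lies in Step 2 when element $1$ belongs to a connecting cycle: one has to show that the annular Kreweras complement restricted to $\pi\setminus C$ corresponds bijectively to the \emph{disk} Kreweras complement of the glued cyclic sequence $(1,\dots,k,k+j,\dots,k+\ell,k+1,\dots,k+j)$ appearing in~\eqref{eq-sourceGUE}. Since the annular $K$ is defined geometrically on non-crossing \emph{permutations} rather than partitions (cf.\ the discussion after Definition~\ref{def-Kreweras2}), the orientation of $C$ and the placement of the label $k$ determine which of the two $\mathfrak{s}_{GUE}$ variants (with or without the prefactor $q_{1,k}\langle A_k\rangle$) is produced. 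Getting this orientation bookkeeping right is where the bulk of the combinatorial work is concentrated, and once it is in place the remaining matrix and scalar factors line up directly with the recursion.
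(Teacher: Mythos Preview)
Your overall framework is right---verify that the right-hand side of~\eqref{eq-fpformula} satisfies the recursion~\eqref{eq-Mrecursion} by induction---and this is exactly what the paper does. But the specific decomposition you propose does not match the recursion, and this is a genuine gap rather than missing bookkeeping.

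The recursion~\eqref{eq-mrecursion} is \emph{not} a ``remove the cycle containing $1$'' operation. Observe that both split sums still contain the label $1$ (they involve $m[1,\dots,j|\dots]$ and $m[1,\dots,j]$), and the whole right-hand side carries the prefactor $m_1/(1-m_1m_k)=m_1(1+q_{1,k})$. The factor $q_{1,k}$ is an \emph{edge weight}, not a block-level quantity: within a single cycle $B\ni 1$ of $\pi$, the free cumulant $m_\circ[B]$ is itself a sum over connected graphs on $B$, some containing the edge $(1,k)$ and some not. Your trichotomy (singleton / outer-only / connecting) classifies the block $B$, but the recursion splits according to which edge leaves vertex $1$ first, which is strictly finer. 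So the claimed term-by-term matching fails already in the scalar case.

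What the paper does instead is introduce a multi-set $\cG(k,\ell)$ of planar annular graphs whose recursive construction (Definition~\ref{def-Ggraphs}, the decomposition $\cG_{(1,k)}\cup\cG_{\neg(1,k)}$ followed by $\cG_1\cup\cG_2\cup\cG_3\cup\cG_4$) is designed to mirror~\eqref{eq-Mrecursion} line by line. It then proves a graph expansion $m[\cdot|\cdot]=(\prod m_s)\sum_{\Gamma\in\cG(k,\ell)}q_\Gamma$ (Lemma~\ref{lem-graphformula1}) and a companion formula for $m_{\circ\circ}$ (Lemma~\ref{lem-graphformula2}), and finally rewrites the right-hand side of~\eqref{eq-fpformula} as a single sum $\sum_{\Gamma\in\cG(k,\ell)}\cF(\pi_\Gamma)q_\Gamma$ over these graphs. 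The recursion then follows from the recursive definition of $\cG(k,\ell)$, with the matrix factors $\cF(\pi_\Gamma)$ tracked via the induced partition $\pi_\Gamma$. The nontrivial intermediate work---Lemmas~\ref{lem-Gproperties1}--\ref{lem-Gproperties3} on loops, multi-edges, and the correspondence between connected components of $\Gamma$ and disk non-crossing graphs---is what makes the passage between the graph sum and the two sums in~\eqref{eq-fpformula} (over $\NCA(k,\ell)$ and over marked $NCP(k)\times NCP(\ell)$) go through. Your proposal has no analogue of this machinery, and in particular no mechanism for producing the $m_{\circ\circ}$ term or for handling the ambiguity between annular permutations and partitions when there is a single connecting block.
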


Observe that  the right-hand side of~\eqref{eq-fpformula} reduces to the combinatorial expression in~\eqref{eq-mcrelation2} if $A_1=\dots=A_{k+\ell}=\Id$.

\begin{remark}
Note that the right-hand side of~\eqref{eq-fpformula} is symmetric with respect to interchanging of $((z_1,A_1),\dots,(z_k,A_k))$ and $((z_{k+1},A_{k+1}),\dots,(z_{k+\ell},A_{k+\ell}))$, which is consistent with the symmetry of $\m[\cdot|\cdot]$ in Definition~\ref{def-M}(i). We can check this directly from~\eqref{eq-fpformula} by observing that there is a one-to-one correspondence between non-crossing permutations of the $(k,\ell)$-annulus and those of the $(\ell,k)$-annulus. This follows from drawing the cycles of the permutation as curves on the respective annuli and observing that interchanging the inner and outer circle with a conformal map (e.g., by inversion to a concentric circle between the outer and inner circle) preserves the standardness and non-crossing property of the picture (cf.~Definition~\ref{def-m}).  Moreover, this symmetry of $m_{GUE}[\cdot|\cdot]$ implies that $m_{\circ\circ}[\cdot|\cdot]$ is also invariant under interchanging~$((z_1,A_1),\dots,(z_k,A_k))$ and $((z_{k+1},A_{k+1}),\dots,(z_{k+\ell},A_{k+\ell}))$ since $m_{GUE}[\cdot|\cdot]$ determines $m_{\circ\circ}[\cdot|\cdot]$ uniquely by Definition~\ref{def-circcirc}.
\end{remark}

\begin{example}[Asymptotics of covariances for GUE]\label{ex-minimalcase}
We consider a special case of Theorem~\ref{thm-resolventCLT2}. Let $p=2$, $k_1=k_2=1$, and assume that $W$ is a GUE matrix\footnote{Note that we are only using that $W$ satisfies Assumption~\ref{as-Wigner2} and $\kappa_4=\sigma=\widetilde{\omega}_2=0$.}. By decomposing $A_1$ and $A_2$ into a tracial and a traceless part, the deterministic approximation for the covariance follows directly from~\cite[Thm.~4.1]{CES-functCLT}, giving
\begin{align*}
&N^2\E(\langle T_1\rangle-\E\langle T_1\rangle)(\langle T_2\rangle-\E\langle T_2\rangle)\\
&\quad=\langle A_1A_2\rangle\frac{m_1^2m_2^2}{(1-m_1m_2)}+\langle A_1\rangle \langle A_2\rangle\Big(\frac{m_1'm_2'}{(1-m_1m_2)^2}-\frac{m_1^2m_2^2}{(1-m_1m_2)}\Big)+\cO\Big(\frac{N^\eps}{\sqrt{N}}\Big)\\
&\quad=\langle A_1A_2\rangle m_\circ[1,2] +\langle A_1\rangle \langle A_2\rangle m_{\circ\circ}[1|2]+\cO\Big(\frac{N^\eps}{\sqrt{N}}\Big),
\end{align*}
where the last equation follows from the formulas in Examples~\ref{ex-circ} and~\ref{ex-circcirc}. Note that the error bound $\Psi/\sqrt{L}$ in~(91) of~\cite{CES-functCLT} evaluates to $\cO(1/\sqrt{N})$ on macroscopic scales. We remark that the deterministic leading term matches the formula for $\m_{GUE}[T_1|T_2]$ obtained from applying~\eqref{eq-Mrecursion} to the initial condition $\m_{GUE}[T_1|\emptyset]=0$. 
\end{example}

Next, we consider the recursion for $\m_\kappa[\cdot|\cdot]$. We obtain a closed solution similar to Theorem~\ref{thm-main}, i.e., a sum of terms that factorizes into two parts depending only on the deterministic matrices $A_1,\dots,A_{k+\ell}$ and the spectral parameters $z_1,\dots,z_{k+\ell}$, respectively. The proof of Theorem~\ref{thm-structurekap} is given in Section~\ref{sect-kappa} below.

\begin{theorem}\label{thm-structurekap}
Let $\alpha=((z_1,A_1),\dots,(z_k,A_k))$ and $\beta=((z_{k+1},A_{k+1}),\dots,(z_{k+\ell},A_{k+\ell}))$ for some $k,\ell\in\N$. Then  there exist
\begin{itemize}
\item[(i)] a family $(\psi_{\pi,B})_{B\in\pi}$ of functions $\psi_{B,\pi}:\C^{|B|}\rightarrow\C$ for every $\pi\in\NCA(k,\ell)$ and
\item[(ii)] a family  $(\psi_{\pi,U_1,U_2})_{U_1\subset[k],U_2\subset[k+1,k+\ell]}$ of functions $\psi_{\pi,U_1,U_2}:\C^{|U_1|}\times\C^{|U_2|}\rightarrow\C$ that are invariant under interchanging of the two arguments as well as functions $(\psi_{\pi_1,B_1})_{B_1\in\pi_1\setminus U_1}$ and $(\psi_{\pi_2,B_2})_{B_2\in\pi_2\setminus U_2}$ with $\psi_{\pi_i,B_i}:\C^{|B_i|}\rightarrow\C$ for every $\pi=\pi_1\times\pi_2\in NCP(k)\times NCP(\ell)$ with marked blocks $ U_1\in\pi_1$ and $U_2\in\pi_2$
\end{itemize}
such that 
\begin{align}
&\m_{\kappa}[\alpha|\beta]=\sum_{\pi\in \NCA(k,\ell)}\prod_{B\in K(\pi)}\Big\langle\Big( \prod_{j\in B\cap[k]}A_j\Big)\odot\Big( \prod_{j\in B\cap[k+1,k+\ell]}A_j\Big)\Big\rangle\prod_{B\in\pi}\psi_{\pi,B}(z_j|j\in B)\NN\\
&\quad+\sum_{\substack{\pi=\pi_1\times\pi_2\in NCP(k)\times NCP(\ell),\\ U_1\in\pi_1,U_2\in\pi_2\text{ marked}}}\Big(\prod_{\substack{B_1\in K(\pi_1),\\ B_2\in K(\pi_2)}}\Big\langle \prod_{j\in B_1} A_j\Big\rangle\Big\langle \prod_{j\in B_2} A_j\Big\rangle\Big)\label{eq-fpformulakap}\\
&\quad\quad\times \psi_{\pi,U_1,U_2}(z_j|j\in U_1\cup U_2)\prod_{\substack{B_1\in \pi_1\setminus U_1,\\ B_2\in\pi_2\setminus U_2}}\psi_{\pi_1,B_1}(z_j|j\in B_1)\psi_{\pi_2,B_2}(z_j|j\in B_2),\NN
\end{align}
where $\odot$ denotes the Hadamard product.
\end{theorem}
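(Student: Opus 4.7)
The strategy mirrors the proof of Theorem~\ref{thm-main}: I would run an induction on $k+\ell$ based on the recursion~\eqref{eq-Mrecursion} specialized to the single source term $\mathfrak{s}_\kappa$. The claim is purely structural (we only need the existence of the functions $\psi$, not their explicit forms), so the goal is to keep track of how the $A$-dependence and $z$-dependence factorize after each application of the recursion, rather than to identify the $\psi$'s. The base case $k=\ell=1$ can be checked by a direct computation from~\eqref{eq-Mrecursion} and~\eqref{eq-sourcekappa}, which already displays both the Hadamard-product block in the annular piece and the marked block in the disk$\times$disk piece.

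For the inductive step, assume~\eqref{eq-fpformulakap} for all pairs of arguments of total size less than $k+\ell$. Applying~\eqref{eq-Mrecursion} with only $\mathfrak{s}_\kappa$ as source, the right-hand side decomposes into two kinds of contributions: (a) terms containing a smaller $\m_\kappa[\cdot|\cdot]$ factor multiplied by some $\m[\cdot]$ factors and scalars of the form $\langle\prod A_j\rangle$, and (b) the source $\mathfrak{s}_\kappa$ itself. For type (a), the inductive hypothesis already provides the required factorization of the $\m_\kappa$ factor, and the additional $\m[\cdot]$ factors can be expanded through~\eqref{eq-formulaM}, while $\langle\prod A_j\rangle$ factors attach as additional blocks on the appropriate circle. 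Combinatorially, multiplying by $m_1$ and an $\m[\cdot,G_k]$-type factor corresponds either to extending an existing block of the relevant annular or marked partition or to gluing a disk non-crossing piece to it; both operations preserve the overall annular non-crossing structure and the location of the Hadamard-product block, exactly as in the Kreweras-complement bookkeeping that drives the proof of Theorem~\ref{thm-main}.

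For type (b), i.e., the source $\mathfrak{s}_\kappa$ itself, the key move is to expand each summand $\langle M_{S}\odot M_{S'}\rangle$ using~\eqref{eq-formulaM}, obtaining a sum over pairs $(\pi_S,\pi_{S'})\in NCP(S)\times NCP(S')$. The Hadamard product then pairs the block of $\pi_S$ containing the index $r$ with the block of $\pi_{S'}$ containing the same index, creating a single "connecting" block whose $A$-contribution is precisely $\langle(\prod_{j\in B\cap[k]}A_j)\odot(\prod_{j\in B\cap[k+1,k+\ell]}A_j)\rangle$. Combining this connecting block with the disk non-crossing structures on the two circles yields either a picture in $\NCA(k,\ell)$ with a distinguished Hadamard block in $K(\pi)$, or -- when the resulting annular picture is degenerate and the $\kappa_4$ contribution ends up on the inside of a single block on each circle -- a marked partition in $NCP(k)\times NCP(\ell)$. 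All $z$-dependence that is not absorbed into the $A$-factors can be collected into the functions $\psi$ of~\eqref{eq-fpformulakap}, whose exact form is irrelevant for the present statement.

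The main obstacle is the combinatorial bookkeeping that makes the last paragraph rigorous: one must verify that, as we iterate the recursion, the disk non-crossing partitions arising from the two Hadamard products in $\mathfrak{s}_\kappa$ combine with the annular/marked structures supplied by the inductive hypothesis to give exactly the pictures $(K(\pi),\pi)\in\NCA(k,\ell)^2$ (resp.\ the marked $NCP(k)\times NCP(\ell)$ pictures) with a \emph{unique} block carrying the Hadamard split. In particular, one needs to show that no annular non-crossing configuration is overcounted or missed, and that the $\kappa_4$-generated connection is always localized in a single block of $K(\pi)$. This is the second-order, Hadamard-decorated analog of the Kreweras complement argument underlying Theorem~\ref{thm-main}, and it is essentially where the bulk of the work lies; once the bijection between terms produced by the recursion and the combinatorial objects in~\eqref{eq-fpformulakap} is set up, the remaining $z$-dependent factors can simply be defined to be the $\psi$'s.
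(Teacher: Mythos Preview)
Your approach is essentially the same as the paper's: induction on $k+\ell$ via the recursion~\eqref{eq-Mrecursion}, treating the recursive terms (your type~(a), the paper's $K_1,K_2,K_3$) and the source $\mathfrak{s}_\kappa$ (your type~(b), the paper's $K_4$) separately, and checking that each produces an $A$-dependence of the required Kreweras-complement form while absorbing the $z$-dependence into the $\psi$'s.

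One correction is worth making. You identify as the main obstacle the need to ``verify that no annular non-crossing configuration is overcounted or missed'' and to set up a bijection between terms and combinatorial objects. This is not needed, and the paper does not do it. Since the theorem only asserts the \emph{existence} of the functions $\psi$ and the right-hand side of~\eqref{eq-fpformulakap} is linear in each of them, it suffices to show that every term produced by the recursion has an $A$-dependence matching $K(\pi)$ for \emph{some} $\pi$; the corresponding $\psi$-contributions are then simply added up, and any $\pi$ that never arises gets $\psi\equiv 0$. The paper makes this explicit by repeatedly noting that ``the right-hand side of~\eqref{eq-fpformulakap} is linear in each $\psi_i$.'' So the bulk of the work is the picture argument (merging doubled labels, removing identity-labelled arcs) that shows each term fits \emph{some} annular or marked picture, not a counting or bijection argument. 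Also, a minor point: the paper takes empty arguments (not $k=\ell=1$) as the base case, which is cleaner since~\eqref{eq-Minitial} gives zero on both sides immediately.
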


Note that despite the obvious structural similarities between~\eqref{eq-fpformula} and~\eqref{eq-fpformulakap}, considering the minimal example
\begin{align*}
\m[T_1|T_2]&=\m_{GUE}[T_1|T_2]+\kappa_4\m_\kappa[T_1|T_2]\\
&=\langle A_1A_2\rangle\frac{m_1^2m_2^2}{(1-m_1m_2)}+\langle A_1\rangle \langle A_2\rangle\Big(\frac{m_1'm_2'}{(1-m_1m_2)^2}-\frac{m_1^2m_2^2}{(1-m_1m_2)}\Big)\\
&\quad+\kappa_4\Big(\langle \mathbf{a}_1\mathbf{a}_2\rangle m_1^3m_2^3+\langle A_1\rangle \langle A_2\rangle(2m_1m_1'm_2m_2'-m_1^3m_2^3)\Big)
\end{align*}
with $\sigma=\widetilde{\omega}_2=0$ already shows that the functions $\psi_i$ describing the dependence on the spectral parameters do not coincide with the free cumulant functions $m_\circ[\cdot]$ and $m_{\circ\circ}[\cdot|\cdot]$ in general. However,~\eqref{eq-Mrecursion} implies that the functions $\psi_i$ themselves satisfy a recursion which allows us to compute them inductively. 

\medskip
We continue by deriving an explicit formula for $\m_\sigma[\cdot|\cdot]$. As the source term $\mathfrak{s}_\sigma$ in the corresponding recursion is, up to transposes, identical to $\mathfrak{s}_{GUE}$, the solution of the recursion is analogous to Theorem~\ref{thm-main} but uses $m^{\#,\sigma}[\cdot]$ instead of the iterated divided differences $m[\cdot]$. We give the proof in Section~\ref{sect-restms}.

\begin{theorem}\label{thm-msigmaform}
Let $\alpha=((z_1,A_1),\dots,(z_k,A_k))$, $\beta=((z_{k+1},A_{k+1}),\dots,(z_{k+\ell},A_{k+\ell}))$ for some $k,\ell\in\N$ and abbreviate
\begin{displaymath}
m_{\sigma}[1,\dots,k|k+1,\dots,k+\ell]:=\m_{\sigma}[G_1,\dots G_k|G_{k+1},\dots,G_{k+\ell}]
\end{displaymath}
in the special case $A_1=\dots=A_{k+\ell}$. If $B\in\pi$ is a connecting cycle of $\pi\in\NCA(k,\ell)$ decomposed as $B=(i_1,\dots,i_r)\circ(j_1,\dots,j_s)$ with $i_1,\dots,i_r\subset[k]$ and $j_1,\dots,j_s\subset[k+1,k+\ell]$, we introduce the notation
\begin{displaymath}
B_{\sigma}:=(i_1,\dots,i_r)\circ(j_s,\dots,j_1).
\end{displaymath}
Then,
\begin{align}
\m_{\sigma}[\alpha|\beta]&=\sum_{\pi\in \NCA(k,\ell)}\Big(\prod_{B\in K(\pi)}\Big\langle \prod_{j\in B\cap[k]}A_j\Big(\prod_{j\in B\cap[k+1,k+j]}A_j\Big)^t\Big\rangle\Big)\prod_{B\in\pi}m^{\#,\sigma}_{\circ}[B_{\sigma}]\NN\\
&\quad+\sum_{\substack{\pi_1\times\pi_2\in NCP(k)\times NCP(\ell),\\ U_1\in\pi_1,U_2\in\pi_2\text{ marked}}}\Big(\prod_{\substack{B_1\in K(\pi_1),\\ B_2\in K(\pi_2)}}\Big\langle \prod_{j\in B_1} A_j\Big\rangle\Big\langle \prod_{j\in B_2} A_j\Big\rangle\Big)\label{eq-fpformulasigma}\\
&\quad\quad\times(m_\sigma)_{\circ\circ}[U_1|U_2]\prod_{\substack{B_1\in \pi_1\setminus U_1,\\ B_2\in\pi_2\setminus U_2}}m^{\#,\sigma}_{\circ}[B_1]m^{\#,\sigma}_{\circ}[B_2]\NN
\end{align}
where $\#=(0,\dots,0,1,\dots,1)$ with the number of zeros and ones matching the number of labels on the inner and outer circle involved in $B$, respectively. Moreover, $\smash{m^{\#,\sigma}_{\circ}[\cdot]}$ denotes the free cumulant function associated with $m^{\#,\sigma}[\cdot]$ via~\eqref{eq-mcrelation1} and $(m_\sigma)_{\circ\circ}[\cdot|\cdot]$ denotes the second-order free cumulant function asssociated to $m_{\sigma}[\cdot|\cdot]$ and $m^{\#,\sigma}[\cdot]$ via~\eqref{eq-mcrelation2}, respectively.
\end{theorem}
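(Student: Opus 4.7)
My plan is to mirror the proof of Theorem~\ref{thm-main} in two steps, exploiting that the recursion defining $\m_\sigma[\cdot|\cdot]$ differs from the one for $\m_{GUE}[\cdot|\cdot]$ only in its source term: $\mathfrak{s}_\sigma$ is obtained from $\mathfrak{s}_{GUE}$ by transposing every inner-circle factor (which reverses their order) and by replacing the iterated divided-difference quantities $\m[\cdot]$ by the $\sigma$-decorated analogs $m^{\#,\sigma}[\cdot]$ from Theorem~\ref{thm-skewedLL}. The rest of the recursion is structurally identical to the GUE case, so both the combinatorial skeleton of the solution and the inductive argument should carry over essentially verbatim, with $m_\circ$ systematically replaced by $m_\circ^{\#,\sigma}$ and $m_{\circ\circ}$ by $(m_\sigma)_{\circ\circ}$.

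First I would treat the scalar case $A_1=\dots=A_{k+\ell}=\Id$, in which~\eqref{eq-fpformulasigma} reduces to a purely combinatorial identity expressing $m_\sigma[1,\dots,k|k+1,\dots,k+\ell]$ as a sum over $\NCA(k,\ell)$ of products of first-order free cumulants $m_\circ^{\#,\sigma}[B_\sigma]$ plus a marked-partition contribution that implicitly defines $(m_\sigma)_{\circ\circ}[\cdot|\cdot]$ via Definition~\ref{def-circcirc} with companion first-order function $m^{\#,\sigma}$. This step proceeds by induction on $k+\ell$: the graph representation of $m^{\#,\sigma}[\cdot]$ (Lemma~\ref{lem-msharpgraphs}) plugged into $s_\sigma$ produces terms that, combined with the inductive hypothesis on the remaining summands of~\eqref{eq-mrecursion}, reassemble into a sum over annular non-crossing permutations obtained by attaching one connecting cycle — whose inner-circle portion is traversed in the reversed order encoded by $B_\sigma$ — to an already-identified annular or marked-disk structure. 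To promote this to general matrices, I would run a second induction on $k+\ell$: substituting the claimed formula~\eqref{eq-fpformulasigma} (inductively available on the right-hand side), the closed form~\eqref{eq-formulaM} for $\m[\cdot]$, and the transposed analog~\eqref{eq-mtransposes} into~\eqref{eq-Mrecursion} restricted to $\mathfrak{s}_\sigma$, the linear part of the recursion redistributes $A_1,\dots,A_{k+\ell}$ along the blocks of the respective Kreweras complements exactly as in the GUE case, while $\mathfrak{s}_\sigma$ contributes one additional connecting cycle whose transposed arguments produce precisely the combined trace factor $\langle \prod_{j\in B\cap [k]}A_j (\prod_{j\in B\cap [k+1,k+\ell]}A_j)^t \rangle$ appearing in~\eqref{eq-fpformulasigma}.

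The main obstacle will be the bookkeeping of transposes through the annular Kreweras complement. Concretely, one must verify that the reversed inner-arc orientation encoded by $B_\sigma$ is consistent with how the recursion peels off the connecting cycle containing position~$1$, that the transposes inside $\mathfrak{s}_\sigma$ propagate correctly so that every connecting block of $K(\pi)$ receives a Hermitian conjugate on exactly its inner-circle portion, and that the splitting between the $\NCA(k,\ell)$ contribution and the marked $NCP(k)\times NCP(\ell)$ contribution is precisely the one prescribed by Definition~\ref{def-circcirc} for the pair $(m_\sigma, m^{\#,\sigma})$. This is the analog here of the annular-orientation check in the GUE proof, made more delicate by the extra reversal coming from $\sigma$; once it is resolved, the rest of the argument is a direct transcription of Section~\ref{sect-mainproof}.
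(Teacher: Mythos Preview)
Your proposal is correct and takes essentially the same approach as the paper: mirror the proof of Theorem~\ref{thm-main}, replacing $m[\cdot]$ and $m_\circ[\cdot]$ by their bicolored analogs $m^{\#,\sigma}[\cdot]$ and $m_\circ^{\#,\sigma}[\cdot]$ from Lemma~\ref{lem-msharpgraphs}, and accounting for the reversed inner-circle order induced by the transposes in $\mathfrak{s}_\sigma$. The paper phrases the first step slightly differently---it explicitly constructs a modified graph family $\cG^\sigma(k,\ell)$ by replacing $\cG_4$ with a bicolored version $\cG_4^\sigma$ built from $NCG^\#$ on the relabeled index set $\{1,\dots,k,k+j,k+j-1,\dots,k+1,k+\ell,\dots,k+j\}$---but this is just the graph-theoretic packaging of your induction on $k+\ell$, and the remaining steps are, as you anticipate, a direct transcription of Section~\ref{sect-mainproof}.
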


Note that the set function $\m_{\sigma}[\cdot|\cdot]$ satisfies the same factorization property as $\m_{GUE}[\cdot|\cdot]$ and~$\m_{\kappa}[\cdot|\cdot]$. In the case $k=\ell=1$, we obtain the formula
\begin{align*}
\m_{\sigma}[T_1|T_2]=\langle A_1A_2^t\rangle\frac{m_1^2m_2^2}{1-\sigma m_1m_2}+\langle A_1\rangle\langle A_2\rangle\Big(\frac{m_1'm_2'}{(1-\sigma m_1m_2)^2}-\frac{m_1^2m_2^2}{1-\sigma m_1m_2}\Big).
\end{align*}
It remains to consider $\m_\omega[\cdot|\cdot]$. The proof of Theorem~\ref{thm-structureom} is given in Section~\ref{sect-restms}.

\begin{theorem}\label{thm-structureom}
Let $\alpha=((z_1,A_1),\dots,(z_k,A_k))$ and $\beta=((z_{k+1},A_{k+1}),\dots,(z_{k+\ell},A_{k+\ell}))$ for some $k,\ell\in\N$. Then  there exist
\begin{itemize}
\item[(i)] a family $(\Psi_{\pi,B})_{B\in\pi}$ of functions $\Psi_{B,\pi}:\C^{|B|}\rightarrow\C$ for every $\pi\in\NCA(k,\ell)$ and
\item[(ii)] a family  $(\Psi_{\pi,U_1,U_2})_{U_1\subset[k],U_2\subset[k+1,k+\ell]}$ of functions $\Psi_{\pi,U_1,U_2}:\C^{|U_1|}\times\C^{|U_2|}\rightarrow\C$ that are invariant under interchanging of the two arguments as well as functions $(\Psi_{\pi_1,B_1})_{B_1\in\pi_1\setminus U_1}$ and $(\Psi_{\pi_2,B_2})_{B_2\in\pi_2\setminus U_2}$ with $\Psi_{\pi_i,B_i}:\C^{|B_i|}\rightarrow\C$ for every $\pi=\pi_1\times\pi_2\in NCP(k)\times NCP(\ell)$ with marked blocks $ U_1\in\pi_1$ and $U_2\in\pi_2$
\end{itemize}
such that 
\begin{align}
&\m_{\omega}[\alpha|\beta]=\sum_{\pi\in \NCA(k,\ell)}\prod_{B\in K(\pi)}\Big\langle\Big( \prod_{j\in B\cap[k]}A_j\Big)\odot\Big( \prod_{j\in B\cap[k+1,k+\ell]}A_j\Big)\Big\rangle\prod_{B\in\pi}\Psi_{B,\pi}(z_j|j\in B)\NN\\
&\quad+\sum_{\substack{\pi=\pi_1\times\pi_2\in NCP(k)\times NCP(\ell),\\ U_1\in\pi_1,U_2\in\pi_2\text{ marked}}}\Big(\prod_{\substack{B_1\in K(\pi_1),\\ B_2\in K(\pi_2)}}\Big\langle \prod_{j\in B_1} A_j\Big\rangle\Big\langle \prod_{j\in B_2} A_j\Big\rangle\Big)\label{eq-fpformulaom}\\
&\quad\quad\times \Psi_{\pi,U_1,U_2}(z_j|j\in U_1\cup U_2)\prod_{\substack{B_1\in \pi_1\setminus U_1,\\ B_2\in\pi_2\setminus U_2}}\Psi_{\pi_1,B_1}(z_j|j\in B_1)\Psi_{\pi_2,B_2}(z_j|j\in B_2).\NN
\end{align}
\end{theorem}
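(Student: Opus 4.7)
My approach is to prove Theorem~\ref{thm-structureom} by induction on $k+\ell$, following the same overall template used for Theorem~\ref{thm-main} and Theorem~\ref{thm-structurekap}. By the decomposition~\eqref{eq-Mdecomp} and the linearity of the recursion~\eqref{eq-Mrecursion}, the component $\m_{\omega}[\cdot|\cdot]$ is uniquely characterized by the symmetry property, the initial condition~\eqref{eq-Minitial}, and the recursion~\eqref{eq-Mrecursion} with $\mathfrak{s}_{GUE}$, $\mathfrak{s}_\kappa$, $\mathfrak{s}_\sigma$ set to zero and only $\mathfrak{s}_\omega$ retained. Comparing $\mathfrak{s}_\omega$ in~\eqref{eq-sourceomega} with $\mathfrak{s}_\kappa$ in~\eqref{eq-sourcekappa}, we observe that $\mathfrak{s}_\omega$ is structurally simpler: it consists of a single Hadamard product of $M_{[k]}A_k$ (or $M_{[k]}$, in the $q_{1,k}$-term) with one cyclic rotation $M_{(k+j,\dots,k+\ell,k+1,\dots,k+j)}$, so each source-term contribution produces exactly one connecting cycle that visits the entire outer circle in one pass and the entire inner circle in one (possibly rotated) pass.

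\textbf{Base case and inductive step.} The base case is $k+\ell=2$, i.e.\ $k=\ell=1$: the recursion reduces to $\m_\omega[T_1|T_2]=m_1\mathfrak{s}_\omega$, which yields a formula of the form~\eqref{eq-fpformulaom} with the unique $\pi\in\NCA(1,1)$ providing a Hadamard-product term and a single marked element of $NCP(1)\times NCP(1)$ providing a $\langle A_1\rangle\langle A_2\rangle$ term. This defines the base values of the functions $\Psi_{\pi,B}$ and $\Psi_{\pi,U_1,U_2}$. For the inductive step, I expand the right-hand side of~\eqref{eq-Mrecursion} (with only $\mathfrak{s}_\omega$) and group the terms into two categories: (a) the lines involving $\m_\omega[\cdot|\cdot]$ on a strictly smaller pair of arguments, to which the induction hypothesis applies directly, and (b) the source-term contributions from $\mathfrak{s}_\omega$, which are already written entirely in terms of $M$-matrices and Hadamard products of traces.

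\textbf{Combinatorial identification.} The key combinatorial claim, parallel to the proofs of Theorems~\ref{thm-main} and~\ref{thm-structurekap}, is that each term produced by one step of the recursion corresponds bijectively to an annular non-crossing permutation (or marked non-crossing partition) via the block/cycle containing the index $1$. More precisely, the $m_1$-prefactor together with the factor $q_{1,k}$ (or its absence) encodes the structure of the cycle containing $1$ on the outer circle, the sums over $j$ in the third and fourth line of~\eqref{eq-Mrecursion} encode the gluing of a smaller annular structure to a disk-non-crossing block on the outer circle, and the $j$-sum inside $\mathfrak{s}_\omega$ encodes the rotation of the inner cycle relative to $1$. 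In the reduction terms, the smaller $\m_\omega$ provides the annular/marked structure on $[2,k]\cup[k+1,k+\ell]$ and the disk factors $\m[\cdot]$ provide the additional disk-non-crossing blocks; by~\eqref{eq-mgraphs} and~\eqref{eq-mcircgraphs}, these are absorbed into the $\Psi$ functions of the resulting $\pi$. In the source terms, the connecting cycle containing $1$ is the "big" connecting cycle with all of $[k]$ on the outer side, which produces precisely the claimed factor $\langle(\prod_{j\in B\cap[k]}A_j)\odot(\prod_{j\in B\cap[k+1,k+\ell]}A_j)\rangle$ from $\mathfrak{s}_\omega$, while the $q_{1,k}\langle A_k\rangle$ term feeds into the marked-partition contribution where the outer marked block is precisely $[k]$.

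\textbf{Main obstacle.} The hard part will be the bookkeeping to ensure that each $\pi\in\NCA(k,\ell)$ and each marked element of $NCP(k)\times NCP(\ell)$ is produced exactly once by the recursion, and that the recursively defined $\Psi$ functions depend only on the spectral parameters inside the relevant block (so that the factorization in~\eqref{eq-fpformulaom} between the "matrix part" and the "spectral part" is genuine). This amounts to repeating the bijective argument in the proof of Theorem~\ref{thm-structurekap}: every annular non-crossing permutation on $[k+\ell]$ is obtained in a unique way by either (i) peeling off the block containing $1$ as a non-connecting disk block on the outer circle, leaving a smaller annular structure, or (ii) declaring the block containing $1$ as \emph{the} connecting cycle contributed by $\mathfrak{s}_\omega$, leaving two independent disk-non-crossing partitions on the remaining labels of the outer and inner circle. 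The same dichotomy for marked non-crossing partitions, combined with the fact that $\mathfrak{s}_\omega$ only ever introduces the "big" connecting block $[k]$ paired with a single-block rotation on the inner circle, shows that the expansion closes on the desired family of combinatorial objects; the $\Psi$-functions are then obtained by explicit recursion. As in Theorem~\ref{thm-structurekap}, we do not claim $\Psi_{\pi,B}=m_\circ[B]$ or $\Psi_{\pi,U_1,U_2}=m_{\circ\circ}[U_1|U_2]$; the specific spectral factors arising from $\mathfrak{s}_\omega$ differ from the GUE case, but the block-wise factorization persists.
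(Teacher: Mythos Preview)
Your approach is essentially the same as the paper's: induction via the recursion~\eqref{eq-Mrecursion}, with the homogeneous terms handled exactly as in the proof of Theorem~\ref{thm-structurekap} and only the new source term $\mathfrak{s}_\omega$ requiring separate analysis.

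Two minor imprecisions worth correcting. First, when you expand $\langle (M_{[k]}A_k)\odot M_{(k+j,\dots,k+j)}\rangle$ via~\eqref{eq-LLformula}, the matrix appearing in the Hadamard product is $\prod_{i\in B(k)}A_i$ for the Kreweras block $B(k)\ni k$ of some $\pi_1\in NCP(k)$, not $A_1\cdots A_k$ in general; so the source term does \emph{not} only produce the single ``big'' connecting cycle with all of $[k]$ on the outer side, but rather all pairs of disk non-crossing partitions on the two circles, linked through one Hadamard factor. This is exactly what the paper uses (``any annular non-crossing permutation can be decomposed uniquely into two permutations acting on the inner and outer circle''). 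Second, since the theorem only asserts existence of the $\Psi$ functions, you do not need the ``exactly once'' bijection claimed in your Main Obstacle paragraph: it suffices that every term produced by the recursion has matrix part of the form $\prod_{B\in K(\pi)}\langle\cdots\rangle$ for \emph{some} admissible $\pi$, after which the spectral factors are simply collected and summed by $\pi$.
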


Note that the last contribution $\m_{\omega}[\cdot|\cdot]$ also satisfies the same factorization property as $\m_{GUE}[\cdot|\cdot]$, $\m_{\kappa}[\cdot|\cdot]$, and $\m_\sigma[\cdot|\cdot]$. In the case $k=\ell=1$, we have the formula
\begin{align*}
\m_{\omega}[T_1|T_2]=\langle \mathbf{a}_1\mathbf{a}_2\rangle m_1^2m_2^2+\langle A_1\rangle\langle A_2\rangle(m_1'm_2'-m_1^2m_2^2).
\end{align*}
It further follows from~\eqref{eq-Mrecursion} that the functions $\Psi_i$ themselves satisfy a recursion which allows us to compute them inductively.

\subsection{General Test Functions and Applications to Free Probability}\label{sect-CLT2}
We conclude the discussion by comparing the explicit formulas from Section~\ref{sect-formulas} to the free probability results in~\cite{MaleMingoPecheSpeicher2020}. To do so, we generalize the CLT for resolvents in Theorem~\ref{thm-resolventCLT2} to a full multi-point functional CLT for ($N$-independent) test functions $f_1,\dots,f_k$, i.e., a CLT for the statistics $Y_\alpha$ in~\eqref{eq-defY}. The proof is analogous to that of~\cite[Thm.~2.7]{JRmain} and hence omitted. Note that we restrict Theorem~\ref{thm-functCLT} to real-valued test functions only for simplicity. Extending the results in this section to complex-valued test functions only requires minor modifications to the argument.

\begin{theorem}[Macroscopic multi-point functional CLT]\label{thm-functCLT}
Let $k\in\N$ and pick deterministic matrices $A_1,\dots,A_k\in\C^{N\times N}$ with $\|A_j\|\lesssim1$. Let further $W$ be a Wigner matrix satisfying Assumption~\ref{as-Wigner2} and let $f_1,\dots,f_k\in H^{k+1}(\R)$ be real-valued compactly supported test functions with $\|f_j\|\lesssim1$. Then, for any $\eps>0$, the centered statistics~\eqref{eq-defY} are approximately distributed~(in the sense of moments) as
\begin{equation}
NY^{(k,a)}_{\alpha}=\xi(\alpha)+\cO\Big(\frac{N^\eps}{\sqrt{N}}\Big)
\end{equation}
with a centered ($N$-dependent) Gaussian process $\xi(\alpha)$ satisfying
\begin{align}
&\E\xi(\alpha)\xi(\beta)=\sum_{\pi\in \NCA(k,\ell)}\Big(\prod_{B\in K(\pi)}\Big\langle \prod_{j\in B}A_j\Big\rangle\Big)\Phi_{\pi}^{(GUE)}(f_1,\dots,f_{k+\ell})\NN\\
&\quad +\kappa_4 \sum_{\pi\in \NCA(k,\ell)}\prod_{B\in K(\pi)}\Big\langle\Big( \prod_{j\in B\cap[k]}A_j\Big)\odot\Big( \prod_{j\in B\cap[k+1,k+\ell]}A_j\Big)\Big\rangle\Phi_{\pi}^{(\kappa)}(f_1,\dots,f_{k+\ell})\NN\\
&\quad +\sigma\sum_{\pi\in \NCA(k,\ell)}\Big(\prod_{B\in K(\pi)}\Big\langle \prod_{j\in B\cap[k]}A_j\Big(\prod_{j\in B\cap[k+1,k+j]}A_j\Big)^t\Big\rangle\Big)\Phi_{\pi}^{(\sigma)}(f_1,\dots,f_{k+\ell})\label{eq-covfunctionsgeneral}\\
&\quad + \widetilde{\omega}_2\sum_{\pi\in \NCA(k,\ell)}\prod_{B\in K(\pi)}\Big\langle\Big( \prod_{j\in B\cap[k]}A_j\Big)\odot\Big( \prod_{j\in B\cap[k+1,k+\ell]}A_j\Big)\Big\rangle\Phi_{\pi}^{(\omega)}(f_1,\dots,f_{k+\ell})\NN\\
&\quad+\sum_{\substack{\pi_1\times\pi_2\in NCP(k)\times NCP(\ell),\\ U_1\in\pi_1,U_2\in\pi_2\text{ marked}}}\Big(\prod_{\substack{B_1\in K(\pi_1),\\ B_2\in K(\pi_2)}}\Big\langle \prod_{j\in B_1} A_j\Big\rangle\Big\langle \prod_{j\in B_2} A_j\Big\rangle\Big)\Phi_{\pi_1\times\pi_2,U_1\times U_2}(f_1,\dots,f_{k+\ell}).\NN
\end{align}
Here, $\beta$ denotes another multi-index of length $\ell$ containing the deterministic matrices $A_{k+1},\dots,A_{k+\ell}$ satisfying $\|A_j\|\lesssim1$ and the test functions $f_{k+1},\dots,f_{k+\ell}\in H^{\ell+1}(\R)$. The functions $\smash{\Phi_{\pi}^{(\cdot)}}$ and $\Phi_{\pi_1\times\pi_2,U_1\times U_2}$ in~\eqref{eq-covfunctionsgeneral} can be computed recursively and only depend on the underlying permutation resp. marked partition, the functions $f_1,\dots,f_{k+\ell}$ and the model parameters $\kappa_4$, $\sigma$, and $\widetilde{\omega}_2$.
\end{theorem}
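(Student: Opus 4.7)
The plan is to deduce Theorem~\ref{thm-functCLT} from the resolvent CLT (Theorem~\ref{thm-resolventCLT2}) via a Helffer-Sj\"ostrand representation, following the strategy of \cite[Thm.~2.7]{JRmain}. First, for each real-valued test function $f_j\in H^{k+1}(\R)$ I would fix an almost analytic extension $\widetilde f_j$ of quasi-analytic order matching the Sobolev regularity and write
\begin{equation*}
f_j(W)=\frac{1}{\pi}\int_\C \bar\partial\widetilde f_j(z)\,G(z)\,\dx^2 z.
\end{equation*}
Plugging this into the definition~\eqref{eq-defY} of $Y_\alpha^{(k)}$ represents it as a $k$-fold integral of the centered resolvent statistic $X_{\alpha'}^{(k)}$ (with integration variables $z_1,\dots,z_k$ playing the role of spectral parameters) against the smooth compactly supported weight $\prod_{j=1}^k\bar\partial\widetilde f_j(z_j)$.

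Next, I would split the integration regions into a bulk part where $|\Im z_j|\gtrsim 1$ and a boundary layer where $|\Im z_j|$ is small. On the bulk part I can apply Theorem~\ref{thm-resolventCLT2} (with Fubini) to each moment $\E\prod_i Y_{\alpha_i}^{(k_i)}$, turning it into an asymptotic Wick sum over pairings with factors $\int \m[\alpha_i|\alpha_j]\prod\bar\partial\widetilde f\,\dx^2 z$. On the boundary layer, the integrand is bounded using the optimal multi-resolvent local law (Theorem~\ref{thm-multiG-LL}) together with the $N^{-1}$ improvement and the quasi-analytic vanishing $\bar\partial\widetilde f_j(z)=\cO(|\Im z|^{k})$ coming from the $H^{k+1}$-regularity; this yields that the small-imaginary-part contributions are negligible in the sense of moments, and the remaining error of size $N^\eps/\sqrt{N}$ comes from the CLT error in~\eqref{eq-resolventCLT2}.

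The covariance of the limiting Gaussian process $\xi(\alpha)$ is then by construction
\begin{equation*}
\E\xi(\alpha)\xi(\beta)=\frac{1}{\pi^{k+\ell}}\int_{\C^{k+\ell}}\m[\alpha'|\beta']\prod_{j=1}^{k+\ell}\bar\partial\widetilde f_j(z_j)\,\dx^2 z_j,
\end{equation*}
where $\alpha',\beta'$ carry the integration variables as spectral parameters. Using the decomposition~\eqref{eq-Mdecomp} and inserting the four explicit formulas from Theorems~\ref{thm-main}, \ref{thm-structurekap}, \ref{thm-msigmaform} and~\ref{thm-structureom}, the integrand factorizes into a deterministic-matrix factor (which does not involve the $z_j$ and therefore pulls out) and a spectral factor that splits as a product over the blocks $B$ of $\pi$ resp.\ of $K(\pi)$. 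Performing the Helffer-Sj\"ostrand integration block-wise defines the coefficient functions $\Phi_{\pi}^{(GUE)}$, $\Phi_{\pi}^{(\kappa)}$, $\Phi_{\pi}^{(\sigma)}$, $\Phi_{\pi}^{(\omega)}$, and $\Phi_{\pi_1\times\pi_2,U_1\times U_2}$ appearing in~\eqref{eq-covfunctionsgeneral}; their dependence only on the test functions and the underlying combinatorial object is inherited from the factorized form of Theorems~\ref{thm-main}--\ref{thm-structureom}.

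The main obstacle is the boundary-layer analysis: the integrand of the Helffer-Sj\"ostrand representation degenerates as $|\Im z_j|\to 0$, so one must carefully balance the singular size of $\m[\alpha|\beta]$ (and of higher moments of the $X$'s) against the vanishing of $\bar\partial\widetilde f_j$, using the sharp $1/N$ and $1/\sqrt{N}$ error bounds of Theorem~\ref{thm-multiG-LL}. This is precisely the step where the Sobolev regularity $H^{k+1}$ is used and where the argument of \cite{JRmain} must be adapted to the general Wigner model of Assumption~\ref{as-Wigner2}; the combinatorics of $\m[\cdot|\cdot]$ itself is already handled by Section~\ref{sect-formulas}, so the analytic input from \cite{JRmain} transfers essentially verbatim.
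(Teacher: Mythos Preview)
Your proposal is correct and follows the same route as the paper: the proof is explicitly stated to be analogous to \cite[Thm.~2.7]{JRmain} (and is omitted), i.e., one passes from Theorem~\ref{thm-resolventCLT2} to general test functions via the Helffer--Sj\"ostrand representation and then reads off the covariance structure~\eqref{eq-covfunctionsgeneral} from the decomposition~\eqref{eq-Mdecomp} together with Theorems~\ref{thm-main}--\ref{thm-structureom}. Since the present paper works only in the macroscopic regime, the boundary-layer analysis you identify as the main obstacle is in fact substantially simpler here than in \cite{JRmain} (where mesoscopic scalings are the technical difficulty), so the analytic input indeed transfers verbatim.
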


For the later applications, we note the following formulas for the case $\kappa_4=\sigma=\widetilde{\omega}_2=0$. Corollary~\ref{cor-covarianceLL} below is proven in~\cite{JRmain} using the formulas in Section~\ref{sect-formulas} as input.

\begin{corollary}[Cor.~2.9 in~\cite{JRmain}]\label{cor-covarianceLL}
Consider Theorem~\ref{thm-functCLT} for a GUE matrix\footnote{We only use that $W$ satisfies Assumption~\ref{as-Wigner2} and $\kappa_4=\sigma=\widetilde{\omega}_2=0$.} $W$. In this case, we have
\begin{equation}\label{eq-phis1}
\Phi_\pi^{(GUE)}(f_1,\dots,f_{k+\ell}):=\prod_{B\in\pi}\mathrm{sc}_{\circ}[B],
\end{equation}
where $\mathrm{sc}_{\circ}[\cdot]$ denotes the free cumulant function associated with
\begin{equation}\label{eq-defsc}
\mathrm{sc}[i_1,\dots,i_n]:=\int_{-2}^2\Big[\prod_{j=1}^n f_{i_j}(x)\Big]\rho_{sc}(x)\dx x,
\end{equation}
with $\rho_{sc}$ as in~\eqref{eq-scdensity}, and
\begin{equation}\label{eq-phis2}
\Phi_{\pi_1\times\pi_2,U_1\times U_2}(f_1,\dots,f_{k+\ell}):=\mathrm{sc}_{\circ\circ}[U_1|U_2]\prod_{\substack{B_1\in \pi_1\setminus U_1,\\ B_2\in\pi_2\setminus U_2}}\mathrm{sc}_{\circ}[B_1]\mathrm{sc}_{\circ}[B_2],
\end{equation}
where $\mathrm{sc}_{\circ\circ}[\cdot|\cdot]$ denotes the second-order free cumulants associated with $\mathrm{sc}[\cdot]$ in~\eqref{eq-defsc} and
\begin{equation}\label{eq-defsc2}
\mathrm{sc}[i_1,\dots,i_n|i_{n+1},\dots,i_{n+m}]:=\frac12\int_{-2}^2\int_{-2}^2\Big(\prod_{j=1}^nf_{i_j}(x)\Big)'\Big(\prod_{j=1}^mf_{i_{n+j}}(y)\Big)'u(x,y)\dx x\dx y
\end{equation}
by Definition~\ref{def-circcirc}.The kernel $u:[-2,2]\times[-2,2]\rightarrow\R$ is given by
\begin{equation}\label{eq-kernel}
u(x,y):=\frac{1}{4\pi^2}\ln\Big[\frac{(\sqrt{4-x^2}+\sqrt{4-y^2})^2(xy+4-\sqrt{4-x^2}\sqrt{4-y^2})}{(\sqrt{4-x^2}-\sqrt{4-y^2})^2(xy+4+\sqrt{4-x^2}\sqrt{4-y^2})}\Big].
\end{equation}
\end{corollary}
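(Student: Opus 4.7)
The plan is to combine three ingredients: Theorem~\ref{thm-functCLT} (functional CLT with recursively defined covariance functions), Theorem~\ref{thm-main} (explicit solution of the GUE recursion \eqref{eq-Mrecursion}), and the Helffer--Sjöstrand functional calculus, which transfers identities at the level of resolvents to identities at the level of regular test functions. Since $\Phi^{(GUE)}_\pi$ and $\Phi_{\pi_1\times\pi_2,U_1\times U_2}$ in \eqref{eq-covfunctionsgeneral} do not depend on the deterministic matrices $A_j$, it suffices to specialize Theorem~\ref{thm-main} to $A_1 = \cdots = A_{k+\ell} = \Id$. With this choice every trace $\langle\prod_{j\in B}A_j\rangle$ equals one and \eqref{eq-fpformula} collapses to
\begin{equation*}
m_{GUE}[1,\dots,k\,|\,k+1,\dots,k+\ell] = \sum_{\pi\in\NCA(k,\ell)} \prod_{B\in\pi} m_\circ[B] \;+\; \sum_{\substack{\pi_1\times\pi_2,\\ U_1,U_2 \text{ marked}}} m_{\circ\circ}[U_1\,|\,U_2]\prod_{B_j\in\pi_j\setminus U_j} m_\circ[B_j].
\end{equation*}
This is structurally identical to \eqref{eq-phis1}--\eqref{eq-phis2}, with the $\mathrm{sc}$-cumulants replaced by their $m$-analogues. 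The remaining task is therefore to show that the passage from resolvents to test functions intertwines $m_\circ\leftrightarrow \mathrm{sc}_\circ$ and $m_{\circ\circ}\leftrightarrow\mathrm{sc}_{\circ\circ}$.

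For this I apply the Helffer--Sjöstrand representation $f_j(W) = -\pi^{-1}\int_\C \overline{\partial}\tilde{f}_j(z)\, G(z)\, \dx^2 z$ for each $j$, where $\tilde{f}_j$ is a compactly supported almost-analytic extension of $f_j$. Substituting into the two centered traces in \eqref{eq-defY}, exchanging integration with expectation, and applying the resolvent CLT \eqref{eq-resolventCLT2} expresses the limiting covariance for test functions as a $(k+\ell)$-fold Helffer--Sjöstrand integral of the resolvent covariance $\m_{GUE}[\alpha_{\mathrm{res}}|\beta_{\mathrm{res}}]$ against $\prod_j \overline{\partial}\tilde{f}_j$. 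The $N$-independent bounds on $\m[\cdot|\cdot]$ recalled in Section~\ref{sect-prelim1}, together with the standard decay $|\overline{\partial}\tilde{f}_j(z)|\lesssim |\Im z|^{k}$, justify the swap and control the boundary behaviour near $[-2,2]$. By the resolvent identity above, the corollary then reduces to the two \emph{transfer identities}
\begin{equation*}
\frac{1}{\pi^n}\int_{\C^n} \Big(\prod_{j=1}^n \overline{\partial}\tilde{f}_{i_j}(z_j)\Big)\, m[z_1,\dots,z_n]\, \prod_{j=1}^n \dx^2 z_j = \mathrm{sc}[i_1,\dots,i_n]
\end{equation*}
and its annular analogue obtained by replacing $m[\cdots]$ with $m_{GUE}[\cdots|\cdots]$ and $\mathrm{sc}[\cdots]$ with $\mathrm{sc}[\cdots|\cdots]$. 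The first identity follows immediately from the representation $m[z_1,\dots,z_n]=\int \rho_{sc}(x)\prod_j(z_j-x)^{-1}\dx x$, swap of integration order, and the scalar Helffer--Sjöstrand resolution $f_{i_j}(x)=-\pi^{-1}\int \overline{\partial}\tilde{f}_{i_j}(z_j)(x-z_j)^{-1}\dx^2 z_j$. Since $m_\circ$ and $\mathrm{sc}_\circ$ are the unique solutions of the \emph{same} linear system of non-crossing-partition relations \eqref{eq-mcrelation1}, this transfer automatically intertwines $m_\circ[B]\mapsto \mathrm{sc}_\circ[B]$; the analogous argument applied to \eqref{eq-mcrelation2} will then give $m_{\circ\circ}[U_1|U_2]\mapsto \mathrm{sc}_{\circ\circ}[U_1|U_2]$ once the second transfer identity is established.

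The main obstacle is verifying this second identity, and in particular identifying the explicit kernel $u(x,y)$ of \eqref{eq-kernel} from the base case $m_{GUE}[1|2]=m_1'm_2'/(1-m_1m_2)^2$ computed in \eqref{eq-msmallest}. I propose to exploit the observation that $m_{GUE}[z_1|z_2] = -\partial_{z_1}\partial_{z_2}\log(1-m(z_1)m(z_2))$, which after integration by parts in $\partial_{z_j}$ (using $\partial_{z_j}\overline{\partial}\tilde{f}_j=\overline{\partial}\partial_{z_j}\tilde{f}_j$) reduces the double Helffer--Sjöstrand integral to
\begin{equation*}
-\frac{1}{\pi^2}\int\!\!\int \overline{\partial}\partial_{z_1}\tilde{f}_1(z_1)\,\overline{\partial}\partial_{z_2}\tilde{f}_2(z_2)\,\log\!\big(1-m(z_1)m(z_2)\big)\,\dx^2 z_1\,\dx^2 z_2.
\end{equation*}
Each single Helffer--Sjöstrand integral then collapses to a real-line integral involving the jump of $\log(1-m(\cdot)m(\cdot))$ across $[-2,2]$, with $\partial_{z_j}\tilde{f}_j$ evaluating to $f_j'$ on the real axis. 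Using the boundary values $m(x\pm \ri 0)=-x/2\pm \ri\pi\rho_{sc}(x)$ to compute the jump produces the logarithmic structure of \eqref{eq-kernel}. This is purely explicit complex analysis requiring no further random matrix input beyond Theorem~\ref{thm-main}, but careful bookkeeping of the branch cuts of $\log$ and of the square-root singularities of $\sqrt{4-x^2}$ at the spectral edges will be the technically heaviest part of the argument.
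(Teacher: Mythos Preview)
The paper does not give a self-contained proof of this corollary; it simply records that ``Corollary~\ref{cor-covarianceLL} below is proven in~\cite{JRmain} using the formulas in Section~\ref{sect-formulas} as input.'' Your outline is precisely of this type: Theorem~\ref{thm-main} supplies the resolvent-level structure, and Helffer--Sj\"ostrand is the natural bridge to general test functions. The factorization of the HS integral over blocks (each $z_j$ lies in a unique $B\in\pi$) and the linearity of the cumulant relations~\eqref{eq-mcrelation1},~\eqref{eq-mcrelation2} are exactly the right observations for passing from $m_\circ\to\mathrm{sc}_\circ$ and $m_{\circ\circ}\to\mathrm{sc}_{\circ\circ}$.

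There is one step you should make explicit. You state the second transfer identity for general $k,\ell$ but only verify the base case $k=\ell=1$ (the kernel computation). The missing link is the divided-difference structure recorded in the remark after Theorem~\ref{thm-resolventCLT2}: for $A_j=\Id$ and $\sigma=0$, the function $m_{GUE}[1,\dots,k|k+1,\dots,k+\ell]$ is the iterated divided difference (in both arguments) of $m_{GUE}[z|w]$. Since the divided difference of $(x-z)^{-1}$ in $z_1,\dots,z_k$ is $\prod_j(x-z_j)^{-1}$, applying HS to each variable turns a divided difference into the product $\prod_j f_{i_j}(x)$, which is exactly how the products in~\eqref{eq-defsc2} arise. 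Without invoking this reduction, your argument for the annular transfer identity is confined to the scalar case and does not yet cover the general $\mathrm{sc}[\cdot|\cdot]$.

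A minor remark: setting $A_j=\Id$ is harmless but unnecessary. Since the trace factors $\langle\prod_{j\in B}A_j\rangle$ in~\eqref{eq-fpformula} are constant in the $z_j$'s, the HS integral acts only on the $m_\circ$, $m_{\circ\circ}$ factors regardless; keeping the $A_j$ general and matching coefficients block by block is in fact the cleaner way to read off the $\Phi$'s directly.
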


We remark that the formula~\eqref{eq-kernel} also appears in~\cite{DiazJaramilloPardo2022} and~\cite{Vova2023} (see also~\cite[Cor.~3.8]{JRmain}).

\medskip
Whenever $f_j(x)=x$ for all $j=1,\dots,k+\ell$ or, more generally, $f_j$ is an ($N$-independent) polynomial\footnote{We implicitly assume $f_j$ to be compactly supported by setting $\widetilde{f}_j(x)=f_j(x)\chi(x)$, where $f_j$ is a polynomial supported on all of $\R$ and $\chi$ is a smooth cutoff function that is equal to one on $[-5/2,5/2]$ and equal to zero on $[-3,3]^c$. Since $f_j(W)=\widetilde{f}_j(W)$ with high probability by eigenvalue rigidity, we may use~$f_j$ and $\widetilde{f}_j$ interchangeably here.}, the $N\rightarrow\infty$ limit of~\eqref{eq-covfunctionsgeneral} describes the second-order limiting distribution of GUE and deterministic matrices in free probability. It is readily checked that Theorem~\ref{thm-functCLT} indeed coincides with the free probability literature in this case. The computations to obtain Corollary~\ref{cor-fpapplication} are included in Appendix~\ref{app-ncpairings}.

\begin{corollary}\label{cor-fpapplication}
Under the assumptions of Corollary~\ref{cor-covarianceLL} let $f_1(x)=\dots=f_{k+\ell}(x)=x$, i.e., $\smash{Y^{(k,a)}_{\alpha}}=\langle WA_1\dots WA_k\rangle-\E\langle\dots\rangle$ and $\smash{Y^{(\ell,b)}_{\beta}}=\langle WA_{k+1}\dots WA_{k+\ell}\rangle-\E\langle\dots\rangle$. Then,
\begin{equation}\label{eq-fpapplication}
\lim_{N\rightarrow\infty}N^2\E\Big(Y^{(k,a)}_{\alpha}Y^{(\ell,b)}_{\beta}\Big)=\sum_{\pi\in \NCA_2(k,\ell)}\Big(\prod_{B\in K(\pi)}\Big\langle \prod_{j\in B}A_j\Big\rangle\Big)
\end{equation}
where $\NCA_2(k,\ell)$ denotes the pairings in $\NCA(k,\ell)$.
\end{corollary}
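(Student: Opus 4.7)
The plan is to leverage Corollary~\ref{cor-covarianceLL} and specialize the formulas~\eqref{eq-phis1} and~\eqref{eq-phis2} to the case $f_j(x) = x$. First I would identify the set function $\mathrm{sc}[\cdot]$ from~\eqref{eq-defsc} with the moments of the semicircle law, which equal the Catalan number $C_{n/2}$ for even $n$ and zero for odd $n$. Inverting the moment--cumulant relation~\eqref{eq-mcrelation1} via the Catalan recursion (or equivalently by recognizing the semicircle as the free analog of the Gaussian) yields immediately that the first-order free cumulants satisfy $\mathrm{sc}_\circ[B] = \delta_{|B|, 2}$.

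With this in hand, $\Phi_\pi^{(GUE)}(f_1, \dots, f_{k+\ell}) = \prod_{B \in \pi} \mathrm{sc}_\circ[B]$ vanishes unless every block of $\pi$ is a pair. The first sum in~\eqref{eq-covfunctionsgeneral} therefore collapses to the sum over $\NCA_2(k,\ell)$, with each surviving term contributing coefficient $1$, which reproduces exactly the right-hand side of~\eqref{eq-fpapplication}.

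It remains to verify that the marked-partition sum in~\eqref{eq-covfunctionsgeneral} vanishes. Since the unmarked blocks must again be pairs (by $\mathrm{sc}_\circ[B] = \delta_{|B|,2}$), this reduces to proving $\mathrm{sc}_{\circ\circ}[U_1|U_2] = 0$ for all finite ordered sets $U_1, U_2$. I would proceed by induction on $|U_1| + |U_2|$: the defining relation~\eqref{eq-mcrelation2} expresses $\mathrm{sc}[S_1|S_2]$ as the first (annular non-crossing permutation) sum --- which by $\mathrm{sc}_\circ[B] = \delta_{|B|,2}$ equals $|\NCA_2(|S_1|, |S_2|)|$ --- plus the second (marked partition) sum containing the $\mathrm{sc}_{\circ\circ}[\cdot|\cdot]$ terms. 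Using the key identity $\mathrm{sc}[n|m] = |\NCA_2(n,m)|$, the two $|\NCA_2|$-contributions cancel, and the inductive hypothesis leaves only $\mathrm{sc}_{\circ\circ}$ evaluated on the coarsest marked partition $\{\{\underline{S_1}\}\} \times \{\{\underline{S_2}\}\}$, forcing it to vanish as well. The base case $|S_1| = |S_2| = 1$ is immediate from $\NCA_2(1,1) = \{(12)\}$ and $\mathrm{sc}[1|1] = 1$.

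The main obstacle is the identity $\mathrm{sc}[n|m] = |\NCA_2(n,m)|$, i.e., explicitly evaluating
\begin{equation*}
\mathrm{sc}[n|m] = \frac{nm}{2} \int_{-2}^2 \int_{-2}^2 x^{n-1} y^{m-1}\, u(x,y)\, \dx x\, \dx y
\end{equation*}
with the logarithmic kernel~\eqref{eq-kernel}. This can be verified by the Chebyshev substitution $x = 2\cos\theta$, $y = 2\cos\phi$, which reduces $u(x,y)$ to a Fourier series whose coefficients encode precisely the counts of non-crossing annular pairings. Alternatively, one can bypass the direct integration by applying Theorem~\ref{thm-functCLT} to polynomial test functions $f_j(x) = x$ (using a smooth compactly supported surrogate via eigenvalue rigidity, as noted in the footnote before Corollary~\ref{cor-fpapplication}) and comparing to the classical Wick calculation for GUE, which directly expresses $\lim N^2 \Cov(\Tr W^n, \Tr W^m)$ as $|\NCA_2(n,m)|$; reading off the coefficient of $\langle A_1 \rangle^n \langle A_{k+1}\rangle^m$-type products in the special case $A_j = \mathrm{Id}$ and then promoting to general $A_j$ by linearity in each entry recovers the claimed identity.
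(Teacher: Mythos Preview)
Your proposal is correct and follows essentially the same route as the paper: compute $\mathrm{sc}_\circ[B]=\delta_{|B|,2}$ from the Catalan moments, then kill $\mathrm{sc}_{\circ\circ}$ by induction once the key identity $\mathrm{sc}[n|m]=|\NCA_2(n,m)|$ is established. The paper obtains this identity exactly via your ``alternative'' argument---applying Theorem~\ref{thm-functCLT} with $A_j=\Id$ and invoking the known GUE fluctuation moments from the free probability literature---rather than by direct integration of the kernel.
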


The limit in~\eqref{eq-fpapplication} matches~\cite[Thm.~6]{MaleMingoPecheSpeicher2020} and thus reproduces the well-known result of second-order freeness of GUE and deterministic matrices from~\cite{MingoSpeicher2006}. Moreover, the deterministic approximation in Theorem~\ref{thm-functCLT} mirrors the overall structure of the joint second-order distribution of Wigner and deterministic matrices described in Equation~(3) of~\cite{MaleMingoPecheSpeicher2020}. We remark that resolvents and functions with an $N$-dependent mesoscopic scaling as considered in~\cite{JRmain} are usually not accessible in free probability theory as many of the standard techniques rely on explicit moment computations. Theorem~\ref{thm-functCLT} and~\cite[Thm.~2.7]{JRmain} thus show that the underlying combinatorics of non-crossing annular permutations and marked partitions are, in fact, more general. We further remark that the parallels between Theorem~\ref{thm-functCLT} and~\cite{MaleMingoPecheSpeicher2020} continue to hold if we consider multiple independent Wigner matrices instead of one matrix $W$. More precisely, for $n$ independent GUE matrices, the underlying combinatorial structure is given by the so-called \emph{non-mixing} annular non-crossing permutations resp. \emph{non-mixing} marked partitions for $n$ colors (see Remark below~\cite[Cor.~2.11]{JRmain}).

\section{Proof of Theorem~\ref{thm-main} (Formula for $\m_{GUE}[\cdot|\cdot]$)}\label{sect-mainproof}

\subsection{Part 1: Graphs}\label{sect-graphs}
As we only consider $m_{GUE}$ throughout this section, let $\kappa_4=\sigma=\widetilde{\omega}=0$ and thus $m_{GUE}[\cdot|\cdot]=m[\cdot|\cdot]$. Recall from~\eqref{eq-mgraphs} and~\eqref{eq-mcircgraphs} that both $m[\cdot]$ and $m_{\circ}[\cdot]$ are expressable in terms of disk non-crossing graphs. In this section, we give analogous combinatorial formulas for $m[\cdot|\cdot]$ and $m_{\circ\circ}[\cdot|\cdot]$. For this task, we define a new, albeit closely related, multi-set of graphs on the $(k,\ell)$-annulus. We start by introducing a transformation that translates between the disk and the annulus picture.

\begin{definition}\label{def-tau}
Fix $k,\ell\in\N$, $1\leq j\leq \ell$, and consider a disk with the $k+\ell+1$ labels $1,\dots,k,k+j,\dots,k+\ell,k+1,\dots,k+j$ equidistantly placed around its boundary in clockwise order. We define a map $\tau$, refered to as \emph{mediating map}, that takes this picture to the $(k,\ell)$-annulus as follows:
\begin{itemize}
\item[1.)] Use a homeomorphic continuous deformation, e.g., a conformal map, to map the disk and its labels to the $(k,\ell+1)$-annulus with a slit located between 1 and $k$ on the outer circle and the two copies of $k+j$ on the inner circle. 
\item[2.)] Remove the slit to obtain an annulus.
\item[3.)] Merge the two copies of the label $k+j$.
\end{itemize}
\end{definition}

We visualize $\tau$ for an example in Fig.~\ref{fig-deftau} below. The two labels $k+j$ are denoted as 6 and 6' to distinguish between them more easily.

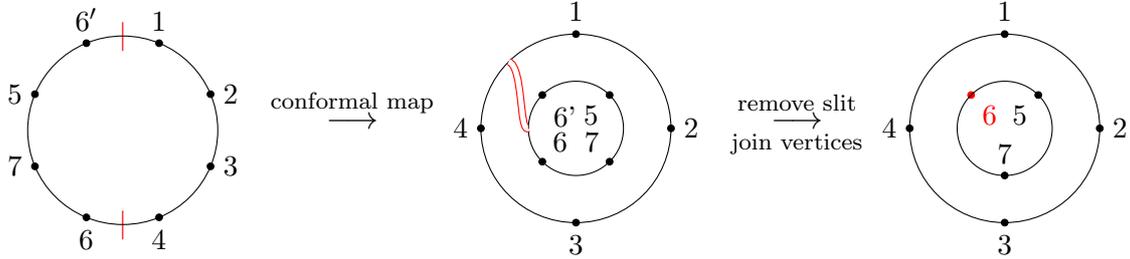
\begin{figure}[H]
\begin{center}
\begin{tikzpicture}[scale=1.25,baseline=(current bounding box.center)]
\draw (0.3827,0.9239) node[above=1pt] {$1$};
\filldraw [black] (0.3827,0.9239) circle (1pt);
\draw (0.9239,0.3827) node[right=1pt] {$2$};
\filldraw [black] (0.9239,0.3827) circle (1pt);
\draw (0.9239,-0.3827) node[right=1pt] {$3$};
\filldraw [black] (0.9239,-0.3827) circle (1pt);
\draw (0.3827,-0.9239) node[below=1pt] {$4$};
\filldraw [black] (0.3827,-0.9239) circle (1pt);

\draw (-0.3827,-0.9239) node[below=1pt] {$6$};
\filldraw [black] (-0.3827,-0.9239) circle (1pt);
\draw (-0.9239,-0.3827) node[left=1pt] {$7$};
\filldraw [black] (-0.9239,-0.3827) circle (1pt);
\draw (-0.9239,0.3827) node[left=1pt] {$5$};
\filldraw [black] (-0.9239,0.3827) circle (1pt);
\draw (-0.3827,0.9239) node[above=1pt] {$6'$};
\filldraw [black] (-0.3827,0.9239) circle (1pt);

\draw (0,-1) node {$\color{red}|\color{black}$};
\draw (0,1) node {$\color{red}|\color{black}$};

\draw (0,0) circle (1cm);
\end{tikzpicture}\hspace{0.3cm}\scalebox{1.1}{$\overset{\text{conformal map}}\longrightarrow$}
\begin{tikzpicture}[scale=1.25,baseline=(current bounding box.center)]
\draw (0,1) node[above=1pt] {1};
\filldraw [black] (0,1) circle (1pt);
\draw (1,0) node[right=1pt] {2};
\filldraw [black] (1,0) circle (1pt);
\draw (0,-1) node[below=1pt] {3};
\filldraw [black] (0,-1) circle (1pt);
\draw (-1,0) node[left=1pt] {4};
\filldraw [black] (-1,0) circle (1pt);

\draw (0.3536,0.3536) node[below left=0.3536pt] {5};
\filldraw [black] (0.3536,0.3536) circle (1pt);
\draw (-0.3536,0.3536) node[below right=0.3536pt] {6'};
\filldraw [black] (-0.3536,0.3536) circle (1pt);
\draw (-0.3536,-0.3536) node[above right=0.1pt] {6};
\filldraw [black] (-0.3536,-0.3536) circle (1pt);
\draw (0.3536,-0.3536) node[above left=0.1pt] {7};
\filldraw [black] (0.3536,-0.3536) circle (1pt);

\draw (0,0) circle (1cm);
\draw (0,0) circle (0.5cm);

\draw[red, style={double,double distance=2pt}] (-0.707,0.707) .. controls (-0.55,0.65) and (-0.6,-0.05) .. (-0.5,0);

\end{tikzpicture}\hspace{0.3cm}\scalebox{1.1}{$\overunderset{\text{remove slit}}{\text{join vertices}}{\longrightarrow}$}
\begin{tikzpicture}[scale=1.25,baseline=(current bounding box.center)]
\draw (0,1) node[above=1pt] {1};
\filldraw [black] (0,1) circle (1pt);
\draw (1,0) node[right=1pt] {2};
\filldraw [black] (1,0) circle (1pt);
\draw (0,-1) node[below=1pt] {3};
\filldraw [black] (0,-1) circle (1pt);
\draw (-1,0) node[left=1pt] {4};
\filldraw [black] (-1,0) circle (1pt);

\draw (0.3536,0.3536) node[below left=0.3536pt] {5};
\filldraw [black] (0.3536,0.3536) circle (1pt);
\draw (-0.3536,0.3536) node[below right=0.3536pt] {\color{red}6\color{black}};
\filldraw [red] (-0.3536,0.3536) circle (1pt);
\draw (0,-0.5) node[above=0.75pt] {7};
\filldraw [black] (0,-0.5) circle (1pt);

\draw (0,0) circle (1cm);
\draw (0,0) circle (0.5cm);
\end{tikzpicture}
\end{center}
\captionof{figure}{The geometry of the transformation $\tau$ for $k=4$, $\ell=3$, and $j=2$.}\label{fig-deftau}
\end{figure}

The map $\tau$ induces a transformation of any graph $\Gamma$ defined on the labeled disk to a graph defined on the $(k,\ell)$-annulus. We denote the resulting annulus graph as $\tau(\Gamma)$. By construction, $\tau(\Gamma)$ is planar whenever $\Gamma$ is a disk non-crossing graph. Recall that we use a slightly more general notion of planar graphs than the standard literature by allowing for loops and multi-edges. We give an example in Fig.~\ref{fig-taugraph} below. For better visibility, the loop arising in the last step is moved from between the two $(1,3)$ edges to the right.

\begin{figure}[H]
\begin{center}
\begin{tikzpicture}[scale=1.25,baseline=(current bounding box.center)]
\draw (-0.707,0.707) node[above left=1pt] {1};
\draw (0.707,0.707) node[above right=1pt] {2};
\draw (0.707,-0.707) node[below right=1pt] {3};
\draw (-0.707,-0.707) node[below left=1pt] {3'};

\draw (-0.707,0.707) -- (0.707,0.707);
\draw (-0.707,0.707) -- (0.707,-0.707);
\draw (-0.707,0.707) -- (-0.707,-0.707);
\draw (0.707,-0.707) -- (-0.707,-0.707);

\draw (0,0) circle (1cm);
\end{tikzpicture}\hspace{0.3cm}\scalebox{1.1}{$\overset{\text{map to annulus}}\longrightarrow$}
\begin{tikzpicture}[scale=1,baseline=(current bounding box.center)]
\draw (0,1.5) node[above=1pt] {1};
\draw (0,-1.5) node[below=1pt] {2};
\draw (0,0.5) node[below=1pt] {3};
\draw (0,-0.5) node[above=1pt] {3'};

\draw (0,1.5) -- (0,0.5);
\draw (0,1.5) .. controls (1.5,0.5) and (1.5,-0.5) .. (0,-1.5);
\draw (0,0.5) .. controls (0.9,0.5) and (0.9,-0.5) .. (0,-0.5);
\draw (0,-0.5) .. controls (1.15,-1.15) and (0.9,0.75) .. (0,1.5);

\draw (0,0) circle (1.5cm);
\draw (0,0) circle (0.5cm);
\end{tikzpicture}\hspace{0.3cm}\scalebox{1.1}{$\overset{\text{join vertices}}\longrightarrow$}
\begin{tikzpicture}[scale=1,baseline=(current bounding box.center)]
\draw (0,1.5) node[above=1pt] {1};
\draw (0,-1.5) node[below=1pt] {2};
\draw (0,0.5) node[below=1pt] {3};

\draw[black,style={double,double distance=2pt}] (0,1.5) -- (0,0.5);
\draw (0.04,0.5) .. controls (0.25,1.25) and (0.75,1) .. (0.04,0.5);
\draw (0,1.5) .. controls (1.5,0.5) and (1.5,-0.5) .. (0,-1.5);

\draw (0,0) circle (1.5cm);
\draw (0,0) circle (0.5cm);
\end{tikzpicture}
\end{center}
\captionof{figure}{Construction of $\tau(\Gamma)$ for a graph $\Gamma\in NCG(\{1,2,3,3'\})$.}\label{fig-taugraph}
\end{figure}
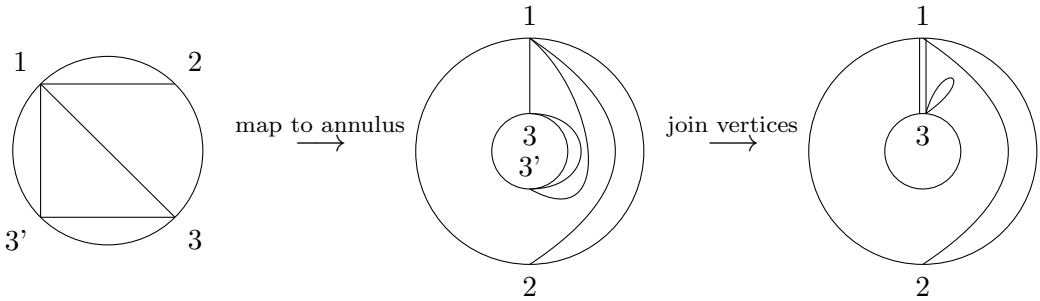

We can now introduce the family of graphs $\cG(k,\ell)$ that constitute the key tool in the proof of Theorem~\ref{thm-main}. In analogy to the disk non-crossing graphs in Definition~\ref{def-NCG}, we require the elements of $\cG(k,\ell)$ to be drawn on the $(k,\ell)$-annulus with the vertices placed around the boundary and the edges drawn in the interior of the annulus (see Fig.~\ref{fig-taugraph} and Example~\ref{ex-G11} below).

\begin{definition}\label{def-Ggraphs}
For $k,\ell\in\N$ we define $\cG([k],[k+1,k+\ell])$ to be the multi-set\footnote{The graph $\Gamma$ may be obtained in several different ways from the recursive construction. This is reflected by the multiplicity of $\Gamma$ in the multi-set $\cG([k],[k+1,k+\ell])$.} of undirected, planar graphs on the $(k,\ell)$-annulus with vertex set $\{1,\dots,k+\ell\}$ and possible loops or double edges that is obtained from the following recursive construction:
\begin{itemize}
\item[(i)] For any $S_1\subset[k]$ and $S_2\subset[k+1,k+\ell]$ we have
\begin{equation}\label{eq-Ginitial}
\cG(S_1,\emptyset)=\cG(\emptyset,S_2)=\emptyset.
\end{equation}
\item[(ii)] The multi-set $\cG([k],[k+1,k+\ell])$ can be constructed from the multi-sets $\cG(S_1,[k+1,k+\ell])$ with $S_1\subsetneq[k]$ as follows: We define
\begin{align}
\cG_{\neg (1,k)}:=\cG_1\cup\cG_2\cup \cG_3\cup\cG_4,\label{eq-Gdecomp2}
\end{align}
as the disjoint union of the sets
\begin{align*}
\cG_1&:=\big\{\Gamma\cup\{1\}\big|\Gamma\in \cG([2,k],[k+1,k+\ell])\big\}\\
\cG_2&:=\bigcup_{j=2}^k\big\{\Gamma=\Gamma_1\cup\Gamma_2\big| \Gamma_1\in NCG([1,j])\text{ with edge }(1,j), \Gamma_2\in\cG([j,k],[k+1,k+\ell])\big\},\\
\cG_3&:=\bigcup_{j=1}^{k-1}\big\{\Gamma=\Gamma_1\cup\Gamma_2\big|\Gamma_1\in\cG([1,j],[k+1,k+\ell])\text{ with edge } (1,j), \Gamma_2\in NCG([j,k])\big\},\\
\cG_4&:=\bigcup_{j=1}^{\ell} \tau\Big(\big\{\Gamma\in NCG(\{1,\dots,k,k+j,\dots,k+\ell,k+1,\dots,k+j-1,k+j\})\big|\\ &\quad\quad\quad\Gamma\text{ has edge }(1,k+j)\big\}\Big).
\end{align*}
Here, the union $\Gamma\cup\{1\}$ in $\cG_1$ is to be understood as adding a separated vertex $1$ to $\Gamma$ while the union $\Gamma_1\cup\Gamma_2$ in $\cG_2$ and $\cG_3$ refers to the graph with the vertex set $\{1,\dots,k+\ell\}$ and the edge set given by the union of the edge sets of $\Gamma_1$ and $\Gamma_2$, respectively. Further, recall $\tau$ from Definition~\ref{def-tau}. We remark that all elements of $\cG_1,\dots,\cG_4$ are planar graphs. Next, let
\begin{displaymath}
\cG_{(1,k)}:=\big\{\Gamma\cup\{(1,k)\}\big|\Gamma\in\cG_{\neg (1,k)}\big\},
\end{displaymath}
where the union is to be understood as adding an edge $(1,k)$ to each graph in $\cG_{\neg (1,k)}$. Observe that the resulting graphs are again planar. Finally, we define
\begin{equation}\label{eq-Gdecomp1}
\cG([k],[k+1,k+\ell]):=\cG_{(1,k)}\cup\cG_{\neg (1,k)},
\end{equation}
where any graphs that occur more than once are counted with multiplicity. In particular, whenever the same graph occurs in both $\cG_{(1,k)}$ and $\cG_{\neg(1,k)}$, the multiplicity of $\Gamma\in\cG([k],[k+1,k+\ell])$ is the total number of occurrences of $\Gamma$ in both subsets.

\item[(iii)] $\cG(S_1,S_2)=\cG(S_2,S_1)$ for any $S_1\subset[k]$ and $S_2\subset[k+1,k+\ell]$ (in the sense that there is a well-defined bijective mapping that takes each element of~$\cG(S_1,S_2)$ to its counterpart).
\end{itemize}
We abbreviate $\cG(k,\ell):=\cG([k],[k+1,k+\ell])$ and refer to its elements as \emph{good graphs}. The subset of connected good graphs is denoted by $\cG_c(k,\ell)$. Any edge $(i,j)$ with $i\in[k]$ and $j\in[k+1,k+\ell]$ is referred to as \emph{connecting edge}.
\end{definition}

\begin{remark}
The occurrence of multi-edges or loops is inherent to the construction of $\cG(k,\ell)$, which can be seen from a simple counting argument. First, note that an element of $\cG(k,\ell)$ can have at most $2(k+\ell)$ edges by construction. To see this, observe that the elements of $\cG_{\neg (1,k)}$ with the highest number of edges lie in $\cG_4$ and that $\Gamma\in\cG_4$ has the same number of edges as the underlying disk non-crossing graph. As any disk non-crossing graph on $n$ vertices has at most $2n-3$ edges (realized by a triangulation), the maximal number of edges for $\Gamma\in\cG_4$ is $2(k+\ell+1)-3=2(k+\ell)-1$. As~\eqref{eq-Gdecomp1} may add another edge to $\Gamma$, the maximum for $\cG(k,\ell)$ is $2(k+\ell)$ edges. On the other hand, the maximal number of edges in a planar graph on the $(k,\ell)$-annulus without multi-edges or loops is only $2(k+\ell)$ if $k,\ell\geq3$ (again realized by a triangulation). It is readily seen that such a graph has strictly less than $2(k+\ell)$ edges if either $k$ or $\ell$ is one or two. As the construction in Definition~\ref{def-Ggraphs} does not introduce crossings and begins with the cases $k,\ell\leq2$, the difference between the two maxima must be reflected as multi-edges or loops.
\end{remark}

We further remark that $\cG(k,\ell)$ is a genuine multi-set, i.e., some graphs appear with multiplicity larger than one, unless $k=\ell=1$ (cf. Lemma~\ref{lem-Gproperties1}(d) and Fig.~\ref{fig-multi}). The key property shared by all elements of $\cG(k,\ell)$ is that each graph arises from some disk non-crossing graph along the recursive construction. Therefore, we may interpret $\cG_4$ as a kind of source term. In view of Lemma~\ref{lem-graphformula1} below, the multi-set $\cG(k,\ell)$ gives an annulus analog to the disk non-crossing graphs, as it plays the same role in the combinatorial description of $m[\cdot|\cdot]$ as $NCG(S)$ does for $m[\cdot]$.

\begin{lemma}\label{lem-graphformula1}
For fixed $k,\ell\in\N$, we have
\begin{align}
m[1,\dots,k|k+1,\dots,k+\ell]=\Big(\prod_{s=1}^{k+\ell}m_s\Big)\sum_{\Gamma\in\cG(k,\ell)}\prod_{(i,j)\in E(\Gamma)}q_{i,j}.\label{eq-graphform}
\end{align}
\end{lemma}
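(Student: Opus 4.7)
The approach is induction on $k + \ell$. The base cases $k = 0$ or $\ell = 0$ are immediate: both sides of~\eqref{eq-graphform} vanish by the initial condition~\eqref{eq-minitial} on the LHS and~\eqref{eq-Ginitial} on the RHS. For the inductive step the LHS satisfies the recursion~\eqref{eq-mrecursion} specialised to the GUE setting $\kappa_4 = \sigma = \widetilde{\omega}_2 = 0$ (so only $s_{GUE}$ survives among the source terms), and the task reduces to checking that the RHS satisfies the same recursion.

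The prefactor $\frac{m_1}{1-m_1 m_k}$ in~\eqref{eq-mrecursion} is first rewritten as $m_1(1+q_{1,k})$. The factor $1+q_{1,k}$ is matched with the multi-set decomposition $\cG(k,\ell) = \cG_{(1,k)} \cup \cG_{\neg(1,k)}$ from Definition~\ref{def-Ggraphs}, where including the edge $(1,k)$ of weight $q_{1,k}$ contributes $\cG_{(1,k)}$ and omitting it contributes $\cG_{\neg(1,k)}$. It thus suffices to identify the bracketed sum in~\eqref{eq-mrecursion} with $\prod_{s=2}^{k+\ell} m_s \sum_{\Gamma \in \cG_{\neg(1,k)}} \prod q$, with the external $m_1$ supplying the missing $m_1$ in $\prod_{s=1}^{k+\ell} m_s$. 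The four summands match bijectively with the four disjoint pieces $\cG_1, \cG_2, \cG_3, \cG_4$ of $\cG_{\neg(1,k)}$: the term $m[2,\dots,k|k+1,\dots,k+\ell]$ matches $\cG_1$ (vertex $1$ isolated) via the inductive hypothesis; the two $j$-indexed sums match $\cG_3$ and $\cG_2$ via a combination of induction and~\eqref{eq-mgraphs}; the source term $s_{GUE}$ matches $\cG_4$ by applying~\eqref{eq-mgraphs} to each multi-set $(1,\dots,k,k+j,\dots,k+\ell,k+1,\dots,k+j)$ and transporting the resulting disk non-crossing graphs through the mediating map $\tau$ of Definition~\ref{def-tau}.

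The delicate point in the matches of $\cG_2$ and $\cG_3$ is that Definition~\ref{def-Ggraphs} \emph{constrains} the respective $\Gamma_1$ to contain the edge $(1,j)$, whereas the recursion contributes $m[1,\dots,j]$ or $m[1,\dots,j|k+1,\dots,k+\ell]$ which sum over \emph{all} admissible graphs. The bridge is the combinatorial identity
\begin{equation*}
m_1 m_j \sum_{\Gamma \in \cC} \prod_{(a,b) \in E(\Gamma)} q_{a,b} \;=\; \sum_{\substack{\Gamma \in \cC \\ (1,j) \in E(\Gamma)}} \prod_{(a,b) \in E(\Gamma)} q_{a,b},
\end{equation*}
applied with $\cC = NCG([1,j])$ for $\cG_2$ and with $\cC = \cG([1,j],[k+1,k+\ell])$ for $\cG_3$. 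This identity rests on the algebraic relation $m_1 m_j = q_{1,j}/(1+q_{1,j})$ combined with a weight-preserving bijection $\Gamma \leftrightarrow \Gamma \cup \{(1,j)\}$ between graphs without and with the edge $(1,j)$, which multiplies weights by $q_{1,j}$. The factor $m_j$ needed on the left arises precisely from the double counting of vertex $j$ in the product $m[1,\dots,j] \cdot m[j,\dots,k|k+1,\dots,k+\ell]$ (and analogously for $\cG_3$), while the factor $m_1$ comes from the external $m_1(1+q_{1,k})$.

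The main obstacle is establishing the bijection: one must verify that the edge $(1,j)$ can always be added to, or removed from, a graph in $\cC$ without creating a crossing and without leaving $\cC$. On the disk ($\cG_2$ case) this is automatic since $1$ and $j$ together delimit an outer arc containing only labels in $[1,j]$, so no other edge in $NCG([1,j])$ can cross $(1,j)$. On the annulus ($\cG_3$ case) the point is subtler, but it can be checked inductively by exploiting the recursive construction in Definition~\ref{def-Ggraphs}: the short outer arc from $j$ to $1$ carries no other vertices, hence remains available for drawing $(1,j)$ without intersecting the existing edges. Once this bookkeeping is accomplished, both sides of~\eqref{eq-graphform} satisfy the same recursion with identical initial conditions and hence coincide, closing the induction.
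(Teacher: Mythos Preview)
Your argument is correct and follows essentially the same route as the paper: verify that the graph sum satisfies the defining recursion~\eqref{eq-mrecursion} (GUE case) by matching its four pieces against the disjoint multi-set decomposition $\cG_{\neg(1,k)}=\cG_1\cup\cG_2\cup\cG_3\cup\cG_4$, with the global factor $(1+q_{1,k})$ absorbing the passage from $\cG_{\neg(1,k)}$ to $\cG(k,\ell)$ and the identity $m_1m_j(1+q_{1,j})=q_{1,j}$ converting the edge-restricted sums into unrestricted ones.

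One remark on your last paragraph: the bijection you flag as the ``main obstacle'' for $\cG_3$ is actually not a geometric issue at all, and the inductive/planarity check you sketch is unnecessary. The multi-set $\cG([1,j],[k+1,k+\ell])$ is \emph{defined} in Definition~\ref{def-Ggraphs}(ii) as the disjoint union $\cG_{(1,j)}\cup\cG_{\neg(1,j)}$, where the first piece is obtained from the second by formally adjoining the edge $(1,j)$ to every element. The weight identity $\sum_{\cG_{(1,j)}}q_\Gamma=q_{1,j}\sum_{\cG_{\neg(1,j)}}q_\Gamma$ therefore holds tautologically at the multi-set level, and your displayed identity follows immediately. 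The paper handles this tersely by simply citing~\eqref{eq-Gdecomp1} (``the same trick''); the subtlety you raise about whether adding $(1,j)$ stays in the class is pre-empted by the definition rather than requiring a separate verification.
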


\begin{proof}
As $q_{i,j}=q_{j,i}$ by~\eqref{eq-defq} and $\cG(k,\ell)=\cG(\ell,k)$ by Definition~\ref{def-Ggraphs}(iii), it readily follows that the right-hand side of~\eqref{eq-graphform} is symmetric under the interchanging $[k]$ and $[k+1,k+\ell]$. Similar to the proof of~\cite[Lem.~5.2]{CES-thermalization}, we use the combinatorial formula~\eqref{eq-graphform} as an ansatz to solve the recursion given in~\eqref{eq-minitial} and~\eqref{eq-mrecursion}. First, observe that
\begin{displaymath}
\Big(\prod_{s=1}^{k}m_s\Big)\sum_{\Gamma\in\cG(S_1,\emptyset)}\prod_{(i,j)\in E(\Gamma)}q_{i,j}=\Big(\prod_{s=k+1}^{k+\ell}m_s\Big)\sum_{\Gamma\in\cG(\emptyset,S_2)}\prod_{(i,j)\in E(\Gamma)}q_{i,j}=0
\end{displaymath}
for any $S_1\subset[k]$ and $S_2\subset[k+1,k+\ell]$ due to the sums being empty. Hence, the initial condition~\eqref{eq-minitial} is satisfied.

\medskip
It remains to check~\eqref{eq-mrecursion}. We introduce the notation
\begin{displaymath}
q_{\Gamma}:=\prod_{(i,j)\in E(\Gamma)}q_{i,j}
\end{displaymath}
and conclude from the decompositions~\eqref{eq-Gdecomp1} and~\eqref{eq-Gdecomp2} that
\begin{align}
\sum_{\Gamma\in\cG(k,\ell)}q_{\Gamma}&=(1+q_{1,k})\sum_{\Gamma\in\cG_{\neg(1,k)}}q_{\Gamma}\NN\\
&=(1+q_{1,k})\Big(\sum_{\Gamma\in\cG_1}q_{\Gamma}+\sum_{\Gamma\in\cG_2}q_{\Gamma}+\sum_{\Gamma\in\cG_3}q_{\Gamma}+\sum_{\Gamma\in\cG_4}q_{\Gamma}\Big).\label{eq-decomp1}
\end{align}
Noting that the vertex $1$ in $\Gamma\in\cG_1$ has no adjacent edges, we may write
\begin{displaymath}
\sum_{\Gamma\in\cG_1}q_{\Gamma}=\sum_{\Gamma\in\cG([2,k],[k+1,k+\ell])}q_{\Gamma}.
\end{displaymath}
Further, the transformation $\tau$ only changes the geometry underlying a graph $\Gamma$, but does not influence its edge set. By the definition of $\cG_4$, any $\Gamma\in NCG([1,\dots,k+\ell,k+j])$ used in the construction must have at least one edge $(1,k+j)$. As a consequence, the product $q_{\Gamma}$ always includes the factor $q_{1,k+j}=m_1m_{k+j}(1+q_{1,k+j})$. This yields
\begin{displaymath}
\sum_{\Gamma\in\cG_4}q_{\Gamma}=\sum_{j=1}^{\ell}\Big(m_1m_{k+j}\sum_{\Gamma\in NCG([1,\dots,k+\ell,k+j])}q_{\Gamma}\Big).
\end{displaymath}
Note that the identity $q_{1,k+j}=m_1m_j(1+q_{1,k+j})$ allows writing summations restricted to graphs with an edge $(1,k+j)$ on the left-hand side into an unrestricted sum over all graphs on the right-hand side. We use the same trick for $\cG_2$ and $\cG_3$ as $q_{\Gamma_1\cup\Gamma_2}=q_{\Gamma_1}q_{\Gamma_2}$ for the union of graphs introduced in Definition~\ref{def-Ggraphs}. With these replacements,~\eqref{eq-decomp1} can be written as
\begin{align*}
&\frac{1}{1+q_{1,k}}\sum_{\Gamma\in\cG(k,\ell)}q_{\Gamma}\\
&=\sum_{\Gamma\in\cG([2,k],[k+1,k+\ell])}q_{\Gamma}+\sum_{j=2}^{k}m_1m_j\Big(\sum_{\Gamma\in NCG([1,j])}q_{\Gamma}\Big)\Big(\sum_{\Gamma\in \cG([j,k],[k+1,k+\ell])}q_{\Gamma}\Big)\\
&\quad+\sum_{j=1}^{k-1}m_1m_j\Big(\sum_{\Gamma\in\cG([1,j],[k+1,k+\ell])}q_{\Gamma}\Big)\Big(\sum_{\Gamma\in NCG([j,k])}q_{\Gamma}\Big)+\sum_{j=1}^{\ell}m_1m_{k+j}\Big(\sum_{\Gamma\in NCG([1,\dots,k+\ell,k+j])}q_{\Gamma}\Big).
\end{align*}
Multiplying both sides with $\prod_{s=1}^{k+\ell}m_s$ and noting that $1+q_{1,j}=(1-m_1m_j)^{-1}$, we see that the right-hand side of~\eqref{eq-graphform} satisfies~\eqref{eq-mrecursion} as claimed.
\end{proof}

\begin{example}\label{ex-G11} We have $|\cG(1,1)|=8$. The graphs are visualized in Fig.~\ref{fig-G11} below.
\begin{figure}[H]
\begin{center}
\begin{tikzpicture}[scale=1]
\draw (0,1.5) node[above=1pt] {1};
\draw (0,0.5) node[below=1pt] {2};

\draw (0,1.5) -- (0,0.5);

\draw (0.5,0) arc[start angle=0, end angle=180, radius=0.5];
\draw (1.5,0) arc[start angle=0, end angle=180, radius=1.5];
\end{tikzpicture}\hspace{0.5cm}
\begin{tikzpicture}[scale=1]
\draw (0,1.5) node[above=1pt] {1};
\draw (0,0.5) node[below=1pt] {2};

\draw (0,1.5) -- (0,0.5);
\draw (0,1.5) .. controls (0.25,0.75) and (0.75,1) .. (0,1.5);

\draw (0.5,0) arc[start angle=0, end angle=180, radius=0.5];
\draw (1.5,0) arc[start angle=0, end angle=180, radius=1.5];
\end{tikzpicture}\hspace{0.5cm}
\begin{tikzpicture}[scale=1]
\draw (0,1.5) node[above=1pt] {1};
\draw (0,0.5) node[below=1pt] {2};

\draw (0,1.5) -- (0,0.5);
\draw (0,0.5) .. controls (0.25,1.25) and (0.75,1) .. (0,0.5);

\draw (0.5,0) arc[start angle=0, end angle=180, radius=0.5];
\draw (1.5,0) arc[start angle=0, end angle=180, radius=1.5];
\end{tikzpicture}\hspace{0.5cm}
\begin{tikzpicture}[scale=1]
\draw (0,1.5) node[above=1pt] {1};
\draw (0,0.5) node[below=1pt] {2};

\draw (0,1.5) -- (0,0.5);
\draw (0,1.5) .. controls (0.25,0.75) and (0.75,1) .. (0,1.5);
\draw (0,0.5) .. controls (0.25,1.25) and (0.75,1) .. (0,0.5);

\draw (0.5,0) arc[start angle=0, end angle=180, radius=0.5];
\draw (1.5,0) arc[start angle=0, end angle=180, radius=1.5];
\end{tikzpicture}\\
\begin{tikzpicture}[scale=1]
\draw (0,1.5) node[above=1pt] {1};
\draw (0,0.5) node[below=1pt] {2};

\draw[black,style={double,double distance=2pt}] (0,1.5) -- (0,0.5);

\draw (0.5,0) arc[start angle=0, end angle=180, radius=0.5];
\draw (1.5,0) arc[start angle=0, end angle=180, radius=1.5];
\end{tikzpicture}\hspace{0.5cm}
\begin{tikzpicture}[scale=1]
\draw (0,1.5) node[above=1pt] {1};
\draw (0,0.5) node[below=1pt] {2};

\draw[black,style={double,double distance=2pt}] (0,1.5) -- (0,0.5);
\draw (0.04,1.5) .. controls (0.25,0.75) and (0.75,1) .. (0.04,1.5);

\draw (0.5,0) arc[start angle=0, end angle=180, radius=0.5];
\draw (1.5,0) arc[start angle=0, end angle=180, radius=1.5];
\end{tikzpicture}\hspace{0.5cm}
\begin{tikzpicture}[scale=1]
\draw (0,1.5) node[above=1pt] {1};
\draw (0,0.5) node[below=1pt] {2};

\draw[black,style={double,double distance=2pt}] (0,1.5) -- (0,0.5);
\draw (0.04,0.5) .. controls (0.25,1.25) and (0.75,1) .. (0.04,0.5);

\draw (0.5,0) arc[start angle=0, end angle=180, radius=0.5];
\draw (1.5,0) arc[start angle=0, end angle=180, radius=1.5];
\end{tikzpicture}\hspace{0.5cm}
\begin{tikzpicture}[scale=1]
\draw (0,1.5) node[above=1pt] {1};
\draw (0,0.5) node[below=1pt] {2};

\draw[black,style={double,double distance=2pt}] (0,1.5) -- (0,0.5);
\draw (0.04,1.5) .. controls (0.25,0.75) and (0.75,1) .. (0.04,1.5);
\draw (0.04,0.5) .. controls (0.25,1.25) and (0.75,1) .. (0.04,0.5);

\draw (0.5,0) arc[start angle=0, end angle=180, radius=0.5];
\draw (1.5,0) arc[start angle=0, end angle=180, radius=1.5];
\end{tikzpicture}
\end{center}
\captionof{figure}{The elements of $\cG(1,1)$.}\label{fig-G11}
\end{figure}

In particular, we readily reobtain~\eqref{eq-msmallest} by evaluating $q_\Gamma$ for every graph in the above list.
Note that decomposing~\eqref{eq-msmallest} into the form $m_1m_2\sum_{\Gamma\in\cG(1,1)} q_\Gamma$ is possible in multiple ways. However, picking graphs that contain a connecting edge yields the set $\cG(1,1)$ in Fig.~\ref{fig-G11} as the smallest possible set.
\end{example}

We state a few properties of the elements of~$\cG(k,\ell)$. In Lemma~\ref{lem-Gproperties1}, we focus on general characteristics  of $\Gamma\in\cG(k,\ell)$ as a planar graph drawn on the $(k,\ell)$-annulus. The properties of $\cG(k,\ell)$ that are needed for the proof of the combinatorial formula for $m_{\circ\circ}$ are given in the separate Lemma~\ref{lem-Gproperties2}.

\begin{lemma}\label{lem-Gproperties1}
Let $k,\ell\in\N$.
\begin{itemize}
\item[(a)] The connected components of any $\Gamma\in\cG(k,\ell)$ give rise to a non-crossing partition of the $(k,\ell)$-annulus. In particular, $\Gamma$ contains a connecting edge if $k,\ell\geq1$.
\item[(b)] $\Gamma\in\cG(k,\ell)$ may have at most two loops. Loops only occur at vertices that are adjacent to a connecting edge. Two loops on vertices on the same circle do not occur.
\item[(c)] $\Gamma\in\cG(k,\ell)$ may have up to $k+\ell-1$ double edges. Double edges are either connecting edges or adjacent to a connecting edge. Edges with a multiplicity higher than two do not occur.
\item[(d)] $\cG(k,\ell)$ is a genuine multi-set unless $k=\ell=1$.
\end{itemize}
\end{lemma}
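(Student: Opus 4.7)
The plan is to prove all four parts by induction on $k+\ell$, following the recursive construction in Definition~\ref{def-Ggraphs}. The base case is $k=\ell=1$, which can be verified directly from the eight graphs listed in Example~\ref{ex-G11}. For the inductive step, I would separately analyze how each property behaves under the four building blocks $\cG_1,\cG_2,\cG_3,\cG_4$ of $\cG_{\neg(1,k)}$ and under the final augmentation in $\cG_{(1,k)}$.

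For part (a), planarity is preserved at every stage: the mediating map $\tau$ carries a planar disk graph to a planar annulus graph, and the unions in $\cG_1$–$\cG_3$ paste sub-graphs together along a single vertex, preserving planarity. Hence the connected components of $\Gamma$ always induce a non-crossing partition of the annulus. The existence of a connecting edge for $k,\ell\geq1$ follows by induction for $\Gamma$ built via $\cG_1,\cG_2,\cG_3$, since the good sub-graph in each of these cases already contains a connecting edge by the inductive hypothesis; for $\cG_4$ the edge $(1,k+j)$ introduced by the construction is itself a connecting edge.

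For parts (b) and (c), the key observation is that loops and multi-edges can only be \emph{introduced} by the map $\tau$ in $\cG_4$ or by the augmentation step $\cG_{(1,k)}$. Concretely, a loop at $k+j$ arises exactly when the underlying disk non-crossing graph in $\cG_4$ contains an edge between the two copies of $k+j$; any such edge forces adjacency to the connecting edge $(1,k+j)$ also present in $\cG_4$. A double edge arises either when the disk graph has two copies of a $(k+j,k+j')$-type edge, or when $\cG_{(1,k)}$ appends a second $(1,k)$ edge to a sub-graph that already contains $(1,k)$. In each case the resulting multi-edge is itself a connecting edge or incident to one. For the quantitative bounds I would combine the inductive hypothesis applied to the sub-good-graph used in $\cG_1$–$\cG_4$ with the non-crossing constraint on the disk graph: an edge between the two copies of $k+j$ confines the remaining inner-circle labels to one side, preventing a second loop on that circle from being produced at the same recursion step and capping the overall multiplicity at two.

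For part (d), it suffices to exhibit, for every $(k,\ell)$ with $k+\ell\geq 3$, a single graph that arises through at least two distinct construction branches; a natural candidate is a graph carrying the edge $(1,k)$ which can be produced both by $\cG_2$ with $j=k$ (using an $NCG([1,k])$ containing the edge $(1,k)$ together with a suitable $\Gamma_2$) and by $\cG_{(1,k)}$ augmenting a companion graph in $\cG_{\neg(1,k)}$ that coincides on the remaining edges. Verifying that these two branches yield the same underlying labelled graph, and observing from the enumeration in Example~\ref{ex-G11} that no such coincidence occurs when $k=\ell=1$, completes the statement. The main technical obstacle will be part (b): tracing precisely how $\tau$-induced loops propagate under the recursion and ruling out two loops on the same circle requires a careful case analysis of where in the recursion the loop-generating disk edges can occur relative to the branch chosen at each level.
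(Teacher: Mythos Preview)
Your proposal is essentially correct and follows the same inductive framework as the paper: both argue by induction on $k+\ell$ through the recursive construction, checking the base case $k=\ell=1$ directly and analyzing $\cG_1,\cG_2,\cG_3,\cG_4$ and the augmentation $\cG_{(1,k)}$ separately for the inductive step. Your identification of $\tau$ (in $\cG_4$) and the augmentation step as the only sources of loops and multi-edges matches the paper's reasoning.

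The one genuine difference is in part (d). The paper exhibits multiplicity by working inside $\cG_4$ alone: for $(k,\ell)=(2,1)$ it produces two distinct disk non-crossing graphs on $\{1,2,3,3'\}$ that collapse to the same annulus graph under $\tau$, and then observes that $\cG(2,1)$ (or $\cG(1,2)$) feeds into every larger $\cG(k,\ell)$. Your route instead produces the same graph twice by two different branches of the recursion, once via $\cG_2$ with $j=k$ and once via $\cG_{(1,k)}$ acting on an element of $\cG_1$. Both arguments work; the paper's has the advantage of localizing the duplication to a single construction step, while yours makes the interaction between the branches more explicit. Note, however, that your candidate requires $k\geq 2$ (since $\cG_2$ is empty for $k=1$), so for $(k,\ell)$ with $k=1$ you must invoke the symmetry in Definition~\ref{def-Ggraphs}(iii) and run the argument on $\cG(\ell,1)$ instead; you should state this explicitly.

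For part (b), your sketch is on the right track but the phrase ``preventing a second loop on that circle from being produced at the same recursion step'' is not quite the crux. The paper's sharper observation is that a single application of $\tau$ creates at most one loop (at the merged vertex on the inner circle), and the only other loop-creating operation is $\cG_{(1,k)}$ in the degenerate case $k=1$. Consequently, a graph with two loops must ultimately trace back to an element of $\cG(1,1)$ carrying one loop on each circle; this is what rules out two loops on the same circle across \emph{all} recursion levels, not just within one step.
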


\begin{proof}
(a) We use proof by induction. First, note that the elements of $\cG(S_1,\emptyset)=\emptyset$ and $\cG(\emptyset,S_2)=\emptyset$ with $S_1\subseteq[k]$ and $S_2\subseteq [k+1,k+\ell]$ clearly give rise to an annular non-crossing partition. Moreover, Example~\ref{ex-G11} establishes the claim in the case $k=\ell=1$ and shows that any $\Gamma\in\cG(1,1)$ contains at least one connecting edge. 

\medskip
Assume next that the elements of $\cG(S_1,[k+1,k+\ell])$ give rise to an annular non-crossing partition for any $S_1\subset[k]$ with $|S_1|\leq k-1$ and a fixed $\ell\geq1$. We aim to show that the connected components of any $\Gamma\in\cG(k,l)$ also correspond to the blocks of some $\pi\in NCP(k,\ell)$. Due to the symmetry induced by Definition~\ref{def-Ggraphs}(iii), this is enough to establish the induction step.

\medskip
By definition, the vertices 1 and $k$ lie next to each other on the outer circle. Hence, adding an edge~$(1,k)$ may connect two connected components, but cannot introduce a crossing in the partition obtained from them. It is, therefore, sufficient to check the claim for elements of $\cG_{\neg (1,k)}$ or, equivalently, for the sets $\cG_1$, $\cG_2$, $\cG_3$, and $\cG_4$ in~\eqref{eq-Gdecomp2}. First, note that the transformation $\tau$ indeed takes the disk partition induced by the disk non-crossing graph to an annular non-crossing partition. This is due to the continuous homeomorphism used in the definition of $\tau$. Moreover, the edge $(1,k+j)$ prescribed for $\Gamma\in NCG(\{1,\dots,k,k+j,\dots,k+j-1,k+j\})$ by the definition is mapped to a connecting edge, ensuring that the resulting partition has a connecting block.

\medskip
By construction, the graphs in $\cG_1$, $\cG_2$, and $\cG_3$ contain an element of $\cG(S_1,[k+1,k+\ell])$ with some $S_1\subset[k]$ as a subgraph. Applying the induction hypothesis for $\cG([2,k],k+1,k+\ell])$ and noting that a separate vertex 1 only adds a singleton set to the underlying partition, we can conclude that $\cG_1$, too, behaves as claimed. The argument for $\cG_2$ and $\cG_3$ is similar. Here, the key observation is that the connected components of the added disk non-crossing graph induce a non-crossing partition of an interval placed along the outer circle. Recalling that all elements of $NCP(k,\ell)$ have at least one connecting block, the corresponding connected component of $\Gamma\in\cG(k,\ell)$ must contain a connecting edge.

\medskip
(b) It is readily seen that the elements of $\cG(S_1,\emptyset)=\emptyset$ and $\cG(\emptyset,S_2)=\emptyset$ with $S_1\subseteq[k]$ and $S_2\subseteq [k+1,k+\ell]$, as well as all $\Gamma\in\cG(1,1)$ have the claimed structure. Moreover, adding an edge $(1,k)$ to a graph cannot introduce a loop unless $k=1$, so it is again sufficient to establish the induction step for $\cG_{\neg (1,k)}$.

\medskip
By Definition~\ref{def-NCG}, a disk non-crossing graph does not contain any loops. Hence, any loops of $\Gamma\in\cG_1\cup\cG_2\cup\cG_3$ must occur in the subgraph in $\cG(S_1,[k+1,k+\ell])$ with $S_1\subset[k]$ used to construct $\Gamma$. As the induction hypothesis applies to this subgraph, we conclude that $\Gamma$ has at most two loops that satisfy the claimed placement rules, respectively. For $\Gamma\in\cG_4$, note that applying $\tau$ to an element of $NCG(\{1,\dots,k,k+j,\dots,k+j-1,k+j\})$ yields an annulus graph with a loop if and only if the disk non-crossing graph contains an edge $(k+j,k+j)$. As the latter must also contain an edge $(1,k+j)$ by definition, a loop in $\Gamma$ is always adjacent to a connecting edge.

\medskip
We remark that the loops in $\Gamma\in\cG(k,\ell)$ are a consequence of the definition of $\tau$ rather than an artifact of the recursive construction of $\cG(k,\ell)$, i.e., they can be created only once. In particular, $\Gamma$ can only contain a loop if it arises from $\cG_4$ or its analog in a previous iteration. Hence, the construction cannot yield a graph with more than two loops or more than one loop per circle, respectively. In particular, any graph in $\cG(k,\ell)$ containing more than one loop is necessarily obtained from an element in $\cG(1,1)$ with two loops.

\medskip
(c) Again, the elements of $\cG(S_1,\emptyset)=\emptyset$ and $\cG(\emptyset,S_2)=\emptyset$ with $S_1\subseteq[k]$ and $S_2\subseteq [k+1,k+\ell]$, as well as all $\Gamma\in\cG(1,1)$ have the claimed structure. We further note that any double edges of $\Gamma\in\cG_1\cup\cG_2\cup\cG_3$ must occur in the subgraph in $\cG(S_1,[k+1,k+\ell])$ with $S_1\subset[k]$ used to construct $\Gamma$ and that the induction hypothesis applies to the latter. For $\Gamma\in\cG_4$, a double edge occurs if and only if the corresponding element of $NCG(\{1,\dots,k,k+j,\dots,k+j-1,k+j\})$ has a vertex $i$ that shares an edge with both copies of $k+j$. However, there is at most one such vertex in $[k]$ and $[k+1,k+\ell]$, respectively, as having two vertices $i,i'$ connected to both copies of $k+j$ in either set induces a crossing. In particular, any double edge shares a vertex with the edge $(1,k+j)\in\Gamma$ prescribed by the definition. Hence, all graphs in $\cG_{\neg(1,k)}$ have the claimed structure.

\medskip
It remains to consider~$\cG_{(1,k)}$, i.e., to add an edge $(1,k)$ to the graphs considered previously. In the case $j=k$ of $\cG_2$, this doubles an existing $(1,k)$ edge. By part (a), any such graph must also have an edge connecting $k$ with a vertex in $[k+1,k+\ell]$. This shows that the placement rule for double edges is satisfied. Further, at most one doubled edge can be added with each application of the recursion, i.e., there are at most $k+\ell-1$ double edges in total. It is readily checked that two is indeed the highest edge multiplicity possible.

\medskip
(d) As the case $k=\ell=1$ has already been discussed in Example~\ref{ex-G11}, let $k=2$, $\ell=1$, and consider $\tau(NCG(\{1,2,3,3\}))$. We relabel the vertices as $1,2,3,3'$ to distinguish between the two copies of the doubled vertex 3 more easily.  Observe that the graphs $\Gamma_1$ and $\Gamma_2$ with edge sets $\{(1,3'), (2,3')\}$ and $\{(1,3'),(2,3)\}$, respectively, give rise to the same element of $\cG(2,1)$, namely the annulus graph with edge set $\{(1,3),(2,3)\}$ (see Fig.~\ref{fig-multi} below).

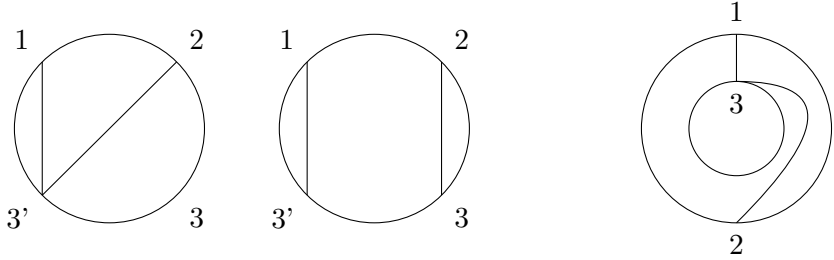
\begin{figure}[H]
\begin{center}
\begin{tikzpicture}[scale=1.25]
\draw (-0.707,0.707) node[above left=1pt] {1};
\draw (0.707,0.707) node[above right=1pt] {2};
\draw (0.707,-0.707) node[below right=1pt] {3};
\draw (-0.707,-0.707) node[below left=1pt] {3'};

\draw (0,1) node[above=1pt] {\color{white}0\color{black}};
\draw (0,-1) node[below=1pt] {\color{white}0\color{black}};

\draw (0,0) circle (1cm);

\draw (-0.707,0.707) -- (-0.707,-0.707);
\draw (0.707,0.707) -- (-0.707,-0.707);
\end{tikzpicture}\hspace{0.5cm}
\begin{tikzpicture}[scale=1.25]
\draw (-0.707,0.707) node[above left=1pt] {1};
\draw (0.707,0.707) node[above right=1pt] {2};
\draw (0.707,-0.707) node[below right=1pt] {3};
\draw (-0.707,-0.707) node[below left=1pt] {3'};

\draw (0,1) node[above=1pt] {\color{white}0\color{black}};
\draw (0,-1) node[below=1pt] {\color{white}0\color{black}};

\draw (0,0) circle (1cm);

\draw (-0.707,0.707) -- (-0.707,-0.707);
\draw (0.707,0.707) -- (0.707,-0.707);
\end{tikzpicture}\hspace{2cm}
\begin{tikzpicture}[scale=1.25]
\draw (0,1) node[above=1pt] {1};
\draw (0,-1) node[below=1pt] {2};

\draw (0,0.5) node[below=1pt] {3};

\draw (0,0) circle (1cm);
\draw (0,0) circle (0.5cm);

\draw (0,1) -- (0,0.5);
\draw (0,-1) .. controls (1,0) and (1,0.5).. (0,0.5);
\end{tikzpicture}
\end{center}
\captionof{figure}{Two disk non-crossing graphs that give rise to the same annulus graph.}\label{fig-multi}
\end{figure}

Hence, $\cG(1,2)$ is indeed a multi-set. By Definition~\ref{def-Ggraphs}, either $\cG(1,2)$ or $\cG(2,1)$ is used in the construction of $\cG(k,\ell)$ for $k,\ell>2$, i.e., the construction yields again a multi-set.
\end{proof}

For the following discussion, we introduce the disjoint decomposition
\begin{equation}\label{eq-Gdecomp3}
\cG(k,\ell)=\cG_{dec}(k,\ell)\cup\cG_{\neg dec}(k,\ell),
\end{equation}
where $\cG_{dec}(k,\ell)$ denotes the graphs in $\cG(k,\ell)$ that have double edges or loops and $\cG_{\neg dec}(k,\ell)$ denotes the graphs that do not have either. We refer to the elements of $\cG_{dec}(k,\ell)$ as \emph{decorated} graphs.

\begin{lemma}\label{lem-Gproperties2}
Let $k,\ell\in\N$.
\begin{itemize}
\item[(a)] Whenever $\Gamma\in\cG(k,\ell)$ has more than one connected component that contains a connecting edge, every connected component of $\Gamma$ can be uniquely identified with a disk non-crossing graph. 
\item[(b)] Whenever $\Gamma\in\cG(k,\ell)$ has exactly one connected component with a connecting edge and $\Gamma$ is not decorated, the same identification as in (a) holds, but it is no longer unique. If $\Gamma_1$ denotes the connected component of $\Gamma$ that contains a connecting edge and $U_1\cup U_2$ with $U_1\subset [k]$ and $U_2\subset [k+1,k+\ell]$ is the vertex set of $\Gamma_1$, there are $|U_1|\cdot |U_2|$ different ways to identify the connected components of $\Gamma$ with a disk non-crossing graph.
\end{itemize}
\end{lemma}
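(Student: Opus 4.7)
The plan is to recover a disk non-crossing graph structure for each connected component of $\Gamma$ by appropriately ``cutting'' the annulus along a radial arc, using the recursive construction of $\cG(k,\ell)$ and the mediating map $\tau$ from Definition~\ref{def-tau} as a guide. Any connected component whose vertex set lies entirely on a single circle is automatically a disk non-crossing graph, since the circular arc is homeomorphic to a disk boundary; so only connecting components require actual work, and I will focus on a distinguished connecting component $\Gamma_1$ with vertex set $U_1 \cup U_2$.

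For part (a), I would exploit the rigidity forced by the presence of at least two connecting components. Using Lemma~\ref{lem-Gproperties1}(a), the components of $\Gamma$ induce an annular non-crossing partition, and in the presence of another connecting component $\Gamma_2$ the vertex sets $U_1$ and $U_2$ of $\Gamma_1$ must each consist of cyclically consecutive vertices: if $U_1$ were not a cyclic interval on $[k]$, any outer vertex of $\Gamma_2$ lying in a gap of $U_1$ would, together with the connecting edges of $\Gamma_2$, cross those of $\Gamma_1$, contradicting Definition~\ref{def-NCA}. Once this arc structure is in place, the planar region carrying $\Gamma_1$ is bounded by an outer arc, an inner arc, and the two radial segments lying in the uniquely determined gaps that separate $\Gamma_1$ from its neighbouring connecting components. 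This region is topologically a disk and its boundary determines the cyclic order of $U_1 \cup U_2$ uniquely, yielding the canonical identification.

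For part (b), with a unique connecting component $\Gamma_1$ the annular region spanned by $\Gamma_1$ wraps around the annulus and cannot be confined to a disk region; a disk identification therefore requires choosing a radial cut $\gamma$ through the edge-complement of $\Gamma_1$. I plan to show that the no-decoration assumption lets $\gamma$ exit the outer boundary at the gap immediately clockwise of any chosen $v_1 \in U_1$ and enter the inner boundary at the gap immediately following any chosen $v_2 \in U_2$: absence of double edges prevents two adjacent gaps from being pinched between parallel edges, while absence of loops prevents $\gamma$ from being forced back through a vertex. Each pair $(v_1, v_2)$ then produces a distinct linearization of the cyclic orders on $U_1$ and $U_2$, and hence a distinct disk non-crossing graph representing $\Gamma_1$, accounting for exactly $|U_1| \cdot |U_2|$ identifications.

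The main obstacle will be rigorously verifying that every one of the $|U_1| \cdot |U_2|$ cuts in part (b) genuinely produces a valid non-crossing disk graph and that distinct cuts yield genuinely distinct identifications; this amounts to a careful local analysis of how the edges of $\Gamma_1$ cluster near each vertex of $U_1 \cup U_2$, leveraging Lemma~\ref{lem-Gproperties1}(b)--(c) to rule out the pathological configurations that decorations could otherwise create. A secondary technical point is the cyclic-arc claim underlying part (a), which I would establish either directly from Definition~\ref{def-NCA} or by appeal to the annular non-crossing combinatorics developed in~\cite{MingoNica2004}.
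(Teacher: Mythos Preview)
Your geometric cutting approach is a reasonable alternative to the paper's route, but there is a genuine gap in your treatment of part (a). You argue that when $\Gamma$ has at least two connecting components, the region carrying each connecting component $\Gamma_1$ is bounded by two arcs and two radial segments and is therefore a disk with a canonical cyclic order on $U_1\cup U_2$. That is fine, but it only shows that $\Gamma_1$ embeds uniquely as a planar graph on a labeled disk; it does \emph{not} show that the resulting graph is a disk non-crossing graph in the sense of Definition~\ref{def-NCG}, which explicitly forbids loops and multi-edges. Nothing in your argument rules out $\Gamma_1$ being decorated. The paper handles this by a different mechanism: it traces $\Gamma$ back through the recursive construction to a preimage $\widetilde{\Gamma}\in NCG(\{1,\dots,k,k+j,\dots,k+j\})$ under $\tau$, and shows that the presence of two connecting edges with disjoint endpoints forces $\widetilde{\Gamma}$ to avoid the configurations (an edge $(k+j,k+j)$, or a vertex adjacent to both copies of $k+j$, or an edge $(1,k)$) that would produce loops or double edges after applying $\tau$. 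This ``no decoration when $\geq 2$ connecting components'' fact is an essential output of the proof, not something you can take for granted, and your proposal does not supply it.

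For part (b), your approach and the paper's differ in where the count $|U_1|\cdot|U_2|$ comes from. You want to parametrize disk identifications directly by radial cuts indexed by pairs $(v_1,v_2)\in U_1\times U_2$ and then use the no-decoration hypothesis to certify each cut; the paper instead passes through the correspondence between annular non-crossing partitions and permutations, invokes~\cite[Prop.~4.6]{MingoNica2004} to get exactly $|U_1|\cdot|U_2|$ permutations $\pi_\Gamma'$ compatible with $\pi_\Gamma$, and reads off a disk for each cycle from the standardness condition in Definition~\ref{def-NCA}. Your version is more hands-on and self-contained, while the paper's is shorter because the enumeration is outsourced. Either can be made to work, but be aware that the ``main obstacle'' you flag---verifying that each cut is admissible and that distinct cuts give distinct disk graphs---is precisely what the permutation framework in~\cite{MingoNica2004} packages for free.
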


Lemma~\ref{lem-Gproperties2} translates between a graph $\Gamma\in\cG(k,\ell)$, the partition induced by its connected components and the cycle structure arising in~\eqref{eq-fpformula}. We give a schematic of the construction of the disk graphs in Fig.~\ref{fig-trafo1} below. Recall from Definition~\ref{def-NCA} that any cycle of an annular non-crossing permutation encloses a region homeomorphic to the unit disk with the boundary oriented clockwise.

\begin{figure}[H]
\begin{center}
\begin{tikzpicture}[scale=1,baseline=(current bounding box.center)]
\draw (0,2) node[above=1pt] {1};
\draw (1.414,1.414) node[above right=1pt] {2};
\draw (2,0) node[right] {3};

\draw (0.3536,0.3536) node[below left=0.1pt] {9};

\draw(0,2) -- (2,0);
\draw[black,style={double,double distance=2pt}](2,0) -- (0.3536,0.3536);
\draw (0.3536,0.3536) .. controls (0,1) and (1,1) .. (0.3536,0.3536);

\draw (2,0) arc[start angle=0, end angle=100, radius=2];
\draw (2,0) arc[start angle=0, end angle=-10, radius=2];
\draw (0.5,0) arc[start angle=0, end angle=90, radius=0.5];
\end{tikzpicture}$\rightarrow$
\begin{tikzpicture}[scale=1,baseline=(current bounding box.center)]
\draw (0,2) node[above=1pt] {1};
\draw (1.414,1.414) node[above right=1pt] {2};
\draw (2,0) node[right] {3};

\draw (0.3536,0.3536) node[below left=0.1pt] {9};

\draw (2,0) arc[start angle=0, end angle=100, radius=2];
\draw (2,0) arc[start angle=0, end angle=-10, radius=2];
\draw (0.5,0) arc[start angle=0, end angle=90, radius=0.5];

\draw[black] (0,2) -- (2,0);
\draw[black] (2,0) -- (0.3536,0.3536);
\draw[black] (0.3536,0.3536) -- (0,2);
\filldraw [black] (1.414,1.414) circle (1pt);
\end{tikzpicture}$\rightarrow$
\begin{tikzpicture}[scale=1,baseline=(current bounding box.center)]
\draw (0,2) node[above=1pt] {1};
\draw (1.414,1.414) node[above right=1pt] {2};
\draw (2,0) node[right] {3};

\draw (0.3536,0.3536) node[below left=0.1pt] {9};

\draw (2,0) arc[start angle=0, end angle=100, radius=2];
\draw (2,0) arc[start angle=0, end angle=-10, radius=2];
\draw (0.5,0) arc[start angle=0, end angle=90, radius=0.5];

\draw[black,postaction={on each segment={mid arrow=black}}] (0,2) -- (2,0);
\draw[black,postaction={on each segment={mid arrow=black}}] (2,0) -- (0.3536,0.3536);
\draw[black,postaction={on each segment={mid arrow=black}}] (0.3536,0.3536) -- (0,2);
\draw[black,postaction={on each segment={mid arrow=black}}] (1.414,1.414) .. controls (1.6,0.7) and (0.7,1.3) .. (1.414,1.414);
\end{tikzpicture}\hspace{2cm}
\begin{tikzpicture}[scale=1,baseline=(current bounding box.center)]
\draw (0,1) node[above=1pt] {1};
\draw (-0.707,-0.707) node[below left=1pt] {9};
\draw (0.707,-0.707) node[below right=1pt] {3};

\draw[black,style={double,double distance=2pt}](-0.707,-0.707) -- (0.707,-0.707);
\draw (0,1) -- (0.707,-0.707);
\draw (-0.707,-0.707) .. controls (-1,-0.1) and (-0.3,-0.3) .. (-0.707,-0.707);

\draw (0,0) circle (1cm);
\end{tikzpicture}
\end{center}
\captionof{figure}{A subgraph of $\Gamma\in\cG(k,\ell)$ with the induced partition $\pi_\Gamma$ and a possible permutation $\pi_\Gamma'$ (left) as well as the disk non-crossing graph obtained for a connected component of $\Gamma$ (right).}\label{fig-trafo1}
\end{figure}
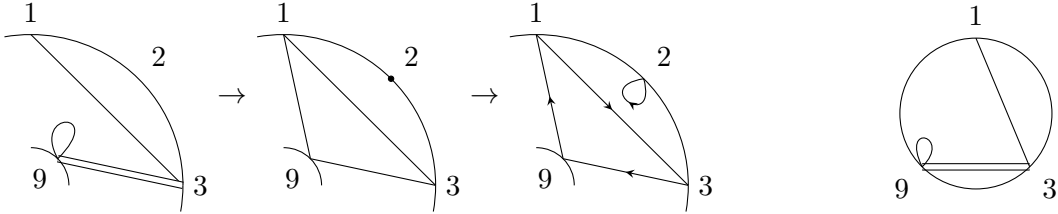

Note that the assignment of the orientation in the second step on the left of Fig.~\ref{fig-trafo1} is not unique if $\pi_\Gamma$ has only one connecting block (cf.~\cite[Prop.~4.6]{MingoNica2004}, see Fig.~\ref{fig-NCAvsNC} for an example). We remark that~(a) and (b) are almost complementary cases and that only decorated graphs with exactly one connected component containing a connecting edge are not covered by Lemma~\ref{lem-Gproperties2}. The proof of (a) further shows that any $\Gamma\in\cG(k,\ell)$ with at least two connected components containing a connecting edge cannot be decorated.

\medskip
We give the full construction behind the schematics in Fig.~\ref{fig-trafo1} below.

\begin{proof}[Proof of Lemma~\ref{lem-Gproperties2}]
(b) It follows from Lemma~\ref{lem-Gproperties1}(a) that the connected components of $\Gamma$ give rise to a partition $\pi_{\Gamma}\in NCP(k,\ell)$. By Propositions 4.4. and 4.6 of~\cite{MingoNica2004}, any element of $NCP(k,\ell)$ may be identified with an annular non-crossing permutation, i.e., its blocks can be given an orientation (see the left of Fig.~\ref{fig-trafo1}). This orientation of the individual cycles is naturally induced by the orientation of the inner and outer circle. However, the identification between $NCP(k,\ell)$ and $\NCA(k,\ell)$ is only unique whenever the underlying partition has more than one connecting block. If there is only one connecting block $U_1\cup U_2$ with $U_1\subset [k]$, $U_2\subset [k+1,k+\ell]$, there are $|U_1|\cdot |U_2|$ possibilities to identify $\pi_{\Gamma}$ with an annular non-crossing permutation (cf. Fig.~\ref{fig-NCAvsNC}). We fix one $\pi_\Gamma'\in \NCA(k,\ell)$ that is associated with $\pi_\Gamma$ in this way.

\medskip
By definition, the cycles of $\pi_\Gamma'$ can be drawn on the $(k,\ell)$-annulus such that each cycle encloses a region between the circles homeomorphic to the disk with boundary oriented clockwise. Adding the elements of the cycle around the boundary yields a labeled disk as in Definition~\ref{def-NCG}. We may use the same transformation to map a connected component of $\Gamma$ to a disk non-crossing graph (see Fig.~\ref{fig-trafo2} below). Note that this transformation cannot induce any crossings of the edges of $\Gamma$, however, any loops or double edges of the original graph are kept. Hence, we only obtain a disk non-crossing graph in the sense of Definition~\ref{def-NCG} if $\Gamma$ does not contain any loops or double edges to begin with.

\begin{figure}[H]
\begin{center}
\begin{tikzpicture}[scale=1,baseline=(current bounding box.center)]
\draw (0,2) node[above=1pt] {1};
\draw (1.414,1.414) node[above right=1pt] {2};
\draw (2,0) node[right] {3};

\draw (0.3536,0.3536) node[below left=0.1pt] {9};

\draw (2,0) arc[start angle=0, end angle=100, radius=2];
\draw (2,0) arc[start angle=0, end angle=-10, radius=2];
\draw (0.5,0) arc[start angle=0, end angle=90, radius=0.5];

\draw[black,postaction={on each segment={mid arrow=black}}] (0,2) -- (2,0);
\draw[black,postaction={on each segment={mid arrow=black}}] (2,0) -- (0.3536,0.3536);
\draw[black,postaction={on each segment={mid arrow=black}}] (0.3536,0.3536) -- (0,2);
\draw[black,postaction={on each segment={mid arrow=black}}] (1.414,1.414) .. controls (1.6,0.7) and (0.7,1.3) .. (1.414,1.414);
\end{tikzpicture}$\rightarrow$
\begin{tikzpicture}[scale=1,baseline=(current bounding box.center)]
\draw (0,1) node[above=1pt] {1};
\filldraw [black] (0,1) circle (1.5pt);
\draw (-0.707,-0.707) node[below left=1pt] {9};
\filldraw [black] (-0.707,-0.707) circle (1.5pt);
\draw (0.707,-0.707) node[below right=1pt] {3};
\filldraw [black] (0.707,-0.707) circle (1.5pt);
\draw (0,0) circle (1cm);
\end{tikzpicture}\hspace{2cm}
\begin{tikzpicture}[scale=1,baseline=(current bounding box.center)]
\draw (0,2) node[above=1pt] {1};
\draw (1.414,1.414) node[above right=1pt] {2};
\draw (2,0) node[right] {3};

\draw (0.3536,0.3536) node[below left=0.1pt] {9};

\draw(0,2) -- (2,0);
\draw[black,style={double,double distance=2pt}](2,0) -- (0.3536,0.3536);
\draw (0.3536,0.3536) .. controls (0,1) and (1,1) .. (0.3536,0.3536);

\draw (2,0) arc[start angle=0, end angle=100, radius=2];
\draw (2,0) arc[start angle=0, end angle=-10, radius=2];
\draw (0.5,0) arc[start angle=0, end angle=90, radius=0.5];
\end{tikzpicture}$\rightarrow$
\begin{tikzpicture}[scale=1,baseline=(current bounding box.center)]
\draw (0,1) node[above=1pt] {1};
\draw (-0.707,-0.707) node[below left=1pt] {9};
\draw (0.707,-0.707) node[below right=1pt] {3};

\draw[black,style={double,double distance=2pt}](-0.707,-0.707) -- (0.707,-0.707);
\draw (0,1) -- (0.707,-0.707);
\draw (-0.707,-0.707) .. controls (-1,-0.1) and (-0.3,-0.3) .. (-0.707,-0.707);

\draw (0,0) circle (1cm);
\end{tikzpicture}
\end{center}
\captionof{figure}{Transformation of a cycle of $\pi_{\Gamma}'\in\protect\NCA(k,\ell)$ to a circle (left) and the induced transformation of a connected component of $\Gamma\in\cG(k,\ell)$ into a disk graph (right).}\label{fig-trafo2}
\end{figure}
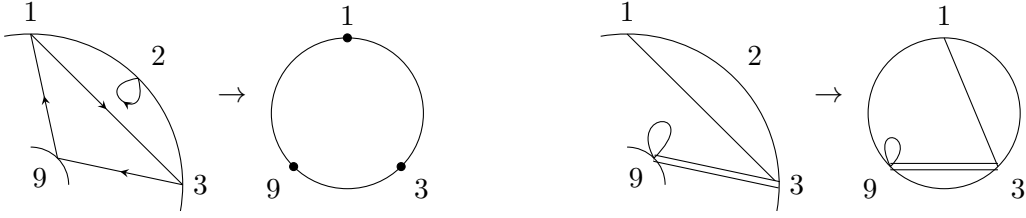

(a) Assume next that $\Gamma\in\cG(k,\ell)$ has at least two connected components that contain a connecting edge. It follows from Definition~\ref{def-Ggraphs} that this structure can only arise from $\cG_4$ in \eqref{eq-Gdecomp2} or its analog in a previous iteration of the recursive definition. Since adding subgraphs that only live on one circle of the $(k,\ell)$-annulus does not interfere with the following argument, assume w.l.o.g. that $\Gamma$ arises from $\cG_4$ directly. To have two connecting blocks, $\Gamma$ must have at least two connecting edges $(i_1,i_2)$ and $(i_1',i_2')$ where $i_1,i_1'\in[k]$, $i_2,i_2'\in [k+1,k+\ell]$ and $i_1\neq i_1'$, $i_2\neq i_2'$. Note that either $(i_1,i_2)$ or $(i_1',i_2')$ may coincide with the edge $(1,k+j)$ prescribed by the definition.

\medskip
 Let $\widetilde{\Gamma}\in NCG(\{1,\dots,k,k+j,\dots,k+j-1,k+j\})$ denote a disk non-crossing graph such that $\tau(\widetilde{\Gamma})=\Gamma$. Here, $\tau$ denotes the map introduced in Definition~\ref{def-tau}. By construction, $\widetilde{\Gamma}$ also has two edges edges $(i_1,i_2)$ and $(i_1',i_2')$ with $i_1,i_1'\in[k]$, $i_2,i_2'\in [k+1,k+\ell]$ and $i_1\neq i_1'$, $i_2\neq i_2'$. This structure imposes several restrictions on $\widetilde{\Gamma}$, as can be seen from Fig.~\ref{fig-trafo3} below.

\begin{figure}[H]
\begin{center}
\begin{tikzpicture}[scale=1.5,baseline=(current bounding box.center)]
\draw (0.3827,0.9239) node[above=1pt] {$1$};
\draw (0.9239,0.3827) node[right=1pt] {$i_1$};
\draw (0.9239,-0.3827) node[right=1pt] {$i_1'$};
\draw (0.3827,-0.9239) node[below=1pt] {$k$};
\draw (-0.3827,-0.9239) node[below=1pt] {$k+j$};
\draw (-0.9239,-0.3827) node[left=1pt] {$i_2'$};
\draw (-0.9239,0.3827) node[left=1pt] {$i_2$};
\draw (-0.3827,0.9239) node[above=1pt] {$k+j$};
\draw (-0.707,0.707) node[above left=1pt] {$k+j-1$};

\draw[red] (0,0) circle (1cm);
\draw (-0.9239,0.3827) -- (0.9239,0.3827);
\draw (-0.9239,-0.3827) -- (0.9239,-0.3827);
\draw (-0.3827,0.9239) -- (0.3827,0.9239);
\end{tikzpicture}\hspace{0.3cm}$\rightarrow$\hspace{0.25cm}
\begin{tikzpicture}[scale=1.5,baseline=(current bounding box.center)]
\draw (0,1) node[above=1pt] {1};
\draw (-0.4484,0.88) node[above left=1pt] {$k$};
\draw (0.707,-0.707) node[below right=1pt] {$i_1$};
\draw (-0.707,-0.707) node[below left=1pt] {$i_1'$};

\draw (0,0.5) node[below=0.3536pt] {$k\nsp+\nsp j$};
\draw (-0.3536,-0.3536) node[above right=0pt] {$i_2'$};
\draw (0.3536,-0.3536) node[above left=0.pt] {$i_2$};

\draw[red] (0,1) arc[start angle=90, end angle=-250, radius=1];
\draw[red] (0,1) arc[start angle=90, end angle=100, radius=1];
\draw[red] (0,0.5) arc[start angle=90, end angle=-250, radius=0.5];
\draw[red] (0,0.5) arc[start angle=90, end angle=100, radius=0.5];

\draw[red] (-0.1736,0.9848) -- (-0.0868,0.4924);
\draw[red] (-0.342,0.9397) -- (-0.171,0.4698);

\draw[black] (0,0.5) -- (0,1);
\draw[black] (-0.707,-0.707) -- (-0.3536,-0.3536);
\draw[black] (0.707,-0.707) -- (0.3536,-0.3536);
\end{tikzpicture}\hspace{0.25cm}$\rightarrow$\hspace{0.25cm}
\begin{tikzpicture}[scale=1.5,baseline=(current bounding box.center)]
\draw (0,1) node[above=1pt] {1};
\draw (-0.309,0.951) node[above left=1pt] {$k$};
\draw (0.707,-0.707) node[below right=1pt] {$i_1$};
\draw (-0.707,-0.707) node[below left=1pt] {$i_1'$};

\draw (0,0.5) node[below=0.3536pt] {$k\nsp+\nsp j$};
\draw (-0.3536,-0.3536) node[above right=0pt] {$i_2'$};
\draw (0.3536,-0.3536) node[above left=0.pt] {$i_2$};

\draw (0,0) circle (1cm);
\draw (0,0) circle (0.5cm);

\draw[black] (0,0.5) -- (0,1);
\draw[black] (-0.707,-0.707) -- (-0.3536,-0.3536);
\draw[black] (0.707,-0.707) -- (0.3536,-0.3536);
\end{tikzpicture}

\end{center}
\captionof{figure}{A schematic visualization of $\widetilde{\Gamma}$ (left) and its mapping to the original graph $\Gamma=\tau(\widetilde{\Gamma})$ (right).}\label{fig-trafo3}
\end{figure}
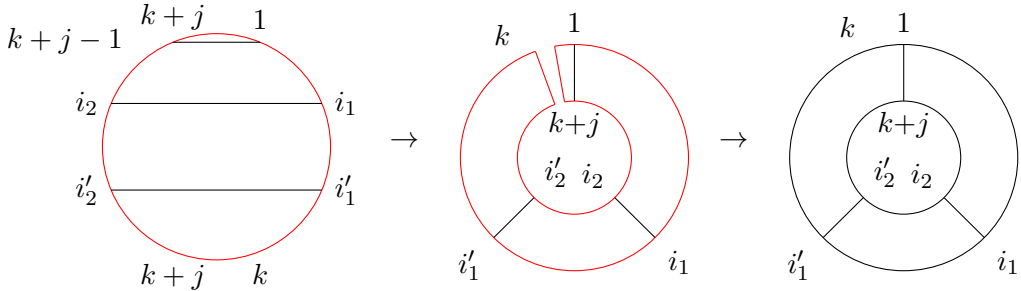

First, there cannot be an edge $(1,k)\in\widetilde{\Gamma}$ without violating the non-crossing condition. Together with~\eqref{eq-Gdecomp1}, this implies that $\Gamma=\tau(\widetilde{\Gamma})$ has at most a single edge $(1,k)$. Further, $\widetilde{\Gamma}$ cannot contain a vertex that connects to both copies of $k+j$. This implies that $\Gamma=\tau(\widetilde{\Gamma})$ cannot have any double edges. Lastly, $\Gamma$ cannot have any loops, as $\widetilde{\Gamma}$ containing an edge $(k+j,k+j)$ would induce a crossing, too. Hence, any $\Gamma\in\cG(k,\ell)$ with more than one connected component containing a connecting edge has only single edges and no loops.
\end{proof}

So far, we have only considered the non-crossing annular partition induced by an element of $\cG(k,\ell)$. The following lemma allows us to partially reverse this relation and explicitly construct a graph that is associated with a given $\pi\in NCP(k,\ell)$.

\begin{lemma}\label{lem-Gproperties3}
For every $\pi\in NCP(k,\ell)$ there is at least one $\Gamma\in\cG(k,\ell)$ for which the vertex sets of the connected components coincide with the blocks of $\pi$. If $\pi$ has exactly one connecting block, then there is at least one such graph in $\cG_{dec}(k,\ell)$ and one in $\cG_{\neg dec}(k,\ell)$, respectively.
\end{lemma}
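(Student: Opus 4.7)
My plan is to prove the lemma by induction on $k+\ell$, mirroring the recursive decomposition of $\cG(k,\ell)$ in Definition~\ref{def-Ggraphs}. The base case $k=\ell=1$ is already covered by Example~\ref{ex-G11}: the unique element of $NCP(1,1)$ is realized by the single-edge graph in $\cG_{\neg dec}(1,1)$ and by each of the remaining seven graphs in $\cG_{dec}(1,1)$. For the inductive step I may assume $k\geq 2$ by Definition~\ref{def-Ggraphs}(iii) and split into three cases according to the block $B_1\in\pi$ containing vertex~$1$, which use the subsets $\cG_1$, $\cG_2$, and $\cG_4$ from the decomposition~\eqref{eq-Gdecomp2} of $\cG_{\neg(1,k)}$.

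If $B_1=\{1\}$ is a singleton, then $\pi\setminus\{B_1\}\in NCP([2,k],[k+1,k+\ell])$; the induction hypothesis supplies a graph $\Gamma'$ realizing it, and attaching the isolated vertex~$1$ yields an element of $\cG_1$. If $|B_1|\geq 2$ and $B_1\subseteq[k]$, I set $j:=\max B_1\geq 2$ and observe that the annular non-crossing condition combined with the non-connecting nature of $B_1$ forces every block other than $B_1$ either to lie in $[2,j-1]$ (disk part, enclosed by $B_1$'s polygon) or in $[j+1,k]\cup[k+1,k+\ell]$ (annular part, including all connecting blocks of $\pi$); realizing the two pieces by $\Gamma_1\in NCG([1,j])$ containing edge $(1,j)$ and $\Gamma_2\in\cG([j,k],[k+1,k+\ell])$ from the induction hypothesis yields an element of $\cG_2$. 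If $B_1$ is instead a connecting block, I would apply the annular-to-disk duplication correspondence of Propositions~4.4 and~4.6 of~\cite{MingoNica2004}: fix a lift $\pi'\in\NCA(k,\ell)$ of $\pi$ via the standardness condition in Definition~\ref{def-NCA}(ii), pick an inner vertex $k+j\in B_1$ such that the corresponding slit makes the lifted partition $\tilde\pi$ on the $k+\ell+1$ labels disk non-crossing (with the two copies of $k+j$ adjoined to $B_1$), realize $\tilde\pi$ by a disk non-crossing graph $\tilde\Gamma$ containing the prescribed edge $(1,k+j)$, and set $\Gamma:=\tau(\tilde\Gamma)\in\cG_4$.

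For the refined statement when $\pi$ has exactly one connecting block, I would track each of the three cases and exploit the freedom in the choice of intermediate graphs. A non-decorated realization is produced by consistently picking spanning-tree building blocks, by refraining from activating the optional edge $(1,k)$ from $\cG_{(1,k)}$, and, in the $\cG_4$ case, by selecting $\tilde\Gamma$ so that the two copies of $k+j$ share neither a direct edge nor a common neighbor (which is always possible via a tree realization of $\tilde\pi$). A decorated realization is then obtained from a non-decorated one by exactly one of three local modifications, each of which preserves membership in $\cG(k,\ell)$ by Lemma~\ref{lem-Gproperties1}(b)--(c): doubling an existing $(1,k)$ edge via $\cG_{(1,k)}$, inserting an edge between the two copies of $k+j$ in $\tilde\Gamma$ (which becomes a loop at $k+j$ after $\tau$), or inserting an edge to a common neighbor of the two copies (which becomes a double edge after $\tau$). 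The main obstacle I foresee is verifying that a suitable doubled vertex $k+j$ and corresponding slit always exist in the connecting case; this reduces to the fact that any annular non-crossing permutation admits a face through which the annulus can be cut to a disk non-crossing partition, a combinatorial statement I would import from~\cite{MingoNica2004} rather than reprove.
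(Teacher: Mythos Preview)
Your inductive organization (case split on the block $B_1$ containing vertex~$1$, using $\cG_1$, $\cG_2$, $\cG_4$) is essentially the same strategy as the paper's direct construction, and Cases~A and~B are fine. However, there is a genuine gap in Case~C. The outer endpoint of the slit in Definition~\ref{def-tau} is \emph{forced} to lie between the labels $1$ and $k$; only the inner endpoint $k+j$ is free. When $k\in B_1$ as well, no choice of $k+j\in B_1$ need produce a disk non-crossing lift~$\tilde\pi$. Concretely, take $k=3$, $\ell=2$, and $\pi=\{\{1,3,4\},\{2,5\}\}\in NCP(3,2)$: the only admissible doubled vertex is $k+j=4$, but on the disk with labels $1,2,3,4,5,4'$ every lift of $\pi$ contains both the pair $\{1,3\}$ (inside the lifted $B_1$) and the block $\{2,5\}$, which satisfy $1<2<3<5$ and therefore cross. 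So the slit you want simply does not exist here, and the general annulus-to-disk correspondence you cite from Mingo--Nica does not help because it does not pin the outer cut at the $(k,1)$-gap.

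The paper handles exactly this obstruction by a preliminary reduction you are missing: if $1$ and $k$ lie in the same block, split that block into two pieces separating $1$ from $k$, realize the resulting partition $\pi^*$ (for which Case~C now succeeds since $k\notin B_1$), and finally rejoin by adding the edge $(1,k)$ via the $\cG_{(1,k)}$ step in~\eqref{eq-Gdecomp1}. In the example above, $\pi^*=\{\{1,4\},\{2,5\},\{3\}\}$ lifts to the disk non-crossing partition $\{\{1,4'\},\{2,5\},\{3\},\{4\}\}$, and adding $(1,3)$ afterward recovers~$\pi$. Once you insert this splitting step into Case~C, the rest of your argument (including the decorated/non-decorated refinement, where the obstruction does not arise since a second connecting block is needed to create the crossing) goes through as written.
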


We briefly sketch the construction of an element on $\cG(k,\ell)$ from a given annular non-crossing partition. For the example, we assume that the sketched connecting block is the only one in the partition. After completing the steps sketched in Fig.~\ref{fig-construct1}, the remaining connected components of the graph are readily added using steps corresponding to $\cG_1$, $\cG_2$, and $\cG_3$ in Definition~\ref{def-Ggraphs} and the symmetry under interchanging of the inner and outer circle.

\begin{figure}[H]
\begin{center}
\begin{tikzpicture}[scale=1,baseline=(current bounding box.center)]
\draw (0,2) node[above=1pt] {1};
\draw (2,0) node[right] {2};
\draw (0.3536,0.3536) node[below left=0.1pt] {7};

\draw (2,0) arc[start angle=0, end angle=100, radius=2];
\draw (2,0) arc[start angle=0, end angle=-10, radius=2];
\draw (0.5,0) arc[start angle=0, end angle=90, radius=0.5];

\draw[black] (0,2) -- (2,0);
\draw[black] (2,0) -- (0.3536,0.3536);
\draw[black] (0.3536,0.3536) -- (0,2);
\end{tikzpicture}\hspace{0.1cm}$\rightarrow$\hspace{0.1cm}
\begin{tikzpicture}[scale=1,baseline=(current bounding box.center)]
\draw (0,2) node[above=1pt] {1};
\draw (2,0) node[right] {2};

\draw (0.3536,0.3536) node[below left=0.1pt] {7};

\draw (2,0) arc[start angle=0, end angle=100, radius=2];
\draw (2,0) arc[start angle=0, end angle=-10, radius=2];
\draw (0.5,0) arc[start angle=0, end angle=90, radius=0.5];

\draw[black,postaction={on each segment={mid arrow=black}}] (0,2) -- (2,0);
\draw[black,postaction={on each segment={mid arrow=black}}] (2,0) -- (0.3536,0.3536);
\draw[black,postaction={on each segment={mid arrow=black}}] (0.3536,0.3536) -- (0,2);
\end{tikzpicture}$\rightarrow$
\begin{tikzpicture}[scale=1,baseline=(current bounding box.center)]
\draw (0.707,0.707) node[above right=1pt] {1};
\draw (-0.707,0.707) node[above left=1pt] {7'};
\draw (-0.707,-0.707) node[below left=1pt] {7};
\draw (0.707,-0.707) node[below right=1pt] {2};

\draw (0,0) circle (1cm);

\draw[black] (0.707,0.707) -- (0.707,-0.707);
\draw[black] (0.707,0.707) -- (-0.707,-0.707);
\end{tikzpicture}\hspace{0.2cm}$\overset{\tau}{\rightarrow}$\hspace{0.2cm}
\begin{tikzpicture}[scale=1,baseline=(current bounding box.center)]
\draw (0,1) node[above=1pt] {1};
\draw (0,-1) node[below=1pt] {2};

\draw (0,0.5) node[below=1pt] {7};

\draw (0,0) circle (1cm);
\draw (0,0) circle (0.5cm);

\draw (0,1) -- (0,0.5);
\draw (0,-1) .. controls (1,0) and (1,0.25).. (0,1);
\end{tikzpicture}
\end{center}
\captionof{figure}{The construction of an element of $\cG(2,1)$ (right) from the connecting block of an annular non-crossing partition (left).}\label{fig-construct1}
\end{figure}
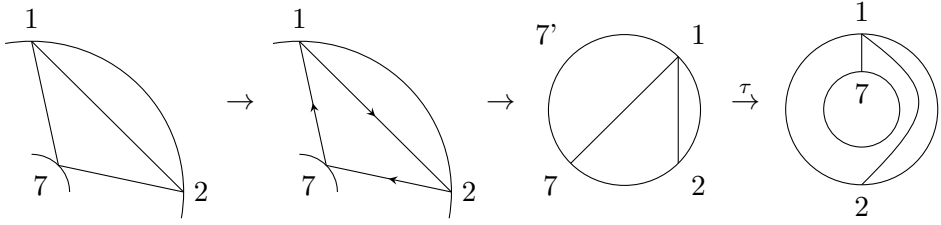

\begin{proof}[Proof of Lemma~\ref{lem-Gproperties3}]
Fix $\pi\in NCP(k,\ell)$ and assume first that $\pi$ has at least two connecting blocks. W.l.o.g. let 1 and $k$ be assigned to different blocks of $\pi$. Indeed, if 1 and $k$ occurred in the same block, we may split it into two disjoint parts that contain 1 and~$k$, respectively, and later add an edge $(1,k)$ to the graph obtained from this modified partition. Further, assume w.l.o.g. that 1 is contained in a connecting block. If $i>1$ were the smallest element that occurs in a connecting block, we may further remove any blocks containing $\{1,\dots,i-1\}$ from the partition and later add a suitable subgraph. Note that the latter is possible by only using steps corresponding to $\cG_1$ and $\cG_2$ in Definition~\ref{def-Ggraphs}.

\medskip
Under these assumptions, there exists $\widetilde{\pi}\in NCP(\{1,\dots,k,k+j,\dots,k+j-1,k+j\})$ such that the transformation $\tau$ in Definition~\ref{def-tau} maps the blocks of $\widetilde{\pi}$ to the blocks of $\pi$. This is readily seen from Figure~\ref{fig-deftau}, as 1 and $k$ being in different blocks implies that none of the blocks of $\pi$ intersects the slit between 1 and $k$. We pick $j$ such that $1$ and $k+j$ are in the same block of $\pi$ and may further choose $\widetilde{\pi}$ such that one copy of the doubled label $k+1$ occurs as a singleton set. Finally, define $\widetilde{\Gamma}\in NCG(\{1,\dots,k,k+1,\dots,k+\ell,k+1\})$ by considering each block $B=\{i_1,\dots,i_n\}\in\widetilde{\pi}$ separately and adding the edges $(i_1,i_2),\dots,(i_1,i_n)$ to the graph. By construction, the vertex sets of the connected components of $\widetilde{\Gamma}$ coincide with the blocks of $\widetilde{\pi}$. Considering $\Gamma_\pi=\tau(\widetilde{\Gamma})$ now yields the element of $\cG(k,\ell)$ with the claimed properties (cf. Fig.~\ref{fig-construct1}, where the procedure is sketched for a single block).

\medskip
Next, consider $\pi\in NCP(k,\ell)$ that has only one connecting block $U=U_1\cup U_2$ with $U_1\subseteq[k]$, $U_2\subseteq[k+1,k+\ell]$. Similar to the first case, we can construct a disk non-crossing graph $\Gamma_U$ for which $\tau(\Gamma_U)\in\cG_c(|U_1|,|U_2|)$. As $\tau(\Gamma_U)$ is only required to have one connecting edge, there is also a choice for $\Gamma_U$ containing a $(k+j,k+j)$ edge (cf. Fig.~\ref{fig-trafo2}). In particular, we obtain at least one graph in $\cG_{c,\neg dec}(|U_1|,|U_2|)$ and one graph in $\cG_{c,dec}(|U_1|,|U_2|)$, respectively. The remaining connected subgraphs of $\Gamma_\pi$ are then added recursively by alternating between adding an isolated vertex (cf. $\cG_1$) and a disk non-crossing graph~(cf.~$\cG_2$ and $\cG_3$) to $\tau(\Gamma_U)$. Note that starting from $\tau(\Gamma_U)\in\cG_{c,\neg dec}(|U_1|,|U_2|)$ yields a graph that is not decorated while starting from $\tau(\Gamma_U)\in\cG_{c,dec}(|U_1|,|U_2|)$ yields a decorated graph.
\end{proof}

Using the properties of $\Gamma\in\cG(k,\ell)$ from Lemmas~\ref{lem-Gproperties2} and~\ref{lem-Gproperties3}, we obtain an explicit non-recursive combinatorial formula for $m_{\circ\circ}$.

\begin{lemma}\label{lem-graphformula2}
Let $k,\ell\in\N$. Then,
\begin{equation}\label{eq-2ndordercumus}
m_{\circ\circ}[1,\dots,k|k+1,\dots,k+\ell]=\Big(\prod_{s=1}^{k+\ell}m_s\Big)\sum_{\Gamma\in\cG_{c}(k,\ell)}c_{\Gamma}q_{\Gamma},
\end{equation}
with suitable constants $c_\Gamma\in\Z$. In particular, $c_{\Gamma}=1$ for $\Gamma\in\cG_{dec}(k,\ell)$.
\end{lemma}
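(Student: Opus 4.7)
The argument proceeds by induction on $k + \ell$, starting from the defining recursion~\eqref{eq-mcrelation2} for $m_{\circ\circ}$. Since the right-hand side of~\eqref{eq-mcrelation2} contains $m_{\circ\circ}[[k]|[k+1,k+\ell]]$ exactly once (namely for the marked partition $\{\{\underline{[k]}\}\}\times\{\{\underline{[k+1,k+\ell]}\}\}$), we can solve for it:
\[
m_{\circ\circ}[[k]|[k+1,k+\ell]] = m[[k]|[k+1,k+\ell]] - \sum_{\pi\in\NCA(k,\ell)}\prod_{B\in\pi} m_\circ[B] - \Sigma_{\text{marked}},
\]
where $\Sigma_{\text{marked}}$ collects the remaining marked partition terms with $(U_1,U_2)\neq([k],[k+1,k+\ell])$. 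The plan is to expand every term on the right as a sum over graph configurations and then match them against the decomposition of $\cG(k,\ell)$.

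For the expansion I will use Lemma~\ref{lem-graphformula1} to write the first term as $(\prod_s m_s)\sum_{\Gamma\in\cG(k,\ell)}q_\Gamma$, and use~\eqref{eq-mcircgraphs} to replace each $m_\circ[B]$ in the second term by a sum over connected disk non-crossing graphs on $B$. In $\Sigma_{\text{marked}}$ the factor $m_{\circ\circ}[U_1|U_2]$ has $|U_1|+|U_2|<k+\ell$, so the induction hypothesis expresses it as a sum over $\cG_c(|U_1|,|U_2|)$. The matching proceeds by splitting $\sum_{\Gamma\in\cG(k,\ell)}$ according to the annular non-crossing partition $\pi_\Gamma$ of $[k+\ell]$ induced by the connected components of $\Gamma$ (Lemma~\ref{lem-Gproperties1}(a)). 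When $\pi_\Gamma$ has at least two connecting blocks, Lemma~\ref{lem-Gproperties2}(a) guarantees that each connected component of $\Gamma$ is a bona fide disk non-crossing graph, and such partitions correspond bijectively to elements of $\NCA(k,\ell)$ with $\geq 2$ connecting cycles; these contributions cancel exactly against the corresponding summands in $\sum_{\pi\in\NCA(k,\ell)}\prod_B m_\circ[B]$. When $\pi_\Gamma$ has exactly one connecting block $U_1\cup U_2\subsetneq[k+\ell]$, the $|U_1|\cdot|U_2|$ multiplicity from Lemma~\ref{lem-Gproperties2}(b) matches the markings in $\Sigma_{\text{marked}}$ combined with the single-connecting-block contributions from $\NCA$, so these graphs are absorbed as well.

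What remains is a sum indexed by $\Gamma\in\cG_c(k,\ell)$ with integer coefficients $c_\Gamma\in\Z$, which is~\eqref{eq-2ndordercumus}. For the claim $c_\Gamma=1$ on decorated graphs, I will argue that $\cG_{c,dec}(k,\ell)$ is disjoint from the combinatorial output of the subtracted terms: $\sum_{\pi\in\NCA}\prod_B m_\circ[B]$ only assembles simple, loopless disk non-crossing pieces on each block, and $\Sigma_{\text{marked}}$ combined with the inductive formula produces decorated pieces only on proper sub-annuli, never on the full $(k,\ell)$-annulus. Hence each decorated $\Gamma\in\cG_c(k,\ell)$ inherits its coefficient purely from its multiplicity in $\cG(k,\ell)$, and this multiplicity equals one because the loops and double edges of a decorated graph uniquely determine the preimage under the map $\tau$ of Definition~\ref{def-tau} (a loop at $k{+}j$ forces the original disk edge $(k{+}j,k{+}j)$, and a double edge between $i$ and $k{+}j$ forces $i$ to connect to both copies of $k{+}j$; the intermediate recursive steps from $\cG_1,\cG_2,\cG_3$ introduce no further ambiguity). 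The main obstacle will be the careful bookkeeping for the non-decorated connected graphs, where the $|U_1|\cdot|U_2|$ orientation ambiguity must be reconciled with the counting of markings and the possible multiplicity of $\Gamma$ itself in $\cG(k,\ell)$; this is where the $c_\Gamma$ may differ from $1$, but being integers it will follow from a direct accounting.
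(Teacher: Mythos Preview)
Your approach is essentially the same as the paper's: both expand $m[\cdot|\cdot]$ via Lemma~\ref{lem-graphformula1}, split the sum over $\cG(k,\ell)$ according to the induced partition $\pi_\Gamma$, separate by the number of connecting blocks, and use Lemma~\ref{lem-Gproperties2} together with the $|U_1|\cdot|U_2|$ count from \cite{MingoNica2004} to match the structure of~\eqref{eq-mcrelation2}. The paper phrases the last step as a direct term-by-term comparison with the defining relation~\eqref{eq-mcrelation2} (uniqueness of $m_{\circ\circ}$), while you phrase it as an induction on $k+\ell$; these are equivalent.

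The one place where your argument overshoots is the justification of $c_\Gamma=1$ for decorated graphs. You argue that decorated connected graphs have multiplicity one in the multi-set $\cG_c(k,\ell)$, and your reason (``the decorations pin down the preimage under $\tau$'') only addresses graphs arising directly from $\cG_4$, not those built up through further applications of $\cG_1,\cG_2,\cG_3$ and the $(1,k)$-edge step. The paper avoids this issue entirely: since the sum in~\eqref{eq-2ndordercumus} is over the \emph{multi-set} $\cG_c(k,\ell)$, the coefficient $c_\Gamma$ is a per-occurrence coefficient. The subtracted term $|U_1|\cdot|U_2|\sum_{\Gamma\in NCG_c(U_1\cup U_2)}q_\Gamma$ consists only of simple loopless disk graphs, so no occurrence of a decorated graph is ever touched by the subtraction, and each keeps coefficient $1$ automatically. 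You do not need (and should not try) to control the multiplicity of decorated graphs in $\cG(k,\ell)$.
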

Note that the constants $c_{\Gamma}$ for $\Gamma\in\cG_{\neg dec}(k,\ell)$ are readily obtained from the multiplicity of the corresponding graph in the multi-set, however, their exact values are not needed for the proof of Theorem~\ref{thm-main}.

\begin{proof}
We start by observing that any connected component of $\Gamma\in\cG(k,\ell)$ with a connecting edge is itself an annular non-crossing graph. Further, a connected component of~$\Gamma$ that only involves vertices from either $[k]$ or $[k+\ell]$ cannot contain loops or double edges~(cf.~(b) and (c) of Lemma~\ref{lem-Gproperties1}) and we may identify it with a disk non-crossing graph.

\medskip
To simplify notation, decompose any $B\subseteq S_1\cup S_2$ as a union $B_1\cup B_2$ with $B_1\subseteq [k]$, $B_2\subseteq [k+1,k+\ell]$ and associate it with a tuple $(B_1,B_2)$ if neither of the subsets is empty. This allows us to use the common notation $NCG(B)$ for both disk and annular graphs by setting $NCG(B)=\cG(B_1,B_2)$ whenever $B$ contains elements from both $[k]$ and $[k+1,k+\ell]$, and $NCG(B)=NCG(B_1)$ resp. $NCG(B)=NCG(B_2)$ if it does not. Splitting the sum in~\eqref{eq-graphform} according to the underlying partition, we can write
\begin{align}
m[1,\dots,k|k+1,\dots,k+\ell]&=\Big(\prod_{s=1}^{k+\ell}m_s\Big)\sum_{\Gamma\in\cG(k,\ell)}q_{\Gamma}\NN\\
&=\sum_{\pi\in NCP(k,\ell)}\prod_{B\in\pi}\Big[\Big(\prod_{s\in B}m_s\Big)\sum_{\Gamma\in NCG_c(B)}q_{\Gamma}\Big]\label{eq-graphtocc}
\end{align}
where $NCG_c(B)$ denotes the connected graphs in $NCG(B)$. By Lemma~\ref{lem-Gproperties3}, none of the sums on the right-hand side are empty. However, there may be multiple permutations in $\NCA(k,\ell)$ as well as a marked element of $NCP(k)\times NCP(\ell)$ associated with a given annular non-crossing partition $\pi\in NCP(k,\ell)$. To obtain the same structure as in~\eqref{eq-mcrelation2}, we thus need to decompose the sum over $\pi\in NCP(k,\ell)$ on the right-hand side of \eqref{eq-graphtocc} further.

\medskip
Distinguishing by the number of connecting blocks of $\pi$ yields
\begin{align}
&\sum_{\pi\in NCP(k,\ell)}\prod_{B\in\pi}\Big[\Big(\prod_{s\in B}m_s\Big)\sum_{\Gamma\in NCG_c(B)}q_{\Gamma}\Big]\label{eq-ccdecomp}\\
&=\sum_{\substack{\pi\in NCP(k,\ell),\\ 1\ conn.\ block}}\prod_{B\in\pi}\Big[\Big(\prod_{s\in B}m_s\Big)\sum_{\Gamma\in NCG_c(B)}q_{\Gamma}\Big]+\sum_{\substack{\pi\in NCP(k,\ell),\\\geq2\ conn.\ blocks}}\prod_{B\in\pi}\Big[\Big(\prod_{s\in B}m_s\Big)\sum_{\Gamma\in NCG_c(B)}q_{\Gamma}\Big].\NN
\end{align}
As an annular non-crossing partition with at least two connecting blocks uniquely corresponds to an annular non-crossing permutation~(cf.~\cite[Prop.~4.4]{MingoNica2004}), we may replace $NCP(k,\ell)$ by $\NCA(k,\ell)$ in the summation if we interpret a cycle as an ordered set. Together with Lemma~\ref{lem-Gproperties2} and~\eqref{eq-mcircgraphs}, it follows that
\begin{align}
\sum_{\substack{\pi\in NCP(k,\ell),\\\geq2\ conn.\ blocks}}\prod_{B\in\pi}\Big[\Big(\prod_{s\in B}m_s\Big)\sum_{\Gamma\in NCG_c(B)}q_{\Gamma}\Big]&=\sum_{\substack{\pi\in \NCA(k,\ell),\\ \geq2\ conn.\ cycles}}\prod_{B\in\pi}\Big[\Big(\prod_{s\in B_1\cup B_2}m_s\Big)\sum_{\Gamma\in NCG_c(B_1\cup B_2)}q_{\Gamma}\Big]\NN\\
&=\sum_{\substack{\pi\in \NCA(k,\ell),\\ \geq2\ conn.\ cycles}}\prod_{B\in\pi}m_{\circ}[B_1\cup B_2].\label{eq-mpart1}
\end{align}
where we used that $m_{\circ}$ is invariant under permutation of the spectral parameters $(z_j)_j$.

\medskip
Applying a similar argument for the partitions with one connecting block $U$, interpreted as $(U_1,U_2)$ with $U_1=U\cap[k], U_2=U\cap[k+1,k+\ell]$, yields
\begin{align}
&\sum_{\substack{\pi\in NCP(k,\ell),\\ 1\ conn.\ block}}\prod_{B\in\pi}\Big[\Big(\prod_{s\in B}m_s\Big)\sum_{\Gamma\in NCG_c(B)}q_{\Gamma}\Big]\NN\\
&=\sum_{\substack{\pi\in NCP(k,\ell),\\1\ conn.\ block}}\Big[\Big(\prod_{s\in U}m_s\Big)\sum_{\Gamma\in\cG_c(U)}q_{\Gamma}\Big]\prod_{B\in\pi\setminus \{U\}}\Big[\Big(\prod_{s\in B}m_s\Big)\sum_{\Gamma\in NCG_c(B)}q_{\Gamma}\Big]\NN\\
&=\sum_{\substack{\pi\in NCP(k,\ell),\\1\ conn.\ block}}\Big[\Big(\prod_{s\in U}m_s\Big)\sum_{\Gamma\in\cG_c(U)}q_{\Gamma}\Big]\prod_{B\in\pi\setminus \{U\}}m_{\circ}[B]\label{eq-1ccterm}
\end{align}
by~\eqref{eq-mcircgraphs}. Note that $B\neq U$ cannot involve both sets $[k]$ and $[k+1,k+\ell]$, as $U$ is the only connecting block. Using~\eqref{eq-Gdecomp3}, we decompose
\begin{displaymath}
\sum_{\Gamma\in\cG_c(U)}q_{\Gamma}=\sum_{\Gamma\in\cG_{c,dec}(U)}q_{\Gamma}+\sum_{\Gamma\in\cG_{c,\neg dec}
(U)}q_{\Gamma}
\end{displaymath}
and recall from Lemma~\ref{lem-Gproperties3} that neither sum on the right-hand side is empty. Note that the split induced by~\eqref{eq-Gdecomp3} also decomposes the right-hand side of~\eqref{eq-1ccterm} into two terms.

\medskip
Next, consider the term corresponding to $\cG_{c,\neg dec}(U)$ and recall that any $\Gamma\in\cG_{c,\neg dec}(U)$ can be identified with a disk non-crossing graph by Lemma~\ref{lem-Gproperties2}(b). However, this identification is not unique. Decompose $U$ into $U_1=U\cap[k],U_2=U\cap[k+1,k+\ell]$. Then there are $|U_1|\cdot |U_2|$ different disk graphs that can be obtained from a given $\Gamma\in\cG_{c,\neg dec}(U)$. As the resulting graphs only differ in the labeling of the vertices and $m_{\circ}$ is invariant under permutation of its arguments, the contribution of each graph to the sum is the same. Recall that the number of annular non-crossing permutations arising from $\pi\in NCP(k,\ell)$ is equal to $|U_1|\cdot |U_2|$ by~\cite[Prop.~4.6]{MingoNica2004}. We thus write
\begin{displaymath}
\sum_{\Gamma\in \cG_{c,\neg dec}(U)}q_{\Gamma}=|U_1|\cdot|U_2|\sum_{\Gamma\in NCG_c(U_1\cup U_2)}q_{\Gamma}+\sum_{\Gamma\in \cG_{c,\neg dec}(U)}c_{\Gamma}q_{\Gamma}.
\end{displaymath}
with suitable constants $c_{\Gamma}\in\Z$. In particular, we do not necessarily have $c_{\Gamma}=1$. This can be seen from considering the element of $\cG(2,1)$ that has the edge set $\{(1,2),(1,3),(2,3)\}$. Hence,
\begin{equation}\label{eq-multiplicity}
\sum_{\Gamma\in \cG_{c}(U)}q_{\Gamma}=|U_1|\cdot|U_2|\sum_{\Gamma\in NCG_c(U_1\cup U_2)}q_{\Gamma}+\sum_{\Gamma\in \cG_{c}(U)}c_{\Gamma}q_{\Gamma}.
\end{equation}
where we set $c_{\Gamma}=1$ for $\Gamma\in\cG_{c,dec}(U)$. Note that the contribution of the first term of~\eqref{eq-multiplicity} to~\eqref{eq-1ccterm} is
\begin{align}
&\sum_{\substack{\pi\in NCP(k,\ell),\\1\ conn.\ block\ U}}\Big[\Big(\prod_{s\in U}m_s\Big)\cdot|U_1|\cdot|U_2|\sum_{\Gamma\in NCG_c(U_1\cup U_2)}q_{\Gamma}\Big]\prod_{B\in\pi\setminus \{U\}}m_{\circ}[B]\NN\\
&=\sum_{\substack{\pi\in \NCA(k,\ell),\\1\ conn.\ block}}\prod_{B\in\pi}m_{\circ}[B_1\cup B_2].\label{eq-mpart2}
\end{align}
Putting everything together, the right-hand side of~\eqref{eq-ccdecomp} reads
\begin{align}
&m[1,\dots,k|k+1,\dots,k+\ell]=\sum_{\pi\in \NCA(k,\ell)}\prod_{B\in\pi}m_{\circ}[B]\label{eq-mparts}\\
&\quad\quad\quad\quad\quad\quad\quad\quad\quad+\sum_{\substack{\pi\in NCP(k)\times NCP(\ell),\\ U_1,U_2\text{ marked}}} \prod_{B\in\pi\setminus \{U_1,U_2\}}m_{\circ}[B_1\cup B_2]\Big[\Big(\prod_{s\in U}m_s\Big)\sum_{\Gamma\in\cG_{c}(U)}c_{\Gamma}q_\Gamma\Big].\NN
\end{align}
with $c_{\Gamma}\in\Z$ as in~\eqref{eq-multiplicity} and $U=U_1\cup U_2$. The first term on the right-hand side of~\eqref{eq-mparts} is the sum of~\eqref{eq-mpart1} and~\eqref{eq-mpart2}. The second term is obtained from the second term in~\eqref{eq-multiplicity} by noting that $U$ is the only connecting block of the partition, i.e., any block of $\pi\setminus\{U\}$ only lives on one of the circles. Observing that this matches the structure in~\eqref{eq-mcrelation2}, we obtain ~\eqref{eq-2ndordercumus} by comparing term-by-term.
\end{proof}

We finally have all the necessary tools to prove Theorem~\ref{thm-main}.

\subsection{Part 2: Conclusion}\label{sect-proof-fpformula}

\begin{proof}[Proof of Theorem~\ref{thm-main}]
Let $\mathfrak{f}$ denote the right-hand side of~\eqref{eq-fpformula}. We recall that the initial condition~\eqref{eq-Minitial} is immediate from Definition~\ref{def-Ggraphs}(ii) and that the symmetry in Definition~\ref{def-M}(i) follows from the remark on Theorem~\ref{thm-main} above. Hence, it only remains to check that $\mathfrak{f}$ satisfies the recursion~\eqref{eq-Mrecursion}. To simplify notation, we interpret $\mathfrak{f}$ as a function of the multi-indices $\alpha$ and $\beta$.

\medskip
Similar to the proof of~\cite[Lem.~4.4]{CES-thermalization}, we use~\eqref{eq-mcircgraphs} and~\eqref{eq-2ndordercumus} to write out the first and second-order free cumulant functions in terms of suitable graphs. This yields
\begin{align}
\frac{\mathfrak{f}[\alpha|\beta]}{m_1\dots m_{k+\ell}}&=\sum_{\pi\in \NCA(k,\ell)}\Big(\prod_{B\in K(\pi)}\Big\langle \prod_{j\in B}A_j\Big\rangle\Big)\prod_{B\in\pi}\Big(\sum_{\Gamma\in NCG_c(B_1\cup B_2)}q_\Gamma\Big)\label{eq-longF}\\
&\quad+\sum_{\substack{\pi\in NCP(k)\times NCP(\ell),\\ U_1,U_2 \text{ marked}}}\Big(\prod_{B\in K(\pi)}\Big\langle \prod_{j\in B} A_j\Big\rangle\Big)\Big(\sum_{\Gamma\in NCG_c(U_1,U_2)}c_{\Gamma}q_{\Gamma}\Big)\NN\\
&\quad\quad\times\prod_{B\in\pi\setminus\{U_1,U_2\}}\Big(\sum_{\Gamma\in NCG_c(B_1\cup B_2)}q_{\Gamma}\Big).\NN
\end{align}
To collect the terms involving the deterministic matrices $A_1,\dots,A_k$, we define
\begin{equation}\label{eq-defF1}
\cF(\pi):=\prod_{B\in K(\pi')}\Big\langle \prod_{j\in B}A_j\Big\rangle
\end{equation}
for any $\pi\in NCP(k,\ell)$ that has more than one connecting block. Here, $\pi'\in \NCA(k,\ell)$ denotes the unique permutation for which the blocks of $\pi'$ coincide with $\pi$. Further, we define
\begin{equation}\label{eq-defF2}
\cF(\pi):=\sum_{\substack{\pi'\in \NCA(k,\ell)\\ blocks(\pi')=\pi}}\Big(\prod_{B\in K(\pi')}\Big\langle \prod_{j\in B}A_j\Big\rangle\Big)+\prod_{B\in K(\pi'')}\Big\langle \prod_{j\in B}A_j\Big\rangle
\end{equation}
for any $\pi\in NCP(k,\ell)$ that has exactly one connecting block $U$. Here, $\pi''$ is the marked partition obtained from $\pi$ by splitting the connecting block into $U_1=U\cap [k]$ and $U_2=U\cap[k+1,k+\ell]$, and marking $U_1,U_2$ on the respective circles.

\medskip
Next, we decompose the sum over $\NCA(k,\ell)$ in~\eqref{eq-longF} according to the number of connecting cycles in the permutation and rewrite the right-hand side as
\begin{align*}
\frac{\mathfrak{f}[\alpha|\beta]}{m_1\dots m_{k+\ell}}&=\sum_{\pi\in NCP(k,\ell)}\cF(\pi)\prod_{B\in\pi}\Big(\sum_{\Gamma\in NCG_c(B)}q_\Gamma\Big)
\end{align*}
using~\eqref{eq-defF1} and~\eqref{eq-defF2}. Recall that any connected component of $\Gamma\in\cG(k,\ell)$ can either be identified with a disk non-crossing graph or itself satisfies Definition~\ref{def-Ggraphs}. We can thus interpret $NCG_c(B)$ as a connected component of a bigger graph and rewrite
\begin{equation}\label{eq-shortF}
\sum_{\pi\in NCP(k,\ell)}\cF(\pi)\prod_{B\in\pi}\Big(\sum_{\Gamma\in NCG_c(B)}q_\Gamma\Big)=\sum_{\Gamma\in\cG(k,\ell)}\cF(\pi_{\Gamma})q_{\Gamma}.
\end{equation}
Here, $\pi_\Gamma$ denotes the partition arising from the vertex sets of the connected components of $\Gamma$.

\medskip
Using~\eqref{eq-Gdecomp1} from Definition~\ref{def-Ggraphs}, decompose the right-hand side of~\eqref{eq-shortF} as
\begin{align}
\sum_{\Gamma\in\cG(k,\ell)}\cF(\pi_{\Gamma})q_{\Gamma}=\sum_{\Gamma\in\cG_{\neg(1,k)}}q_{\Gamma}\Big(\cF(\pi_{\Gamma})+q_{1,k}\cF(\pi_{\Gamma\cup\{(1,k)\}})\Big).\label{eq-Fdecomp1}
\end{align}
Recall that $\Gamma\cup\{(1,k)\}$ denotes the graph obtained from adding an edge $(1,k)$ to $\Gamma$ and that, therefore, $q_{\Gamma\cup\{(1,k)\}}=q_{1,k}q_{\Gamma}$. Next, apply~\eqref{eq-Gdecomp2} to split the right-hand side of~\eqref{eq-Fdecomp1} into contributions corresponding to $\cG_1$, $\cG_2$, $\cG_3$, and $\cG_4$. This  yields
\begin{align}
\sum_{\Gamma\in\cG(k,\ell)}\cF(\pi_{\Gamma})q_{\Gamma}&=\sum_{\Gamma\in\cG([2,k],[k+1,k+\ell])}q_{\Gamma}\Big(\cF(\pi_{\Gamma})+q_{1,k}\cF(\pi_{\Gamma\cup\{(1,k)\}})\Big)\label{eq-Fdecomp2}\\
&+\sum_{j=2}^k\sum_{\Gamma\in NCG_{(1,j)}([1,j])\times \cG([j,k],[k+1,k+\ell])}q_{\Gamma}\Big(\cF(\pi_{\Gamma})+q_{1,k}\cF(\pi_{\Gamma\cup\{(1,k)\}})\Big)\NN\\
&+\sum_{j=1}^{k-1}\sum_{\Gamma\in\cG_{(1,j)}([1,j],[k+1,k+\ell])\times NCG([j,k)]}q_{\Gamma}\Big(\cF(\pi_{\Gamma})+q_{1,k}\cF(\pi_{\Gamma\cup\{(1,k)\}})\Big)\NN\\
&+\sum_{j=1}^\ell\sum_{\Gamma\in NCG_{(1,k+j)}(\{1,k,k+j,\dots,k+\ell,\dots,k+j\})}q_{\Gamma}\Big(\cF(\pi_{\Gamma})+q_{1,k}\cF(\pi_{\Gamma\cup\{(1,k)\}})\Big),\NN
\end{align}
where the edges prescribed in the definitions of $\cG_2$, $\cG_3$, and $\cG_4$ are added as a subscript to $NCG$ and $\cG$, respectively. It remains to compare~\eqref{eq-Fdecomp2} with~\eqref{eq-Mrecursion}. 

\medskip
\underline{\smash{Computation of the first line of~\eqref{eq-Fdecomp2}}}: Recall that $\sum_{\Gamma\in\cG_1}q_{\Gamma}=\sum_{\Gamma\in\cG([2,k],[k+1,k+\ell])}q_{\Gamma}$. As any $\Gamma\in\cG_1$ has an isolated vertex 1, the set $\{1\}$ appears as a singleton block of the underlying partition $\pi_{\Gamma}$. Let $\pi_{\Gamma}'$ denote an annular non-crossing permutation with blocks given by $\pi_{\Gamma}$. Then $(\dots k1\dots)\in K(\pi_{\Gamma}')$, i.e., 1 and $k$ appear in the same cycle of the Kreweras complement. This gives
\begin{equation}\label{eq-Fsimplified1}
\prod_{B\in K(\pi_{\Gamma}')}\Big\langle \prod_{j\in B}A_j\Big\rangle=\prod_{B\in K(\pi_{\Gamma}')|_{(1,k]\cup [k+1,k+\ell]}}\Big\langle \prod_{j\in B}A_j'\Big\rangle
\end{equation}
where $A_j'=A_j$ for $j=2,\dots k-1$ and $j\in [k+1,k+\ell]$, but $A_k'=A_kA_1$. Considering $\cF(\pi_{\Gamma\cup(1,k)})$, note that adding an edge $(1,k)$ to the graph $\Gamma$ implies that $(k)\in K(\pi_{\Gamma\cup\{(1,k)\}})$, i.e., $\langle A_k\rangle$ always occurs as a separate factor in $\cF$. Hence, whenever $\pi_\Gamma$ has more than one connecting block we can use~\eqref{eq-defF1} to evaluate
\begin{align}
&\cF(\pi_{\Gamma})+q_{1,k}\cF(\pi_{\Gamma\cup\{(1,k)\}})\NN\\
&=\prod_{B\in K(\pi_{\Gamma}')|_{\langle1,k]\cup [k+1,k+\ell]}}\Big\langle \prod_{j\in B}A_j'\Big\rangle+q_{1,k}\langle A_k\rangle\prod_{B\in K(\pi_{\Gamma}')|_{(1,k)\cup [k+1,k+\ell]}}\Big\langle \prod_{j\in B}A_j\Big\rangle\label{eq-Fsimplified2}
\end{align}
with $A_j'$ as in~\eqref{eq-Fsimplified1}. In the remaining cases, $\pi_{\Gamma}$ has only one connecting block $U$, and $\cF$ is a sum of two terms. As the first term of~\eqref{eq-defF2} can be evaluated similarly to~\eqref{eq-Fsimplified2}, we only consider the second term. Let $\pi_{\Gamma}''$ denote the element of $NCP(k)\times NCP(\ell)$ in which the blocks $U_1=U\cap [k]$ and $U_2=U\cap [k+1,k+\ell]$ are marked. Recall that the marking does not influence the Kreweras complement, which is taken for both circles separately here. As 1 and $k$ lie on the same circle, we can argue as in the permutation case. It follows~that
\begin{align}
&\Big(\prod_{s=1}^{k+\ell}m_s\Big)\sum_{\Gamma\in\cG([2,k],[k+1,k+\ell])}q_{\Gamma}\Big(\cF(\pi_{\Gamma})+q_{1,k}\cF(\pi_{\Gamma\cup\{(1,k)\}})\Big)\NN\\ 
&=m_1\big(F[T_2,\dots,T_{k-1},G_kA_kA_1|T_{k+1},\dots,T_{k+\ell}]\NN\\
&\quad+q_{1,k}F[T_2,\dots,T_{k-1}G_kA_1|T_{k+1},\dots,T_{k+\ell}]\langle A_k\rangle\big).\label{eq-rewrittenG1}
\end{align}

\underline{\smash{Computation of the second line of~\eqref{eq-Fdecomp2}}}: For the contribution arising from $\cG_2$, recall that $\Gamma=\Gamma_1\cup\Gamma_2$ with $\Gamma_1\in NCG_{(1,j)}([1,j])$ and $\Gamma_2\in\cG([j,k],[k+1,k+\ell])$ implies $q_{\Gamma}=q_{\Gamma_1}q_{\Gamma_2}$, i.e., the weights factorize. Further, the term involving $\Gamma_1$ evaluates to
\begin{equation}\label{eq-goodweights}
\sum_{\Gamma_1\in NCG_{(1,j)}([1,j])}q_{\Gamma_1}=\frac{q_{1,j}}{1+q_{1,j}}\sum_{\Gamma_1\in NCG[1,j])}q_{\Gamma_1}=m_1m_j\sum_{\Gamma_1\in NCG[1,j])}q_{\Gamma_1}
\end{equation}
as $q_{\Gamma_1}$ always includes a factor $q_{1,j}$ if $\Gamma_1\in NCG_{(1,j)}([1,j])$.

\medskip
Let $\pi_{\Gamma}\in NCP(j)\times NCP(k-j+1,\ell)$ denote the partition associated with a graph $\Gamma\in NCG_{(1,j)}([1,j])\times \cG([j,k],[k+1,k+\ell])$ in the second term of~\eqref{eq-Fdecomp2} and let $\pi_{\Gamma}'$ is be an annular non-crossing permutation with blocks given by $\pi_{\Gamma}$. Since the edge $(1,j)$ must occur in $\Gamma$, the vertices 1 and $j$ must be associated with same cycle of $\pi_{\Gamma}'$. Hence, the elements in $[1,j\rangle$ and $[j,k]$ are in different cycles of $K(\pi_{\Gamma}')$. Moreover, none of $1,\dots,j-1$ can be part of a connecting cycle in $K(\pi_{\Gamma}')$. This implies the decomposition
\begin{displaymath}
\prod_{B\in K(\pi_{\Gamma}')}\Big\langle \prod_{j\in B}A_j\Big\rangle=\Big(\prod_{B\in K(\pi_{\Gamma}'|_{[1,j]})}\Big\langle \prod_{i\in B\setminus\{j\}}A_i\Big\rangle\Big)\Big(\prod_{B\in K(\pi_{\Gamma}'|_{[j,k]\cup [k+1,k+\ell]})}\Big\langle \prod_{i\in B}A_i\Big\rangle\Big).
\end{displaymath}
Again, the only difference between $\cF(\pi_{\Gamma})$ and $\cF(\pi_{\Gamma\cup\{(1,k)\}})$ is the fact that $\langle A_k\rangle$ must occur as a separate factor in the second case. Thus, we can argue as in~\eqref{eq-Fsimplified2} and evaluate
\begin{align}
\cF(\pi_{\Gamma})+q_{1,k}\cF(\pi_{\Gamma\cup\{(1,k)\}})&=\Big(\prod_{B\in K(\pi_{\Gamma}|_{[1,j]})}\Big\langle \prod_{i\in B\setminus\{j\}}A_i\Big\rangle\Big)\Big(\prod_{B\in K(\pi_{\Gamma}'|_{[j,k]\cup [k+1,k+\ell]})}\Big\langle \prod_{i\in B}A_i\Big\rangle\Big)\NN\\
&\quad+q_{1,k}\langle A_k\rangle\Big(\prod_{B\in K(\pi_{\Gamma}|_{[1,j]})}\Big\langle \prod_{i\in B\setminus\{j\}}A_j\Big\rangle\Big)\NN\\
&\quad\quad\times\Big(\prod_{B\in K(\pi_{\Gamma}'|_{[j,k]\cup [k+1,k+\ell]})}\Big\langle \prod_{i\in B\setminus\{k\}}A_i\Big\rangle\Big)\label{eq-Fsimplified3}
\end{align}
whenever $\pi_{\Gamma}$ has more than one connecting block and~\eqref{eq-defF1} applies.

\medskip
In the remaining cases, $\cF(\pi_{\Gamma})$ is evaluated using~\eqref{eq-defF2}. Here, the first term can be treated similarly to~\eqref{eq-Fsimplified3}, leaving only the contribution of the marked partition $\pi_\Gamma''$. Recalling that $[1,j\rangle$ and $[j,k]$ lie on the same circle and that $K(\pi_\Gamma'')$ is evaluated circle-wise, we can argue as in the permutation case. In particular, 
\begin{align*}
\cF(\pi_{\Gamma})&=\sum_{\substack{\pi_{\Gamma}'\in \NCA(k,\ell)\\ blocks(\pi_{\Gamma}')=\pi_{\Gamma}}}\Big(\prod_{B\in K(\pi_{\Gamma}'|_{[1,j]})}\Big\langle \prod_{i\in B\setminus\{j\}}A_i\Big\rangle\Big)\Big(\prod_{B\in K(\pi_{\Gamma}'|_{[j,k]\cup [k+1,k+\ell]})}\Big\langle \prod_{i\in B}A_i\Big\rangle\Big)\\
&\quad+\Big(\prod_{B\in K(\pi_{\Gamma}''|_{[1,j]})}\Big\langle \prod_{i\in B\setminus\{j\}}A_i\Big\rangle\Big)\Big(\prod_{B\in K(\pi_{\Gamma}''|_{[j,k]\cup [k+1,k+\ell]})}\Big\langle \prod_{i\in B}A_i\Big\rangle\Big)\\
&=\Big(\prod_{B\in K(\pi_{\Gamma}|_{[1,j]})}\Big\langle \prod_{i\in B\setminus\{j\}}A_i\Big\rangle\Big)\cF(\pi_{\Gamma_2})
\end{align*}
i.e., $\cF$ factorizes similar to~\eqref{eq-Fsimplified3}. Here, $\Gamma_2$ denotes the subgraph of $\Gamma\in\cG_2$ that lies in $\cG([j,k],[k+1,k+\ell])$. Using~\eqref{eq-formulaM} and~\eqref{eq-mcircgraphs}, we evaluate
\begin{align}
&\Big(\prod_{s=1}^{k+\ell}m_s\Big)\sum_{\Gamma\in NCG_{(1,j)}([1,j])\times \cG([j,k],[k+1,k+\ell])}q_{\Gamma}\Big(\cF(\pi_{\Gamma})+q_{1,k}\cF(\pi_{\Gamma\cup\{(1,k)\}})\Big)\NN\\
&=m_1\m[T_1,\dots,T_{j-1},G_j]\big(F[T_j,\dots,T_k|T_{k+1},\dots,T_{k+\ell}]\NN\\
&\quad\quad+q_{1,k}F[T_j,\dots,T_{k-1},G_k|T_{k+1},\dots,T_{k+\ell}]\langle A_k\rangle\big).\label{eq-rewrittenG2}
\end{align}

\underline{\smash{Computation of the third line of~\eqref{eq-Fdecomp2}}}: The contribution from $\cG_3$ can be treated similarly to the second line of~\eqref{eq-Fdecomp2}.

\medskip
\underline{\smash{Computation of the fourth line of~\eqref{eq-Fdecomp2}}}: For the term that arises from $\cG_4$, recall that~$\tau$ only influences the geometry of the graph, but not its edge set. Hence, the summation reduces to the underlying disk non-crossing graphs and we can evaluate it using~\eqref{eq-formulaM} and~\eqref{eq-mcircgraphs}. As the only difference between $\cF(\pi_{\Gamma})$ and $\cF(\pi_{\Gamma\cup\{(1,k)\}})$ is again the fact that $\langle A_k\rangle$ must occur as a separate factor in the second case, we obtain
\begin{align}
&\Big(\prod_{s=1}^{k+\ell}m_s\Big)\sum_{\Gamma\in NCG_{(1,k+j)}(\{1,k,k+j,\dots,k+\ell,\dots,k+j\})}q_{\Gamma}\Big(\cF(\pi_{\Gamma})+q_{1,k}\cF(\pi_{\Gamma\cup\{(1,k)\}})\Big)\NN\\
&=m_1\big(\m[T_1,\dots,T_k,T_{k+j},\dots,T_{k+j-1},G_{k+j}]\NN\\
&\quad\quad +q_{1,k}\m[T_1,\dots,T_{k-1},G_k,T_{k+j},\dots,T_{k+j-1},G_{k+j}]\langle A_k\rangle\big).\label{eq-rewrittenG4}
\end{align}
Similar to~\eqref{eq-goodweights}, $\Gamma$ containing an edge $(1,k+j)$ ensures that the contribution has the prefactor $m_1$. 

\medskip
Putting~\eqref{eq-rewrittenG1},~\eqref{eq-rewrittenG2}, and~\eqref{eq-rewrittenG4} together, we see that~\eqref{eq-Fdecomp2} is equivalent to~\eqref{eq-Mrecursion} with $\mathfrak{f}[\cdot|\cdot]$ in place of $\m[\cdot|\cdot]$. We conclude that $\mathfrak{f}[\cdot|\cdot]$ satisfies the same symmetry and initial condition as $\m[\cdot|\cdot]$, as well as the same recursion. Recall that these three properties uniquely identify $\mathfrak{f}[\alpha|\beta]$ and $\m[\alpha|\beta]$ for any multi-indices $\alpha,\beta$. It now readily follows by induction that $\mathfrak{f}[\alpha|\beta]$ and $\m[\alpha|\beta]$ indeed coincide for all $\alpha,\beta$, i.e., that $\mathfrak{f}[\cdot|\cdot]=\m[\cdot|\cdot]$, as claimed.
\end{proof}

\section{Proof of the Formulas for $\m_\kappa[\cdot|\cdot]$, $\m_\sigma[\cdot|\cdot]$, and $\m_\omega[\cdot|\cdot]$}\label{sect-restproof}
\subsection{Proof of Theorem~\ref{thm-structurekap}}\label{sect-kappa}
We use proof by induction over the length of the multi-indices and the recursion for $\m_{\kappa}[\cdot|\cdot]$ from~\eqref{eq-Mrecursion}. First, recall that by definition of $NCP(k)$, $NCP(\ell)$, and $\NCA(k,\ell)$ both sums on the right-hand side of~\eqref{eq-fpformulakap} are empty whenever either $\alpha$ or $\beta$ is empty. Observing that Definition~\ref{def-M}(i) together with Theorem~\ref{thm-main} implies that $\m_{\kappa}[S_1|\emptyset]=\m_{\kappa}[\emptyset|S_2]=0$, the base case is established. As the formula on the right-hand side of~\eqref{eq-fpformulakap} is further symmetric under the interchanging $\alpha$ and~$\beta$ (cf. remark below Theorem~\ref{thm-main}), it is sufficient to only carry out the induction step for one of the arguments in $\m_{\kappa}[\cdot|\cdot]$.

\medskip
Fix $k,\ell\in\N$ and assume that~\eqref{eq-fpformulakap} holds for multi-indices $\alpha$ of length $1,\dots,k-1$ and $\beta$ of length $\ell$. Recalling that $\kappa_4\m_{\kappa}[\cdot|\cdot]$ satisfies~\eqref{eq-Mrecursion} with $\mathfrak{s}_{\kappa}$ in~\eqref{eq-sourcekappa} as the only source term, we can use the recursion to express $\m_{\kappa}[\alpha|\beta]$ in terms of $\m[\cdot]$ and $\m_\kappa[\alpha'|\beta]$ with multi-indices $\alpha'$ of length $1,\dots,k-1$. The claim thus follows by applying the induction hypothesis and showing that the expression obtained from the recursion can be rewritten to match the structure on the right-hand side of~\eqref{eq-fpformulakap}. We start by considering the summands on the right-hand side of~\eqref{eq-Mrecursion} separately and then check that their sum, i.e., $\m_{\kappa}[\cdot|\cdot]$, is of the same form. To facilitate keeping track of the individual contributions, we start with the terms on the right-hand side of~\eqref{eq-Mrecursion} that do not contain the prefactor~$q_{1,k}$ and abbreviate
\begin{align*}
K_1&:=m_1\m_{\kappa}[T_2,\dots,T_{k-1},G_kA_kA_1|T_{k+1},\dots,T_{k+\ell}]\\
K_2^{(j)}&:=m_1\m_{\kappa}[T_1,\dots,T_{j-1},G_j|T_{k+1},\dots,T_{k+\ell}]\m[T_j,\dots,T_k],\ j\in[k-1]\\
K_3^{(j)}&:=m_1\m[T_1,\dots,T_{j-1},G_j]\m_{\kappa}[T_j,\dots,T_k|T_{k+1},\dots,T_{k+\ell}],\ j\in[2,k]\\
K_4^{(r,s,t)}&:=m_1\langle M_{[r]}\odot M_{[s,t]}\rangle\langle (M_{[r,k]}A_k)\odot M_{(t,\dots,k+\ell,k+1,\dots,s)}\rangle,\ r\in[k],\ k+1\leq s\leq t\leq k+\ell.
\end{align*}
As the argument is similar, we fix $j$ resp. $r,s,t$ and omit the superscripts for the following discussion.

\medskip
\underline{\smash{Structure of $K_1$:}} By the induction hypothesis, $\m_{\kappa}[T_2,\dots,T_{k-1},G_kA_kA_1|T_{k+1},\dots,T_{k+\ell}]$ factorizes into expressions involving only deterministic matrices or spectral parameters, respectively, and further has the structure specified on the right-hand side of~\eqref{eq-fpformulakap} in terms of $z_2,\dots,z_{k+\ell}$ and the matrices $A_2,\dots,A_{k-1},A_kA_1,A_{k+1},\dots,A_{k+\ell}$. As a consequence, the matrices $A_k$ and $A_1$ always occur together in the matrix products. On the level of the indices of the deterministic matrices, we may reinterpret this as an element $\pi\in\NCA(k,\ell)$ with a cycle $(\dots k1\dots)$ or an element of $\pi\in NCP(k)\times NCP(\ell)$ with a block $\{\dots,k,1,\dots\}$ depending on the rest of the underlying structure. The treament of the two cases is identical and we consider them in parallel. As the indices $1$ and $k$ always occur together in $K(\pi)$, the index 1 must occur separated in $K^{-1}(\pi)$, either as a fixed point $(1)$ or a singleton set $\{1\}$, to match the structure in~\eqref{eq-fpformulakap}. Note that the spectral parameter $z_1$ only appears in the prefactor $m_1$. Hence, setting the functions $\psi_{K^{-1}(\pi),(1)}$ resp. $\psi_{K^{-1}(\pi),\{1\}}$ equal to $m(z_1)$ yields the missing contribution. It follows that $K_1$ matches the structure on the right-hand side of~\eqref{eq-fpformulakap}. Note that all $\psi_i$ associated with permutations without a fixed point $(1)$ and partitions without a singleton block $\{1\}$, respectively, are equal to zero for $K_1$.

\medskip
\underline{\smash{Structure of $K_2$:}} We apply the induction hypothesis for $\m_{\kappa}[T_1,\dots,T_{j-1},G_j|T_{k+1},\dots,T_{k+\ell}]$ as well as~\eqref{eq-formulaM} for $\m[T_j,\dots,T_k]$ to rewrite $K_2$ as a sum of terms that naturally factorize into expressions involving only $z_1,\dots,z_{k+\ell}$ or $A_1,\dots,A_{k+\ell}$, respectively. It remains to check for the structure on the right-hand side of~\eqref{eq-fpformulakap}, i.e., that each summand can be associated with an annular non-crossing permutation $\pi$ or a marked element $\pi\in NCP(k)\times NCP(\ell)$ such that the terms involving spectral parameters factorize according to cycles resp. blocks of $\pi$ and the contribution of the deterministic matrices factorizes according to the cycles resp. blocks in the Kreweras complement $K(\pi)$. As treatment of the cases $\pi\in\NCA(k,\ell)$ and $\pi\in NCP(k)\times NCP(\ell)$ is identical, we consider them in parallel.

\medskip
Observe that the induction hypothesis and~\eqref{eq-formulaM} already prescribe the desired complement structure for the indices of the  spectral parameters and deterministic matrices occurring in $m_1\m_{\kappa}[T_1,\dots,T_{j-1},G_j|T_{k+1},\dots,T_{k+\ell}]$ and $\m[T_j,\dots,T_k]$, respectively. As we may visualize the elements of $NCP(j)$ with their Kreweras complement on an interval by cutting the boundary of the labeled disk (cf. Fig.~\ref{fig-disknc}), we can draw both factors of $K_2$ onto the same annulus. The result is visualized on the left of Fig.~\ref{fig-gluednca} below. For simplicity, we omit most of the intermediate labels and only add some of the matrices associated with the vertices on the midpoints of the arcs between the labels in red.

\medskip
Note that the interval is placed such that the orientation inherited from the disk aligns with the orientation of the underlying annulus. Moreover, Fig.~\ref{fig-gluednca} matches the picture of a $(k,\ell)$-annulus up to the label $j$ occurring twice and the label on the midpoint of the arch connecting the two copies of $j$ that is associated with the identity matrix. Since this identity matrix does not influence the value of $K_2$, we may remove the label corresponding to $\Id$ from the picture in Fig.~\ref{fig-gluednca}. This leaves $k+\ell$ labels at the midpoints of arches along the annulus, one associated with each matrix $A_1,\dots,A_{k+\ell}$. On the level of the indices of the deterministic matrices, each term in in $K_2$ can thus be identified with an element $\pi\in\NCA(k,\ell)$ resp. an element of $\pi\in NCP(k)\times NCP(\ell)$. As the two labels~$j$ now occur next to each other, we merge them to obtain the $(k,\ell)$-annulus as the structure underlying the indices of the spectral parameters. The result is visualized on the right of Fig.~\ref{fig-gluednca} below. Note that the labels now match Definition~\ref{def-Kreweras2} exactly.

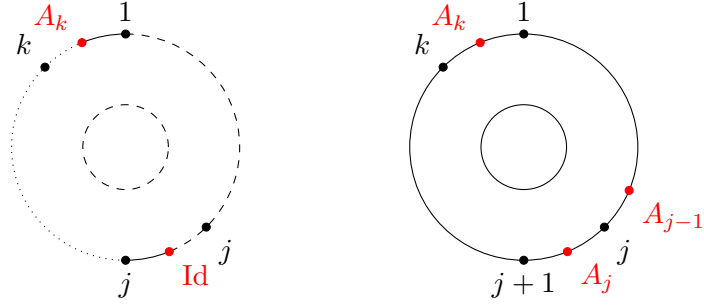
\begin{figure}[H]
\begin{center}
\begin{tikzpicture}[scale=0.75]
\draw[dashed] (0,0) circle (0.75cm);
\draw[dotted] (0,-2) arc[start angle=270, end angle=112.5, radius=2];
\draw[dashed] (0,2) arc[start angle=90, end angle=-67.5, radius=2];
\draw (0,2) arc[start angle=90, end angle=112.5, radius=2];
\draw (0,-2) arc[start angle=270, end angle=292.5, radius=2];

\draw (0,2) node[above=1pt] {1};
\filldraw [black] (0,2) circle (2pt);
\draw (1.414,-1.414) node [below right=1pt] {$j$};
\filldraw [black] (1.414,-1.414)  circle (2pt);
\draw (0,-2) node[below=1pt] {$j$};
\filldraw [black] (0,-2) circle (2pt);
\draw (-1.414,1.414) node [above left=1pt] {$k$};
\filldraw [black] (-1.414,1.414) circle (2pt);

\draw (0.765, -1.848) node[below right=1pt] {\color{red}$\Id$\color{black}};
\filldraw [red] (0.765, -1.848) circle (2pt);
\draw (-0.765, 1.848) node[above left=1pt] {\color{red}$A_k$\color{black}};
\filldraw [red] (-0.765, 1.848) circle (2pt);
\end{tikzpicture}\hspace{2cm}
\begin{tikzpicture}[scale=0.75]
\draw (0,0) circle (0.75cm);
\draw (0,0) circle (2cm);

\draw (0,2) node[above=1pt] {1};
\filldraw [black] (0,2) circle (2pt);
\draw (0,-2) node[below=1pt] {$j+1$};
\filldraw [black] (0,-2) circle (2pt);
\draw (1.414,-1.414) node [below right=1pt] {$j$};
\filldraw [black] (1.414,-1.414)  circle (2pt);
\draw (-1.414,1.414) node [above left=1pt] {$k$};
\filldraw [black] (-1.414,1.414) circle (2pt);

\draw (1.848,-0.765) node[below right=1pt] {\color{red}$A_{j-1}$\color{black}};
\filldraw [red] (1.848,-0.765) circle (2pt);
\draw (0.765, -1.848) node[below right=1pt] {\color{red}$A_j$\color{black}};
\filldraw [red] (0.765, -1.848) circle (2pt);
\draw (-0.765, 1.848) node[above left=1pt] {\color{red}$A_k$\color{black}};
\filldraw [red] (-0.765, 1.848) circle (2pt);
\end{tikzpicture}
\end{center}
\captionof{figure}{Visualization of the indices in $m_1\m_{\kappa}[T_1,\dots,T_{j-1},G_j|T_{k+1},\dots,T_{k+\ell}]$ (dashed) and in $\m[T_j,\dots,T_k]$ (dotted) before the rewriting (left) and the indices after merging the two labels $j$ (right).}\label{fig-gluednca}
\end{figure}

It is readily checked that the cycle resp. block structure obtained from merging the two labels~$j$ matches $K^{-1}(\pi)$ and that any cycles resp. blocks that were connected in this step can be interpreted as a cycle of an element in $\NCA(k,\ell)$ resp. a block of an element of $NCP(k)\times NCP(\ell)$. In particular, the contribution of the spectral parameters factorizes as claimed on the right-hand side of~\eqref{eq-fpformulakap}. Comparing the permutations resp. partitions contributing to each $\smash{K_2^{(j)}}$ and noting that the right-hand side of~\eqref{eq-fpformulakap} is linear in each $\psi_i$, the sum $\smash{\sum_jK_2^{(j)}}$ also has the desired structure.

\medskip
\underline{\smash{Structure of $K_3$:}} The treatment of $K_3$ is analogous to that of $K_2$.

\medskip
\underline{\smash{Structure of $K_4$:}} From the formula for $M_{(\dots)}$ in~\eqref{eq-LLformula}, it follows that $K_4$ can be written as a sum of terms that naturally factorize into expressions involving only $z_1,\dots,z_{k+\ell}$ or $A_1,\dots,A_{k+\ell}$, respectively. In particular, the part that involves deterministic matrices always consists of two factors of the form $\langle (\prod_{j\in I_1}A_j)\odot(\prod_{j\in I_2}A_j)\rangle$ with index sets $I_1\subseteq[k]$ and $I_2\subseteq[k+1,\dots,k+\ell]$ due to the Hadamard product in $K_4$. It remains to check for the structure on the right-hand side of~\eqref{eq-fpformulakap}.

\medskip
Using the same trick as for $K_2$ and $K_3$, we can visualize the terms involved in $K_4$ on the same annulus. The result is sketched in Fig.~\ref{fig-gluedncp} below. Again, we omit most of the labels and add the matrices associated with the vertices on the midpoints of the arcs in red. To avoid overcrowding the labels in the interior of the inner circle, we further visualize the two circles separately.

\begin{figure}[H]
\begin{center}
\begin{tikzpicture}[scale=0.75]
\draw[dotted] (0,-2) arc[start angle=270, end angle=112.5, radius=2];
\draw[dashed] (0,2) arc[start angle=90, end angle=-67.5, radius=2];
\draw (0,2) arc[start angle=90, end angle=112.5, radius=2];
\draw (0,-2) arc[start angle=270, end angle=292.5, radius=2];

\draw (0,2) node[above=1pt] {1};
\filldraw [black] (0,2) circle (2pt);
\draw (1.414,-1.414) node [below right=1pt] {$r$};
\filldraw [black] (1.414,-1.414) circle (2pt);
\draw (0,-2) node[below=1pt] {$r$};
\filldraw [black] (0,-2) circle (2pt);
\draw (-1.414,1.414) node [above left=1pt] {$k$};
\filldraw [black] (-1.414,1.414)  circle (2pt);

\draw (0.765, -1.848) node[below right=1pt] {\color{red}$\Id$\color{black}};
\filldraw [red] (0.765, -1.848) circle (2pt);
\draw (-0.765, 1.848) node[above left=1pt] {\color{red}$A_k$\color{black}};
\filldraw [red] (-0.765, 1.848) circle (2pt);
\end{tikzpicture}\hspace{2cm}
\begin{tikzpicture}[scale=0.75]
\draw[dashed] (2,0) arc[start angle=0, end angle=-135, radius=2];
\draw[dashed] (2,0) arc[start angle=0, end angle=22.5, radius=2];
\draw[dotted] (-2,0) arc[start angle=180, end angle=202.5, radius=2];
\draw[dotted] (-2,0) arc[start angle=180, end angle=45, radius=2];
\draw (1.84776, 0.765367) arc[start angle=22.5, end angle=45, radius=2];
\draw (-1.84776,-0.765367) arc[start angle=202.5, end angle=225, radius=2];

\draw (-0.765, 1.848) node[above left=1pt] {$k+1$};
\filldraw [black] (-0.765, 1.848) circle (2pt);
\draw (0,2) node[above=1pt] {$k+\ell$};
\filldraw [black] (0,2) circle (2pt);
\draw (-1.414,-1.414) node [below left=1pt] {$s$};
\filldraw [black] (-1.414,-1.414) circle (2pt);
\draw (-2,0) node[left=1pt] {$s$};
\filldraw [black] (-2,0) circle (2pt);
\draw (2,0) node[right=1pt] {$t$};
\filldraw [black] (2,0) circle (2pt);
\draw (1.414,1.414) node [above right=1pt] {$t$};
\filldraw [black] (1.414,1.414) circle (2pt);
\draw (0,-2) node[below=1pt] {\color{white}r\color{black}};

\draw (-1.848, -0.765) node[left=4pt] {\color{red}$\Id$\color{black}};
\filldraw [red] (-1.848, -0.765) circle (2pt);
\draw (1.848, 0.765) node[right=5pt] {\color{red}$\Id$\color{black}};
\filldraw [red] (1.848, 0.765) circle (2pt);
\end{tikzpicture}
\end{center}
\captionof{figure}{Visualization of the indices in $m_1\langle M_{[r]}\odot M_{[s,t]}\rangle$ (dashed) and $\langle (M_{[r,k]}A_k)\odot M_{(t,\dots,k+\ell,k+1,\dots,s)}\rangle$ (dotted) on the outer (left) and inner circle (right) of the $(k,\ell)$-annulus~(pictured separately).}\label{fig-gluedncp}
\end{figure}
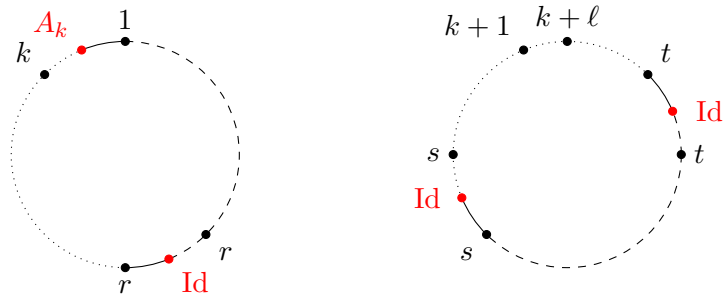

Observe that whenever a label occurs twice, both copies are placed next to one another and the label at the midpoint of the arch connecting them is always associated with the identity matrix. As the identity matrices do not contribute to $K_4$, we may argue as before and remove the corresponding labels from the picture in Fig.~\ref{fig-gluedncp}. Note that this leaves $k+\ell$ labels at the midpoints of arches along the annulus, one associated with each matrix $A_1,\dots,A_{k+\ell}$, such that we can reinterpret the  structure on the level of the indices of the deterministic matrices as an element $\pi\in\NCA(k,\ell)$ resp. $\pi\in NCP(k)\times NCP(\ell)$. Since the two copies of $r$, $s$, and $t$ are now placed right next to their counterpart, we merge them to obtain a $(k,\ell)$-annulus as the structure underlying the indices of the spectral parameters. It is readily checked that the cycle resp. block structure obtained from this step matches $K^{-1}(\pi)$ and that any cycles resp. blocks that were connected by merging two identical labels can be interpreted as a cycle of an element in $\NCA(k,\ell)$ resp. a block of an element of $NCP(k)\times NCP(\ell)$. In particular, whenever the merging of the labels creates two new cycles, the cycles can be drawn onto the $(k,\ell)$-annulus without crossing. It follows that the contribution of the spectral parameters factorizes as claimed on the right-hand side of~\eqref{eq-fpformulakap}. Comparing the permutations resp. partitions contributing to each $\smash{K_4^{(r,s,t)}}$ and noting that the right-hand side of~\eqref{eq-fpformulakap} is linear in each $\psi_i$, summing over $r,s,t$ preserves the structure of the term. 

\medskip
The remaining terms with prefactor $q_{1,k}\langle A_k\rangle$ can be rewritten similarly. As the factor $\langle A_k\rangle$ always occurs separately, the structure underlying the indices of the deterministic matrices is an element $\pi\in\NCA(k,\ell)$ resp. $\pi\in NCP(k)\times NCP(\ell)$ with a fixed point $(k)$ resp. a singleton set $\{k\}$. Hence, $K^{-1}$ must contain a cycle $(\dots k1\dots)$ resp. a block $\{\dots,k,1,\dots\}$ in which $1$ and $k$ occur together. Recalling that $q_{1,k}=\frac{m_1m_k}{1-m_1m_k}$, it is ensured that the part of the term depending on $z_1,\dots,z_k$ contains a factor $\psi_i(\dots,z_k,z_1,\dots)$. With this modification, we can use the same argument as for $K_1,\dots,K_4$ to conclude that all terms contributing to the recursion match the structure on the right-hand side of~\eqref{eq-fpformulakap}. Comparing the permutations resp. partitions contributing to each term and recalling that the right-hand side of~\eqref{eq-fpformulakap} is linear in each $\psi_i$, it follows that the same holds for their sum, i.e., for $\m_\kappa[\alpha|\beta]$. This concludes the induction step.
\qed

\subsection{Proof of Theorems~\ref{thm-msigmaform} and~\ref{thm-structureom}}\label{sect-restms}

The proofs of Theorems~\ref{thm-msigmaform} and~\ref{thm-structureom} are similar to those of Theorems~\ref{thm-main} and~\ref{thm-structurekap}, respectively. We, therefore, mainly focus on the necessary modifications below.

\begin{proof}[Proof of Theorem~\ref{thm-msigmaform}]
The overall argument is analogous to the proof of Theorem~\ref{thm-main} in Sections~\ref{sect-graphs} and~\ref{sect-proof-fpformula} up to the definition of the graphs appearing in the combinatorial formula for~$m_{\sigma}[\cdot|\cdot]$. Recalling that the structure of the recursion for $m_{\sigma}[\cdot|\cdot]$ is similar to the one for $m_{GUE}[\cdot|\cdot]$, the multi-set of graphs can be constructed as described in Definition~\ref{def-Ggraphs}. However, since the source term involves $m^{\#,\sigma}[\cdot]$, the resulting multi-set of graphs will carry the same kind of vertex coloring. By replacing $\cG_4$ in Definition~\ref{def-Ggraphs} by
\begin{align*}
\cG^\sigma_4&:=\bigcup_{j=1}^{\ell} \tau\Big(\big\{\Gamma\in NCG^\#(\{1,\dots,k,k+j,k+j-1,\dots,k+1,k+\ell,k+j\})\big|\\ &\quad\quad\quad\Gamma\text{ has edge }(1,k+j)\big\}\Big).
\end{align*}
where $\#=(0,\dots,0,1,\dots,1)$ with $k$ zeros and $k+\ell+1$ ones, we obtain a family $\cG^\sigma(k,\ell)$ of graphs with the desired properties. Recall that map $\tau$ was introduced in Definition~\ref{def-tau}. An example in the current setting is given in Fig.~\ref{fig-deftau2} below. Note the different arrangement of the indices on the left compared to Fig.~\ref{fig-deftau}, which results from the structure of the source term in the recursion for $m_\sigma[\cdot|\cdot]$. Further, $\tau$ does not influence the coloring of the vertices.

\begin{figure}[H]
\begin{center}
\begin{tikzpicture}[scale=1.25,baseline=(current bounding box.center)]
\draw (0.3827,0.9239) node[above=1pt] {$1$};
\filldraw [black] (0.3827,0.9239) circle (1pt);
\draw (0.9239,0.3827) node[right=1pt] {$2$};
\filldraw [black] (0.9239,0.3827) circle (1pt);
\draw (0.9239,-0.3827) node[right=1pt] {$3$};
\filldraw [black] (0.9239,-0.3827) circle (1pt);
\draw (0.3827,-0.9239) node[below=1pt] {$4$};
\filldraw [black] (0.3827,-0.9239) circle (1pt);

\draw (-0.3827,-0.9239) node[below=1pt] {\color{red}$6'$\color{black}};
\filldraw [red] (-0.3827,-0.9239) circle (1pt);
\draw (-0.9239,-0.3827) node[left=1pt] {\color{red}$5$\color{black}};
\filldraw [red] (-0.9239,-0.3827) circle (1pt);
\draw (-0.9239,0.3827) node[left=1pt] {\color{red}$7$\color{black}};
\filldraw [red] (-0.9239,0.3827) circle (1pt);
\draw (-0.3827,0.9239) node[above=1pt] {\color{red}$6$\color{black}};
\filldraw [red] (-0.3827,0.9239) circle (1pt);

\draw (0,-1) node {$\color{black}|\color{black}$};
\draw (0,1) node {$\color{black}|\color{black}$};

\draw (0,0) circle (1cm);
\end{tikzpicture}\hspace{0.3cm}\scalebox{1.1}{$\overset{\text{conformal map}}\longrightarrow$}
\begin{tikzpicture}[scale=1.25,baseline=(current bounding box.center)]
\draw (0,1) node[above=1pt] {1};
\filldraw [black] (0,1) circle (1pt);
\draw (1,0) node[right=1pt] {2};
\filldraw [black] (1,0) circle (1pt);
\draw (0,-1) node[below=1pt] {3};
\filldraw [black] (0,-1) circle (1pt);
\draw (-1,0) node[left=1pt] {4};
\filldraw [black] (-1,0) circle (1pt);

\draw (0.3536,0.3536) node[below left=0.3536pt] {\color{red}7\color{black}};
\filldraw [red] (0.3536,0.3536) circle (1pt);
\draw (-0.3536,0.3536) node[below right=0.3536pt] {\color{red}6\color{black}};
\filldraw [red] (-0.3536,0.3536) circle (1pt);
\draw (-0.3536,-0.3536) node[above right=0.1pt] {\color{red}6'\color{black}};
\filldraw [red] (-0.3536,-0.3536) circle (1pt);
\draw (0.3536,-0.3536) node[above left=0.1pt] {\color{red}5\color{black}};
\filldraw [red] (0.3536,-0.3536) circle (1pt);

\draw (0,0) circle (1cm);
\draw (0,0) circle (0.5cm);

\draw[black, style={double,double distance=2pt}] (-0.707,0.707) .. controls (-0.55,0.65) and (-0.6,-0.05) .. (-0.5,0);

\end{tikzpicture}\hspace{0.3cm}\scalebox{1.1}{$\overunderset{\text{remove slit}}{\text{join vertices}}{\longrightarrow}$}
\begin{tikzpicture}[scale=1.25,baseline=(current bounding box.center)]
\draw (0,1) node[above=1pt] {1};
\filldraw [black] (0,1) circle (1pt);
\draw (1,0) node[right=1pt] {2};
\filldraw [black] (1,0) circle (1pt);
\draw (0,-1) node[below=1pt] {3};
\filldraw [black] (0,-1) circle (1pt);
\draw (-1,0) node[left=1pt] {4};
\filldraw [black] (-1,0) circle (1pt);

\draw (0.3536,0.3536) node[below left=0.3536pt] {\color{red}7\color{black}};
\filldraw [red] (0.3536,0.3536) circle (1pt);
\draw (-0.3536,0.3536) node[below right=0.3536pt] {\color{red}6\color{black}};
\filldraw [red] (-0.3536,0.3536) circle (1pt);
\draw (0,-0.5) node[above=0.75pt] {\color{red}5\color{black}};
\filldraw [red] (0,-0.5) circle (1pt);

\draw (0,0) circle (1cm);
\draw (0,0) circle (0.5cm);
\end{tikzpicture}
\end{center}
\captionof{figure}{The geometry of the transformation $\tau$ for $k=4$, $\ell=3$, and $j=2$.}\label{fig-deftau2}
\end{figure}
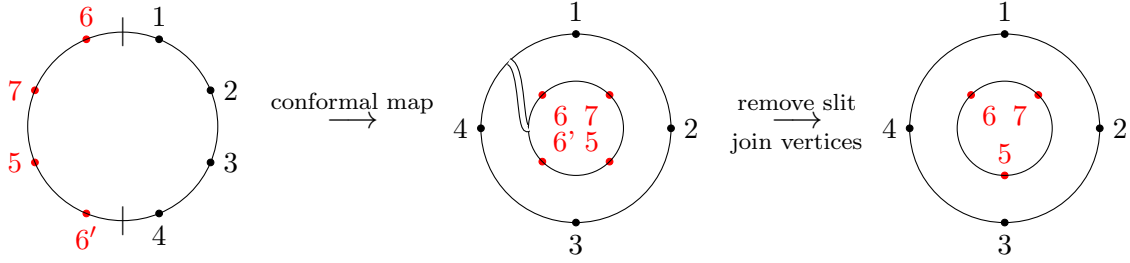

Similar to the proof of Lemma~\ref{lem-graphformula1}, we obtain
\begin{displaymath}
m_{\sigma}[1,\dots,k|k+1,\dots,k+\ell]=\Big(\prod_{s=1}^{k+\ell}m_s\Big)\sum_{\Gamma\in\cG^\sigma(k,\ell)}\prod_{(i,j)\in E(\Gamma)}q_{i,j}^\sharp
\end{displaymath}
where $q_{i,j}^\sharp$ is as in Lemma~\ref{lem-msharpgraphs}, i.e., $\smash{q_{i,j}^\sharp=q_{i,j}=\frac{m_im_j}{1-m_im_j}}$ whenever the edge $(i,j)$ connects two vertices of the same color and $\smash{q_{i,j}^\sharp=\frac{\sigma m_im_j}{1-\sigma m_im_j}}$ otherwise. From here, the remaining steps are carried out as in the proof of Theorem~\ref{thm-main}.
\end{proof}

\begin{proof}[Proof of Theorem~\ref{thm-structureom}]
We use again proof by induction. As the recursions for $\m_{\omega}[\cdot|\cdot]$ and $\m_{\kappa}[\cdot|\cdot]$ are the same up to the source term, it only remains to show that
\begin{displaymath}
K_5^{(j)}:=\langle (M_{[k]}A_k)\odot M_{(k+j,\dots,k+\ell,k+1,\dots k+j)}\rangle,\ j\in[\ell]
\end{displaymath}
is of the form~\eqref{eq-fpformulaom} and that summing up the contributions on the right-hand side of the recursion~\eqref{eq-Mrecursion} does not break the structure. We start by noting that each $\smash{K_5^{(j)}}$ contains the matrices $A_1,\dots,A_{k+\ell}$ exactly once and that the indices involved in the first and second factor of the Hadamard product match the indices on the outer and inner circle of the $(k,\ell)$-annulus, respectively. The desired structure now follows from~\eqref{eq-formulaM} and the fact that any annular non-crossing permutation of the $(k,\ell)$-annulus can be decomposed uniquely into a composition of two permutations that only act on the inner and outer circle, respectively. Note that the index $k+j$ occurring twice in $M_{(k+j,\dots,k+\ell,k+1,\dots k+j)}$ does not influence the structure of the Kreweras complement. When visualizing the term on a labeled disk, the two vertices labeled $k+j$ lie next to each other and the vertex on the midpoint of the arch connecting the two copies is associated with the identity matrix, i.e., its effect is not visible when the matrix product in $M_{(k+j,\dots,k+\ell,k+1,\dots k+j)}$ is evaluated.
\end{proof}

\appendix
\section{Proofs for the CLT in the Resolvent Case}

\subsection{Proof of Theorem~\ref{thm-skewedLL} (Global Law with Transposes)}\label{app-skewedLLproof}
The proof of Theorem~\ref{thm-skewedLL} is, modulo careful bookkeeping of the transposes, similar to the proof of the averaged local law in~\cite[Thm.~3.4]{CES-thermalization}. In particular, an explicit formula for $m^{\#,\sigma}[\cdot]$ and the associated free cumulants is obtained along the way. For the convenience of the reader, we give a brief overview of the necessary changes. As there is nothing to prove if $\sigma=1$ and the case $\sigma=-1$ needs an additional argument, consider $\sigma\in(-1,1)$ first. We start by introducing a renormalization that captures the general second moment structure of $W$ more precisely (cf.~\cite{CES-ETH} and~\cite{CES-functCLT}). Let
\begin{equation}\label{eq-underline2}
\underline{Wf(W)}:= Wf(W)-\widetilde{\E}\widetilde{W}(\partial_{\widetilde{W}}f)(W)
\end{equation}
with $\widetilde{W}$ an independent copy of $W$. This yields, e.g.,
\begin{align}
\underline{WG_1}&=WG_1+\langle G_1\rangle G_1+\frac{\sigma}{N}G_1^tG_1+\frac{\widetilde{\omega_2}}{N}\diag{G_1}G_1,\label{eq-underlinesingle2}\\
\underline{WT_1\dots T_k}&=\underline{WG_1}A_1T_{[2,k]}+\sum_{j=2}^k\Big(\langle T_{[1,j\rangle}G_j\rangle T_{[j,k]}+\frac{\sigma}{N}(T_{[1,j\rangle}G_j)^tT_{[j,k]}\NN\\
&\quad+\frac{\widetilde{\omega_2}}{N}\diag{ T_{[1,j\rangle}G_j}T_{[j,k]}\Big).\label{eq-underlinechain2}
\end{align}
Note that a simpler renormalization can be obtained by choosing $\widetilde{W}$ to be an independent GUE matrix instead of an independent copy of $W$. For the formulas~\eqref{eq-underlinesingle2} and~\eqref{eq-underlinechain2} above, the difference between the two renormalizations is negligible. However, it becomes significant if the matrix product involves at least one transpose of a resolvent~(cf.~\cite[Rem.~4.3]{CES-ETH}). As an example, consider the normalized trace~of
\begin{displaymath}
\underline{WG_1^t}=WG_1^t+\frac{1}{N} G_1G_1^t+\sigma \langle G_1\rangle G_1^t+\frac{\widetilde{\omega_2}}{N}\diag{G_1}G_1^t
\end{displaymath}
where $\sigma\langle G_1\rangle^2\sim1$.

\medskip
To obtain~\eqref{eq-defMsharp}, we rewrite $G_1^\sharp A_1\dots G_k^\sharp A_k$ in terms of $\underline{\smash{WG_1^\sharp A_1\dots G_k^\sharp A_k}}$ and estimate the resulting terms. We start by considering the case $A_1=\dots=A_k=\Id$. For $k=2$, we obtain, e.g.,
\begin{align*}
\underline{WG_1G_2^t}=\underline{WG_1}G_2^t+\frac{1}{N}(G_1G_2^t)^tG_2^t+\sigma\langle G_1G_2^t\rangle G_2^t+\frac{\widetilde{\omega_2}}{N}\diag{G_1G_2^t}G_2^t
\end{align*}
which yields
\begin{align*}
\Big(1-\sigma m_1m_2+\cO_{\prec}\Big(\frac{1}{N}\Big)\Big)\langle G_1G_2^t\rangle &= m_1m_2-m_1\underline{WG_1G_2^t}+\frac{m_1}{N}\Big(\sigma\langle G_1^tG_1G_2^t\rangle\\
&\quad+\widetilde{\omega_2}\langle \diag{G_1}G_1G_2^t+\diag{G_1G_2^t}G_2^t\rangle\Big)+\cO_\prec\Big(\frac{1}{N}\Big)
\end{align*}
by~\eqref{eq-underlinesingle2}, the resolvent identity $WG-zG=\Id$,~\eqref{eq-mselfcon}, and the local law for $\langle G_j\rangle$. Noting that $|\langle\underline{\smash{ WG_1^tG_2}}\rangle|=\cO_{\prec}(N^{-1})$ following the bounds in~\cite[Thm.~4.1]{CES-ETH} and that the term with prefactor $N^{-1}$ is also of lower order (cf. (69) and (70) in~\cite{CES-functCLT}), we obtain~\eqref{eq-defMsharp} with
\begin{displaymath}
\m^{\#,\sigma}[G_1,G_2^t]=\frac{m_1m_2}{1-\sigma m_1m_2}.
\end{displaymath}
Note that the only effect of the inclusion of transposes lies in the emergence of the stability factor $1-\sigma m_1m_2$. However, as $|1-\sigma m_1m_2|>1-|\sigma|$ for $|\sigma|<1$, the term is bounded from below and does not change the outcome of the estimates. The analog for general $k\geq2$ follows by induction over the number of resolvents. Similar to~\cite[Thm.~3.4]{CES-thermalization}, the above argument allows us to extract a recursion that is satisfied by the deterministic approximation of $\langle G_1^\sharp\dots G_k^\sharp\rangle$ up to an $\cO_{\prec}(N^{-1})$ error. By defining $m^{\#,\sigma}[\cdot]$ to satisfy this recursion exactly, i.e., defining it by~\eqref{eq-mhashrecursion}, yields the desired statement. The recursion for $\m[G_1^\sharp A_1,\dots, G_k^\sharp A_k]$ is obtained analogously.

\medskip
The explicit formula~\eqref{eq-defMsharp} now follows by solving this recursion. As the treatment of the deterministic matrices is completely analogous to~\cite[Thm.~3.4]{CES-thermalization}, we restrict the discussion to the case $A_1=\dots=A_k=\Id$, i.e., to~\eqref{eq-mhashrecursion}.  Here, we obtain that $m^{\#,\sigma}[\cdot]$ and the associated free cumulants have a representation in terms of non-crossing graphs that mirror~\eqref{eq-mgraphs} and~\eqref{eq-mcircgraphs}.

\begin{definition}[Bicolored NCG]
Let $k\in\N$ and denote by $\#$ a binary vector of length~$k$. For every $\Gamma\in NCG(k)$, color the vertex $j\in\{1,\dots,k\}$ red if the $j$-th entry of $\#$ is 1, otherwise color it black. We call the resulting set of graphs \emph{bicolored (disk) non-crossing graphs} on $\{1,\dots,k\}$ and denote it by $NCG^\#(k)$.
\end{definition}

\begin{lemma}\label{lem-msharpgraphs}
Let $k\in\N$ and fix a binary vector $\#$ of length $k$. Then
\begin{align}
m^{\#,\sigma}[1,\dots,k]&=\Big(\prod_{s=1}^k m_s\Big)\sum_{\Gamma\in NCG^\#(k)}q_\Gamma^\#\\
m_\circ^{\#,\sigma}[1,\dots,k]&=\Big(\prod_{s=1}^k m_s\Big)\sum_{\Gamma\in NCG_c^\#(k)}q_\Gamma^\#
\end{align}
where $NCG^\#_c(k)$ denotes the connected graphs in $NCG^\#(k)$ and $q_\Gamma:=\prod_{(i,j)\in\E(\Gamma)}q_{i,j}^\sharp$ with $E(\Gamma)$ denoting the edge set of $\Gamma$. The edge weights $q_{i,j}^\sharp$ are such that $q_{i,j}^\sharp:=q_{i,j}=\frac{m_im_j}{1-m_im_j}$ whenever the edge $(i,j)$ connects two vertices of the same color and $q_{i,j}^\sharp:=\frac{\sigma m_im_j}{1-\sigma m_im_j}$ whenever $(i,j)$ connects a red vertex to a black one.
\end{lemma}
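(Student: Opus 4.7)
The plan is to mirror the proof of Lemma~\ref{lem-graphformula1} (equivalently \cite[Lem.~5.2]{CES-thermalization}): use the bicolored graph sum as an ansatz, verify that it satisfies the recursion~\eqref{eq-mhashrecursion}, and then deduce the free cumulant formula from the first identity via the partition induced by connected components. Throughout, the essential new ingredient compared to the uncolored case is the algebraic identity
\[
c_{1,j}(1+q_{1,j}^\sharp)=\frac{q_{1,j}^\sharp}{m_1m_j},
\]
which holds regardless of whether the two vertices $1$ and $j$ have matching colors: if the colors agree then $c_{1,j}=1$ and $q_{1,j}^\sharp=m_1m_j/(1-m_1m_j)$, while if they differ then $c_{1,j}=\sigma$ and $q_{1,j}^\sharp=\sigma m_1m_j/(1-\sigma m_1m_j)$. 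This uniform identity is exactly what allows the coloring-sensitive coefficient $c_{1,j}$ in the recursion to be absorbed into the edge weight $q_{1,j}^\sharp$ of the combinatorial formula.

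For the first identity, I would proceed by induction on $k$. The base case $k=1$ is immediate since $NCG^\#(1)$ consists of a single edgeless graph, giving $m_1$. For the inductive step, abbreviate $\mathfrak{q}(k):=\sum_{\Gamma\in NCG^\#(k)}q_\Gamma^\#$ and decompose this sum according to the adjacency structure at vertex $1$. Exactly as in the uncolored case, split first by the presence or absence of the edge $(1,k)$, giving the factor $(1+q_{1,k}^\sharp)$. Then, within $NCG^\#_{\neg(1,k)}(k)$, decompose further by taking $j$ to be the largest index in $[2,k]$ with $(1,j)\in E(\Gamma)$; by the non-crossing property, the graph splits into the union of a bicolored non-crossing graph on $[1,j]$ containing the edge $(1,j)$ and an independent bicolored non-crossing graph on $[j,k]$. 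The ``no such $j$'' case (vertex $1$ isolated) contributes $\mathfrak{q}([2,k])$. Pulling out the prescribed edge with the identity above yields
\[
\mathfrak{q}(k)=(1+q_{1,k}^\sharp)\Big(\mathfrak{q}([2,k])+\sum_{j=2}^k c_{1,j}\,m_1m_j\,\mathfrak{q}([1,j])\,\mathfrak{q}([j,k])\Big).
\]
Multiplying by $\prod_{s=1}^k m_s$ recovers~\eqref{eq-mhashrecursion}. Uniqueness of the solution to that recursion (given the trivial initial condition and the recursion itself) then forces equality.

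For the second identity, observe that every $\Gamma\in NCG^\#(k)$ induces a non-crossing partition $\pi_\Gamma\in NCP(k)$ whose blocks are the vertex sets of the connected components of $\Gamma$, and each such component is itself a connected bicolored non-crossing graph on its vertex set. Grouping by the induced partition, I would write
\[
\Big(\prod_{s=1}^k m_s\Big)\sum_{\Gamma\in NCG^\#(k)}q_\Gamma^\# = \sum_{\pi\in NCP(k)}\prod_{B\in\pi}\Big[\Big(\prod_{s\in B}m_s\Big)\sum_{\Gamma_B\in NCG_c^\#(B)}q_{\Gamma_B}^\#\Big].
\]
Comparing with the defining relation~\eqref{eq-mcrelation1} for the first-order free cumulant function of $m^{\#,\sigma}[\cdot]$ and invoking the uniqueness of $m_\circ^{\#,\sigma}$, the claimed graphical formula for $m_\circ^{\#,\sigma}[1,\dots,k]$ follows term by term (noting that the coloring is preserved when restricting to a block).

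The main obstacle is purely bookkeeping, namely verifying that the splitting of $NCG^\#(k)$ at the maximal-edge vertex $j$ respects the coloring and that the combinatorial identity $c_{1,j}(1+q_{1,j}^\sharp)=q_{1,j}^\sharp/(m_1m_j)$ produces precisely the two distinct stability denominators $(1-m_1m_j)^{-1}$ and $(1-\sigma m_1m_j)^{-1}$ that appear in the recursion. Once this is in place, the rest of the argument is a direct transcription of the uncolored case.
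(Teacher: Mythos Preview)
Your proposal is correct and follows precisely the approach the paper indicates (``analogous to \cite[Lem.~5.2]{CES-thermalization}''); in particular, the identity $c_{1,j}(1+q_{1,j}^\sharp)=q_{1,j}^\sharp/(m_1m_j)$ is exactly the bookkeeping device the paper alludes to when it says the factors $q_{1,k}^\sharp$ and $c_{1,j}$ ``account for the stability factor $1-\sigma m_im_j$.'' One small point: your verbal decomposition (largest neighbor of $1$ within $NCG^\#_{\neg(1,k)}(k)$) actually yields $\sum_{j=2}^{k-1}$ rather than $\sum_{j=2}^{k}$, since the $(1,k)$ edge has already been removed---this is consistent, as the upper limit $k$ in~\eqref{eq-mhashrecursion} appears to be a typo (the $j=k$ term would make the relation self-referential and is not satisfied by the graph ansatz already for $k=2$).
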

The proof is analogous to~\cite[Lem.~5.2]{CES-thermalization} using the recursive structure of $NCG(k)$. Note that the modified edge weights as well as the factors $\smash{q_{1,k}^\sharp}$ and $\smash{c_{1,j}}$ in the recursion account for the stability factor $1-\sigma m_im_j$ arising whenever the product $G_1^\sharp\dots G_k^\sharp$ involves both resolvents and their transposes. We illustrate Lemma~\ref{lem-msharpgraphs} with an example.

\begin{example}
Let $k=3$ and pick spectral parameters $z_1,z_2,z_3\in\C$ with $|\Im z_j|\gtrsim1$ as well as $A_1=A_2=A_3=\Id$. By Theorem~\ref{thm-skewedLL}, the deterministic approximation of $\langle G_1G_2^tG_3^t\rangle$ is given by $m^{\#,\sigma}[1,2,3]$ with $\#=(0,1,1)$. We visualize the elements of the set $NCG^\#(3)$ in Fig.~\ref{fig-NCGcolored} below. For a better overview, the edges are drawn as solid or dashed according to their contribution to $\smash{q_\Gamma^\#}$. We use this sketch to compute
\begin{displaymath}
m^{\#,\sigma}[1,2,3]=m_1m_2m_3\sum_{\Gamma\in NCG^\#}=\frac{m_1m_2m_3}{(1-\sigma m_1 m_2)(1-\sigma m_1m_3)(1-m_2m_3)}.
\end{displaymath}
Note that we thus reobtain (49) of~\cite{CES-functCLT} on macroscopic scales if $z_2=z_3$.
\end{example}

\begin{figure}[H]
\begin{center}
\begin{tikzpicture}[scale=1,baseline=(current bounding box.center)]
\draw (0,1) node[above=1pt] {1};
\draw (-0.707,-0.707) node[below left=1pt] {\color{red}3\color{black}};
\draw (0.707,-0.707) node[below right=1pt] {\color{red}2\color{black}};

\draw (0,0) circle (1cm);
\end{tikzpicture}
\hspace{0.5cm}
\begin{tikzpicture}[scale=1,baseline=(current bounding box.center)]
\draw (0,1) node[above=1pt] {1};
\draw (-0.707,-0.707) node[below left=1pt] {\color{red}3\color{black}};
\draw (0.707,-0.707) node[below right=1pt] {\color{red}2\color{black}};

\draw[dashed] (0,1) -- (0.707,-0.707);

\draw (0,0) circle (1cm);
\end{tikzpicture}
\hspace{0.5cm}
\begin{tikzpicture}[scale=1,baseline=(current bounding box.center)]
\draw (0,1) node[above=1pt] {1};
\draw (-0.707,-0.707) node[below left=1pt] {\color{red}3\color{black}};
\draw (0.707,-0.707) node[below right=1pt] {\color{red}2\color{black}};

\draw (0.707,-0.707) -- (-0.707,-0.707);

\draw (0,0) circle (1cm);
\end{tikzpicture}
\hspace{0.5cm}
\begin{tikzpicture}[scale=1,baseline=(current bounding box.center)]
\draw (0,1) node[above=1pt] {1};
\draw (-0.707,-0.707) node[below left=1pt] {\color{red}3\color{black}};
\draw (0.707,-0.707) node[below right=1pt] {\color{red}2\color{black}};

\draw[dashed] (0,1) -- (-0.707,-0.707);

\draw (0,0) circle (1cm);
\end{tikzpicture}\\
\begin{tikzpicture}[scale=1,baseline=(current bounding box.center)]
\draw (0,1) node[above=1pt] {1};
\draw (-0.707,-0.707) node[below left=1pt] {\color{red}3\color{black}};
\draw (0.707,-0.707) node[below right=1pt] {\color{red}2\color{black}};

\draw[dashed] (0,1) -- (0.707,-0.707);
\draw (0.707,-0.707) -- (-0.707,-0.707);

\draw (0,0) circle (1cm);
\end{tikzpicture}
\hspace{0.5cm}
\begin{tikzpicture}[scale=1,baseline=(current bounding box.center)]
\draw (0,1) node[above=1pt] {1};
\draw (-0.707,-0.707) node[below left=1pt] {\color{red}3\color{black}};
\draw (0.707,-0.707) node[below right=1pt] {\color{red}2\color{black}};

\draw (0.707,-0.707) -- (-0.707,-0.707);
\draw[dashed] (0,1) -- (-0.707,-0.707);

\draw (0,0) circle (1cm);
\end{tikzpicture}
\hspace{0.5cm}
\begin{tikzpicture}[scale=1,baseline=(current bounding box.center)]
\draw (0,1) node[above=1pt] {1};
\draw (-0.707,-0.707) node[below left=1pt] {\color{red}3\color{black}};
\draw (0.707,-0.707) node[below right=1pt] {\color{red}2\color{black}};

\draw[dashed] (0,1) -- (0.707,-0.707);
\draw[dashed] (0,1) -- (-0.707,-0.707);

\draw (0,0) circle (1cm);
\end{tikzpicture}
\hspace{0.5cm}
\begin{tikzpicture}[scale=1,baseline=(current bounding box.center)]
\draw (0,1) node[above=1pt] {1};
\draw (-0.707,-0.707) node[below left=1pt] {\color{red}3\color{black}};
\draw (0.707,-0.707) node[below right=1pt] {\color{red}2\color{black}};

\draw[dashed] (0,1) -- (0.707,-0.707);
\draw (0.707,-0.707) -- (-0.707,-0.707);
\draw[dashed] (0,1) -- (-0.707,-0.707);

\draw (0,0) circle (1cm);
\end{tikzpicture}
\end{center}
\captionof{figure}{The elements of $NCG^\#(3)$ for $\#=(0,1,1)$. The solid edges contribute a factor of $\frac{m_im_j}{1-m_im_j}$ to $q_\Gamma^\#$ while dashed edges contribute a factor $\frac{\sigma m_im_j}{1-\sigma m_im_j}$.}\label{fig-NCGcolored}
\end{figure}
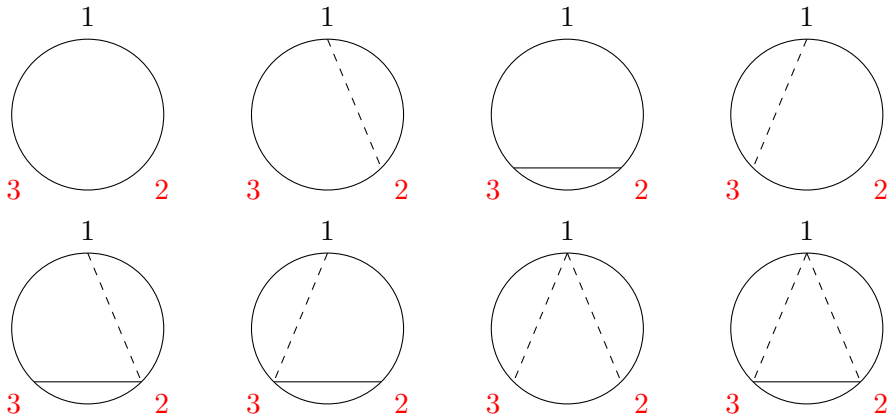

It remains to consider the case $\sigma=-1$. Here, the Wigner matrix is of the form $W=D+\ri S$ with a diagonal matrix $D$ and a skew-symmetric matrix $S$. Note that whenever the diagonal part is equal to zero, the resolvent $R(z_j)=(\ri S-z_j)^{-1}$ satisfies $R(z_j)^t=-R(-z_j)$. This allows treating the proof of~\eqref{eq-defMsharp} for $\langle R(z_1)^\sharp \dots R(z_k)^\sharp\rangle$ with $R(z_j)^\sharp\in\{R(z_j),R(z_j)^t\}$ analogous to the case $\langle G_1\dots G_k\rangle$. Recall that this requires in particular a local law for $\langle R(z_j)\rangle$, which holds even if the Wigner matrix has zero diagonal. The general case follows from the bound
\begin{equation}\label{eq-dropdiag}
\langle G_1^\sharp\dots G_k^\sharp\rangle=\langle R(z_1)^\sharp\dots R(z_k)^\sharp\rangle+\cO_\prec\Big(\frac{1}{N}\Big)
\end{equation}
which reduces the proof of~\eqref{eq-defMsharp} to the previously considered case $D=0$. The proof of~\eqref{eq-dropdiag} is given for $\langle G_1G_2^t\rangle$ in~\cite[App.~B]{CES-functCLT}. By expanding $G_3^\sharp$,\dots, $G_k^\sharp$ analogously, the argument readily extends to $k\geq3$ resolvents.
\qed

\subsection{Proof of Therorem~\ref{thm-resolventCLT2} (CLT for Resolvents)}\label{app-resolventCLTproof}
We follow the general outline of the proof of~\cite[Thm.~3.6]{JRmain} to identify the necessary modifications for the general model described by Assumption~\ref{as-Wigner2}. As the formulas in the general case are derived analogously, we assume w.l.o.g. $\langle A_k\rangle=0$ throughout to keep the following equations short. The first step is the analog of~\cite[Lem.~2.3]{JRmain} that characterizes the difference between $\langle T_{[1,k]}\rangle-\m[T_1,\dots,T_k]$ and $\langle T_{[1,k]}\rangle-\E\langle T_{[1,k]}\rangle$, thus connecting the statistics $X_\alpha$ back to the local law~\eqref{eq-multiGaveraged}.

\begin{lemma}\label{lem-mEexchange2}
Let $k\in\N$ and fix spectral parameters $z_1,\dots,z_n$ with $|\Im z_j|\gtrsim1$ and $\max_j|z_j|\leq N^{100}$ as well as bounded deterministic matrices $A_1,\dots,A_k$ such that $\langle A_k\rangle=0$. Then,
\begin{equation}
\E\langle T_1\dots T_k\rangle=\m[T_1,\dots,T_k]+\frac{1}{N}\cE[T_1,\dots,T_k]+\cO\Big(\frac{N^\eps}{N^{3/2}}\Big)
\end{equation}
with $\m[\cdot]$ as in~\eqref{eq-defM} and a set function $\cE[\cdot]$ that satisfies $\cE[\emptyset]=0$ as well as the recursion
\begin{align*}
\cE[T_1,\dots,T_k]&=m_1\Big(\cE[T_2,\dots,T_{j-1},G_kA_kA_1]+\sum_{j=1}^{k-1}\cE[T_1,\dots,T_{j-1},G_j]\m[T_j,\dots,T_k]\NN\\
&\quad+\sum_{j=2}^k\m[T_1,\dots,T_{j-1},G_j]\cE[T_j,\dots,T_k]+\frac{\widetilde{\omega_2}}{N}\sum_{j=1}^k\langle M_{[j]}\odot(M_{[j,k]}A_k)\rangle\\
&\quad+\frac{\sigma}{N}\sum_{j=1}^k\m[G_j^tA_{j-1}^t,\dots,G_2^tA_1^t,G_1^t,T_j,\dots, T_k]\\
&\quad+\kappa_4\sum_{1\leq r\leq s\leq t\leq k}\langle M_{[r]}\odot M_{[s,t]}\rangle\langle M_{[r,s]}\odot(M_{[t,k]}A_k)\rangle\Big),\NN
\end{align*}
where $\odot$ denotes the Hadamard product and $\m[\cdot|\cdot]$ is interpreted as in~\eqref{eq-mtransposes}.
\end{lemma}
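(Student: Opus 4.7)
The plan is to proceed by induction on the chain length $k$, mirroring the strategy of \cite[Lem.~2.3]{JRmain} but tracking the additional contributions from the parameters $\sigma$, $\widetilde{\omega_2}$, and $\kappa_4$ that appear under Assumption~\ref{as-Wigner2}. The base case $k=0$ holds by the convention $\cE[\emptyset]=0$. For the inductive step, I start from the resolvent identity $WG_1=\Id+z_1G_1$ to write $z_1\langle T_{[1,k]}\rangle=\langle WT_{[1,k]}\rangle-\langle A_1T_{[2,k]}\rangle$, then apply the underlined expansion \eqref{eq-underlinechain2} to decompose $\langle WT_{[1,k]}\rangle$ as $\langle\underline{WT_{[1,k]}}\rangle$ plus a sum over $j\in[k]$ of three families of correction terms: the GUE-type contribution $\langle T_{[1,j\rangle}G_j\rangle\langle T_{[j,k]}\rangle$, the transpose contribution $\frac{\sigma}{N}\langle(T_{[1,j\rangle}G_j)^tT_{[j,k]}\rangle$, and the diagonal contribution $\frac{\widetilde{\omega_2}}{N}\langle\diag{T_{[1,j\rangle}G_j}T_{[j,k]}\rangle$.

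The crucial step is computing $\E\langle\underline{WT_{[1,k]}}\rangle$. Because the renormalization \eqref{eq-underline2} subtracts exactly the contribution of the second cumulant of $W$, this expectation picks up only cumulants of order three and higher. The third cumulant vanishes by the centering of $\chi_d$ and $\chi_{od}$, and a standard cumulant expansion yields the fourth-cumulant contribution
\begin{equation*}
\E\langle\underline{WT_{[1,k]}}\rangle=\kappa_4\sum_{1\leq r\leq s\leq t\leq k}\langle M_{[r]}\odot M_{[s,t]}\rangle\langle M_{[r,s]}\odot(M_{[t,k]}A_k)\rangle+\cO_{\prec}(N^{-3/2}),
\end{equation*}
where the Hadamard-product structure arises because the four-fold matching of a single entry $W_{ij}$ pairs row and column indices across two separate resolvent chains; the isotropic local law \eqref{eq-multiGisotropic} from Theorem~\ref{thm-multiG-LL} is then used to replace each resolvent product by the deterministic matrix $M_{[\cdot]}$ at optimal precision, and power counting bounds cumulants of order $\geq 5$ by $\cO_{\prec}(N^{-3/2})$.

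For the three families of correction terms from the underlined expansion, apply concentration (via covariance estimates for normalized traces of resolvent products) to factor expectations as $\E[\langle T_{[1,j\rangle}G_j\rangle\langle T_{[j,k]}\rangle]=\E\langle T_{[1,j\rangle}G_j\rangle\cdot\E\langle T_{[j,k]}\rangle+\cO(N^{-2})$, and analogously for the transpose and diagonal variants. After taking expectation, solving for $\E\langle T_{[1,k]}\rangle$ via the Dyson equation $z_1=-m_1-1/m_1$, and substituting the inductive hypothesis $\E\langle T_{[\cdot]}\rangle=\m[\cdot]+\frac{1}{N}\cE[\cdot]+\cO(N^{-3/2+\eps})$ into each shorter chain, the order-one cross terms exactly reconstruct the recursion satisfied by $\m[T_1,\dots,T_k]$ obtained from \eqref{eq-formulaM} (i.e., the recursion implicit in \cite[Thm.~3.4]{CES-thermalization}), while the $1/N$ cross terms $\m\cdot\cE+\cE\cdot\m$, combined with the $\widetilde{\omega_2}/N$ diagonal correction, the $\sigma/N$ transpose correction (whose structure is recognized via Theorem~\ref{thm-skewedLL} as $\m[G_j^tA_{j-1}^t,\dots,G_1^t,T_j,\dots,T_k]$), and the $\kappa_4$ term above, assemble into precisely the claimed recursion for $\cE$.

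The main obstacle is controlling the cumulant expansion of $\E\langle\underline{WT_{[1,k]}}\rangle$ at $N^{-3/2+\eps}$ precision: one must verify that the fourth-cumulant contribution reduces to the stated Hadamard-product structure through a careful combinatorial bookkeeping of which indices get paired, and that the remaining contributions---cumulants of order $\geq 5$, the concentration errors introduced when factoring products of traces, and the errors from replacing resolvent products by their deterministic approximations $M_{[\cdot]}$---are each individually $\cO(N^{-3/2+\eps})$. Particular care is needed for the transpose term, where one must track the reversed order of the matrices $A_1,\dots,A_{j-1}$ produced by transposition and appeal to Theorem~\ref{thm-skewedLL} to identify the correct deterministic approximation of the resulting mixed resolvent/transpose chain.
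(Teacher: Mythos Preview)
Your approach is essentially the one the paper follows: expand via the renormalization~\eqref{eq-underlinechain2} to separate the $\sigma$ and $\widetilde{\omega_2}$ corrections, apply a cumulant expansion to $\E\langle\underline{WT_{[1,k]}}\rangle$ for the $\kappa_4$ contribution, replace resolvent chains by their deterministic approximations using Theorems~\ref{thm-multiG-LL} and~\ref{thm-skewedLL}, and read off the recursion for $\cE$ by matching the $N^{-1}$ terms. The paper phrases the first step as an identity for $\langle T_{[1,k]}\rangle-\m[T_1,\dots,T_k]$ rather than solving $z_1\langle T_{[1,k]}\rangle=\dots$ through the Dyson equation, but the two formulations are equivalent.

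There is one genuine slip. The third cumulant does \emph{not} vanish by centering: centering of $\chi_d,\chi_{od}$ kills only the first cumulant, while quantities such as $\E|\chi_{od}|^2\chi_{od}$ are generically nonzero under Assumption~\ref{as-Wigner2}. The correct reason the $n=2$ term of the cumulant expansion is negligible is power counting. After two derivatives one obtains a product of three resolvent-chain entries with indices in $\{x,y\}$; replacing each by its deterministic approximation via~\eqref{eq-multiGisotropic} and using that $\|M_{[\cdot]}\|_F=\cO(\sqrt N)$ (since $\|M_{[\cdot]}\|\lesssim1$), the double sum $\sum_{x,y}$ is $\cO(N)$ rather than $\cO(N^2)$, so together with $\kappa_3\sim N^{-3/2}$ the contribution to $\E\langle\underline{WT_{[1,k]}}\rangle$ is $\cO(N^{-3/2+\eps})$ and falls into the error. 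Your displayed formula for $\E\langle\underline{WT_{[1,k]}}\rangle$ is also off by a factor of $N^{-1}$ in front of the $\kappa_4$ sum; compare the normalization in the paper, where $N\E\langle\underline{WT_{[1,k]}}\rangle$ is the quantity of order one.
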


\begin{proof}
We note the following modifications to the proof of~\cite[Lem.~2.3]{JRmain}. Using the renormalization in~\eqref{eq-underline2} with $\widetilde{W}$ being an independent copy of $W$ (compared to the independent GUE matrix $\widetilde{W}$ used in~\cite{JRmain}), we obtain the relation
\begin{align}
\langle T_{[1,k]}\rangle-\m[T_1,\dots,T_k]\NN&=m_1\Big(-\langle \underline{WT_{[1,k]}}\rangle+(\langle T_{[2,k\rangle}G_kA_kA_1\rangle-\m[T_2,\dots,T_{k-1},G_kA_kA_1])\NN\\
&\quad+\sum_{j=1}^{k-1}(\langle T_{[1,j\rangle}G_j\rangle-\m[T_1,\dots,T_{j-1},G_j])\m[T_j,\dots,T_k]\NN\\
&\quad+\sum_{j=2}^k\m[T_1,\dots,T_{j-1},G_j](\langle T_{[j,k]}\rangle-\m[T_j,\dots,T_k])\label{eq-newdifference}\\
&\quad +\frac{\sigma}{N}\sum_{j=1}^k\langle(T_{[1,j\rangle}G_j)^tT_{[j,k]}\rangle+\frac{\widetilde{\omega_2}}{N}\sum_{j=1}^k\langle\diag{T_{[1,j\rangle}G_j}T_{[j,k]}\rangle\Big)+\cO_{\prec}\Big(\frac{1}{N^2}\Big).\NN
\end{align}
Recall that the deterministic approximation $\m[\cdot]$ is independent of the value of $\sigma$ and $\widetilde{\omega}_2$ (cf.~\cite[Thm.~3.4]{CES-thermalization}). The terms in the last line of~\eqref{eq-newdifference} involve an additional factor of $N^{-1}$. Applying Theorem~\ref{thm-skewedLL} as well as the isotropic local law~\eqref{eq-multiGisotropic} thus yields
\begin{align*}
\langle(T_{[1,j\rangle}G_j)^tT_{[j,k]}\rangle&=\m[G_j^tA_{j-1}^t,\dots,G_2^tA_1^t,G_1^t,T_j,\dots, T_k]+\cO_{\prec}\Big(\frac{1}{N}\Big)\\
\langle\diag{T_{[1,j\rangle}G_j}T_{[j,k]}\rangle&=\frac{1}{N}\sum_{r=1}^N(T_{[1,j\rangle}G_j)_{rr}(T_{[j,k]})_{rr}=\frac{1}{N}\sum_{r=1}^N(M_{[j]})_{rr}(M_{[j,k]}A_k)_{rr}+\cO_{\prec}\Big(\frac{1}{\sqrt{N}}\Big)
\end{align*}
and we can replace the respective terms in~\eqref{eq-newdifference} by their deterministic approximation. Note that this also changes the error term to $\cO_{\prec}(N^{-3/2})$. Computing the expectation of~\eqref{eq-newdifference} is now analogous to~\cite[Lem.~2.3]{JRmain}. In particular, the cumulant expansion
\begin{displaymath}
N\E\langle \underline{WT_{[1,k]}}\rangle=\sum_{n\geq2}\sum_{x,y\in[N]}\sum_{\nu\in\{xy,yx\}^n}\frac{\kappa(xy,\nu)}{n!}\E \partial_{\nu}(T_{[1,k]})_{yx}
\end{displaymath}
yields the same result, since the term does not involve any second moments of the entries of $W$.
\end{proof}

Next, we identify the limiting covariance of two modes $X_\alpha$ and $X_\beta$. This yields an analog of~\cite[Lem.~2.5]{JRmain} for the model in Assumption~\ref{as-Wigner2} on macroscopic scales and thus constitutes the base case for the induction argument in the proof of Theorem~\ref{thm-resolventCLT2}.

\begin{lemma}\label{lem-covapprox2}
Fix $k,\ell\in\N$ and let $\alpha,\beta$ be two multi-indices of length $k$ and $\ell$, respectively. Assume that the Wigner matrix $W$ satisfies Assumption~\ref{as-Wigner2} and pick spectral parameters $z_1,\dots,z_{k+\ell}$ with $|\Im z_j|\gtrsim1$  and $\max_j|z_j|\leq N^{100}$ as well as deterministic matrices $A_1,\dots,A_{k+\ell}$ with $\|A_j\|\lesssim 1$. Then, for any $\eps>0$,
\begin{align*}
&N^2\E X^{(k,a)}_\alpha X^{(\ell,b)}_\beta=\m[\alpha|\beta]+\cO\Big(\frac{N^\eps}{\sqrt{N}}\Big),
\end{align*}
with $\m[\cdot|\cdot]$ as introduced in Definition~\ref{def-M}.
\end{lemma}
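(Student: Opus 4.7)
The plan is to show both sides satisfy the same recursion with the same initial condition, so we can conclude by induction on $k+\ell$. Starting from the identity $W G_1 = \Id + z_1 G_1$ and the renormalization \eqref{eq-underline2}, one derives (as in the proof of Lemma~\ref{lem-mEexchange2}) the representation
\begin{align*}
\langle T_{[1,k]}\rangle &= m_1\Big(\langle T_{[2,k\rangle}G_k A_k A_1\rangle + \sum_{j=1}^{k-1}\langle T_{[1,j\rangle} G_j\rangle\, \m[T_j,\dots,T_k]\\
&\quad + \sum_{j=2}^{k}\m[T_1,\dots,T_{j-1},G_j]\,\langle T_{[j,k]}\rangle - \langle \underline{W T_{[1,k]}}\rangle\Big) + \cO_\prec(N^{-2}),
\end{align*}
where terms with prefactor $\sigma/N$ and $\widetilde{\omega_2}/N$ have been kept inside $\underline{W T_{[1,k]}}$ for the moment. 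Multiplying by $N^2 X_\beta$, taking expectation, and replacing each averaged factor $\langle T_{[1,j\rangle}G_j\rangle$ in the lines above by its deterministic approximation (with an $\cO_\prec(N^{-1})$ error, via Theorem~\ref{thm-multiG-LL}), produces the first four lines of the recursion~\eqref{eq-Mrecursion} applied to $N^2\E X_{\alpha'}X_\beta$ for shorter multi-indices $\alpha'$. The inductive hypothesis then identifies these contributions with the corresponding pieces of $m_1 \,\m[\cdot|\beta]$.

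The source terms $\mathfrak{s}_{GUE}, \mathfrak{s}_\kappa, \mathfrak{s}_\sigma, \mathfrak{s}_\omega$ of Definition~\ref{def-M} will arise from the remaining term $N^2 m_1 \E\langle \underline{WT_{[1,k]}}\rangle X_\beta$, which is handled by a cumulant expansion in the entries of $W$ against the product $N\langle T_{[k+1,k+\ell]}\rangle$. The renormalization kills the pure second-moment contraction, so the surviving second-cumulant terms must couple one resolvent from $T_{[1,k]}$ with one resolvent from $T_{[k+1,k+\ell]}$. Splitting the second moment of the entries of $W$ into $\E|\chi_{od}|^2 = 1$, the symmetric part $\E\chi_{od}^2 = \sigma$, and the diagonal correction $\E\chi_d^2 - 1 - \sigma = \widetilde{\omega_2}$ (divided by $N^{-1}$ for the diagonal), and applying the averaged and isotropic local laws (Theorems~\ref{thm-multiG-LL} and~\ref{thm-skewedLL}) to replace the resulting resolvent chains by their deterministic counterparts $M_{(\dots)}$ with $\cO_\prec(N^{-1/2})$ error, yields exactly $\mathfrak{s}_{GUE} + \sigma\,\mathfrak{s}_\sigma/\sigma + \widetilde{\omega_2}\,\mathfrak{s}_\omega/\widetilde{\omega_2}$ of the definition (with the indices of summation being determined by which of the $k$ resolvents in $T_{[1,k]}$ is contracted with which of the $\ell$ resolvents in $T_{[k+1,k+\ell]}$). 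The fourth-cumulant contribution produces the double-sum structure of $\mathfrak{s}_\kappa$, the Hadamard products arising because the four contraction indices are all equal. All higher cumulants and all remaining error terms are $\cO_\prec(N^{-3/2})$ by the standard power counting already used in~\cite[Thm.~3.4]{CES-thermalization} and~\cite{JRmain}.

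Collecting the contributions, the leading-order part of $N^2 \E X_\alpha X_\beta$ satisfies the exact same linear recursion as $\m[\alpha|\beta]$ with the same initial condition (which holds because $X_\beta \equiv 0$ when $\beta$ is empty), so by the uniqueness of the solution to the recursion the two coincide up to $\cO(N^\eps/\sqrt{N})$. The base case $k=\ell=1$ is either verified directly or obtained as the atomic case of the recursion with no lower-order inputs. The main obstacle lies in bookkeeping the cumulant expansion carefully enough to match the precise Hadamard and transpose patterns in $\mathfrak{s}_\kappa$, $\mathfrak{s}_\sigma$ and $\mathfrak{s}_\omega$: in each case the four indices $(r,s,t,j)$ (or the transpose orientation) must be read off from which diagonal entry of which matrix is contracted, and the use of the isotropic local law must be justified uniformly for all the coupled resolvent chains that appear. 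Once this combinatorial matching is done, the estimate reduces to known stochastic-domination bounds.
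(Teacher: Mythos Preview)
Your proposal is correct and follows essentially the same route as the paper's proof: both derive a recursion for $N^2\E X_\alpha X_\beta$ by writing $\langle T_{[1,k]}\rangle$ via the renormalized identity~\eqref{eq-newdifference}, multiplying by $N^2 X_\beta$, taking expectation, and handling $N^2\E\langle\underline{WT_{[1,k]}}\rangle X_\beta$ by cumulant expansion, where the $n=1$ term (with the derivative acting on $X_\beta$) yields $\mathfrak{s}_{GUE}$, $\mathfrak{s}_\sigma$, $\mathfrak{s}_\omega$ and the fourth-cumulant term yields $\mathfrak{s}_\kappa$. The only cosmetic difference is that you phrase the argument as matching two solutions of the same recursion by induction, while the paper simply refers to the analogous computation in~\cite[Lem.~2.5]{JRmain} and records the additional $\sigma$- and $\widetilde{\omega}_2$-terms appearing in the second-cumulant step.
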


\begin{proof}[Proof of Lemma~\ref{lem-covapprox2}]
The proof is analogous to that of~\cite[Lem.~2.5]{JRmain} with one key difference. When evaluating the cumulant expansion
\begin{align*}
N^2\E\big(\langle \underline{WT_{[1,k]}}\rangle X^{(\ell)}_{\beta}\big)&=N\sum_{n\geq1}\sum_{x,y\in[N]}\sum_{\nu\in\{xy,yx\}^n}\frac{\kappa(xy,\nu)}{n!}\E \partial_{\nu}\Big((T_{[1,k]})_{yx}X^{(\ell)}_{\beta}\Big),
\end{align*}
the second moment structure of the model in Assumption~\ref{as-Wigner2} has to be taken into account for evaluating the $n=1$ term. We obtain
\begin{align*}
&N\sum_{x,y\in[N]}\sum_{\nu\in\{xy,yx\}}\kappa(xy,\nu)\E \partial_{\nu}\Big((T_{[1,k]})_{yx}X^{(\ell)}_{\beta}\Big)\\
&=-\sum_{j=1}^\ell\E\Big(\langle T_{[1,k]}T_{[k+j,k+\ell]}T_{[1,k+j\rangle}G_{k+j}\rangle+\sigma\langle T_{[1,k]}(T_{[k+j,k+\ell]}T_{[1,k+j\rangle}G_{k+j})^t\rangle\\
&\quad\quad\quad\quad+\widetilde{\omega_2}\langle \diag{T_{[1,k]}}\diag{T_{[k+j,k+\ell]}T_{[1,k+j\rangle}G_{k+j}}\rangle\Big),
\end{align*}
which, compared to the computation in the proof of~\cite[Lem.~2.5]{JRmain}, additionally involves $\sigma$ and $\widetilde{\omega_2}$. The deterministic approximation of the terms follow from~\eqref{eq-multiGaveraged}, Theorem~\ref{thm-skewedLL} and ~\eqref{eq-multiGisotropic}, respectively, which yields the source terms $\mathfrak{s}_{GUE}$, $\mathfrak{s}_\sigma$, and $\mathfrak{s}_\omega$ in~\eqref{eq-Mrecursion}. The rest of the expansion evaluates exactly as its counterpart in~\cite{JRmain} since the term does not involve any second moments of the entries of $W$.
\end{proof}

The rest of the proof of Theorem~\ref{thm-resolventCLT2} is analogous to that of~\cite[Thm.~3.6]{JRmain}, i.e., we apply induction on the number of factors in~\eqref{eq-resolventCLT2} using $\E X_\alpha=0$ and Lemma~\ref{lem-covapprox2} as base cases.
\qed

\section{Additional Proofs and Computations}
\subsection{Proof of Lemma~\ref{lem-doubledindex}}\label{app-doubledindex}
Using~\eqref{eq-mgraphs} for the ordered multi-set $S=\{z_1,\dots,z_j,\dots,z_k,z_j\}$, rewrite the left-hand side of~\eqref{eq-doubledindex} as
\begin{equation}\label{eq-doubledindex2}
m[1,\dots,j,\dots,k,j]=m_1\dots m_{j-1}m_j^2m_{j+1}\dots m_k\sum_{\Gamma\in NCG[\{1,\dots,k,j\}]}\prod_{(a,b)\in E(\Gamma)}q_{a,b}.
\end{equation}
Recalling that $m[S]$ is invariant under any permutation of the elements of $S$, we pick an ordering in which the two $j$'s occur in two consecutive positions and visualize the corresponding non-crossing graphs by equidistantly arranging the vertices on a circle. In this picture, the edge $e$ connecting both $j$'s cannot be involved in any crossing, even in an arbitrary planar graph on the given vertices (see left of Fig.~\ref{fig-cross}). Hence, for any non-crossing graph with edges $\{e_1,\dots,e_n\}\niton e$, the graph with edge set $\{e_1,\dots,e_n,e\}$ is also non-crossing. In particular, every non-crossing graph that involves $e$ has a counterpart that does not. Next, note that there is at most one vertex $l$ among $1,\dots,j-1,j+1,\dots,k$ that is connected to both copies of $j$, as having two distinct vertices $l,l'$ with this property results in a crossing (see right of Fig.~\ref{fig-cross}).

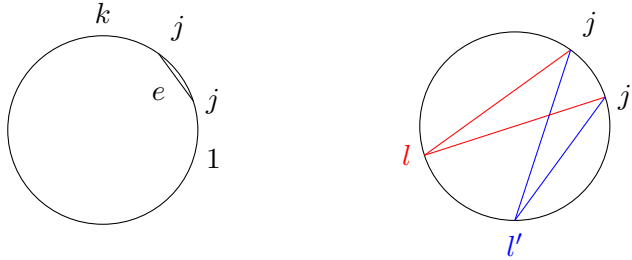
\begin{figure}[H]
\begin{center}
\begin{tikzpicture}[scale=1.25]
\draw (0,1) node[above=1pt] {$k$};
\draw (0.5878,0.809) node[above right=1pt] {$j$};
\draw (0.951,0.309) node[right=1pt] {$j$};
\draw (0.951,-0.309) node[right=1pt] {1};
\draw (0.7694,0.559) node[below left=0.1pt] {$e$};
\draw (0,-1) node[below=1pt]  {\color{white}0\color{black}};

\draw (0,0) circle (1cm);

\draw[black] (0.5878,0.809) -- (0.951,0.309);
\end{tikzpicture}\hspace{2cm}
\begin{tikzpicture}[scale=1.25]
\draw (0,1) node[above=1pt] {\color{white}$0$\color{black}};
\draw (0.5878,0.809) node[above right=1pt] {$j$};
\draw (0.951,0.309) node[right=1pt] {$j$};
\draw (0,-1) node[below=1pt] {\color{blue}$l'$\color{black}};
\draw (-0.951,-0.309) node[left=1pt] {\color{red}$l$\color{black}};

\draw (0,0) circle (1cm);

\draw[red] (-0.951,-0.309) -- (0.5878,0.809);
\draw[red] (-0.951,-0.309) -- (0.951,0.309);
\draw[blue] (0,-1) -- (0.5878,0.809);
\draw[blue] (0,-1) -- (0.951,0.309);
\end{tikzpicture}
\end{center}
\captionof{figure}{An edge between two consecutive vertices never results in a crossing (left), but connecting two distinct vertices to both copies of $j$ always does (right).}\label{fig-cross}
\end{figure}

Consider the disjoint decomposition
\begin{displaymath}
NCG(\{1,\dots,j,\dots,k,j\})=\cS_1\cup\Big(\bigcup_{l\in[k]\setminus\{j\}}\cS_2^{(l)}\Big)
\end{displaymath}
where $\cS_2^{(l)}$ contains all $\Gamma\in NCG(\{1,\dots,j,\dots,k,j\})$ for which the vertex $l\in[k]\setminus\{j\}$ is connected to both copies of $j$ and $\cS_1$ contains any remaining $\Gamma$. This implies
\begin{equation}\label{eq-decomp}
\sum_{\Gamma\in NCG[\{1,\dots,k,j\}]}\prod_{(a,b)\in E(\Gamma)}q_{a,b}=\sum_{\Gamma\in \cS_1}\prod_{(a,b)\in E(\Gamma)}q_{a,b}+\sum_{l\in [k]\setminus\{j\}}\sum_{\Gamma\in\cS_2^{(l)}}\prod_{(a,b)\in E(\Gamma)}q_{a,b}.
\end{equation}
Next, merge both vertices with the label $j$. The resulting graphs are either an element of $NCG(k)$ or arise from an element of $NCG(k)$ by adding a loop $(j,j)$. Recalling that
\begin{equation}\label{eq-qjj}
q_{j,j}=\frac{m_j^2}{1-m_j^2}=m_j'
\end{equation}
where the second equality follows from~\eqref{eq-mselfconderived}, we can factor out $(1+m_j')$ on the right-hand side of~\eqref{eq-decomp} and reduce to summation over $NCG(k)$ without further restriction. Note that this also results in the edge $(l,j)$ doubling whenever $\smash{\Gamma\in\cS_2^{(l)}}$, i.e., we obtain an extra factor of $q_{j,l}$ in this case. Hence,
\begin{displaymath}
\sum_{\Gamma\in NCG[\{1,\dots,k,j\}]}\prod_{(a,b)\in E(\Gamma)}q_{a,b}=(1+m_j')\Big(\sum_{\Gamma\in NCG[\{1,\dots,k\}]}\prod_{(a,b)\in E(\Gamma)}q_{a,b}\Big)\Big(1+\sum_{l\in [k]\setminus \{j\}}q_{j,l}\Big).
\end{displaymath}
Noting that~\eqref{eq-qjj} is equivalent to $m_j(1+m_j')=\frac{m_j'}{m_j}$ and applying~\eqref{eq-mgraphs} to recover $m[1,\dots,k]$ yields~\eqref{eq-doubledindex}. \qed

\subsection{Proof of Corollary~\ref{cor-fpapplication}}\label{app-ncpairings}
We start by evaluating $\mathrm{sc}[i_1,\dots,i_n]$ and $\mathrm{sc}[i_1,\dots,i_n|i_{n+1},\dots,i_{n+m}]$ before considering the associated free cumulant functions. First, note that
\begin{align*}
\mathrm{sc}[i_1,\dots,i_n]=\int_{-2}^2x^n\rho_{sc}(x)\dx x=\begin{cases}0,\quad&\text{if $n$ odd},\\C_{n/2},\quad&\text{if $n$ even},\end{cases}
\end{align*}
where $C_0,C_1,C_2,\dots$ denote the Catalan numbers. In particular, $\mathrm{sc}[i_1,\dots,i_n]$ coincides with the number of non-crossing pairings of the set $[n]$. It readily follows that $\mathrm{sc}_\circ[i_1,i_2]=1$ and that $\mathrm{sc}_\circ[i_1,\dots,i_n]=0$ whenever $n$ is odd. Moreover, $\mathrm{sc}_\circ[i_1,\dots,i_n]=0$ for any even $n\geq4$. The latter follows inductively from~\eqref{eq-mcrelation1} by writing
\begin{equation}\label{eq-cancelcirc}
\mathrm{sc}_\circ[i_1,\dots,i_n]=\mathrm{sc}[i_1,\dots,i_n]-\sum_{\pi\in NCP(n)\setminus\{[n]\}}\prod_{B\in\pi}\mathrm{sc}_\circ[B],
\end{equation}
and observing that only pairings contribute to the sum on the right-hand side of~\eqref{eq-cancelcirc}, i.e., the two terms cancel.

\medskip
Next, we compute $\mathrm{sc}[i_1,\dots,i_n|i_{n+1},\dots,i_{n+m}]$. Observe that by~\eqref{eq-mcrelation2} and Theorem~\ref{thm-functCLT},
\begin{displaymath}
\mathrm{sc}[i_1,\dots,i_n|i_{n+1},\dots,i_{n+m}]=\lim_{N\rightarrow\infty}\E\big[(\Tr W^n-\E W^n)(\Tr W^m-\E W^m)\big].
\end{displaymath}
The limit on the right-hand side is well-known in the free probability literature (see, e.g.,~\cite{MaleMingoPecheSpeicher2020}) and hence readily identified as the number of non-crossing pairings of the $(n,m)$-annulus. Solving~\eqref{eq-mcrelation2} for $\mathrm{sc}_{\circ\circ}[i_1,\dots,i_n|i_{n+1},\dots,i_{n+m}]$, it follows inductively that $\mathrm{sc}_{\circ\circ}[i_1,\dots,i_n|i_{n+1},\dots,i_{n+m}]=0$ for any $n,m$. Hence, $\Phi_{\pi_1\times\pi_2, U_1\times U_2}(f_1,\dots,f_{k+\ell})=0$ and
\begin{align*}
\Phi_{\pi}(f_1,\dots,f_{k+\ell})=\begin{cases}1,\quad &\text{if }\pi\in\NCA_2(k,\ell),\\ 0,\quad &\text{otherwise},\end{cases}
\end{align*}
which is the claim. Note that the error in~\eqref{eq-covfunctionsgeneral} evaluates to $\cO(N^{\eps-1/2})$ as $\|f_i\|_{H^p}$ for $i=1,\dots,k$ resp. $\|f_j\|_{H^q}$ for $j=k+1,\dots,k+\ell$ are $N$-independent constants in the macroscopic regime.
\qed

\renewcommand*{\bibname}{References}

\let\oldthebibliography\thebibliography
\let\endoldthebibliography\endthebibliography
\renewenvironment{thebibliography}[1]{
  \begin{oldthebibliography}{#1}
    \setlength{\itemsep}{0.5em}
    \setlength{\parskip}{0em}
}
{
  \end{oldthebibliography}
}

\bibliographystyle{plain}
\bibliography{References}

\begin{thebibliography}{10}

\bibitem{BaiYao2005}
Z.~D. Bai and J.~Yao.
\newblock On the convergence of the spectral empirical process of {W}igner
  matrices.
\newblock {\em Bernoulli}, 11:1059--1092, 2005.

\bibitem{BaoHe2021}
Z.~Bao and Y.~He.
\newblock Quantitative {CLT} for linear eigenvalue statistics of {W}igner
  matrices.
\newblock {\em Preprint, arXiv:2103.05402}, 2021.

\bibitem{BorotGuionnet2013}
G.~Borot and A.~Guionnet.
\newblock Asymptotic expansion of $\beta$ matrix models in the one-cut regime.
\newblock {\em Commun. Math. Phys.}, 317:447--483, 2013.

\bibitem{CES-ETH}
G.~Cipolloni, L.~Erd\H{o}s, and D.~Schröder.
\newblock Eigenstate thermalization hypothesis for {W}igner matrices.
\newblock {\em Commun. Math. Phys.}, 388(2):1005–1048, 2021.

\bibitem{CES-optimalLL}
G.~Cipolloni, L.~Erd\H{o}s, and D.~Schröder.
\newblock Optimal multi-resolvent local laws for {W}igner matrices.
\newblock {\em Electron. J. Probab.}, 27:1--38, 2022.

\bibitem{CES-thermalization}
G.~Cipolloni, L.~Erd\H{o}s, and D.~Schröder.
\newblock Thermalization for {W}igner matrices.
\newblock {\em J. Funct. Anal.}, 282(8), 2022.

\bibitem{CES-functCLT}
G.~Cipolloni, L.~Erd\H{o}s, and D.~Schröder.
\newblock Functional central limit theorems for {W}igner matrices.
\newblock {\em Ann. Appl. Probab.}, 33(1):447--489, 2023.

\bibitem{CollinsMingoSniadySpeicher2007}
B.~Collins, J.~Mingo, P.~\'{S}niady, and R.~Speicher.
\newblock Second order freeness and fluctuations of random matrices: {III}.
  higher order freeness and free cumulants.
\newblock {\em Documenta Math.}, 12:1--70, 2007.

\bibitem{DiazJaramilloPardo2022}
M.~Diaz, A.~Jaramillo, and J.~C. Pardo.
\newblock Fluctuations for matrix-valued {G}aussian processes.
\newblock {\em Ann. Henri Poincaré}, 58(4):2216--2249, 2022.

\bibitem{DiazMingo2022}
M.~Diaz and J.A. Mingo.
\newblock On the analytic structure of second-order non-commutative probability
  spaces and functions of bounded {F}réchet variation.
\newblock {\em Random Matrices: Theory Appl.}, page 2250044, 2022.

\bibitem{Guionnet2002}
A.~Guionnet.
\newblock Large deviation upper bounds and central limit theorems for
  non-commutative functionals of {G}aussian large random matrices.
\newblock {\em Ann. Inst. H. Poincar\'{e} Probab. Stat.}, 38:341--384, 2002.

\bibitem{HeKnowles2020}
Y.~He and A.~Knowles.
\newblock Mesoscopic eigenvalue density correlations of {W}igner matrices.
\newblock {\em Probab. Theory Relat. Fields}, 177:147--216, 2020.

\bibitem{Johansson1998}
K.~Johansson.
\newblock On fluctuations of eigenvalues of random {H}ermitian matrices.
\newblock {\em Duke Math. J.}, 91(1):151--204, 1998.

\bibitem{KhorunzhyKhoruzhenkoPastur1995}
A.~M. Khorunzhi, B.~A. Khoruzhenko, and L.~A. Pastur.
\newblock On the {1/N} corrections to the {G}reen functions of random matrices
  with independent entries.
\newblock {\em J. Phys. A Math. Gen.}, 28:L31, 1995.

\bibitem{KhorunzhyKhoruzhenkoPastur1996}
A.~M. Khorunzhi, B.~A. Khoruzhenko, and L.~A. Pastur.
\newblock Asymptotic properties of large random matrices with independent
  entries.
\newblock {\em J. Math. Phys.}, 37:5033--5060, 1996.

\bibitem{LandonSosoe2022}
B.~Landon and P.~Sosoe.
\newblock Almost-optimal bulk regularity conditions in the {CLT} for {W}igner
  matrices.
\newblock {\em Preprint, arXiv:2204.03419}, 2022.

\bibitem{Lytova2013}
A.~Lytova.
\newblock On non-{G}aussian limiting laws for certain statistics of {W}igner
  matrices.
\newblock {\em Zh. Mat. Fiz. Anal. Geom.}, 9:536--581, 2013.

\bibitem{LytovaPasturCLT}
A.~Lytova and L.~Pastur.
\newblock Central limit theorem for linear eigenvalue statistics of the
  {W}igner and the sample covariance random matrices.
\newblock {\em Metrika}, 69:153--172, 2009.

\bibitem{LytovaPastur2009}
A.~Lytova and L.~Pastur.
\newblock Fluctuations of matrix elements of regular functions of {G}aussian
  random matrices.
\newblock {\em J. Stat. Phys.}, 134:147--159, 2009.

\bibitem{Male2021}
C.~Male.
\newblock Freeness over the diagonal and global fluctuations of complex
  {W}igner matrices.
\newblock {\em Preprint, arXiv:2104.06157}, 2021.

\bibitem{MaleMingoPecheSpeicher2020}
C.~Male, J.~A. Mingo, S.~Peché, and R.~Speicher.
\newblock Joint global fluctuations of complex {W}igner and deterministic
  matrices.
\newblock {\em Random Matrices: Theory Appl.}, 11(2):2250015, 2022.

\bibitem{MSBook}
J.~A. Mingo and R.~Speicher.
\newblock {\em Free Probability and Random Matrices}.
\newblock Vol. 35, Fields Institute Research Monographs, Springer, New York,
  2017.

\bibitem{MingoNica2004}
J.A. Mingo and A.~Nica.
\newblock Annular noncrossing permutations and partitions, and second-order
  asymptotics for random matrices.
\newblock {\em Int. Math. Res. Not.}, 2004(28):1413--1460, 2004.

\bibitem{MingoSpeicher2006}
J.A. Mingo and R.~Speicher.
\newblock Second order freeness and fluctuations of random matrices {I}:
  {G}aussian and {W}ishart random matrices and cyclic {F}ock spaces.
\newblock {\em J. Funct. Anal.}, 235(1):226--270, 2006.

\bibitem{Redelmeier2012}
C.~E.~I. Redelmeier.
\newblock Real second-order freeness and the asymptotic real second-order
  freeness of several real matrix models.
\newblock {\em Int. Math. Res. Not.}, 2014(12):3353--3395, 2012.

\bibitem{Redelmeier2018}
C.~E.~I. Redelmeier.
\newblock Real and quaternionic second-order free cumulants and connections to
  matrix cumulants.
\newblock {\em Preprint, arXiv:1808.10589v2}, 2018.

\bibitem{JRmain}
J.~Reker.
\newblock Multi-point functional central limit theorem for {W}igner matrices.
\newblock {\em Preprint}, 2023.

\bibitem{Vova2023}
V.~Riabov.
\newblock Mesoscopic eigenvalue statistics for {W}igner-type matrices.
\newblock {\em Preprint, arXiv:2301.01712}, 2023.

\bibitem{Shcherbina2011}
M.~Shcherbina.
\newblock Central limit theorem for linear eigenvalue statistics of the
  {W}igner and sample covariance random matrices.
\newblock {\em Zh. Mat. Fiz. Anal. Geom.}, 7:176--192, 2011.

\bibitem{Shcherbina2013}
M.~Shcherbina.
\newblock Fluctuations of linear eigenvalue statistics of $\beta$ matrix models
  in the multi-cut regime.
\newblock {\em J. Stat. Phys.}, 151:1004--1034, 2013.

\bibitem{SosoeWong2013}
P.~Sosoe and P.~Wong.
\newblock Regularity conditions in the {CLT} for linear eigenvalue statistics
  of {W}igner matrices.
\newblock {\em Adv. Math.}, 249:37--87, 2013.

\bibitem{Wigner1955}
E.~Wigner.
\newblock Characteristic vectors of bordered matrices with infinite dimensions.
\newblock {\em Annals of Mathematics}, 62:548--564, 1955.

\end{thebibliography}

\end{document}